\providecommand{\@LN}[2]{}
\newcommand{\brdgrp}{\mathcal{B}}
\newcommand{\symgrp}{\mathcal{S}}
 \def\black{\color{black}}
\date{\today}
\title[A finite approach to multicategories]{\textbf{A finite approach to representable multicategories and related structures}}
\author{Gabriele Lobbia}
\thanks{The author is deeply grateful to John Bourke for bringing up the problem and for the significant contribution, suggestions and feedback. 
The majority of this research formed part of the author's Phd project, supported by an EPSRC Scholarship, under the very helpful supervision of Nicola Gambino.}
\address{University of Leeds, %\newline
Leeds, United Kingdom \newline
Masaryk University, 
Department of Mathematics and Statistics, Masaryk University, Kotl\'a\v rsk\'a 2, Brno 61137, Czech Republic}
\email{lobbia.math@gmail.com}
\begin{document}
\begin{abstract}
It is known that monoidal categories have a finite definition, whereas multicategories have an infinite (albeit finitary) definition. Since monoidal categories correspond to representable multicategories, it goes without saying that representable multicategories should also admit a finite description. With this in mind, we give a new finite definition of a structure called a short multicategory, which only has multimaps of dimension at most four, and show that under certain representability conditions short multicategories correspond to various flavours of representable multicategories. This is done in both the classical and skew settings.
\end{abstract}

\maketitle 
\tableofcontents

%\red
%\section*{Acknowledgements} 
%The author is very grateful to John Bourke for bringing up the problem and for all the help, suggestions and feedback. 
%This research is part of the author's PhD project, supported by an EPSRC Scholarship, under the very helpful supervision of Nicola Gambino. 
%\black 

\section*{Introduction}

In the zoo of categorical structures, there are three closely related ones:
\begin{itemize}
\item monoidal categories \cite{Maclane-monoidal}, which involve tensor products $A \otimes B$ and a unit~$I$;
\item closed categories \cite{EilenbergS:cloc}, which involve an internal hom $[A,B]$ and unit~$I$;
\item multicategories \cite{Lambek-multicategories}, which involve multimorphisms $A_1,\ldots,A_n \to B$ for all $n \in \mathbb N$.
\end{itemize}
It is well known that there are various correspondences between different flavours of these notions \cite{Hermida2000Representable, Manzyuk2012Closed, LackBourke:skew,CruttShul:gen-framw-multi,StatLev:univ-prop-impure-progr,HERMIDA20017}. For instance, representable multicategories are equivalent to monoidal categories \cite{Hermida2000Representable}. Then, if we want to weaken the representability condition, for example considering only \emph{left representable} multicategories, we end up in the world of \emph{skew} monoidal categories. In \cite{LackBourke:skew} we can find the details of various equivalences between (skew) multicategories and skew monoidal categories. 

Hence, we get different equivalent concepts, each of which has various pros and cons.
\begin{itemize}
\item Monoidal categories are fairly straightforward to work in --- for instance, it is easy to write down the definition of a monoid in a monoidal category.  Another advantage is that while the definition is finite, they admit a coherence theorem --- all diagrams commute \cite{Maclane-monoidal}.  A disadvantage is that in practise, the tensor product is often constructed using colimits and so sometimes could be hard to describe explicitly. % --- see, for instance, the tensor product of vector spaces.
\item Closed categories have several advantages. Again they have a finite definition and a coherence theorem, though this is of a more complex nature \cite{KelMac:coherence-closed-cat, Soloviev1997-SOLPOA}.  Another advantage is that the internal homs are often constructed using limits, and so easy to describe explicitly --- see, for instance, the internal hom of vector spaces. The disadvantage is that the axiomatics of closed categories involve iterated contravariance, and this makes it quite hard to parse diagrams in a closed category.
\item In a multicategory, the multimaps can often be described directly --- see, for instance, multilinear maps of vector spaces --- and this avoids the potentially complicated constructions of tensor products and internal homs using colimits and limits.  A disadvantage is that the definition, is infinite (though finitary) in nature, and this sometimes makes it difficult to describe examples in full detail.
\end{itemize}

Since monoidal categories admit a definition involving finite data and finite axioms, it is natural to wonder if the same is possible for multicategories. Our goal in the present paper is to describe a finite approach to the kinds of multicategory that arise in practise --- these include representable and closed multicategories --- with the goal of making examples of such notions easier to construct.  We do this by introducing a structure called a \emph{short multicategory}, which is not itself a multicategory, since it indeed only has multimaps of dimension at most $4$. One of our main results shows that representable short multicategories are equivalent to representable multicategories, so providing a finite description of the latter.  Moreover, we adapt all of these results to the setting of skew multicategories and skew monoidal categories described in \cite{LackBourke:skew}. Our results make it easier to construct examples of (skew) multicategorical structures in practice. For instance, they made significantly more manageable the definition of the skew structures on the category of Gray-categories described in \cite{BourkeLob:SkewApp}.\footnote{We remark that in \cite{BourkeLob:SkewApp} \emph{4-ary (skew) multicategories} are used. These have an underlying short (skew) multicategorical structure, but the latter involves less data and axioms. For this reason in this paper we focus on the \emph{short} notion. }

%As an example, we will consider Verity's multicategory of bicategories \cite[Definition~1.3.3]{Verity:thesis} and we aim to have further applications in higher category theory in the future. 

\subsection*{Main Results}
%\blue
The main contribution of this work is to provide equivalences between different flavours of short (skew) multicategories and (skew) multicategories. \black %passing through known results about (skew) monoida categories. 
In particular, we consider the following cases:
%showing these to be equivalent to left representable and representable multicategories in Theorems' 2.9 and 2.11 by passing through known results about left normal skew monoidal and monoidal categories.  There is also a result dealing with the closed left representable and \blue representable cases.\black
%\blue
\begin{itemize}
\item Theorem~\ref{thm:rep-mult} provides an equivalence between representable multicategories and representable short multicategories. We prove this as a consequence of the more general Theorem~\ref{thm:fin-equiv}, which deals with left representable short multicategories. 

\item Theorem~\ref{thm:closed-lr-multi} and Theorem~\ref{thm:cl-rep-equiv} show the equivalences in the closed left representable and closed representable case.

\item Then, Theorem~\ref{them:sk-fin-equiv} proves the left representable skew case.

\item Theorem~\ref{thm:skew-left-closed-equiv} is about the left representable closed skew case.

\item \black
Finally, Theorem~\ref{thm:brd-sk-equiv} provides an equivalence between braided/symmetric left representable short skew multicategories and braided/symmetric skew monoidal categories. From this it follows that braidings on a left representable skew multicategory have a finite presentation (Corollary~\ref{cor:brd-bij-corr}).  We conclude with Theorem~\ref{thm:brd-equiv}, which proves the braided/symmetric result for short multicategories. 
\black 
\end{itemize}

We also show that these equivalences are compatible with the ones given in \cite{LackBourke:skew,BouLack:skew-braid,Hermida2000Representable} for different flavours of multicategory and monoidal category.

\subsection*{Overview}
\black 
In Section~\ref{sec:clas-mult} we review the definition of a multicategory, before giving a slight reformulation of it better suited for our later use. We also recall some important notions for multicategories, such as representability and closedness.

In Section~\ref{sec:fin-mult} we use the reformulation given in Section~\ref{sec:clas-mult} to define \emph{short multicategories}.  We then define the notions of representability and closedness in the context of short multicategories. 

In Section~\ref{sec:skew-notions} we give an overview on skew monoidal categories and skew multicategories, including the notions of left representability, closedness and braiding/symmetry in this context. 

Section~\ref{sec:short-vs-mult} provides various equivalences between different flavour of short multicategories and skew monoidal categories. 

We conclude the paper in Section~\ref{sec:short-skew-mult} introducing short skew multicategories and describe analogues of the results in Section~\ref{sec:short-vs-mult} appropriate to the skew setting \black and further considering the braided case. \black 

\black For the interested reader, we leave to Appendix~\ref{sec:closed-case} a discussion on the \emph{just} closed case (not left representable), which leads to similar equivalences as well. Since most of the examples in the literature are also left representable, we left the treatment of this particular case to the appendix.
%\blue
In Appendix~\ref{app:nat-conditions} we write explicitly some naturality conditions. 
%\black 

%\red This is the CT abstract. \\ \black
%
%It is well known that there are various correspondences between different flavours of monoidal category and multicategory. For instance, representable multicategories are equivalent to monoidal categories. Then, if we want to weaken the representable condition, for example considering only \emph{left representable} multicategories, we end up in the world of \emph{skew} monoidal categories. In \cite{LackBourke:skew} we can find the details of various equivalences between (skew) multicategories and skew monoidal categories. 
%
%Monoidal categories admit a finitary definition, and our goal is to describe a corresponding finitary definition for various kinds of multicategory. More precisely, we will show that in particular cases, such as left representable and closed (skew) multicategories, a few restricted forms of substitution and a finite list axioms involving these. This will be done both in the classical and skew cases, and in each case we will prove that the finitary definition is equivalent to the usual infinitary one. Moreover, we will show some surprising results in the biclosed setting. 
%
%We believe our results will make it easier to construct examples of multicategorical structures in practice. As an example, we will consider Verity's multicategory of bicategories \cite[Definition~1.3.3]{Verity:thesis} and we aim to have further applications in higher category theory in the future.
\black 

\section{Classical Multicategories}
\label{sec:clas-mult}

In this section we will recall the definitions of multicategories and morphisms between them. To begin with, a \textbf{multicategory} $\mlc$ consists of:
\begin{itemize}
\item a collection of objects;
\item for each (possibly empty) list $a_1,...,a_n$ of objects and object $b$, a set $\mlc_n(a_1,...,a_n;b)$;
\item for each object $a$ an element $1_a\in\mlc_1(a;a)$.
\end{itemize}

The elements of the set $\mlc_n(a_1,...,a_n;b)$ are called $n$-ary multimaps, with domain the list $a_1,\ldots,a_n$ and codomain $b$, whilst $1_a$ plays the role of the identity unary morphism. We sometimes write $\overline{a}$ for the list, and then $\mlc_n(\overline{a};b)$ for the set of multimaps.

Substitution in a multicategory can be encoded in two ways. The best known one involves substitutions into all positions simultaneously. In this case, substitution is encoded by functions of the form
\begin{align*}
\mlc_n(b_1,\ldots,b_n;c)\times\prod_{i=1}^n\mlc_{k_i}(\overline{a_i};b_i) &\longrightarrow\mlc_K(\overline{a_1},\ldots,\overline{a_n};c) \\
(g,f_1,\ldots,f_n)&\longmapsto g \circ (f_1,\ldots,f_n)
\end{align*}
where $K={\sum_{i=1}^nk_i}$. For such substitutions, there is a straightforward associativity axiom --- see, for instance, Definition 2.1.1 of \cite{higherop} --- and two identity axioms, which at $g \in \mlc_n(a_1,\ldots,a_n;b)$ are captured by the two equations $1_b \circ (g)=g=g \circ (1_{a_1},\ldots,1_{a_n})$.

The original definition of multicategory, due to Lambek \cite{Lambek-multicategories}, instead involved substitutions into a single position, and these are encoded by functions of the following form:
\begin{align*}
-\circ_i-\colon \mlc_n(\overline{b};c) \times \mlc_m(\overline{a};b_i) &\to \mlc_{n+m-1}(\overline{b}_{<i},\overline{a},\overline{b}_{>i};c) \\
(g,f) &\mapsto g \circ_i f
\end{align*}
where $\overline{b}_{<i}$ and $\overline{b}_{>i}$ denote the sublists of $\overline{b}$ in indices strictly less than or greater than $i$, respectively. To encode associativity of the $\circ_i$-type substitutions, one requires the following two collections of equations (the first referred to as associative law and the second as commutative law in \cite{Lambek-multicategories})
\begin{align*}
h \circ_i(g\circ_j f) = (h\circ_i g)\circ_{j+i-1}f  \hspace{0.5cm} &\textnormal{for} \hspace{0.5cm} 1 \leq i \leq m, 1 \leq j \leq n   %\label{eq:classic-ass-line}
\\
(h\circ_i g)\circ_{n+j-1}f=(h \circ_j f) \circ_{i} g \hspace{0.5cm} &\textnormal{for} \hspace{0.5cm} 1 \leq i<j \leq m. %\label{eq:classic-ass-not-line} 
\end{align*}
Finally, there are the two identity axioms which at $g \in \mlc_n(a_1,\ldots,a_n;b)$ are captured by the equations $1_b \circ_1 g = g = g\circ_i 1_{a_i}$. %(\ldots((g\circ_1 1_{a_{1}}) \circ_{2} 1_{a_{2}})\ldots \circ_{n} 1_{a_{n}}$.

Given a multicategory with $\circ$-type substitutions, the corresponding $\circ_i$ is defined by 
$$g \circ_i f = g \circ (1,\ldots,1,f,1,\ldots,1)$$
where $f$ is substituted in the $i$'th position.  Given a multicategory with $\circ_i$-type substitutions, the corresponding $\circ$ is defined by
$$g\circ (f_1,\ldots,f_n) = (\ldots((g\circ_1 f_1) \circ_{k_{1}+1} f_2)\ldots \circ_{k_{1} + \ldots k_{n-1} +1} \circ f_n$$

Each multicategory $\mlc$ has an underlying category $U\mlc$ with the same objects, and morphisms the unary ones, so that one can consider a multicategory $\mlc$ as a category $U\mlc$ equipped with additional structure. Thinking of a multicategory $\mlc$ as a category equipped with $\circ_i$-type substitution, we obtain the following reformulations, which will be our starting point in which follows.  It is closely related to Proposition 3.4 of \cite{LackBourke:skew}.

\begin{prop}\label{prop:classical}
A multicategory $\mlc$ is equivalently specified by:
\begin{itemize}
\item a category $\catc$;
\item for $n \in \mathbb N$ a functor $\mlc_n(-;-)\colon(\catc^{n})^{op} \times \catc \to \Set$ such that, when $n=1$, we have $\mlc_1(-;-)=\catc(-,-)\colon\catc^{op} \times \catc \to \Set$; %\red [Should make these into sets, and functoriality will follow later on] [Alternatevely impose that $\circ_i$ below with $n$ or $m$ equal to $1$ are exactly the profunctor actions]\black 
\item substitution functions, for $i \in \{1,\ldots,n\}$, 
$$\circ_i\colon \mlc_n(\overline{b};c) \times \mlc_m(\overline{a};b_i) \to \mlc_{n+m-1}(\overline{b}_{<i},\overline{a},\overline{b}_{>i};c),$$
which are natural in each variable $a_1,\ldots, a_m,b_1,\ldots b_{i-1},b_{i+1},\ldots,b_n,c$, dinatural\footnote{We leave the precise diagrams relative to these (di)naturality requirements in Appendix~\ref{app:nat-dinat-axioms}.} in $b_i$ and satisfying the same axioms
\begin{align}
h \circ_i(g\circ_j f) = (h\circ_i g)\circ_{j+i-1}f  \hspace{0.5cm} &\textnormal{for} \hspace{0.5cm} 1 \leq i \leq m, 1 \leq j \leq n   \label{ax:classic-ass-line}\\
(h\circ_i g)\circ_{n+j-1}f=(h \circ_j f) \circ_{i} g \hspace{0.5cm} &\textnormal{for} \hspace{0.5cm} 1 \leq i<j \leq m \label{eq:classic-ass-not-line} 
%\\
%%\blue 
%1\circ_1f=f=f\circ_i1\hspace{0.5cm} &\textnormal{for} \hspace{0.5cm} 1 \leq i \leq n \label{eq:classic-identity} \black 
\end{align}
as before. 
%\blue 
Moreover, %we require that if $n$ or $m$ are equal to $1$ then $\circ_i$ is the action of $\mlc_m(\overline{a};-)$ or $\mlc_n(\overline{b}_{<i},-,\overline{b}_{>i};c)$ respectively.
%More precisely, 
we require that, for any unary maps $p\colon a'_i\to a_i$ and $q\colon b\to b'$ and $n$-ary map $f\colon\overline{a}\to b$,\footnote{This condition says that the action of $\mlc_n(-;-)$ must be substitution with unary morphisms.}
\begin{equation}
	\label{circ_i-comp-with-fct-act}
	q\circ_1f=\mlc_n(\overline{a};q)(f)\quad \text{and}\quad f\circ_i p=\mlc_n(\overline{a}_{<i},p,\overline{a}_{>i};b)(f). 
\end{equation}
\black 
In this way, $U\mlc=\catc$. 
\end{itemize}
\end{prop}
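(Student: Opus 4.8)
The plan is to prove Proposition~\ref{prop:classical} by exhibiting a translation in each direction between the two packages of data and checking that the two translations are mutually inverse. This is a bookkeeping argument: the content of a multicategory (with $\circ_i$-type substitutions) is being repackaged so that the unary part is explicitly carved out as a category and the functoriality of the hom-sets in their unary arguments is made into a primitive axiom via \eqref{circ_i-comp-with-fct-act} rather than derived.

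First I would describe the passage from a multicategory $\mlc$ to the structure on the right-hand side. Take $\catc = U\mlc$, the underlying category whose objects are those of $\mlc$, whose morphisms are the unary multimaps, and whose composition is $\circ_1$ (with identities the $1_a$); the identity axioms $1_b \circ_1 g = g = g \circ_1 1_a$ and the associative law instance with all three maps unary make this a category. For the functors $\mlc_n(-;-)$, define the action on a tuple of unary maps $(p_1,\ldots,p_n;q)$ by $q \circ_1 (f \circ_1 p_1 \circ_2 \cdots)$ — more precisely by post-composition with $q$ in position $1$ and pre-composition with each $p_i$ in position $i$, using the $\circ_i$ operations. Functoriality (covariance in $b$, contravariance in each $a_i$) and the fact that these actions in different positions commute follow from the associative and commutative laws \eqref{ax:classic-ass-line}–\eqref{eq:classic-ass-not-line} restricted to the case where the outer or inner maps are unary; the compatibility with the identities again uses the identity axioms. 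When $n=1$ this recovers $\catc(-,-)$ by construction. The substitution functions $\circ_i$ are taken verbatim; their (di)naturality in the various slots is exactly a reexpression of the associative/commutative laws with one leg unary, and \eqref{circ_i-comp-with-fct-act} holds by the very definition of the functor action just given.

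Conversely, from a structure $(\catc, \{\mlc_n(-;-)\}, \{\circ_i\})$ of the stated kind, I would build a multicategory by taking the same objects, the same multimap sets, the identity $1_a = \mathrm{id}_a \in \catc(a,a) = \mlc_1(a;a)$, and the same $\circ_i$; the associative and commutative laws are postulated, and the two identity axioms $1_b \circ_1 g = g = g \circ_i 1_{a_i}$ follow from \eqref{circ_i-comp-with-fct-act} together with functoriality of $\mlc_n(-;-)$ at identity maps (since $q \circ_1 f = \mlc_n(\overline a; q)(f)$ reduces at $q = \mathrm{id}$ to $f$, and similarly for $f \circ_i p$). This yields a genuine multicategory in Lambek's sense, and hence, via the bijection recalled earlier in the section, one with $\circ$-type substitution too.

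Finally I would check the two constructions are mutually inverse. Starting from a multicategory, passing to the structured form and back returns the same objects, multimap sets, identities and $\circ_i$ operations on the nose, so it is literally the identity. Starting from a structured form, the only nontrivial point is that the functor actions $\mlc_n(-;-)$ on unary maps are recovered correctly: but that is precisely what \eqref{circ_i-comp-with-fct-act} guarantees — the action is forced to be substitution with unary morphisms, which is how we defined it in the forward direction. The main obstacle, such as it is, is not conceptual but organisational: one must be careful about the ordering conventions when several unary maps are substituted into different positions of an $n$-ary map (the indices shift, and one must invoke the commutative law \eqref{eq:classic-ass-not-line} to see that substituting $p_i$ in slot $i$ and $p_j$ in slot $j$ in either order agree), and one must make sure the dinaturality in the repeated variable $b_i$ is stated and used correctly. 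Since this is essentially the content of Proposition~3.4 of \cite{LackBourke:skew}, I would keep the verification brief and refer there for the diagram-chasing details, which I leave to Appendix~\ref{app:nat-dinat-axioms}.
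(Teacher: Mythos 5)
Your proposal is correct and follows essentially the same route as the paper's proof: both directions translate the data by taking the functor action on unary maps to be substitution via the $\circ_i$ operations, derive functoriality and (di)naturality from the associativity, commutativity and identity axioms, and use \eqref{circ_i-comp-with-fct-act} to see that the identity axioms are encoded by functoriality and that the round trip is the identity. The only cosmetic difference is that the paper emphasises that (di)naturality \emph{forces} the functor action to be pre/post-composition, whereas you define it that way outright; the verification is the same.
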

\begin{proof}
Given a structure as above, we can form a multicategory with objects those of $\catc$, sets of multimaps $\mlc_n(\overline{a};b)$, identities $1_a \in \catc(a,a)=\mlc_1(a;a)$ and substitution functions $\circ_i$ as above.
%\blue 
Finally we notice that, by \eqref{circ_i-comp-with-fct-act}, the identity axioms for the multicategory are encoded by the fact that the functor $\mlc_n(-;-)\colon(\catc^{n})^{op} \times \catc \to \Set$ preserves identities. \black  

In the opposite direction, given a multicategory $\mlc$ with $\circ_i$-type operations, let $\catc$ be its underlying category of unary morphisms.  We must define a functor
$$\mlc_n(-;-)\colon(\catc^{n})^{op} \times \catc \to \Set$$
 sending $(\overline{a};b)$ to $\mlc_n(\overline{a};b)$ on objects in such a way that the $\circ_i$ substitutions are natural in the sense described above, and such that $\mlc_1(-;-)=\catc(-,-)$.  In fact, the requirement~for 
$$\circ_i\colon \mlc_1({b};c) \times \mlc_n(\overline{a};b) \to \mlc_{n}(\overline{a};c)$$
to be dinatural in $b$ (and identity) forces us to define $\mlc_n(\overline{a};f) = f \circ_1 -$.  Naturality of the $\circ_i$ also ensure naturality of the associated $\circ$ operations, and in particular naturality of
$$\circ\colon\mlc_n(a_1,\ldots,a_n;b) \times \mlc_1(c_1,a_1) \times \ldots \mlc_1(c_n;a_n) \to \mlc_n(c_1,\ldots,c_n;b)$$ in $a_1,\ldots,a_n$ forces us similarly to define $\mlc_n(f_1,\ldots,f_n;b) = - \circ (f_1,\ldots,f_n)$.  With this definition of $\mlc_n(-;-)$ on morphisms, associativity and the identity axioms of substitutions implies that it is a functor and that the substitution maps are natural in each variable, and satisfy $\mlc_1(-;-)=\catc(-,-)$.
%\blue The identity axioms follow from the ones in 

These two constructions are inverse.
\end{proof}

\begin{rmk}
	%\blue
	%It is interesting to notice that, in the equivalent description given above in \Cref{prop:classical}, the actions of the functors $\mlc_{n}(-;-)$ on morphisms are forced to be pre and post
	Alternatively, one could also start with a collection of objects $\catc_0$, sets $\mlc_{n}(\overline{a};b)$ and substitutions $\circ_i$ as above and derive from this data the functor structure on $\mlc_{n}(-;-)$. 
	Indeed, as seen in the proposition above, the action of $\mlc_{n}(-;-)$ on morphisms is forced to be pre/post-composition. 
	\black 
\end{rmk}

\begin{rmk}
	%\blue 
	Another possible change to the description in Proposition~\ref{prop:classical} is to replace condition \eqref{circ_i-comp-with-fct-act} with the identity axioms, for any $n$-ary multimap $f$ with $n\geq 2$, 
	\begin{center}
		$1\circ_1f=f=f\circ_i1$ for $1 \leq i \leq n$. %\label{eq:classic-identity}
	\end{center}
	%	It is interesting to notice that the identity axioms \eqref{eq:classic-identity} can also be recovered from a different condition. 
	%	Indeed, let us consider $\circ_{i}$'s as in Proposition~\ref{prop:classical} and let us require that if $n$ or $m$ are equal to $1$ then $\circ_i$ is the action of $\mlc_m(\overline{a};-)$ or $\mlc_n(\overline{b}_{<i},-,\overline{b}_{>i};c)$ respectively.
	%	More precisely, we require for any unary maps $p\colon a'_i\to a_i$ and $q\colon b\to b'$ and $n$-ary map $f\colon\overline{a}\to b$,
	%	\begin{center}
		%		$q\circ_1f=\mlc_n(\overline{a};q)(f)$ and $f\circ_i p=\mlc_n(\overline{a}_{<i},p,\overline{a}_{>i};b)(f)$.
		%	\end{center}
	%	Hence, under this requirement and setting $p=1$, the identity axioms \eqref{eq:classic-identity} correspond to the preservation of the identity by the functor $\mlc_n(-;-)$. 
\end{rmk}

Naturally, there is a notion of morphism between multicategories, which we call here \emph{multifunctor}. From now on, when talking about multicategories we will mean in the sense of Proposition~\ref{prop:classical}.

\begin{defn}
Let $\mlc$ and $\md$ two multicategories. A \emph{multifunctor} is a functor $F\colon\catc\to\catd$ together with natural families
$$F_n\colon\mlc_n(\overline{a};b)\to\md_n(F\overline{a};Fb)$$
for any $n\in\mathbb{N}$, such that when $n=1$, then $F_1$ is the functor action. These families must commute with all substitution operators $\circ_i$. 
\end{defn}

Multicategories and multifunctors form a category $\multi$. \black

\subsection{Representability}

An important notion for multicategories is the one of representability \cite{Hermida2000Representable}. Firstly, a \textbf{$\mathbf{n}$-ary map classifier} for $\overline{a}=a_1,\ldots,a_n$ consists of a representation of $\mlc_n(a_1,\ldots,a_n;-):\catc \to \Set$ -- in other words, a multimap $$\theta_{\overline{a}}\colon a_1,\ldots,a_n \to m(a_1,\ldots,a_n)$$ for which the induced function $-\circ \theta_{\overline{a}}\colon \mlc_1(m(a_1,\ldots,a_n);b) \to \mlc_n(a_1,\ldots,a_n;b)$ is a bijection for all $b$.  We sometimes refer to such a multimap as a universal multimap and write $m\overline{a}$ for $m(a_1,\ldots,a_n)$. A $n$-ary map classifier is said to be \emph{left universal} if, moreover, the induced function 
$$-\circ_1 \theta_{\overline{a}}\colon\mlc_{1+r}(m\overline{a},\overline{y};d) \to \mlc_{n+r}(a_1,\ldots,a_n,\overline{y};d)$$
is a bijection for any $\overline{y}$ of length $r$.

\begin{defn}[\cite{Hermida2000Representable, LackBourke:skew}]
\label{def:lr-rep-multi}
Let $\mlc$ be a multicategory.
\begin{itemize}
\item $\mlc$ is said to be \textbf{weakly representable} when each of the functors $\mlc_n(\overline{a};-)\colon\catc\to\Set$ is representable, i.e. if it has all $n$-ary map classifiers $\theta_{\overline{a}}$. 

\item $\mlc$ is said to be \textbf{left representable} if it is weakly representable and all $\theta_{\overline{a}}$ are left universal. 

\item $\mlc$ is said to be \textbf{representable} if it is weakly representable and substitution with universal $n$-multimaps $\theta_{\overline{a}}$ induces bijections
$$\mlc_{k+1}(\overline{x},m\overline{a},\overline{y}; b)\to\mlc_{k+n}(\overline{x},\overline{a},\overline{y}; b)$$
for $\overline{x}$ and $\overline{y}$ tuples of appropriate length. 
\end{itemize}
\end{defn}

We will denote with $\multilr$ and $\multirep$ the full subcategories of $\multi$ with objects, respectively, left representable multicategories and representable multicategories. 

\subsection{Closedness} 

Another important notion for multicategories is the one of closedness.

\begin{defn}
\label{def:multi-closed}
A multicategory $\mlc$ is said to be \textbf{closed} if for all pair of objects $b$ and $c$ there exists an object $[b,c]$ and binary map $e_{b,c}\colon [b,c],b \to c$ for which the induced function 
$$e_{b,c}\circ_1-\colon\mlc_n(\overline{x};[b,c]) \to \mlc_{n+1}(\overline{x},b;c)$$ 
is a bijection, for any tuple $\overline{x}$ of length $n$.  
\end{defn} 

We will denote with $\multilrcl$ the full subcategory of $\multi$ with objects left representable closed multicategories. 

\black

\section{Short Multicategories}
\label{sec:fin-mult}

In this section, we will present a finite definition of certain multicategory-like structures, which we call short multicategories.  Later on, under further assumptions, we will show that they are equivalent to known types of multicategory.  We will take Proposition~\ref{prop:classical} as the grounds for our definition.

%We will use the same style as the second definition presented in Section~\ref{sec:clas-mult} and make some changes. 

A \textbf{short multicategory} consists, to begin with, of a category $\catc$ together with:
\begin{itemize}
\item For $n \leq 4$ a functor $\mlc_n(-;-)\colon(\catc^{n})^{op} \times \catc \to \Set$ such that, when $n=1$, we have $\mlc_1(-;-)=\catc(-,-):\catc^{op} \times \catc \to \Set$.
\end{itemize}

%\begin{rmk}
%Finitary multicategory seems like bad terminology here --- something like pre-multicategory perhaps.  In any case, we should use this structure as we need to consider both the closed and left representable versions of it.
%\end{rmk}
\begin{rmk}
\label{rmk:no-need-for-unary-circ_i}
This says that for $n \leq 4$ we have sets $\mlc_n(x_1,\ldots,x_n;y)$ of $n$-ary multimaps (where the unary morphisms are those of $\catc$) and $n$-ary multimaps can be precomposed and postcomposed by unary ones in a compatible manner. We sometimes refer to these compatibilities as \emph{profunctoriality of $n$-ary multimaps.}
%\blue
%	It is interesting to notice that we do not require $\circ_i$ for $n$ or $m$ equal to~$1$. 
%	This is the case because, similarly to what happens in Proposition~\ref{prop:classical} and condition \eqref{circ_i-comp-with-fct-act}, these cases are already encoded in the action on morphisms of the functors $\mlc_n(-;-)$. 
%	For this reason, 
For these pre/post-composition we will use the following notation, for any unary maps $p\colon a'_i\to a_i$ and $q\colon b\to b'$, and $n$-ary map $f\colon\overline{a}\to b$, 
\begin{center}
	$q\circ f:=\mlc_n(\overline{a};q)(f)$ and $f\circ_i p:=\mlc_n(\overline{a}_{<i},p,\overline{a}_{>i};b)(f)$. 
\end{center}
Clearly, with these definitions, since functors preserve the identity, we get the following identity equations (for any $n$-ary map $f$ and any $i=1,\ldots,n$).
$$1\circ f=f=f\circ_i1$$
\black 

\end{rmk}

Furthermore, we require substitution functions %\red [In case, add condition that for $n$ or $m$ equal to $1$ need to be equal to functor action] \black 
$$\circ_i\colon \mlc_n(\overline{b};c) \times \mlc_m(\overline{a};b_i) \to \mlc_{n+m-1}(b_{<i},\overline{a},b_{>i};c)$$
for $i \in \{1,\ldots,n\}$ which are natural in each variable $a_1,\ldots, a_m,b_1,\ldots,b_{i-1},b_{i+1},\ldots,b_n,c,$ and dinatural in $b_i$ where:
\begin{itemize}
\item $n=2,3$, $m=2$ (substitution of binary into binary and ternary);
\item  $n=2$, $m=3$  (substitution of ternary into binary);
\item  $n=2,3$, $m=0$ (substitution of nullary into binary and ternary).
\end{itemize}

In the context of multimaps $f,g$ and $h$ of arity $2,n$ and $p$ respectively, one can consider associativity and commutativity equations, and identity axioms of the form: %\red we can just say $f$ binary ($m=2$), it's the only case we consider \black 
\begin{align}
f \circ_i(g\circ_j h) = (f\circ_i g)\circ_{j+i-1}h  \hspace{0.5cm} \textnormal{for} \hspace{0.5cm} 1\leq i\leq 2, 1 \leq j \leq n  \label{eq:ass-line}\\
(f\circ_1 g)\circ_{n+1}h=(f \circ_2 h) \circ_{1} g \hspace{0.5cm}  \hspace{1cm} \label{eq:ass-not-line} 
%\\
%%\blue
%1\circ f=f=f\circ_i1. \hspace{0.5cm}  \hspace{2cm} \label{eq:identity-ax} 
%\black 
\end{align}
%\blue 
These are particular cases of the equations (\ref{ax:classic-ass-line},\ref{eq:classic-ass-not-line}) in Section~\ref{sec:clas-mult}. We require equations~(\ref{eq:ass-line},\ref{eq:ass-not-line}) \black in the following cases:\black 
%These are particular case of the associativity equations (\ref{ax:classic-ass-line},\ref{eq:classic-ass-not-line}) in Section~\ref{sec:clas-mult}. We require these equations in the following cases:\black
\begin{itemize}
\item[(a)] $n=p=2$;
\item[(b)] $n = 2$, $p=0$;
\item[(c)] only for (\ref{eq:ass-not-line}), $n = 0$, $p=2$;
\item[(d)] only for (\ref{eq:ass-not-line}), $n=p = 0$.
\end{itemize}
Let us explain these equations in a more digestible form, using diagrams.
\begin{itemize}
\item (\ref{eq:ass-line}.a) corresponds to the four equations 
\begin{equation*}%\tag{A}
\label{ax:fin-BBB}
(f\circ_i g)\circ_{i-j+1}h=f \circ_i(g\circ_j h)
\end{equation*}
where $1 \leq i,j \leq 2$ with $f,g$ and $h$ binary.  These amount to the fact that certain string diagrams are well-defined.  For instance, if we set $i=1$ and $j=2$, we get that the two possible interpretations of the following string diagram are the same.
\begin{center}
\begin{tikzpicture}[triangle/.style = {fill=yellow!50, regular polygon, regular polygon sides=3,rounded corners}]
%Multimaps
\path 
	(0,0.5) node [triangle,draw,shape border rotate=-90,inner sep=1pt,label=135:$a_1$,label=230:$a_2$] (b) {$h$} 
	(2,1) node [triangle,draw,shape border rotate=-90,inner sep=1pt,label=135:$b_1$,label=230:$b_2$] (a) {$g$}
	(4,0.5) node [triangle,draw,shape border rotate=-90,inner sep=1pt, label=135:$c_1$,label=230:$c_2$] (c) {$f$};

%Nodes
\draw [-] (-0.8,0.2) to (b.228);
\draw [-] (-0.8,0.75) to (b.139);
\draw [-] (3.1,0.1) to (c.228);
\draw [-] (a) .. controls +(right:1cm) and +(left:1cm).. (c.139);
\draw [-] (b) .. controls +(right:1cm) and +(left:1cm).. (a.225);
\draw [-] (c) to node [above] {$c$} (5,0.5);
\draw [-] (1.2,1.25) to (a.135);
\end{tikzpicture}  
\end{center}
\item (\ref{eq:ass-not-line}.a) corresponds to the equation \begin{equation*}%\tag{A'}
\label{ax:fin-A}
\begin{gathered}
\begin{tikzpicture}[triangle/.style = {fill=yellow!50, regular polygon, regular polygon sides=3,rounded corners}]
%Multimaps
	%LHS
\path (0,0) node [triangle,draw,shape border rotate=-90,inner sep=1.1pt] (a) {$h$}
	(2,2) node [triangle,draw,shape border rotate=-90,inner sep=1.1pt] (b) {$g$} 
	(4,1) node [triangle,draw,shape border rotate=-90,inner sep=1.1pt,label=135:$b_1$,label=230:$b_2$] (c) {$f$} 
	%RHS
	(9,0) node [triangle,draw,shape border rotate=-90,inner sep=1.1pt] (a') {$h$}
	(7,2) node [triangle,draw,shape border rotate=-90,inner sep=1.1pt] (b') {$g$} 
	(11,1) node [triangle,draw,shape border rotate=-90,inner sep=1.1pt, label=135:$b_1$,label=230:$b_2$] (c') {$f$};
	
%Connecting
	%LHS
\draw [-] (a) .. controls +(right:2cm) and +(left:1cm).. (c.220);
\draw [-] (b) .. controls +(right:1cm) and +(left:1cm).. (c.140);
	%RHS
\draw [-] (a') .. controls +(right:1cm) and +(left:1cm).. (c'.220);
\draw [-] (b') .. controls +(right:2cm) and +(left:1cm).. (c'.140);
	
%Nodes
	%LHS
\draw [-] (c) to node [above] {$c$} (5,1);
\draw [-] (1.15,2.2) to node [above] {$a_1$} (b.139);
\draw [-] (1.15,1.75) to node [below] {$a_2$} (b.228);
\draw [-] (-.85,0.25) to node [above] {$a_3$} (a.139);
\draw [-] (-.85,-0.3) to node [below] {$a_4$} (a.228);

\node () at (5.5,1) {$=$};

	%RHS
	\draw [-] (c') to node [above] {$c$} (12,1);
\draw [-] (6.15,2.2) to node [above] {$a_1$} (b'.139);
\draw [-] (6.15,1.75) to node [below] {$a_2$} (b'.228);
\draw [-] (8.15,0.25) to node [above] {$a_3$} (a'.139);
\draw [-] (8.15,-0.3) to node [below] {$a_4$} (a'.228);
\end{tikzpicture}
\end{gathered}
\end{equation*}
\item (\ref{eq:ass-line}.b) correspond to the four equations 
\begin{equation*}%\tag{N}
\label{ax:fin-AU}
(f\circ_ig)\circ_{i-j+1}h=f\circ_i(g\circ_j h)
\end{equation*}
with $f,g$ binary, $h$ nullary and $1 \leq i,j \leq 2$.  For instance, if we set $i=2$ and $j=1$ it says that the following string diagram is well-defined:
\begin{center}
\begin{tikzpicture}[triangle/.style = {fill=yellow!50, regular polygon, regular polygon sides=3,rounded corners}]
%Multimaps
\path 
	(0,0.5) node [triangle,draw,shape border rotate=-90,inner sep=1pt] (b) {$h$} 
	(2,0) node [triangle,draw,shape border rotate=-90,inner sep=1pt,label=135:$a_1$,label=230:$a_2$] (a) {$g$}
	(4,0.5) node [triangle,draw,shape border rotate=-90,inner sep=1pt,label=135:$b_1$,label=230:$b_2$] (c) {$f$};

%Nodes
\draw [-] (3.1,0.8) to (c.139);
\draw [-] (a) .. controls +(right:1cm) and +(left:1cm).. (c.220);
\draw [-] (b) .. controls +(right:1cm) and +(left:1cm).. (a.140);
\draw [-] (c) to node [above] {$c$} (5,0.5);
\draw [-] (1.2,-0.25) to (a.225);
\end{tikzpicture} .
\end{center}
\item (\ref{eq:ass-not-line}.b) is the equation
\begin{equation*}%\tag{NR}
\label{ax:fin-R}
\begin{gathered}
\begin{tikzpicture}[triangle/.style = {fill=yellow!50, regular polygon, regular polygon sides=3,rounded corners}]
%Multimaps
	%LHS
\path (0,0.5) node [triangle,draw,shape border rotate=-90,inner sep=1pt] (a) {$h$}
	(2,2) node [triangle,draw,shape border rotate=-90,inner sep=1pt] (b) {$g$} 
	(4,1) node [triangle,draw,shape border rotate=-90, inner sep=1pt, label=135:$b_1$,label=230:$b_2$] (c) {$f$} 
	%RHS
	(9,0.5) node [triangle,draw,shape border rotate=-90,inner sep=1pt] (a') {$h$}
	(7,2) node [triangle,draw,shape border rotate=-90,inner sep=1pt] (b') {$g$} 
	(11,1) node [triangle,draw,shape border rotate=-90, inner sep=1pt, label=135:$b_1$,label=230:$b_2$] (c') {$f$};
	
%Connecting
	%LHS
\draw [-] (a) .. controls +(right:2cm) and +(left:1cm).. (c.220);
\draw [-] (b) .. controls +(right:1cm) and +(left:1cm).. (c.140);
	%RHS
\draw [-] (a') .. controls +(right:1cm) and +(left:1cm).. (c'.220);
\draw [-] (b') .. controls +(right:2cm) and +(left:1cm).. (c'.140);
	
%Nodes
	%LHS
\draw [-] (c) to node [above] {$c$} (5,1);
\draw [-] (1.15,2.2) to node [above] {$a_1$} (b.139);
\draw [-] (1.15,1.75) to node [below] {$a_2$} (b.228);

\node () at (5.5,1) {$=$};

	%RHS
	\draw [-] (c') to node [above] {$c$} (12,1);
\draw [-] (6.15,2.2) to node [above] {$a_1$} (b'.139);
\draw [-] (6.15,1.75) to node [below] {$a_2$} (b'.228);
\end{tikzpicture}
\end{gathered}
\end{equation*}

\item (\ref{eq:ass-not-line}.c) is the equation
\begin{equation*}%\tag{NL}
\label{ax:fin-L}
\begin{gathered}
\begin{tikzpicture}[triangle/.style = {fill=yellow!50, regular polygon, regular polygon sides=3,rounded corners}]
%Multimaps
	%LHS
\path (0,0) node [triangle,draw,shape border rotate=-90,inner sep=1pt] (a) {$h$}
	(2,1.5) node [triangle,draw,shape border rotate=-90,inner sep=1pt] (b) {$g$} 
	(4,1) node [triangle,draw,shape border rotate=-90, inner sep=1pt, label=135:$b_1$,label=230:$b_2$] (c) {$f$} 
	%RHS
	(9,0) node [triangle,draw,shape border rotate=-90,inner sep=1pt] (a') {$h$}
	(7,1.5) node [triangle,draw,shape border rotate=-90,inner sep=1pt] (b') {$g$} 
	(11,1) node [triangle,draw,shape border rotate=-90,  inner sep=1pt, label=135:$b_1$,label=230:$b_2$] (c') {$f$};
	
%Connecting
	%LHS
\draw [-] (a) .. controls +(right:2cm) and +(left:1cm).. (c.220);
\draw [-] (b) .. controls +(right:1cm) and +(left:1cm).. (c.140);
	%RHS
\draw [-] (a') .. controls +(right:1cm) and +(left:1cm).. (c'.220);
\draw [-] (b') .. controls +(right:2cm) and +(left:1cm).. (c'.140);
	
%Nodes
	%LHS
\draw [-] (c) to node [above] {$c$} (5,1);
\draw [-] (-.85,0.25) to node [above] {$a_3$} (a.139);
\draw [-] (-.85,-0.3) to node [below] {$a_4$} (a.228);

\node () at (5.5,1) {$=$};

	%RHS
	\draw [-] (c') to node [above] {$c$} (12,1);
\draw [-] (8.15,0.25) to node [above] {$a_3$} (a'.139);
\draw [-] (8.15,-0.3) to node [below] {$a_4$} (a'.228);
\end{tikzpicture}
\end{gathered}
\end{equation*}

\item (\ref{eq:ass-not-line}.d) is the equation 
\begin{equation*}%\tag{U}
\label{ax:fin-U}
\begin{gathered}
\begin{tikzpicture}[triangle/.style = {fill=yellow!50, regular polygon, regular polygon sides=3,rounded corners}]
%Multimaps
	%LHS
\path (0,0.5) node [triangle,draw,shape border rotate=-90,inner sep=1pt] (a) {$h$}
	(2,1.5) node [triangle,draw,shape border rotate=-90,inner sep=1pt] (b) {$g$} 
	(4,1) node [triangle,draw,shape border rotate=-90,inner sep=1pt,label=135:$b_1$,label=230:$b_2$] (c) {$f$} 
	%RHS
	(9,0.5) node [triangle,draw,shape border rotate=-90,inner sep=1pt] (a') {$h$}
	(7,1.5) node [triangle,draw,shape border rotate=-90,inner sep=1pt] (b') {$g$} 
	(11,1) node [triangle,draw,shape border rotate=-90,inner sep=1pt,label=135:$b_1$,label=230:$b_2$] (c') {$f$};
	
%Connecting
	%LHS
\draw [-] (a) .. controls +(right:2cm) and +(left:1cm).. (c.220);
\draw [-] (b) .. controls +(right:1cm) and +(left:1cm).. (c.140);
	%RHS
\draw [-] (a') .. controls +(right:1cm) and +(left:1cm).. (c'.220);
\draw [-] (b') .. controls +(right:2cm) and +(left:1cm).. (c'.140);
	
%Nodes
	%LHS
\draw [-] (c) to node [above] {$c$} (5,1);

\node () at (5.5,1) {$=$};

	%RHS
	\draw [-] (c') to node [above] {$c$} (12,1);
\end{tikzpicture}
\end{gathered}
\end{equation*}

\end{itemize}

\begin{notation*}
From now on, when we will have a functor $F\colon\catc\to\catd$ and a list $\overline{a}$ of objects $a_1,\ldots,a_n$ in $\catc$, then we will write $F\overline{a}$ for the list of objects $Fa_1,\ldots,Fa_n$ in $\catd$. 
\end{notation*}

\begin{defn}
\label{def:map-fin-mult}
Let $\mlc$ and $\md$ two short multicategories. A \emph{morphism of short multicategories} is a functor $F\colon\catc\to\catd$ together with natural families
$$F_i\colon\mlc_i(\overline{a};b)\to\md_i(F\overline{a};Fb)$$
for any $0\leq i\leq 4$, with $i=1$ the functor action. These families must commute with all substitution operators $\circ_i$. 
\end{defn}

Short multicategories and their morphisms form a category $\fmulti$. Naturally, there is a forgetful functor $U\colon\multi\to\fmulti$ which takes a multicategory $\mlc$ and \emph{forgets} all the structure involving $n$-ary multimaps with $n\geq 4$. In particular, this functor forgets all substitutions which have as a result any $n$-ary multimaps with $n\geq 4$. For instance, it will not consider the substitution of ternary maps into ternary maps, since it gives out 5-ary multimaps.

\subsection{Representability for Short Multicategories}

We can define a $n$-ary map classifier for $\overline{a}$ also in $\fmulti$ as a representation of $\mlc_n(\overline{a};-)\colon\catc\to\Set$, i.e. a multimap 
$$\theta_{\overline{a}}\colon a_1,\ldots,a_n\to m\overline{a}$$
for which the induced function $-\circ_1\theta_{\overline{a}}\colon\mlc_1(m\overline{a};b)\to\mlc_n(\overline{a};b)$ is a bijection for all $b$.  

Then, a binary map classifier is said to be \emph{left universal} if, moreover, the induced function 
$$-\circ_1 \theta_{a,b}\colon\mlc_n(\,m(a,b),\overline{x};d) \to \mlc_{n+1}(a,b,\overline{x};d)$$
is a bijection for $n=2,3$ and $\overline{x}$ a tuple of the appropriate length.  Similarly a nullary map classifier $u \in \mlc_0(\diamond;i)$ is said to be left universal if, moreover, the function
 $$-\circ_1 u\colon\mlc_{1+n}(i,\overline{x};d) \to \mlc_n(\overline{x};d)$$
 is a bijection for $n=1,2$ and $\overline{x}$ a tuple of the appropriate length. We remark that here we consider only $n=1,2$ and not $n=3$ because in the definition of short multicategory we have only substitution of nullary into binary and ternary.

We will denote a binary multimap classifier as below left and the nullary map classifier as below right. 
From now on we might use the notation $m(a,b)=ab$. 
\begin{displaymath}
\begin{tikzpicture}[triangle/.style = {fill=yellow!50, regular polygon, regular polygon sides=3,rounded corners}]
%Multimaps
\path (2,2) node [triangle,draw,shape border rotate=-90, inner sep=0pt] (m) {$\theta_{a,b}$};

%Nodes
\draw [-] (0.5,2.5) to node [above] {$a$} (m.130);
\draw [-] (0.5,1.5) to node [below] {$b$} (m.230);
\draw [-] (m) to node [above] {$ab$} (4,2);

%Multimaps
\path (7,2) node [triangle,draw,shape border rotate=-90, inner sep=1pt] (m) {$u$};

%Nodes
\draw [-] (m) to node [above] {$i$} (8.5,2);
\end{tikzpicture}
%\end{displaymath}
%and the nullary map classifier by 
%\hspace{1cm}
%\begin{displaymath}
%\begin{tikzpicture}[triangle/.style = {fill=yellow!50, regular polygon, regular polygon sides=3,rounded corners}]
%%Multimaps
%\path (2,2) node [triangle,draw,shape border rotate=-90, inner sep=1pt] (m) {$u$};
%
%%Nodes
%\draw [-] (m) to node [above] {$i$} (3.5,2);
%\end{tikzpicture}
\end{displaymath}

\begin{prop}\label{prop:universal}
Let $\mlc$ be a short multicategory with all binary map classifiers and nullary map classifiers. If, moreover, these classifiers are left universal, then the multimaps 

\begin{equation}\label{eq:univ}
\begin{gathered}
\begin{tikzpicture}[triangle/.style = {fill=yellow!50, regular polygon, regular polygon sides=3,rounded corners}]

\path 
	(7,1.35) node [triangle,draw,shape border rotate=-90,inner sep=0pt] (b') {$\theta_{a,b}$} 
	(9,1) node [triangle,draw,shape border rotate=-90,inner sep=-1.5pt,label=135:$ab$,label=230:$c$] (c') {$\theta_{ab, c}$};

\draw [-] (6.15,1.7) to node [above] {$a$} (b'.140);
\draw [-] (6.15,1) to node [below] {$b$} (b'.220);
\draw [-] (b') to (c'.140);
\draw [-] (7.9,0.65) to node [below] {$$} (c'.220);
\draw [-] (c') to node [above] {$(ab)c$} (11,1);
\end{tikzpicture}
\end{gathered}
\end{equation}

and

\begin{equation}
\label{eq:univ2}
\begin{gathered}
	\scalebox{1}{
\begin{tikzpicture}[triangle/.style = {fill=yellow!50, regular polygon, regular polygon sides=3,rounded corners}]

\path 
	(7,1.35) node [triangle,draw,shape border rotate=-90,inner sep=0pt] (b') {$\theta_{a,b}$} 
	(9,1) node [triangle,draw,shape border rotate=-90,inner sep=-1.5pt,label=135:$ab$,label=230:$c$] (c') {$\theta_{ab, c}$}
       	(11.7,0.65) node [triangle,draw,shape border rotate=-90,inner sep=-1.5pt,label=135:$(ab)c$,label=230:$d$] (d') {$\theta_{(ab)c, d}$};

\draw [-] (6.15,1.7) to node [above] {$a$} (b'.140);
\draw [-] (6.15,1) to node [below] {$b$} (b'.220);
\draw [-] (b') to (c'.140);
\draw [-] (7.9,0.65) to node [below] {$$} (c'.220);
\draw [-] (c') to node [above] {$$} (11,1);
\draw [-] (10.15,0.0) to node [below] {$$} (d'.223);
\draw [-] (d') to node [above] {$((ab)c)d$} (14.5,0.65);
\end{tikzpicture}}
\end{gathered}
\end{equation}

are $3$-ary and $4$-ary map classifiers and the following one a unary map classifier.

\begin{equation}\label{eq:univ2}
\begin{gathered}
\begin{tikzpicture}[triangle/.style = {fill=yellow!50, regular polygon, regular polygon sides=3,rounded corners}]
%Multimaps
	%LHS
\path
	(2,1.5) node [triangle,draw,shape border rotate=-90,inner sep=1pt] (b) {$u$} 
	(4,1) node [triangle,draw,shape border rotate=-90,label=135:$i$,label=230:$a$] (ab) {$\theta$};

%Connecting
	%LHS
\draw [-] (3.2,0.6) to (ab.227);
\draw [-] (b) .. controls +(right:1cm) and +(left:1cm).. (ab.140);
	%RHS
	
%Nodes
	%LHS
\draw [-] (ab) to node [above] {$ia$} (5.5,1);

	%RHS
\end{tikzpicture}
\end{gathered}
\end{equation}
%is a unary map classifier.

\end{prop}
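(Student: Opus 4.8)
The plan is to realise each of the three displayed induced functions as a composite of bijections, where each factor comes either from an ordinary binary/nullary classifier property or from left universality. Throughout I write $h\circ g:=\mlc_n(\overline{a};h)(g)$ for the post-composition of an $n$-ary multimap $g\colon\overline{a}\to b$ by a unary $h\colon b\to b'$; note that $h\circ_1 X=h\circ X$ whenever the first argument $h$ is unary, and that naturality of each substitution operation $\circ_1$ in its codomain variable gives $h\circ(g\circ_1 f)=(h\circ g)\circ_1 f$. With this, the three statements are proved in the same way.

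For the ternary composite $\theta_{ab,c}\circ_1\theta_{a,b}$ of \eqref{eq:univ}, take a unary $h\colon(ab)c\to d$; naturality of $\circ_1$ in the codomain gives $h\circ_1(\theta_{ab,c}\circ_1\theta_{a,b})=(h\circ\theta_{ab,c})\circ_1\theta_{a,b}$, so that the induced function $-\circ_1(\theta_{ab,c}\circ_1\theta_{a,b})\colon\mlc_1((ab)c;d)\to\mlc_3(a,b,c;d)$ is the composite
$$\mlc_1((ab)c;d)\xrightarrow{-\circ\theta_{ab,c}}\mlc_2(ab,c;d)\xrightarrow{-\circ_1\theta_{a,b}}\mlc_3(a,b,c;d).$$
Here the first arrow is a bijection because $\theta_{ab,c}$ is a binary map classifier, and the second is a bijection by left universality of $\theta_{a,b}$ (the instance $n=2$, $\overline{x}=c$); hence the composite is a bijection for every $d$. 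The $4$-ary composite $\theta_{(ab)c,d}\circ_1(\theta_{ab,c}\circ_1\theta_{a,b})$ is treated identically with one more factor: for a unary $h\colon((ab)c)d\to e$, naturality in the codomain followed by the associativity axiom (\ref{eq:ass-line}.a) rewrites $h\circ_1\big(\theta_{(ab)c,d}\circ_1(\theta_{ab,c}\circ_1\theta_{a,b})\big)$ as $\big((h\circ\theta_{(ab)c,d})\circ_1\theta_{ab,c}\big)\circ_1\theta_{a,b}$, exhibiting the induced function as
$$\mlc_1(((ab)c)d;e)\xrightarrow{-\circ\theta_{(ab)c,d}}\mlc_2((ab)c,d;e)\xrightarrow{-\circ_1\theta_{ab,c}}\mlc_3(ab,c,d;e)\xrightarrow{-\circ_1\theta_{a,b}}\mlc_4(a,b,c,d;e),$$
a binary classifier bijection followed by left universality of $\theta_{ab,c}$ ($n=2$, $\overline{x}=d$) and then of $\theta_{a,b}$ ($n=3$, $\overline{x}=c,d$).

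For the unary composite $\theta_{i,a}\circ_1 u$, recall that a unary map classifier for $a$ is precisely a unary map $a\to m(a)$ for which post-composition $\mlc_1(m(a);d)\to\mlc_1(a;d)$ is a bijection for all $d$. For a unary $h\colon ia\to d$, naturality in the codomain gives $h\circ_1(\theta_{i,a}\circ_1 u)=(h\circ\theta_{i,a})\circ_1 u$, so the induced function factors as
$$\mlc_1(ia;d)\xrightarrow{-\circ\theta_{i,a}}\mlc_2(i,a;d)\xrightarrow{-\circ_1 u}\mlc_1(a;d),$$
which is a bijection since $\theta_{i,a}$ is a binary classifier and $u$ is left universal (the instance $n=1$, $\overline{x}=a$). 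I expect the only delicate point to be the arity bookkeeping: one must check that every substitution used above (binary-into-binary, binary-into-ternary, ternary-into-binary, nullary-into-binary) and the single genuine appeal to associativity all lie within the operations and axioms that a short multicategory actually retains — concretely, that the associativity instance needed in the $4$-ary case is exactly (\ref{eq:ass-line}.a) — whereas the repeated peeling-off of the unary map $h$ is not an associativity axiom but merely naturality of $\circ_1$ in its codomain variable.
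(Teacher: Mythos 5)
Your proof is correct and follows essentially the same route as the paper: the paper likewise factors each induced function as the composite $-\circ\theta$ followed by the relevant $-\circ_1\theta$ (resp. $-\circ_1 u$) instances, each a bijection by the classifier property and left universality, so the composites are bijections. The only difference is that you make explicit the naturality-in-codomain and the single use of axiom (\ref{eq:ass-line}.a) identifying the composite of functions with the function induced by the composite multimap, which the paper leaves implicit.
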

\begin{proof}
Left universality implies that each component of the composite maps
%%Since $\theta_{ab,c}$ is a binary map classifier, the left morphism below
\begin{equation*}\label{eq:local1}
\xymatrix{
\mlc_1((ab)c;d) \ar[rr]^{-\circ\theta_{ab,c}} & & \mlc_2(ab,c;d) \ar[rr]^{-\circ_1\theta_{a,b}} & & \mlc_3(a,b,c;d)}
\end{equation*}
\begin{equation*}\label{eq:local2}
\xymatrix{
\mlc_1(((ab)c)d;e) \ar[rr]^{-\circ\theta_{(ab)c,d}}  & & \mlc_2((ab)c,d;e) \ar[rr]^{-\circ_1\theta_{ab,c}} & & \mlc_3(ab,c,d;e) \ar[rr]^{-\circ_1\theta_{a,b}} & & \mlc_4(a,b,c,d;e) \\
}
\end{equation*}
and
\begin{equation*}\label{eq:local3}
\xymatrix{
\mlc_1(ia;b) \ar[rr]^{-\circ\theta_{i,a}} & & \mlc_2(i,a;b) \ar[rr]^{-\circ_1u} & & \mlc_1(a,b)}
\end{equation*}
is a bijection; it follows that the composites are bijections, which says exactly that the three claimed multimaps are universal.
%are bijections and so the composites are too; this says precisely that 
%%is a bijection --- hence the composite is one if and only if its second component is.  Thus \eqref{eq:univ} is a ternary map classifier if and only if $\mlc_2(ab,c;d) \to \mlc_3(a,b,c;d)$ is a bijection for all $c,d$.  Assuming this to be the case, consider the composite
%\begin{equation}\label{eq:local2}
%\xymatrix{
%\mlc_1(((ab)c)d;e) \ar[r]  & \mlc_2((ab)c,d;e) \ar[r] & \mlc_3(ab,c,d;e) \ar[r] & \mlc_4(a,b,c,d;e) \\
%}
%\end{equation}
%are bijections and so the composites are too; this says precisely that $\eqref{eq:univ}$ and $\eqref{eq:univ2}$ are universal.  Conversely, if the maps \eqref{eq:univ} are universal, then the composite in \eqref{eq:local1} is a bijection; since the left component is a bijection by assumption, the right component $\mlc_2(ab,c;d) \to \mlc_3(a,b,c;d)$ is a bijection too.  If, furthermore, the maps \eqref{eq:univ2} are universal, then composite in \eqref{eq:local2} is a bijection; its left and central component are bijections, hence so is the right component.  Since the right components of both \eqref{eq:local1} and \eqref{eq:local2} are invertible, $\theta_{a,b}$ is left universal.
%%From the previous part, the leftmost composite of two is invertible.  The the composite of three is invertible --- --- that is, $\eqref{eq:univ2}$ is universal --- if and only if $ \mlc_3(ab,c,d;e) \to \mlc_4(a,b,c,d;e)$, which is to say that $\theta_{a,b}$ is left universal.
%\item 
\end{proof}

\begin{notation*}%\label{not:3-4-ary-class}
	Let $\mlc$ be a short multicategory with all left universal binary map classifiers. 
	From now on, we might use the notation
	\begin{center}
		$\theta_{a,b,c}:=\theta_{ab,c}\circ_1\theta_{a,b}$, see \eqref{eq:univ}, and $\theta_{a,b,c,d}:=\theta_{(ab)c,d}\circ_1\theta_{ab,c}\circ_1\theta_{a,b}$, see \eqref{eq:univ2},
	\end{center} 
	for the 3-ary and 4-ary map classifiers found in the proposition above.  
\end{notation*}

Now, following the style of Definition~\ref{def:lr-rep-multi}, we will define the notion of representability for short multicategories. We will denote with $\mid\overline{x}\mid$ the length of a list $\overline{x}$. 

\begin{defn}\label{def:lr-rep-short.multi}
Let $\mlc$ be a short multicategory. 
\begin{itemize}
\item $\mlc$ is said to be \textbf{left representable} if it admits left universal nullary and binary map classifiers. 
\item $\mlc$ is said to be \textbf{representable} if it admits nullary and binary map classifiers such that the induced maps are bijections
\begin{center}
$-\circ_ju\colon\mlc_n(\overline{x},i,\overline{y};z)\to\mlc_{n-1}(\overline{x},\overline{y};z)$ \hspace{0.5cm} for $1\leq n\leq 3$\\ \vspace{0.1cm}
$-\circ_j\theta_{a,b}\colon\mlc_n(\overline{x},ab,\overline{y};z)\to\mlc_{n+1}(\overline{x},a,b,\overline{y};z)$ \hspace{0.5cm} for $1\leq n\leq 3$ 
\end{center}
where $0\leq\mid\overline{x}\mid,\mid\overline{y}\mid\leq n-1$ and $j=\mid\overline{x}\mid+1$. 
\end{itemize}
\end{defn}

We will denote by $\fleftrep$ and $\fmulti_{rep}$ the full subcategories of $\fmulti$ with objects left representable/representable short multicategories. Naturally, the forgetful functor $U\colon\multi\to\fmulti$ restricts to forgetful functors
\begin{center}
$U_{lr}\colon\multilr\to\fleftrep$ \\
$U_{rep}\colon\multirep\to\fmulti_{rep}$.
\end{center} 

\begin{notation*}
Let $\mlc$ be a short multicategory with a left universal binary classifier. Then we will use $(-)'\colon\mlc_n(\overline{a};b)\to\mlc_{n-1}(a_1a_2,a_3,\ldots,a_n;b)$ for the inverse of $-\circ_1\theta_{a_1,a_2}$: in other words, for any $n$-multimap $f$, $f'$ is the unique $(n-1)$-multimap such that $f'\circ_1\theta=f$.
\end{notation*}  

%Our goal is to show that $\fleftrep$ is equivalent to the category $\skmon_{ln}$ of left normal skew monoidal categories and lax monoidal functors.

\begin{lemma}
\label{lemma:char-morph-left-repr}
Let $\mlc$ and $\md$ left representable short multicategories. A morphism $F\colon\mlc\to\md$ is uniquely specified by:
\begin{itemize}
\item A functor $F\colon\catc\to\catd$. 
\item Natural families $F_i\colon\mlc_i(\overline{a};b)\to\md_i(F\overline{a};Fb)$ for $i=0, 2$ commuting with the substitutions %where $F_1$ is the functor action,
\begin{equation}
\label{eq:lemma-morph-nullary}
\begin{gathered}
\begin{tikzpicture}[triangle/.style = {fill=yellow!50, regular polygon, regular polygon sides=3,rounded corners}]
%\node at (1,0.8) {$(a)$};
%Multimaps
	%LHS
\path
%(-0.5,1.5) node {\text{(a)}}
	(2,1.5) node [triangle,draw,shape border rotate=-90,inner sep=1pt] (b) {$u$} 
	(4,1) node [triangle,draw,shape border rotate=-90,label=135:$a$,label=230:$b$] (ab) {$f$};

%Connecting
	%LHS
\draw [-] (3.2,0.55) to (ab.225);
\draw [-] (b) .. controls +(right:1cm) and +(left:1cm).. (ab.140);
	%RHS
	
%Nodes
	%LHS
\draw [-] (ab) to node [above] {$c$} (5.5,1);

	%RHS
\end{tikzpicture}
\hspace{2cm}
\begin{tikzpicture}[triangle/.style = {fill=yellow!50, regular polygon, regular polygon sides=3,rounded corners}]
%\node at (1,0.75) {$(b)$};
%Multimaps
	%LHS
\path
%(-0.5,1.5) node {\text{(b)}}
	(2,.4) node [triangle,draw,shape border rotate=-90,inner sep=1pt] (b) {$v$} 
	(4,1) node [triangle,draw,shape border rotate=-90,label=135:$a$,label=230:$b$] (ab) {$f$};

%Connecting
	%LHS
\draw [-] (3.2,1.4) to (ab.140);
\draw [-] (b) .. controls +(right:1cm) and +(left:1cm).. (ab.227);
	%RHS
	
%Nodes
	%LHS
\draw [-] (ab) to node [above] {$c$} (5.5,1);

	%RHS
\end{tikzpicture}
\end{gathered}
\end{equation}
and such that if we define, for any ternary map $h\in\mlc_3(\overline{a};b)$, $F_3h:=F_2h'\circ_1F_2\theta$, then $F_3$ also commutes with %(where $h'$ is the corresponding map through left representability)
\begin{equation}
\label{eq:lemma-morph-bin}
\begin{gathered}
\begin{tikzpicture}[triangle/.style = {fill=yellow!50, regular polygon, regular polygon sides=3,rounded corners}]
%Multimaps
\path 
	(2,0) node [triangle,draw,shape border rotate=-90,inner sep=0pt,label=135:$a$,label=230:$b$] (a) {$f$}
	(4,0.5) node [triangle,draw,shape border rotate=-90,label=135:$x$,label=230:$y$] (c) {$g$};

%Nodes
\draw [-] (3.1,0.8) to (c.139);
\draw [-] (a) .. controls +(right:1cm) and +(left:1cm).. (c.220);
\draw [-] (1.2,.25) to (a.140);
\draw [-] (c) to node [above] {$c$} (5,0.5);
\draw [-] (1.2,-0.25) to (a.225);
\end{tikzpicture} .
\end{gathered}
\end{equation}
\end{itemize}
\end{lemma}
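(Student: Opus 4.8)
The plan is to prove the stated characterisation as a bijection: on one side morphisms of short multicategories $F\colon\mlc\to\md$, on the other triples consisting of a functor $F\colon\catc\to\catd$ together with natural families $F_0,F_2$ satisfying the listed conditions. Throughout I use that $\md$ is left representable too, so by Proposition~\ref{prop:universal} both $\mlc$ and $\md$ carry the induced ternary and $4$-ary classifiers $\theta_{a,b,c}$ and $\theta_{a,b,c,d}$; concretely, by left universality (Definition~\ref{def:lr-rep-short.multi}) substitution with the $\theta$'s exhibits every ternary map as $h'\circ_1\theta_{a_1,a_2}$ for a unique binary $h'$, and every $4$-ary map as $(k''\circ_1\theta_{a_1a_2,a_3})\circ_1\theta_{a_1,a_2}$ for a unique binary $k''$. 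I also use that having all binary map classifiers makes the tensor $m(a,b)=ab$ into a functor $\catc\times\catc\to\catc$ with the family $\theta_{a,b}$ natural, which is immediate from the definition together with profunctoriality of binary multimaps (Remark~\ref{rmk:no-need-for-unary-circ_i}).

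The easy direction is the passage from a morphism $F=(F,F_0,F_1,F_2,F_3,F_4)$ to the triple $(F,F_0,F_2)$. Naturality of $F_0,F_2$ and commutation with the substitutions in~\eqref{eq:lemma-morph-nullary} are part of being a morphism (Definition~\ref{def:map-fin-mult}), these being among the substitution operators of a short multicategory. Since $F$ commutes with $\circ_1\colon\mlc_2(\overline b;c)\times\mlc_2(\overline a;b_1)\to\mlc_3$, applying this to $h=h'\circ_1\theta_{a_1,a_2}$ gives $F_3 h=F_2 h'\circ_1 F_2\theta_{a_1,a_2}$, so the recipe in the statement reproduces $F_3$, and in particular $F_3$ commutes with~\eqref{eq:lemma-morph-bin}; the same computation gives $F_4 k=F_2 k''\circ_1 F_2\theta_{a_1a_2,a_3}\circ_1 F_2\theta_{a_1,a_2}$. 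This also shows injectivity of the correspondence: $F_1$ must be the functor action, and $F_3,F_4$ are then forced by these formulas.

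For the reverse passage, take a triple $(F,F_0,F_2)$ as in the statement, set $F_1$ to be the functor action, and define $F_3 h:=F_2 h'\circ_1 F_2\theta_{a_1,a_2}$ and $F_4 k:=F_2 k''\circ_1 F_2\theta_{a_1a_2,a_3}\circ_1 F_2\theta_{a_1,a_2}$ (well defined by the uniqueness of the decompositions above). These restrict to $(F,F_0,F_2)$, so it remains to check that the five families are natural in each variable and commute with every substitution operator of a short multicategory. The first observation is that $F_3$ automatically commutes with $\circ_1\colon\mlc_2\times\mlc_2\to\mlc_3$: if $h=h'\circ_1 g$ with $g=\hat g\circ\theta_{c_1,c_2}$, then $h=(h'\circ_1\hat g)\circ_1\theta_{c_1,c_2}$ is its distinguished decomposition, whence $F_3 h=F_2(h'\circ_1\hat g)\circ_1 F_2\theta_{c_1,c_2}=F_2 h'\circ_1 F_2 g$ by naturality of $F_2$; together with the hypothesis~\eqref{eq:lemma-morph-bin} this gives that $F_3$ commutes with \emph{both} binary-into-binary substitutions. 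Naturality of $F_3$ and $F_4$ follows similarly: for instance naturality of $F_3$ in its first argument reduces, via $(h\circ_1 p)'=h'\circ_1 m(p,1_{a_2})$ (a consequence of associativity and the definition of $m$ on morphisms), to naturality of $F_2$ and functoriality of $F$. Finally, commutation with the remaining substitutions — nullary into ternary, binary into ternary, ternary into binary — is obtained by rewriting: one uses the $\theta$-decompositions together with the associativity and commutativity laws~\eqref{eq:ass-line},~\eqref{eq:ass-not-line} (and the naturality of substitution and of the $\theta$'s) to express each such substitution applied to decomposed maps as a composite of binary-into-binary and nullary-into-binary substitutions and pre/post-composition with $\theta$'s, and then invokes the cases already settled and the hypotheses, checking that the corresponding identity holds in $\mlc$ and transporting it along $F_2$. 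For example $F_2(h\circ_3 v)=F_3 h\circ_3 F_0 v$ for $h$ ternary and $v$ nullary, because by the commutative law $h\circ_3 v=(h'\circ_2 v)\circ\theta_{a_1,a_2}$ with $h'\circ_2 v$ unary, so $F_2(h\circ_3 v)=F(h'\circ_2 v)\circ F_2\theta_{a_1,a_2}=(F_2 h'\circ_2 F_0 v)\circ F_2\theta_{a_1,a_2}$ by naturality of $F_2$ and~\eqref{eq:lemma-morph-nullary}, and the commutative law in $\md$ rewrites this as $F_3 h\circ_3 F_0 v$.

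I expect the main obstacle to be exactly this last round of verifications. It is tempting to fear that more hypotheses are needed: the $\theta$-decomposition of the output of a substitution is in general not the substitution applied to the decompositions of the inputs, and rebracketing it brings in the ``associator'' comparison maps of $\catc$ (the unique maps $(xa)b\to x(ab)$ and so on through which the $\theta$'s mediate). What makes the argument close up is that possessing~\eqref{eq:lemma-morph-bin} — the $\circ_2$ binary-into-binary case — in addition to the automatically available $\circ_1$ case is precisely enough to pin down the behaviour of $F$ on these comparison maps, while~\eqref{eq:lemma-morph-nullary} handles the unit comparisons; the rest is careful bookkeeping with the short-multicategory axioms.
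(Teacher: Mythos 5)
Your proposal is correct and follows essentially the same strategy as the paper: define $F_1$ as the functor action and $F_3$, $F_4$ via the $\theta$-decompositions forced by left universality, then verify commutation with each substitution by reducing to lower arity using naturality/dinaturality, the short-multicategory axioms (\ref{eq:ass-line},\ref{eq:ass-not-line}), and the hypotheses (\ref{eq:lemma-morph-nullary}) and (\ref{eq:lemma-morph-bin}); indeed your "automatic $\circ_1$" computation is exactly the one case the paper writes out, with the remaining verifications deferred (in the paper, to the thesis) as routine bookkeeping of the same kind.
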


\begin{proof}
\black The strategy to check that $F$ commutes with all substitutions is similar in every case, which is to use left representability and reduce each case to a lower dimensional one. We will show only the calculations for the case of the substitution of a binary map into the first variable of another binary map and redirect the interested reader to the proof of \cite[Lemma~4.2.5]{LobbiaThesis} for the remaining explicit calculations.  \black 
Let us now check that $F$ preserves substitution of binary maps into binarys maps.
Consider $g\colon x,c\to d$ and $f\colon a,b\to x$ binary maps. 
% with $g\circ_1f$ for $g\colon x,c\to d$ and $f\colon a,b\to x$ binary maps. 
By left representability, we know that $f=f'\circ\theta_{a,b}$. 
\begin{align*}
&F_3(g\circ_1f) 
& &\\
&= F_2(g\circ_1f')\circ_1F_2(\theta_{a,b})
& (\textrm{definition of}\,F_3) 
&
\\ 
&= [F_2(g)\circ_1F_1(f')]\circ_1F_2(\theta_{a,b})
& (\textrm{naturality of }\,F_2) 
&
\\ 
&= F_2(g)\circ_1[F_1(f')\circ_1F_2(\theta_{a,b})]
& (\textrm{dinaturality of sub. of bin. into bin. in}\,\md) 
&
\\ 
&= F_2(g)\circ_1F_2(f'\circ\theta_{a,b})
& (\textrm{naturality of }\,F_2) 
&
\\ 
&= F_2(g)\circ_1F_2(f)
& (\textrm{definition of}\,f').
& \qedhere
\end{align*} 
\end{proof}

\subsection{Closedness for Short Multicategories}

We can adapt Definition~\ref{def:multi-closed} to short multicategories with the following.

\begin{defn}
\label{def:fin-closed}
A short multicategory is said to be \textbf{closed} if for all $b,c$ there exists an object $[b,c]$ and binary map $e_{b,c}\colon [b,c],b \to c$ for which the induced function 
$$e_{b,c}\circ_1-\colon\mlc_n(\overline{x};[b,c]) \to \mlc_{n+1}(\overline{x},b;c)$$
is a bijection, for $n=0,1,2,3$. 
\end{defn}

We will denote with $\fmulti^{cl}_{lr}$ the full subcategory of $\fmulti$ with objects left representable closed short multicategories. Naturally, the forgetful functor $U\colon\multi\to\fmulti$ restricts to a forgetful functor
\begin{center}
$U_{lr}^{cl}\colon\multilrcl\to\fleftrep^{cl}$.
\end{center} The next proposition gives a characterisation of closed short multicategories which are also left representable.  
 
\begin{prop}\label{prop:left-iff-adj}
A closed short multicategory is left representable if and only if it has nullary map classifier and each $[b,-]$ has a left adjoint. 
\end{prop}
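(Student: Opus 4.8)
The plan is to prove the two implications separately: the forward direction is essentially a repackaging of the classifier bijections, while the backward one needs a ``peeling'' argument through the closedness isomorphisms.

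\emph{Left representable $\Rightarrow$ nullary classifier and left adjoints.} I would first record that closedness automatically makes each $[b,-]$ a functor $\catc\to\catc$, since for $g\colon c\to c'$ the arrow $[b,g]$ is the unique unary map with $e_{b,c'}\circ_1[b,g]=g\circ e_{b,c}$ (the bijection $e_{b,c'}\circ_1-$ on unary maps, i.e.\ the $n=1$ case of Definition~\ref{def:fin-closed}, forces it). If $\mlc$ is left representable it has in particular a nullary map classifier, so only the left adjoints need attention. For each $a,b$, left representability supplies a binary map classifier $\theta_{a,b}\colon a,b\to m(a,b)$, hence a bijection $-\circ_1\theta_{a,b}\colon\catc(m(a,b),c)\xrightarrow{\sim}\mlc_2(a,b;c)$ natural in $c$; composing with the closedness bijection $e_{b,c}\circ_1-\colon\mlc_1(a;[b,c])\xrightarrow{\sim}\mlc_2(a,b;c)$ gives $\catc(m(a,b),c)\cong\catc(a,[b,c])$ natural in $c$. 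Since $\mlc_2(-,b;-)$ is a functor and $\mlc_2(a,b;-)\cong\catc(m(a,b),-)$, the object $m(a,b)$ is functorial in $a$ and the bijection becomes natural in $a$ as well, which is precisely an adjunction $m(-,b)\dashv[b,-]$. (Only the plain binary classifier is used here, not its left universality.)

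\emph{Nullary classifier and left adjoints $\Rightarrow$ left representable.} Conversely, given $u\in\mlc_0(\diamond;i)$ and adjunctions $L_b\dashv[b,-]$, I would run the previous computation backwards: put $m(a,b):=L_b a$ and take $\theta_{a,b}\colon a,b\to m(a,b)$ to be the binary map corresponding to $1_{L_b a}$ under $\catc(L_b a,c)\cong\catc(a,[b,c])\cong\mlc_2(a,b;c)$ (adjunction, then closedness at $n=1$); by construction $-\circ_1\theta_{a,b}\colon\mlc_1(m(a,b);c)\to\mlc_2(a,b;c)$ is this bijection, so $\mlc$ has all binary and nullary map classifiers. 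What remains is their \emph{left universality}, and I would prove both statements by the same peeling. To see that $-\circ_1\theta_{a,b}\colon\mlc_n(m(a,b),\overline y;d)\to\mlc_{n+1}(a,b,\overline y;d)$ is a bijection for $n=2,3$ (so $|\overline y|=n-1$), apply the closedness isomorphisms $e_{y,d}\circ_1-$ in turn to peel off $y_{n-1},\dots,y_1$ and absorb them together with $d$ into an iterated internal hom $d':=[y_1,[y_2,\dots[y_{n-1},d]\cdots]]$; this carries the map in question onto $-\circ_1\theta_{a,b}\colon\mlc_1(m(a,b);d')\to\mlc_2(a,b;d')$, which is a bijection by the classifier property. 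Compatibility holds because substituting $\theta_{a,b}$ into the first position and peeling off trailing variables touch disjoint positions, so the relevant square commutes by the associativity axiom~\eqref{eq:ass-line} (the degenerate unary/nullary instances being covered by (di)naturality of the substitutions). Replacing $\theta_{a,b},m(a,b)$ by $u,i$ and using instead the bijection $-\circ_1 u\colon\mlc_1(i;d')\xrightarrow{\sim}\mlc_0(\diamond;d')$, the same argument shows $-\circ_1 u\colon\mlc_{1+n}(i,\overline x;d)\to\mlc_n(\overline x;d)$ is a bijection for $n=1,2$. Hence $\mlc$ is left representable.

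\emph{Main obstacle.} No single step is conceptually hard; the work is entirely in the last paragraph, namely verifying that the zig-zag of closedness bijections is compatible with substitution by the classifiers and --- crucially --- that every intermediate arity occurring in the peeling stays within $\{0,1,2,3,4\}$. The latter is exactly what the restricted list of substitution operations and the bound $n\le 3$ in the closedness axiom are designed to make work, and carrying out this bookkeeping carefully is the only real effort.
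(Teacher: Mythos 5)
Your proof is correct and follows essentially the same route as the paper: the forward direction composes the binary-classifier and closedness bijections to exhibit the adjunction, and the backward direction obtains $\theta_{a,b}$ from the adjoint via Yoneda and then establishes left universality (of both the binary and nullary classifiers) through exactly the commuting square with $e_{y,c}\circ_1-$ and $-\circ_1\theta_{a,b}$, using axiom~(\ref{eq:ass-line}); your ``peeling'' is just the paper's induction run in the opposite direction. No gaps worth noting.
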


\begin{proof}
If it is left representable and closed then the natural bijections
\begin{equation*}
\mlc_1(ab;c)\cong \mlc_2(a,b;c) \cong \mlc_1(a;[b,c])
\end{equation*}
show that $-b \dashv [b,-]$.  Conversely, if $[b,-]$ has left adjoint $-b$, then we have natural bijections 
\begin{equation*}
\catc(ab,c) \cong  \mlc_1(a;[b,c])  \cong \mlc_2(a,b;c)
\end{equation*}
and, by Yoneda, the composite is of the form $- \circ_1 \theta_{a,b}$ for a binary map classifier $\theta_{a,b}\colon$~$a,b \to$~$ab$.  It remains to show that this and the nullary map classifer are left universal. For the binary map classifier, we must show that
$- \circ \theta_{a,b}\colon\mlc_{n+1}(ab,\overline{x};c) \to \mlc_{n+2}(a,b, \overline{x};c)$ is a bijection for all $\overline{x}$ of length $1$ or $2$, the case $0$ being known.  For an inductive style argument, suppose it is true for $\overline{x}$ of length $i\leq 1$.  We should show that the bottom line below is a bijection
\begin{equation*}
% https://q.uiver.app/#q=WzAsNCxbMCwwLCJcXG1sY197aSsxfShhYixcXG92ZXJsaW5le3h9O1t5LGNdKSJdLFswLDEsIlxcbWxjX3tpKzJ9KGFiLFxcb3ZlcmxpbmV7eH0seTtjKSJdLFsyLDAsIlxcbWxjX3tpKzJ9KGEsYixcXG92ZXJsaW5le3h9O1t5LGNdKSJdLFsyLDEsIlxcbWxjX3tpKzN9KGEsYixcXG92ZXJsaW5le3h9LHk7Y10pIl0sWzAsMSwiZV97eSxjfSBcXGNpcmNfMSAtIiwyXSxbMCwyLCItIFxcY2lyY18xIFxcdGhldGFfe2EsYn0iXSxbMiwzLCJlX3t5LGN9IFxcY2lyY18xIC0iXSxbMSwzLCItIFxcY2lyY18xIFxcdGhldGFfe2EsYn0iLDJdXQ==
\begin{tikzcd}[ampersand replacement=\&]
	{\mlc_{i+1}(ab,\overline{x};[y,c])} \&\& {\mlc_{i+2}(a,b,\overline{x};[y,c])} \\
	{\mlc_{i+2}(ab,\overline{x},y;c)} \&\& {\mlc_{i+3}(a,b,\overline{x},y;c])}
	\arrow["{- \circ_1 \theta_{a,b}}", from=1-1, to=1-3]
	\arrow["{e_{y,c} \circ_1 -}"', from=1-1, to=2-1]
	\arrow["{e_{y,c} \circ_1 -}", from=1-3, to=2-3]
	\arrow["{- \circ_1 \theta_{a,b}}"', from=2-1, to=2-3]
\end{tikzcd}
\end{equation*}
but this follows from the fact that the square commutes, by associativity axiom (\ref{eq:ass-line}), and the other three morphisms are bijections, by assumption. The case of the nullary map classifier is similar in form. 
\end{proof}  

\section{Skew Notions}
\label{sec:skew-notions}

%We recall that representable multicategories are equivalent to monoidal categories \cite{Hermida2000Representable}. 

Now that we have introduced short multicategories, we shall review some important skew notions. In particular, we will recall the definitions of \emph{skew monoidal category} \cite{Szlachanyi-skew} and \emph{skew multicategory} \cite{LackBourke:skew}. The first concept will be useful in Section~\ref{sec:short-vs-mult} when we will consider various correspondences between different flavours of monoidal category and multicategory. Then, in Section~\ref{sec:short-skew-mult} we will generalise the results of Section~\ref{sec:short-vs-mult} to skew multicategories. 

\subsection{Skew Monoidal Categories}
\label{sec:skew-mon}

We start reviewing the definition of skew monoidal categories and morphisms between them. A \textbf{(left) skew monoidal category} $(\catc,\otimes,i,\alpha,\lambda,\rho)$  \cite{Szlachanyi-skew} is a category $\catc$ together with a functor\footnote{For either $f=1_a$ or $g=1_b$ we will write $f\cdot 1_b=:f\cdot b$ and $1_a\cdot g=:a\cdot g$. }
\begin{align*}
\otimes\colon\catc \times \catc &\to \catc \\
(a,b) &\mapsto ab, \\
(f,g) &\mapsto fg=f\cdot g,
\end{align*}
a unit object $i \in \catc$, and natural families $\alpha_{a,b,c}\colon(ab) c \to a(bc)$, $\lambda_{a}\colon i a \to a$ and $\rho_{a}\colon a \to ai$ satisfying five axioms which are neatly labelled by the five words
\begin{center}
$abcd$\\
$iab$ $aib$ $abi$ \\
$ii$
\end{center}
of which the first refers to MacLane's pentagon axiom. More precisely, the axioms are the following 
\begin{equation}\label{ax:mon-pent}
\begin{tikzcd}
	&& {(ab)(cd)} \\
	{((ab)c)d} &&&& {a(b(cd))} \\
	& {(a(bc))d} && {a((bc)d)}
	\arrow["{\alpha_{a,b,c}d}"', from=2-1, to=3-2]
	\arrow["{\alpha_{a,bc,d}}"', from=3-2, to=3-4]
	\arrow["{a\alpha_{b,c,d}}"', from=3-4, to=2-5]
	\arrow["{\alpha_{ab,c,d}}", from=2-1, to=1-3]
	\arrow["{\alpha_{a,b,cd}}", from=1-3, to=2-5]
\end{tikzcd}
\end{equation}
\begin{minipage}{0.4\textwidth}
\begin{equation}
\begin{tikzcd}
	{(ia)b} & {i(ab)} \\
	& ab
	\arrow["{\alpha_{i,a,b}}", from=1-1, to=1-2]
	\arrow["{\lambda_{ab}}", from=1-2, to=2-2]
	\arrow["{\lambda_ab}"', from=1-1, to=2-2]
\end{tikzcd} \label{ax:mon-lambda}
\end{equation} 
    \end{minipage}%
    \begin{minipage}{0.2\textwidth}
    \hspace{0.2cm}
    \end{minipage}
    \begin{minipage}{0.4\textwidth}
\begin{equation}
\begin{tikzcd}
	ab & {(ab)i} \\
	& {a(bi)}
	\arrow["{\rho_{ab}}", from=1-1, to=1-2]
	\arrow["{\alpha_{a,b,i}}", from=1-2, to=2-2]
	\arrow["{a\rho_b}"', from=1-1, to=2-2]
\end{tikzcd} \label{ax:mon-rho}
\end{equation}
    \end{minipage}
\begin{minipage}{0.5\textwidth}
\begin{equation}
\label{ax:mon-ab}
\begin{tikzcd}
	ab & {(ai)b} & {a(ib)} \\
	&& ab
	\arrow["{\rho_ab}", from=1-1, to=1-2]
	\arrow["{\alpha_{a,i,b}}", from=1-2, to=1-3]
	\arrow["{a\lambda_b}", from=1-3, to=2-3]
	\arrow["{1_{ab}}"', from=1-1, to=2-3]
\end{tikzcd}
\end{equation} 
    \end{minipage}%
    \begin{minipage}{0.1\textwidth}
    \hspace{0.2cm}
    \end{minipage}
    \begin{minipage}{0.4\textwidth}
\begin{equation}
\label{ax:mon-ii}
\begin{tikzcd}
	i & ii \\
	& i. & {}
	\arrow["{\rho_i}", from=1-1, to=1-2]
	\arrow["{\lambda_i}", from=1-2, to=2-2]
	\arrow["{1_i}"', from=1-1, to=2-2]
\end{tikzcd}
\end{equation}
    \end{minipage}

\begin{defn}
Let $(\catc,\otimes,i,\alpha,\lambda,\rho)$ be a skew monoidal category. 
\begin{itemize}
\item $\catc$ is said to be \textbf{left normal} if $\lambda$ is invertible.

\item $\catc$ is said to be \textbf{(left) closed} if the endofunctor $-\otimes b$ has a right adjoint $[b,-]$ for any $b\in\catc$. We will sometimes refer to \emph{left closed} skew monoidal categories simply as \emph{closed} skew monoidal. 
\end{itemize}
\end{defn}

Let $(\catc,\otimes,i^C,\alpha^C,\lambda^C,\rho^C)$ and $(\catd,\otimes,i^D,\alpha^D,\lambda^D,\rho^D)$ be two skew monoidal categories. A \textbf{lax monoidal functor} $(F,f_0,f_2)$ \cite{Szlachanyi-skew} consists of a functor $F\colon\catc\to\catd$, a map
$$f_0\colon i^D\to Fi^C$$
and a family of maps
$$f_2\colon FaFb\to F(ab)$$
natural in $a$ and $b$ and satisfying the following axioms:
\begin{equation}
\label{eq:mon-moprh-alpha}
\begin{tikzcd}
	{(FaFb)Fc} && {F(ab)Fc} && {F(\,(ab)c\,)} \\
	{Fa(FbFc)} && {FaF(bc)} && {F(\,a(bc)\,)}
	\arrow["{f_2\cdot Fc}", from=1-1, to=1-3]
	\arrow["{f_2}", from=1-3, to=1-5]
	\arrow["\alpha^D"', from=1-1, to=2-1]
	\arrow["F\alpha^C", from=1-5, to=2-5]
	\arrow["{Fa\cdot f_2}"', from=2-1, to=2-3]
	\arrow["{f_2}"', from=2-3, to=2-5]
\end{tikzcd}
\end{equation}
\begin{minipage}{0.4\textwidth}
\begin{equation}
\label{eq:mon-morph-lambda}
\begin{tikzcd}
	iFa & Fa \\
	FiFa & {F(ia)}
	\arrow["\lambda^D", from=1-1, to=1-2]
	\arrow["{f_0Fa}"', from=1-1, to=2-1]
	\arrow["{f_2}"', from=2-1, to=2-2]
	\arrow["F\lambda^C"', from=2-2, to=1-2]
\end{tikzcd}
\end{equation} 
    \end{minipage}%
    \begin{minipage}{0.2\textwidth}
    \hspace{0.2cm}
    \end{minipage}
    \begin{minipage}{0.4\textwidth}
\begin{equation}
\label{eq:mon-morph-rho}
\begin{tikzcd}
	Fa && {F(ai)} \\
	{Fa.i} && FaFi.
	\arrow["F\rho^C", from=1-1, to=1-3]
	\arrow["\rho^D"', from=1-1, to=2-1]
	\arrow["{Fa\cdot f_0}"', from=2-1, to=2-3]
	\arrow["{f_2}"', from=2-3, to=1-3]
\end{tikzcd}
\end{equation}
    \end{minipage}

With an abuse of notation, we may write the associators, left/right unit maps as $\alpha,\lambda$ and $\rho$ both in $\catc$ and $\catd$, omitting the superscript. Skew monoidal categories and lax monoidal functors form a category $\skmon$\footnote{We notice that in \cite{LackBourke:skew} this category is denoted with $\skmon_l$ to underline that these are \emph{left} skew monoidal categories. In this paper, we drop the subscript $l$ since we only consider left skew monoidal categories and we rather use that space to specify other characteristics.}. We will denote with $\skmon_{ln}$, $\skmon^{cl}$ and $\skmon_{ln}^{cl}$ the full subcategories with objects left normal/closed/left normal and closed skew monoidal categories. 

\black There is a notion of \textbf{braiding} on a skew monoidal category \cite[Definition~2.2]{BouLack:skew-braid}, which specialises to braiding on monoidal categories, although looking slightly different. It consists of natural isomorphisms $s_{x,a,b}\colon (xa)b\to(xb)a$ satisfying the four axioms below. 
\begin{equation}\label{ax:sk-mon-brd-s}
% https://q.uiver.app/?q=WzAsNixbMCwxLCIoKHhhKWIpYyJdLFsxLDAsIigoeGEpYyliIl0sWzIsMCwiKCh4YylhKWIiXSxbMywxLCIoKHhjKWIpYSJdLFsxLDIsIigoeGIpYSljIl0sWzIsMiwiKCh4YiljKWEiXSxbMCwxLCJzIl0sWzEsMiwic1xcY2RvdCBiIl0sWzIsMywicyJdLFswLDQsInNcXGNkb3QgYyIsMl0sWzQsNSwicyIsMl0sWzUsMywic1xcY2RvdCBhIiwyXV0=
\begin{tikzcd}[ampersand replacement=\&]
	\& {((xa)c)b} \& {((xc)a)b} \\
	{((xa)b)c} \&\&\& {((xc)b)a} \\
	\& {((xb)a)c} \& {((xb)c)a}
	\arrow["s", from=2-1, to=1-2]
	\arrow["{s\cdot b}", from=1-2, to=1-3]
	\arrow["s", from=1-3, to=2-4]
	\arrow["{s\cdot c}"', from=2-1, to=3-2]
	\arrow["s"', from=3-2, to=3-3]
	\arrow["{s\cdot a}"', from=3-3, to=2-4]
\end{tikzcd}
\end{equation}
\begin{equation}\label{ax:sk-mon-brd-s-a-1}
% https://q.uiver.app/?q=WzAsNSxbMCwwLCIoKHhhKWIpYyJdLFsxLDAsIigoeGIpYSljIl0sWzIsMCwiKCh4YiljKWEiXSxbMiwxLCIoeChiYykpYSJdLFswLDEsIih4YSkoYmMpIl0sWzAsMSwic1xcY2RvdCBjIl0sWzEsMiwicyJdLFsyLDMsIlxcYWxwaGFcXGNkb3QgYSJdLFswLDQsIlxcYWxwaGEiLDJdLFs0LDMsInMiLDJdXQ==
\begin{tikzcd}[ampersand replacement=\&]
	{((xa)b)c} \& {((xb)a)c} \& {((xb)c)a} \\
	{(xa)(bc)} \&\& {(x(bc))a}
	\arrow["{s\cdot c}", from=1-1, to=1-2]
	\arrow["s", from=1-2, to=1-3]
	\arrow["{\alpha\cdot a}", from=1-3, to=2-3]
	\arrow["\alpha"', from=1-1, to=2-1]
	\arrow["s"', from=2-1, to=2-3]
\end{tikzcd}
\end{equation}
\begin{equation}\label{ax:sk-mon-brd-s-a-2}
% https://q.uiver.app/?q=WzAsNSxbMCwwLCIoKHhhKWIpYyJdLFsxLDAsIigoeGEpYyliIl0sWzIsMCwiKCh4YylhKWIiXSxbMiwxLCIoeGMpKGFiKSJdLFswLDEsIih4KGFiKSljIl0sWzAsMSwicyJdLFsxLDIsInNcXGNkb3QgYiJdLFsyLDMsIlxcYWxwaGEiXSxbMCw0LCJcXGFscGhhXFxjZG90IGMiLDJdLFs0LDMsInMiLDJdXQ==
\begin{tikzcd}[ampersand replacement=\&]
	{((xa)b)c} \& {((xa)c)b} \& {((xc)a)b} \\
	{(x(ab))c} \&\& {(xc)(ab)}
	\arrow["s", from=1-1, to=1-2]
	\arrow["{s\cdot b}", from=1-2, to=1-3]
	\arrow["\alpha", from=1-3, to=2-3]
	\arrow["{\alpha\cdot c}"', from=1-1, to=2-1]
	\arrow["s"', from=2-1, to=2-3]
\end{tikzcd}
\end{equation}
\begin{equation}\label{ax:sk-mon-brd-a-s}
% https://q.uiver.app/?q=WzAsNixbMCwwLCIoKHhhKWIpYyJdLFsxLDAsIih4KGFiKSljIl0sWzIsMCwieCgoYWIpYykiXSxbMiwxLCJ4KChhYyliKSJdLFswLDEsIigoeGEpYyliIl0sWzEsMSwiKHgoYWMpKWIiXSxbMCwxLCJcXGFscGhhXFxjZG90IGMiXSxbMSwyLCJcXGFscGhhIl0sWzIsMywieFxcY2RvdCAgcyJdLFswLDQsInMiLDJdLFs0LDUsIlxcYWxwaGFcXGNkb3QgYiIsMl0sWzUsMywiXFxhbHBoYSIsMl1d
\begin{tikzcd}[ampersand replacement=\&]
	{((xa)b)c} \& {(x(ab))c} \& {x((ab)c)} \\
	{((xa)c)b} \& {(x(ac))b} \& {x((ac)b)}
	\arrow["{\alpha\cdot c}", from=1-1, to=1-2]
	\arrow["\alpha", from=1-2, to=1-3]
	\arrow["{x\cdot  s}", from=1-3, to=2-3]
	\arrow["s"', from=1-1, to=2-1]
	\arrow["{\alpha\cdot b}"', from=2-1, to=2-2]
	\arrow["\alpha"', from=2-2, to=2-3]
\end{tikzcd}
\end{equation}

We say that a braiding is a symmetry if, moreover, $s_{x,b,a}=(s_{x,a,b})^{-1}$. 
%% https://q.uiver.app/?q=WzAsMyxbMCwxLCIoeGEpYiJdLFsxLDAsIih4YilhIl0sWzIsMSwiKHhhKWIiXSxbMCwxLCJzX3t4LGEsYn0iXSxbMSwyLCJzX3t4LGIsYX0iXSxbMCwyLCIiLDAseyJsZXZlbCI6Miwic3R5bGUiOnsiaGVhZCI6eyJuYW1lIjoibm9uZSJ9fX1dXQ==
%\[\begin{tikzcd}[ampersand replacement=\&]
%	\& {(xb)a} \\
%	{(xa)b} \&\& {(xa)b.}
%	\arrow["{s_{x,a,b}}", from=2-1, to=1-2]
%	\arrow["{s_{x,b,a}}", from=1-2, to=2-3]
%	\arrow[Rightarrow, no head, from=2-1, to=2-3]
%\end{tikzcd}\]
It can be shown that there is a category $\brdskmon$ with object braided skew monoidal categories and morphisms lax monoidal functors $(F,f_0,f_{a,b})\colon\cc\to\cd$ preserving the braiding, i.e. making the following diagram commutative. 
\begin{equation}\label{ax:brd-sk-mon-fct}
% https://q.uiver.app/?q=WzAsNixbMCwwLCIoRnhGYSlGYiJdLFsxLDAsIihGeEZiKUZhIl0sWzEsMSwiRih4YilGYSJdLFswLDEsIkYoeGEpRmIiXSxbMCwyLCJGKFxcLCh4YSliXFwsKSJdLFsxLDIsIkYoXFwsKHhiKWFcXCwpIl0sWzQsNSwiRnNeXFxjYyIsMl0sWzAsMSwic15cXGNkIl0sWzEsMiwiZl97eCxifVxcY2RvdCBGYSJdLFswLDMsImZfe3gsYX1cXGNkb3QgRmIiLDJdLFszLDQsImZfe3hhLGJ9IiwyXSxbMiw1LCJmX3t4YixhfSJdXQ==
\begin{tikzcd}[ampersand replacement=\&]
	{(FxFa)Fb} \& {(FxFb)Fa} \\
	{F(xa)Fb} \& {F(xb)Fa} \\
	{F(\,(xa)b\,)} \& {F(\,(xb)a\,)}
	\arrow["{Fs^\cc}"', from=3-1, to=3-2]
	\arrow["{s^\cd}", from=1-1, to=1-2]
	\arrow["{f_{x,b}\cdot Fa}", from=1-2, to=2-2]
	\arrow["{f_{x,a}\cdot Fb}"', from=1-1, to=2-1]
	\arrow["{f_{xa,b}}"', from=2-1, to=3-1]
	\arrow["{f_{xb,a}}", from=2-2, to=3-2]
\end{tikzcd}
\end{equation}
%Since if a braided skew monoidal category is left normal, then it is an actual monoidal category \cite[Proposition~2.12]{BouLack:skew-braid}, with the notation above we get the equivalence below. \red [Comment about morphisms?]\green
%$$\brdskmon_{ln}\simeq\brdmon$$
Finally, we write $\symskmon$ and $\symmon$ for the subcategories of $\brdskmon$ and $\brdmon$ consisting of symmetric (skew) monoidal categories. 
\black 

\subsection{Skew Multicategories}
\label{sec:skew-mult}

In this section we will recall the definition of skew multicategory and some other important notions, all of which can be found in \cite{LackBourke:skew,BouLack:skew-braid}. 

\begin{defn}{\cite[Definition~4.2]{LackBourke:skew}}
A skew multicategory consists of
\begin{itemize}
\item a collection of objects $\catc_0$;
%, a category $\catc$ together with:
\item for each $a\in\catc_0$ a set $\mlc^l_0(\diamond;a)$ of \emph{nullary maps};
\item for each $n>0$, each $a_1,\ldots,a_n\in\catc_0$ and each $b\in\catc_0$ a set $\mlc_n^t(\overline{a};b)$ of \emph{tight $n$-ary maps} natural in all components and such that, when $n=1$, then $\mlc_1^t(a;b)=\catc(a,b)$;
\item for each $n>0$, each $a_1,\ldots,a_n\in\catc_0$ and each $b\in\catc_0$ a set $\mlc_n^l(\overline{a};b)$ of \emph{loose $n$-ary maps} natural in all components;
\item for each $n>0$, each $a_1,\ldots,a_n\in\catc_0$ and each $b\in\catc_0$ a function
$$j_{\overline{a},b}\colon\mlc_n^t(\overline{a};b)\to\mlc_n^l(\overline{a};b).$$
\end{itemize}
On top of this there is further structure:
\begin{itemize}
\item substitutions similar to a multicategory giving us multimaps $g(f_1,\ldots,f_n)$, which are tight just when $g$ and $f_1$ are. 
More precisely, for $x,y \in \{t,l\}$, we define %$x \circ_i y$ as 
\[
x \circ_i y := 
\begin{cases}
	t,& \text{if } x=y=t \text{ and } i=1\\
	t, & \text{if } x=t \text{ and }i \neq 1\\
	l, & \text{otherwise}
\end{cases}
\]
and require substitutions of the form
$$-\circ_i-\colon \mlc^x_n(\overline{b};c) \times \mlc^y_m(\overline{a};b_i) \longrightarrow \mlc^{x \circ_i y}_{n+m-1}(b_{<i},a,b_{>i};c),$$
%where 
%substitution gives us multimaps $g(f_1,\ldots,f_n)$, which are tight just when $g$ and $f_1$ are; 
which, moreover, commute with the comparisons viewing tight multimaps as loose. %\red [More explicit]
%
%For $x,y \in \{t,l\}$, let us write
%\[
%x \circ_i y = 
%\begin{cases}
%	t,& \text{if } x=y=t \text{ and } i=1\\
%	t, & \text{if } x=t \text{ and }i \neq 1\\
%	l, & \text{otherwise}
%\end{cases}
%\]
%
%Then we require functions
%$$-\circ_i-\colon \mlc^x_n(\overline{b};c) \times \mlc^y_m(\overline{a};b_i) \longrightarrow \mlc^{x \circ_i y}_{n+m-1}(b_{<i},a,b_{>i};c)$$
% \black 
\end{itemize}
Finally the usual associativity and unit axioms must be satisfied. 

We call the category with objects $\catc_0$ and morphisms tight unary maps the underlying category of $\mlc$ and denote it with $\catc$.  
\end{defn}

\begin{rmk}
We can identify multicategories as skew multicategories in which all multimorphisms are tight (or loose), i.e. $j$ is the indentity.
\end{rmk}

Skew multicategories have a notion of morphism between them, which we call here \emph{skew multifunctor}. We recall that, given a functor $F\colon\catc\to\catd$, with $F\overline{a}$ we mean the list $Fa_1,\ldots,Fa_n$.

\begin{defn}\cite[Section~3~and~4]{LackBourke:skew}
Let $\mlc$ and $\md$ be skew multicategories.  A \textbf{skew multifunctor} is a functor $F\colon\catc\to\catd$ together with natural families
\begin{center}
$F^t_n\colon\mlc^t_n(\overline{a};b)\to\md^t_n(F\overline{a};Fb)$ \hspace{0.5cm} for \hspace{0.5cm} $1\geq n$ \\
$F^l_n\colon\mlc^l_n(\overline{a};b)\to\md^l_n(F\overline{a};Fb)$ \hspace{0.5cm} for \hspace{0.5cm} $0\geq n$
\end{center}
such that $F_1^t\equiv F$. These families must commute with all substitution operators and $j$. 
\end{defn}

Skew multicategories and skew multifunctors form a category $\smulti$. 

\subsubsection*{Left Representability and Closedness}

A skew multicategory $\mlc$ is \textbf{weakly representable} \cite[Section~4.4]{LackBourke:skew} if for each pair $x=t,l$ and $\overline{a}\in\catc^n$ there exists an object $m^x\overline{a}\in\catc$ and multimap
$$\theta^x_{\overline{a}}\in\mlc_n^x(\overline{a};m^x\overline{a})$$
with the property that the induced function 
$$-\circ_1\theta^x_{\overline{a}}\colon\mlc_1^t(m^x\overline{a};b)\to\mlc_n^x(\overline{a};b)$$
is a bijection for all $b\in\catc$. We call $\theta^t_{\overline{a}}$ a \textbf{tight $\mathbf{n}$-ary map classifier} and $\theta^l_{\overline{a}}$ a \textbf{loose $\mathbf{n}$-ary map classifier}. Moreover, we say that $\theta^x_{\overline{a}}$ is \textbf{left universal} if the induced function  
$$-\circ_1\theta^x_{\overline{a}}\colon\mlc_{1+r}^t(m^x\overline{a},\overline{x};b)\to\mlc_{n+r}^x(\overline{a},\overline{x};b)$$
is a bijection for each $r\geq 0$, $\overline{x}\in\catc^r$ and $b\in\catc$. 

\begin{defn}{\cite[Definition~4.5]{LackBourke:skew}}
A skew multicategory $\mlc$ is said to be \textbf{left representable} if it is weakly representable and all universal multimaps $\theta^x_{\overline{a}}$ are left universal. 
\end{defn}

We will denote with $\smulti_{lr}$ the full subcategory of $\smulti$ with objects left representable skew multicategories. 

%\blue
\begin{rmk}
	One might wonder why we consider morphisms in $\smulti_{lr}$ to be only skew multifunctors and not require them to preserve the left representability structure, i.e. the left universal binary and nullary map classifiers, $\theta_{a,b}$ and $u$ respectively. 
	The idea is that, under the equivalence with skew monoidal categories described in \cite[Theorem~6.1]{LackBourke:skew},  skew multifunctors preserving $\theta_{a,b}$ and $u$ correspond to \emph{strong} monoidal functors and not \emph{lax}. 
	More precisely, a skew multifunctor preserving $\theta_{a,b}$ corresponds to a lax monoidal functor with $f_2$ invertible and, similarly, a skew multifunctor preserving $u$ corresponds to a lax monoidal functor with $f_0$ invertible. 
	This can be seen using, for instance, Lemma~\ref{lemma:sk-bij}. 
\end{rmk}
\black

%\subsection*{Closedness for Skew Multicategories}

\begin{defn}{\cite[Definition~4.7]{LackBourke:skew}}\label{def:closed-skew-multi}
A skew multicategory $\mlc$ is said to be \textbf{closed} if for all $b,c\in\catc$ there exists an object $[b,c]$ and tight multimap $e_{b,c}\in\mlc_2^t([b,c],b;c)$ with the universal property that the induced function
$$e_{b,c}\circ_1-\colon\mlc_n^x(\overline{a};[b,c])\to\mlc_{n+1}^x(\overline{a},b;c)$$
is a bijection for all $a_1,\ldots,a_n\in\catc$ and $x=t,l$. 
\end{defn}

We will denote with $\smulti^{cl}_{lr}$ the full subcategory of $\smulti$ with objects left representable closed skew multicategories. 

We conclude this subsection explaining briefly how to construct an equivalence 
$$T^{cl}_{lr}\colon\multilrcl\to\skmon_{ln}^{cl}$$
 between left representable closed multicategories and left normal skew monoidal closed categories. Even though this equivalence is not explicitly presented in \cite{LackBourke:skew}, it follows directly from some of their results. In particular let us recall three.

 \begin{rmk}
 	%\blue
 	The various categories with objects skew monoidal categories or skew multicategories described in this section can actually be seen as 2-categories. 
 	%The various categories of skew monoidal categories and skew multicategories are actually part of 2-categories. 
 	Indeed, the theorems we are about to recall describe \emph{2-equivalences}. 
 	Since these 2-dimensional equivalences are strict, they also induce equivalences between the underlying categories. 
 	In this paper we will use only this part of these results, since our aim is to give different characterisations for (left representable/closed) skew multicategories. 
 	For this reason, we avoid giving precise definitions of the 2-cells in these 2-categories, and only briefly describe them in this remark. %for skew monoidal categories and skew multicategories
 	
 	The 2-cells in all of the 2-categories with objects skew monoidal categories are monoidal natural transformations, analogous to the ones for monoidal categories. 
 	On the other hand, a 2-cell between two skew multifunctors $F,G\colon\mlc\to\md$ will consists of a family, for any object $x\in\mc$, of unary maps $Fx\to Gx$ in $\md$ compatible with substitutions \cite[Section~3]{LackBourke:skew}. 
 \end{rmk}
 \black 

\begin{theorem}{\cite[Theorem~6.1]{LackBourke:skew}}
\label{thm:eq-skew-lr-multi}
There is a 2-equivalence between the 2-category $\skmon$ of skew monoidal categories and
the 2-category of left representable skew
multicategories.
\end{theorem}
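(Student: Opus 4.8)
The plan is to construct a 2-functor in each direction and exhibit natural isomorphisms witnessing a 2-equivalence, following the standard pattern that already underlies Hermida's correspondence between representable multicategories and monoidal categories. In the direction from skew monoidal categories to left representable skew multicategories, I would send a skew monoidal category $(\catc,\otimes,i,\alpha,\lambda,\rho)$ to the skew multicategory whose underlying category is $\catc$, whose tight $n$-ary maps are $\mlc_n^t(\overline{a};b):=\catc(((a_1 a_2)a_3)\cdots a_n, b)$ (with the canonical left-bracketing of the tensor), and whose loose $n$-ary maps for $n>0$ coincide with the tight ones, while the nullary maps are $\mlc_0^l(\diamond;a):=\catc(i,a)$. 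Substitution is defined using $\alpha$, $\lambda$ and $\rho$ to re-associate and insert units; the skew monoidal axioms \eqref{ax:mon-pent}--\eqref{ax:mon-ii} are exactly what is needed to verify the multicategory associativity and unit laws. The classifiers $\theta_{\overline{a}}$ are the identity maps on the iterated tensor, and the nullary classifier $u$ is $1_i$; left universality is automatic since substitution along an identity is an isomorphism. On morphisms, a lax monoidal functor $(F,f_0,f_2)$ induces a skew multifunctor by composing with $f_0$ and iterated $f_2$, and the coherence diagrams \eqref{eq:mon-moprh-alpha}--\eqref{eq:mon-morph-rho} guarantee compatibility with substitution. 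Monoidal natural transformations pass through verbatim, giving the action on 2-cells.

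In the reverse direction, from a left representable skew multicategory $\mlc$ I would define a skew monoidal structure on its underlying category $\catc$ by setting $a\otimes b:=m^t(a,b)$ using the tight binary classifier, $i:=$ the object representing the nullary classifier, and defining $\alpha$, $\lambda$, $\rho$ by transporting the universal multimaps through the bijections supplied by left universality. Concretely, $\alpha_{a,b,c}$ is the unique unary map classifying the two ways of building a ternary classifier out of binary ones, $\rho_a$ classifies $u$ substituted into the second slot of $\theta_{a,i}$, and $\lambda_a$ is the unary map obtained from the left universal property of $u$; left normality is not needed because we are over the general skew setting. The skew monoidal axioms are then forced by uniqueness of factorizations through universal multimaps, exactly as in \cite{Hermida2000Representable} and \cite{LackBourke:skew}. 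A skew multifunctor yields a lax monoidal functor with $f_0$ and $f_2$ the comparison maps obtained by applying the multifunctor to the classifiers and factoring; functoriality and the coherence axioms again follow from universality.

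Finally, I would check that the two 2-functors are mutually inverse up to 2-natural isomorphism. Starting from a skew monoidal category and going round, one recovers a skew monoidal structure whose tensor is $m^t(a,b)$ for the classifier $\theta_{a,b}=1_{ab}$, hence canonically isomorphic (in fact equal) to the original; the associators and unitors match on the nose by uniqueness. Starting from a left representable skew multicategory, the round trip reconstructs $\mlc_n^x(\overline{a};b)$ as a hom-set of iterated tensors, and the chain of bijections from left universality gives a natural isomorphism of skew multicategories which is moreover the identity on the underlying category; naturality in $\mlc$ is what upgrades this to a 2-natural isomorphism.

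The main obstacle I anticipate is bookkeeping the loose/tight distinction throughout, especially making sure that substitution lands in the correct variance ($x\circ_i y$) and that the nullary maps $\mlc_0^l(\diamond;a)=\catc(i,a)$ interact correctly with $\lambda$ and $\rho$ — this is precisely where the skew (as opposed to genuine monoidal) phenomena live, and where the non-invertibility of $\lambda$ and $\rho$ must be respected. A secondary but real difficulty is verifying that the skew monoidal pentagon and the three unit triangles are consequences of the multicategory associativity and unit axioms via uniqueness of universal factorizations, rather than assumed; this is a finite but delicate diagram chase. Since the statement merely asserts the existence of the 2-equivalence and attributes it to \cite{LackBourke:skew}, I would in practice cite their Theorem~6.1 directly rather than reproduce the full verification, noting only the shape of the correspondence on objects, morphisms and 2-cells as above.
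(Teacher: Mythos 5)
The paper does not prove this statement at all: it is recalled verbatim from \cite[Theorem~6.1]{LackBourke:skew}, so your closing remark that you would simply cite that result is exactly what the paper does, and on that level there is nothing to compare.

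However, the construction you sketch in the forward direction contains a genuine error, and it sits precisely at the point you yourself flag as the delicate one. You define the loose $n$-ary maps of the skew multicategory associated to $(\catc,\otimes,i,\alpha,\lambda,\rho)$ to \emph{coincide} with the tight ones for $n>0$, keeping only $\mlc^l_0(\diamond;a)=\catc(i,a)$. With that choice the comparison $j$ is the identity, and such skew multicategories correspond only to \emph{left normal} skew monoidal categories (this is the content of Theorem~\ref{thm:eq-lnskew-lrmulti}), not to general ones. Concretely, your substitution structure cannot even be defined: substituting a nullary map $v\colon i\to a_1$ into the first slot of a tight binary map $f\colon a_1a_2\to b$ must produce a \emph{loose} unary map, and there is no way to manufacture a morphism $a_2\to a_1a_2$ from $v$ in a skew monoidal category ($\lambda\colon ia_2\to a_2$ points the wrong way and $\rho$ inserts $i$ on the right). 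In \cite{LackBourke:skew} the loose $n$-ary maps are instead $\catc\bigl((\cdots(ia_1)a_2\cdots)a_n,\,b\bigr)$, with $j$ given by precomposition with $\lambda_{a_1}\otimes 1\otimes\cdots$, and then the above substitution is simply $f\circ(v\otimes a_2)$ precomposed appropriately. With your definition the round trip also fails: starting from a skew monoidal category with non-invertible $\lambda$, passing to your multicategory and back yields a left normal skew monoidal category, so the unit of the putative 2-equivalence is not an isomorphism. The rest of your outline (tensor as binary classifier, $\alpha,\lambda,\rho$ via uniqueness of factorisations through universal multimaps, lax monoidal functors from comparison maps) matches the standard Hermida/Lack--Bourke pattern, but the theorem as stated is strictly stronger than what your loose-equals-tight construction can deliver.
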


From this theorem they then deduce the following result. 

\begin{theorem}{\cite[Theorem~6.3]{LackBourke:skew}}
\label{thm:eq-lnskew-lrmulti}
There is a 2-equivalence between the 2-categories of left normal skew monoidal categories and of left representable multicategories.
\end{theorem}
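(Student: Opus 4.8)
The plan is to deduce this by restricting the 2-equivalence of Theorem~\ref{thm:eq-skew-lr-multi}. Write $T_{lr}\colon\skmon\to\smulti_{lr}$ for that 2-equivalence. Using the identification of multicategories with those skew multicategories whose comparison maps $j_{\overline a,b}$ are identities (equivalently, bijections), $\multilr$ sits as a full sub-2-category of $\smulti_{lr}$: a left representable skew multicategory in which $j$ is a bijection is exactly a left representable multicategory in the sense of Definition~\ref{def:lr-rep-multi}, since the tight and loose left universal classifiers then coincide and the skew axioms collapse to the classical ones. Likewise $\skmon_{ln}$ is a full sub-2-category of $\skmon$. Since a 2-equivalence restricts to a 2-equivalence between full sub-2-categories as soon as these correspond under it, the whole content reduces to one claim: $T_{lr}(\catc)$ is a multicategory (i.e. has all $j_{\overline a,b}$ bijective) if and only if $\catc$ is left normal.

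For the unary case I would unpack the construction underlying Theorem~\ref{thm:eq-skew-lr-multi}: there $\mlc^t_1(a;b)=\catc(a,b)$ while $\mlc^l_1(a;b)$ is naturally $\catc(ia,b)$, and under these identifications $j_{a,b}$ is precomposition with $\lambda_a\colon ia\to a$. By the Yoneda lemma, $j_{a,b}$ is a bijection for all $a,b$ precisely when every $\lambda_a$ is invertible, that is, precisely when $\catc$ is left normal.

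It then remains to promote this to all arities $n\geq 2$ (the comparison $j$ being defined only for $n\geq 1$, there is nothing to check in arity $0$). If $j$ is a bijection in arity $1$, I would show it is a bijection in every arity by the same reduction-to-lower-dimension argument used in the proofs of Lemma~\ref{lemma:char-morph-left-repr} and Proposition~\ref{prop:left-iff-adj}: left representability provides tight left universal multimaps, substitution with these is compatible with $j$, so the arity-$n$ component of $j$ is conjugate, via the bijections afforded by those classifiers, to the arity-$1$ component $j_{m\overline a,b}$ and hence is itself a bijection. Conversely, if $j$ is bijective in all arities then $T_{lr}(\catc)$ is, under the identification recalled above, a genuine multicategory, and one checks that its skew left-representability data (the tight and loose binary and nullary left universal classifiers) specialises to left representability in the sense of Definition~\ref{def:lr-rep-multi}.

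Finally I would assemble the pieces: $T_{lr}$ carries $\skmon_{ln}$ into $\multilr$; being essentially surjective onto $\smulti_{lr}$ and sending exactly the left normal objects to multicategories, it hits every left representable multicategory; and it is fully faithful on $1$- and $2$-cells, these properties passing unchanged to the full sub-2-categories. Hence $T_{lr}$ restricts to a 2-equivalence $\skmon_{ln}\simeq\multilr$. I expect the only real work to lie in the arity-by-arity bookkeeping of the displayed claim — matching the skew-multicategorical left-representability data with the classical one — rather than in anything conceptually subtle.
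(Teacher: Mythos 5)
Your high-level route is exactly the one the paper (following \cite{LackBourke:skew}) intends: the statement is recorded there as the restriction of the 2-equivalence of Theorem~\ref{thm:eq-skew-lr-multi}, with multicategories identified as the skew multicategories whose comparisons $j$ are bijective, and your unary analysis — that under the construction $j_{a,b}$ is precomposition with $\lambda_a\colon ia\to a$, hence bijective for all $b$ iff $\lambda_a$ is invertible, by Yoneda — is precisely the key point of that reduction.

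There is, however, a flaw in the step promoting this to higher arities. You claim that, via the bijections induced by the tight left universal classifiers, the arity-$n$ component of $j$ is conjugate to the arity-$1$ component $j_{m\overline{a},b}$. The relevant square (with horizontal maps $-\circ_1\theta^t_{\overline{a}}$ and vertical maps $j$) does commute, but its bottom side is $-\circ_1\theta^t_{\overline{a}}$ acting on \emph{loose} maps, and this is not a bijection in a general left representable skew multicategory: in the skew multicategory built from a skew monoidal $\catc$ it is precomposition with (instances of) $\alpha_{i,a_1,a_2}$, and indeed by axiom \eqref{ax:mon-lambda} one has $j_2 = (-\circ\alpha_{i,a_1,a_2})\circ j_1$, so $j_2$ and $j_1$ differ exactly by a map that need not be invertible. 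Thus bijectivity of $j_1$ plus left universality does not, by that square, yield bijectivity of $j_n$. The conclusion is still correct and the fix is small: under the classifier bijections, $j_n$ corresponds to precomposition with the canonical comparison $m^l(\overline{a})\to m^t(\overline{a})$ between the loose and tight classifiers, which the construction of Theorem~\ref{thm:eq-skew-lr-multi} identifies as $\lambda_{a_1}\cdot a_2\cdots a_n$ (left-bracketed); since $\otimes$ is a functor this is invertible whenever $\lambda$ is, so left normality forces every $j_n$ to be a bijection. Equivalently, just compute $j_n$ directly in $T_{lr}\catc$ as you did for $n=1$. With that replacement, the restriction argument (fullness, faithfulness and essential surjectivity passing to the full sub-2-categories) goes through as you describe.
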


%\red
In the same way, one can prove that restricting the 2-equivalence given in the following result one get the wanted equivalence $T^{cl}_{lr}\colon\multilrcl\to\skmon_{ln}^{cl}$. 
% In the same way, one can prove that the existence of the equivalence $T^{cl}_{lr}\colon\multilrcl\to\skmon_{ln}^{cl}$ follows from the following result. 
\black

\begin{theorem}{\cite[Theorem~6.4]{LackBourke:skew}}
The 2-equivalence of Theorem~\ref{thm:eq-lnskew-lrmulti} restricts to a 2-equivalence between the 2-category $\skmon_{ln}^{cl}$ of closed skew monoidal categories and the 2-category $\multilrcl$ of left representable closed skew multicategories.
\end{theorem}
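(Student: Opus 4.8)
The plan is to realise $T^{cl}_{lr}$ as a restriction of the 2-equivalence of Theorem~\ref{thm:eq-lnskew-lrmulti}, which we denote $T_{lr}\colon\multilr\to\skmon_{ln}$. Observe first that $\multilrcl\hookrightarrow\multilr$ and $\skmon_{ln}^{cl}\hookrightarrow\skmon_{ln}$ are \emph{full} sub-2-category inclusions: a morphism in $\multilrcl$ is just a left representable multifunctor between closed multicategories, and a morphism in $\skmon_{ln}^{cl}$ is just a lax monoidal functor between closed skew monoidal categories, so the hom-categories on the two sides are inherited unchanged, as are the 2-cells. Consequently, once we verify that $T_{lr}$ both \emph{preserves} closedness (sends a closed object to a closed object) and \emph{reflects} it (if $T_{lr}\mlc$ is closed then $\mlc$ is closed), the restriction $T^{cl}_{lr}$ is a well-defined 2-functor which is locally an equivalence (inherited from $T_{lr}$) and essentially surjective — for closed $\cc\in\skmon_{ln}^{cl}$, essential surjectivity of $T_{lr}$ gives $\mlc\in\multilr$ with $T_{lr}\mlc\simeq\cc$, whence $T_{lr}\mlc$ is closed (closedness being invariant under monoidal equivalence) and hence $\mlc$ is closed by reflection — so it is a 2-equivalence.

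For preservation, let $\mlc$ be left representable and closed. Then $T_{lr}\mlc$ has underlying category $\catc$ and tensor $a\otimes b=ab$ determined by the binary map classifier, so $\catc(ab,c)=\mlc_1(ab;c)\cong\mlc_2(a,b;c)$ naturally. Closedness of $\mlc$ supplies for each $b,c$ an object $[b,c]$ with $\catc(x,[b,c])=\mlc_1(x;[b,c])\cong\mlc_2(x,b;c)$ naturally in $x$ (the case of a single-element tuple in Definition~\ref{def:multi-closed}). Composing, $\catc(x\otimes b,c)\cong\catc(x,[b,c])$ naturally in $x$ and $c$, which says precisely $-\otimes b\dashv[b,-]$, so $T_{lr}\mlc\in\skmon_{ln}^{cl}$.

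For reflection, suppose $\cc=T_{lr}\mlc$ and $-\otimes b\dashv[b,-]$ with counit $\varepsilon^b_c\colon[b,c]\otimes b\to c$. Define $e_{b,c}\in\mlc_2([b,c],b;c)$ as the binary map corresponding to $\varepsilon^b_c$ under $\mlc_2([b,c],b;c)\cong\catc([b,c]\otimes b,c)$, and check the universal property of Definition~\ref{def:multi-closed}. This is the same inductive reduction as in Proposition~\ref{prop:left-iff-adj}, now carried out in the full multicategory: for $\overline{x}$ of length $n\geq 1$ take the left universal $n$-ary map classifier $\theta_{\overline{x}}\colon\overline{x}\to m\overline{x}$ and consider
\[
\begin{tikzcd}[ampersand replacement=\&]
	{\mlc_1(m\overline{x};[b,c])} \& {\mlc_2(m\overline{x},b;c)} \\
	{\mlc_n(\overline{x};[b,c])} \& {\mlc_{n+1}(\overline{x},b;c)}
	\arrow["{e_{b,c}\circ_1-}", from=1-1, to=1-2]
	\arrow["{-\circ\theta_{\overline{x}}}"', from=1-1, to=2-1]
	\arrow["{-\circ_1\theta_{\overline{x}}}", from=1-2, to=2-2]
	\arrow["{e_{b,c}\circ_1-}"', from=2-1, to=2-2]
\end{tikzcd}
\]
which commutes by the associativity axiom (case $i=j=1$). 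The vertical maps are bijections by left universality of $\theta_{\overline{x}}$; the top map is a bijection since, by the construction of $e_{b,c}$ together with naturality, it coincides with the composite $\catc(m\overline{x},[b,c])\cong\catc(m\overline{x}\otimes b,c)\cong\mlc_2(m\overline{x},b;c)$ of the adjunction bijection with the binary classifier bijection. Hence the bottom map $e_{b,c}\circ_1-\colon\mlc_n(\overline{x};[b,c])\to\mlc_{n+1}(\overline{x},b;c)$ is a bijection; the case $n=0$ is handled the same way using the nullary map classifier $u$ in place of $\theta_{\overline{x}}$. Thus $\mlc$ is closed.

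The main obstacle is the bookkeeping in the reflection step: one must confirm that the single map $e_{b,c}$ built from the counit induces the asserted bijection in \emph{every} arity simultaneously, i.e. that the top edge of the square really is precomposition-with-$e_{b,c}$. This is the Yoneda-style transport of $1_{[b,c]}\mapsto\varepsilon^b_c\mapsto e_{b,c}$ along the adjunction, exactly as in Proposition~\ref{prop:left-iff-adj}; one should also keep in mind that the identification of $\otimes$ with the binary-classifier tensor and of $T_{lr}$ with the identity on underlying categories is part of the content of Theorem~\ref{thm:eq-lnskew-lrmulti}. Everything concerning morphisms and 2-cells is automatic, both sub-2-categories being full, so no further verification is needed there.
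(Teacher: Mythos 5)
The paper does not actually prove this statement: it is recalled verbatim from \cite[Theorem~6.4]{LackBourke:skew}, with only the remark that the restriction argument is ``the same'' as for the other recalled equivalences. So there is no in-paper proof to compare line by line; what can be said is that your argument is precisely the strategy the paper deploys for its own analogous results, namely Proposition~\ref{prop:left-iff-adj} and Theorem~\ref{thm:closed-lr-multi}: fullness of both subcategories, preservation of closedness via $\catc(ab,c)\cong\mlc_2(a,b;c)\cong\catc(a,[b,c])$, and reflection by producing $e_{b,c}$ through Yoneda and propagating the bijection to all arities with a square whose verticals are $-\circ_1\theta_{\overline{x}}$ (or $-\circ_1 u$) and whose commutativity is the $i=j=1$ associativity law. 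All of that is correct, and in the unrestricted multicategory setting the induction over arities is even cleaner than in the short case, since left universal classifiers exist in every arity.

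The one step you should not wave through is the parenthetical ``closedness being invariant under monoidal equivalence'' in the essential-surjectivity argument. The equivalence $T_{lr}\mlc\simeq\cc$ you invoke lives in the 2-category $\skmon_{ln}$, whose 1-cells are \emph{lax} monoidal functors, and a lax monoidal equivalence does not obviously let you transport the adjunction $-\otimes b\dashv[b,-]$ across, because the comparison $F(a)\otimes F(b)\to F(a\otimes b)$ need not be invertible a priori. What rescues the claim is the doctrinal-adjunction fact that any equivalence internal to $\skmon$ automatically has strong (invertible) monoidal constraints, after which the transfer is the routine chain $\cc(a\otimes b,c)\cong\cd(Fa\otimes Fb,Fc)\cong\cc(a,G[Fb,Fc])$; alternatively you can sidestep the issue entirely by recalling that the pseudo-inverse of $T_{lr}$ is the explicit construction sending $\cc$ to the multicategory with multimaps $\cc((\cdots(a_1a_2)\cdots)a_n,b)$, which is visibly closed when $\cc$ is, and whose image is isomorphic (not merely equivalent) to $\cc$. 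Either repair is short, but as written that sentence is the only genuine gap in an otherwise complete and correctly organised proof.
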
 

\subsubsection*{Braidings and Symmetries}

\black For an ordinary multicategory $\mx$, a braiding on $\mx$ consists of an action of the braid groups $\brdgrp_n$ on $n$-ary multimaps compatible with substitution and identities. Similarly, a symmetry on $\mx$ involves an action of the symmetric group $\symgrp_n$ (again compatible with substitution and identities). 

Let us consider now a skew multicategory $\mlc$ and let $\brdgrp_n^1$ be the subgroup of $\brdgrp_n$ fixing the first variable. A \textbf{braiding} on $\mlc$ \cite[Section~5.3]{BouLack:skew-braid}
consists of, for any $r\in\brdgrp_n$ and $s\in\brdgrp^1_n$, actions
\begin{center}
$r^\ast\colon\mlc_n^l(a_1,\dots,a_n;b)\to\mlc_n^l(a_{r1},\dots,a_{rn};b)$ and \\
$s^\ast\colon\mlc_n^t(a_1,\dots,a_n;b)\to\mlc_n^t(a_1,a_{s2},\dots,a_{sn};b)$
\end{center}
compatible with substitution and $j_n$. A braiding is a symmetry if $r^\ast=s^\ast$ whenever $r$ and $s$ are sent to the same element of the symmetry group under the canonical map $\mid-\mid_n\colon\brdgrp_n\to\symgrp_n$. 

A skew multifunctor between braided skew multicategories is said to be \textbf{braided} if it respects all actions on $n$-ary multimaps, i.e. for any $r\in \brdgrp_n^x$ (where $\brdgrp_n^t=\brdgrp_n^1$ and $\brdgrp_n^l=\brdgrp_n$)
% https://q.uiver.app/?q=WzAsNCxbMCwwLCJcXG1jX25eeChhXzEsXFxjZG90cyxhX247YykiXSxbMSwwLCJcXG1jX25eeChhX3tyMX0sXFxjZG90cyxhX3tybn07YykiXSxbMCwxLCJcXG1kXzJebChGYV8xLFxcY2RvdHMsRmFfbjtGYykiXSxbMSwxLCJcXG1kX25eeChGYV97cjF9LFxcY2RvdHMsRmFfe3JufTtGYykiXSxbMCwxLCJeXFxtYyByXlxcYXN0Il0sWzEsMywiRl9uXngiXSxbMCwyLCJGX25eeCIsMl0sWzIsMywiXlxcbWMgcl5cXGFzdCIsMl1d
\[\begin{tikzcd}[ampersand replacement=\&]
	{\mlc_n^x(a_1,\cdots,a_n;c)} \& {\mlc_n^x(a_{r1},\cdots,a_{rn};c)} \\
	{\md_2^l(Fa_1,\cdots,Fa_n;Fc)} \& {\md_n^x(Fa_{r1},\cdots,Fa_{rn};Fc).}
	\arrow["{^\mlc r^\ast}", from=1-1, to=1-2]
	\arrow["{F_n^x}", from=1-2, to=2-2]
	\arrow["{F_n^x}"', from=1-1, to=2-1]
	\arrow["{^\md r^\ast}"', from=2-1, to=2-2]
\end{tikzcd}\]
We write $\brdsmulti$/$\symsmulti$ for the categories of braided/symmetric skew multicategories and braided skew multifunctors. Similarly, we will denote with $\brdsmulti_{lr}$ and $\symsmulti_{lr}$ the full subcategories of $\brdsmulti$ and $\symsmulti$ with objects left representable braided/symmetric skew multicategories. 
\black

\section{Short Multicategories vs Skew Monoidal Categories}
\label{sec:short-vs-mult}

In this section we will show that certain kinds of short multicategories are equivalent to certain kinds of multicategories. We will mostly consider kinds of representable multicategories because it will make proofs easier and many examples in the literature are of this kind. The strategy will be to use known equivalences between different flavours of monoidal category and multicategory \cite{Hermida2000Representable,LackBourke:skew}. For example, the left representable case gives us the following picture
\[\begin{tikzcd}
	{\multi_{lr}} \\
	& {\fmulti_{lr}} \\
	{\skmon_{ln}}.
	\arrow["T"', from=1-1, to=3-1]
	\arrow["U_{lr}", from=1-1, to=2-2]
	\arrow["K", dashed, from=2-2, to=3-1]
\end{tikzcd}\] 
We will start showing how to construct the functor $K$ and then prove it is an equivalence. The proof of the other cases will have the same structure. 

\subsection{The Left Representable and Representable Cases}
The first equivalence we will use is $T\colon\multilr\to\skmon_{ln}$ between left representable multicategories and left normal skew monoidal categories \cite[Theorem~6.3]{LackBourke:skew}.

\begin{notation*}
From now on, to increase readability of proofs, we will often mix algebraic parts and diagrams. For clarity, we shall explain what we mean with this. In a short multicategory any diagram has multiple interpretations, which are given by the order of the substitutions we apply. We presented some examples of this at the start of Section~\ref{sec:fin-mult}, when we explained how to interpret the associativity equations. For this reason, the formal proofs are always given by the algebraic expressions. However, the chain of equations can be quite long. We therefore add diagrams whenever the maps involved in the equations change and not only the bracketing. 
\end{notation*}

\begin{lemma}
\label{lemma:k-on-obj}
Given a left representable short multicategory $\mathbb C$ we can construct a left normal skew monoidal category $K \mathbb C$ in which:
\begin{itemize}
\item The tensor product $ab$ of two objects $a$ and $b$ is the binary map classifier;
\item The unit $i$ is the nullary map classifier;
\item Given $f\colon a \to b$ and $g\colon c \to d$ the tensor product $fg\colon ac \to bd$ is the unique morphism such that %$fg\circ\theta_{a,c}=(\theta_{b,d}\circ_2g)\circ_1f$ 
\begin{equation}
\label{eq:prod-maps}
\begin{gathered}
\begin{tikzpicture}[triangle/.style = {fill=yellow!50, regular polygon, regular polygon sides=3,rounded corners}]
%Multimaps
	%LHS
\path (1,1) node [triangle,draw,shape border rotate=-90,label=135:$a$,label=230:$c$,inner sep=0pt] (c) {$\theta_{a,c}$} 
	(3,1) node [triangle,draw,shape border rotate=-90,inner sep=0pt] (ci) {$fg$}
	%RHS
	(9,0.5) node [triangle,draw,shape border rotate=-90,inner sep=1.5pt] (a') {$g$}
	(7,1.5) node [triangle,draw,shape border rotate=-90,inner sep=1pt] (b') {$f$} 
	(11,1) node [triangle,draw,shape border rotate=-90,label=135:$b$,label=230:$d$,inner sep=0pt] (c') {$\theta_{b,d}$};
	
%Connecting
	%LHS
\draw [-] (0,.6) to (c.220);
\draw [-] (0,1.35) to (c.140);
\draw [-] (c) to node [above] {$ac$} (ci);
	%RHS
\draw[-] (8,0.5) to node [below] {$c$} (a');
\draw[-] (6,1.5) to node [above] {$a$} (b');
\draw [-] (a') .. controls +(right:1cm) and +(left:1cm).. (c'.220);
\draw [-] (b') .. controls +(right:2cm) and +(left:1cm).. (c'.140);
	
%Nodes
	%LHS
\draw [-] (ci) to node [above] {$bd$} (4.25,1);

\node () at (5.25,1) {$=$};

	%RHS
	\draw [-] (c') to node [above] {$bd$} (12.5,1);
\end{tikzpicture}
\end{gathered}
\end{equation}
We will denote with $f\cdot c:=f1_c$ and similarly $a\cdot g:=1_ag$. 
\item The associator $\alpha\colon (ab)c\to a(bc)$ is defined as the unique map such that 
\begin{equation}
\label{eq:univ-alpha}
\begin{gathered}
\begin{tikzpicture}[triangle/.style = {fill=yellow!50, regular polygon, regular polygon sides=3,rounded corners}]
%Multimaps
	%LHS
\path (0.25,0.5) node [triangle,draw,shape border rotate=-90,inner sep=0pt] (a) {$\theta_{b,c}$} 
	(2.5,1) node [triangle,draw,shape border rotate=-90,inner sep=-1.5pt,label=135:$a$,label=230:$bc$] (c) {$\theta_{a,bc}$} 
	%RHS
	(7,1.35) node [triangle,draw,shape border rotate=-90,inner sep=0pt] (b') {$\theta_{a,b}$} 
	(9,1) node [triangle,draw,shape border rotate=-90,inner sep=-1.5pt,label=135:$ab$,label=230:$c$] (c') {$\theta_{ab, c}$}
	(11.25,1) node [triangle,draw,shape border rotate=-90,inner sep=1pt] (f) {$\alpha$};
	
%Connecting
	%LHS
\draw [-] (a) to (c.230);
\draw [-] (1.25,1.45) to (c.135);
	%RHS
\draw [-] (7.9,0.65) to (c'.220);
\draw [-] (b') to (c'.140);
	
%Nodes
	%LHS
\draw [-] (c) to node [above] {$a(bc)$} (3.75,1);
\draw [-] (-.6,0.825) to node [above] {$b$} (a.140);
\draw [-] (-.6,0.175) to node [below] {$c$} (a.220);

\node () at (5,1) {$=$};

	%RHS
\draw [-] (c') to node [above] {$(ab)c$} (f);
\draw [-] (f) to node [above] {$a(bc)$} (13,1);
\draw [-] (6.15,1.7) to node [above] {$a$} (b'.140);
\draw [-] (6.15,1) to node [below] {$b$} (b'.220);
\end{tikzpicture}
\end{gathered}
\end{equation}

\item The left unit map $\lambda\colon ia\to a$ is defined as the unique map such that 
\begin{equation}
\label{eq:univ-lamda}
\begin{gathered}
\begin{tikzpicture}[triangle/.style = {fill=yellow!50, regular polygon, regular polygon sides=3,rounded corners}]
%Multimaps
	%LHS
\path (2.5,1) node [triangle,draw,shape border rotate=-90] (c) {$1_a$} 
	%RHS
	(6.5,1.35) node [triangle,draw,shape border rotate=-90,inner sep=1pt] (b') {$u$} 
	(8,1) node [triangle,draw,shape border rotate=-90,label=135:$i$,label=230:$a$,inner sep=0pt] (c') {$\theta_{i,a}$}
	(10,1) node [triangle,draw,shape border rotate=-90,inner sep=0pt] (c'') {$\lambda_a$};
	
%Nodes
	%LHS
\draw [-] (c) to node [above] {$a$} (4,1);
\draw [-] (1.5,1) to node [above] {$a$} (c);

	\node () at (5,1) {$=$};

	%RHS
\draw [-] (7,0.65) to (c'.223);
\draw [-] (b') to (c'.137);
\draw [-] (c') to node [above] {$ia$} (c'');
\draw [-] (c'') to node [above] {$a$} (11,1);
\end{tikzpicture}
\end{gathered}
\end{equation}
(which is invertible by left representability).

\item The right unit map $\rho\colon a\to ai$ is defined as
\begin{equation}
\label{eq:univ-rho}
\begin{gathered}
\begin{tikzpicture}[triangle/.style = {fill=yellow!50, regular polygon, regular polygon sides=3,rounded corners}]
%Multimaps
	%LHS
\path (0.5,0.55) node [triangle,draw,shape border rotate=-90,inner sep=1pt] (a) {$u$} 
	(2.5,1) node [triangle,draw,shape border rotate=-90,inner sep=0pt,label=135:$a$,label=230:$i$] (c) {$\theta_{a,i}$};

%Nodes
	%LHS
\draw [-] (a) to (c.230);
\draw [-] (1.2,1.4) to (c.135);
\draw [-] (c) to node [above] {$ai$} (4,1);

\end{tikzpicture}
\end{gathered}
\end{equation}

\end{itemize}
\end{lemma}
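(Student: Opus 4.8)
The plan is to lean on left representability throughout, via the \emph{classifier uniqueness principle}: by Proposition~\ref{prop:universal}, a unary map out of $ab$ (resp.\ $(ab)c$, resp.\ $((ab)c)d$, resp.\ $ia$) is determined by its composite with the binary classifier $\theta_{a,b}$ (resp.\ the ternary classifier $\theta_{a,b,c}$, resp.\ the $4$-ary classifier $\theta_{a,b,c,d}$, resp.\ the unary classifier $\ell_a:=\theta_{i,a}\circ_1 u$). First I would record that the listed data is well defined: the unit and the tensor of objects are the chosen classifiers; $\rho_a$ is given outright by \eqref{eq:univ-rho}; and each of $fg$, $\alpha_{a,b,c}$ and $\lambda_a$ is \emph{defined} as the unique unary map making the relevant triangle in \eqref{eq:prod-maps}, \eqref{eq:univ-alpha}, \eqref{eq:univ-lamda} commute, the right-hand multimaps there being legitimate precisely because the substitutions they use (unary-into-binary and binary-into-binary) are among those a short multicategory carries.

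Next I would check that $\otimes$ is a bifunctor and that $\alpha,\lambda,\rho$ are natural, each time by the uniqueness principle. For functoriality one precomposes $1_a1_b$ and $(f'\circ f)(g'\circ g)$ with $\theta_{a,b}$ and rewrites using \eqref{eq:prod-maps} and profunctoriality of the hom-functors (Remark~\ref{rmk:no-need-for-unary-circ_i}); for naturality of $\alpha$ and $\lambda$ one precomposes with $\theta_{a,b,c}$, resp.\ $\ell_a$, unfolds via \eqref{eq:univ-alpha}, resp.\ \eqref{eq:univ-lamda}, together with \eqref{eq:prod-maps}, and observes that both legs collapse to a single multimap; naturality of $\rho$ follows directly from \eqref{eq:univ-rho}, \eqref{eq:prod-maps} and naturality of the nullary substitution. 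Left normality is immediate: \eqref{eq:univ-lamda} reads $\lambda_a\circ\ell_a=1_a$, and since $\ell_a$ is a unary classifier the identity $(\ell_a\circ\lambda_a)\circ\ell_a=\ell_a\circ(\lambda_a\circ\ell_a)=\ell_a=1_{ia}\circ\ell_a$ forces $\ell_a\circ\lambda_a=1_{ia}$.

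Finally I would verify the five skew monoidal axioms \eqref{ax:mon-pent}--\eqref{ax:mon-ii}. The uniform recipe: precompose both legs of the axiom with the universal multimap that classifies its source --- the pentagon \eqref{ax:mon-pent} with the $4$-ary classifier $\theta_{a,b,c,d}$, the triangle \eqref{ax:mon-lambda} with the ternary classifier $\theta_{ia,b}\circ_1\theta_{i,a}$, the triangles \eqref{ax:mon-rho} and \eqref{ax:mon-ab} with the binary classifier $\theta_{a,b}$, and \eqref{ax:mon-ii} with the nullary classifier $u$ --- then rewrite every occurrence of $\alpha,\lambda,\rho,\otimes$ via \eqref{eq:prod-maps}, \eqref{eq:univ-alpha}, \eqref{eq:univ-lamda} and \eqref{eq:univ-rho} until both legs are iterated substitutions of $\theta_{a,b}$, $u$ and identities, and identify the two resulting multimaps using the associativity and commutativity equations \eqref{eq:ass-line} and \eqref{eq:ass-not-line} that a short multicategory provides (the instances catalogued in cases (a)--(d)), the identity relations, and profunctoriality; the uniqueness principle then delivers the axiom. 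The hard part will be the pentagon, the only axiom that genuinely needs a $4$-ary classifier: making the reduction precise means checking at each rewriting step that the instance of \eqref{eq:ass-line} invoked has arities the definition actually supplies --- which is exactly where the restriction to multimaps of dimension $\le 4$ is shown to suffice. The unit triangles are lower-dimensional but still require moving the nullary classifier past binary maps through the commutativity equations in cases (b)--(d).
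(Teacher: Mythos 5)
Your proposal is correct and follows essentially the same route as the paper: define the structure via the classifiers, then verify functoriality, naturality, left normality and the five skew monoidal axioms by precomposing each equation with the appropriate universal multimap (binary, ternary, $4$-ary or nullary classifier) and reducing both sides using the short multicategory's substitution equations~(\ref{eq:ass-line},\ref{eq:ass-not-line}) together with profunctoriality. The paper carries out only the left unit axiom~(\ref{ax:mon-lambda}) explicitly (precomposing with $\theta_{ia,b}\circ_1\theta_{i,a}$ and $u$, exactly as in your recipe) and defers the remaining axioms to the author's thesis, so your uniform classifier-by-classifier scheme is the intended argument.
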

\begin{proof}  
Functoriality of $\catc^{2} \to \catc :(a,b) \mapsto ab$ follows from the universal property of the binary map classifier and profunctoriality of $\mlc_{2}(-;-)$.  It remains to verify the five axioms for a skew monoidal category.
\black All of them follow by checking the equalities using left representability. We will show explicitly how the axioms (\ref{eq:ass-line}.b) and (\ref{eq:ass-not-line}.c) prove the left unit axiom (\ref{ax:mon-lambda}) and refer the interested reader to \cite[Lemma~4.4.1]{LobbiaThesis} for the remaining axioms. We recall axiom (\ref{ax:mon-lambda}) below,\black
%Similarly we use axioms (\ref{eq:ass-line}.b) and (\ref{eq:ass-not-line}.c) to prove the left unit axiom (\ref{ax:mon-lambda}), which we recall below, 
\[\begin{tikzcd}
	{(ia)b} & {i(ab)} \\
	& ab.
	\arrow["{\alpha_{i,a,b}}", from=1-1, to=1-2]
	\arrow["{\lambda_{ab}}", from=1-2, to=2-2]
	\arrow["{\lambda_ab}"', from=1-1, to=2-2]
\end{tikzcd}\] 
Using left representability it is enough to prove the equality precomposing with the universal nullary map $u$ and binary maps $\theta$. So, 
\begin{center}
\begin{tikzpicture}[triangle/.style = {fill=yellow!50, regular polygon, regular polygon sides=3,rounded corners}]
%Multimaps
\path
	(1,1.35) node [triangle,draw,shape border rotate=-90,inner sep=1pt] (b') {$u$} 
	(3,1) node [triangle,draw,shape border rotate=-90,inner sep=1pt,label=135:$i$,label=230:$a$] (c') {$\theta_{i,a}$}
	(5.5,0.5) node [triangle,draw,shape border rotate=-90,inner sep=1pt,label=135:$ia$,label=230:$b$] (d) {$\theta_{ia, b}$}
	(8.25,0.5) node [triangle,draw,shape border rotate=-90,inner sep=-2pt] (f) {$\lambda_a\cdot b$};
	
	%Nodes
\draw [-] (1.9,0.65) to (c'.220);
\draw [-] (b') to (c'.140);
\draw [-] (c') to (d.140);
\draw [-] (4.4,0) to (d.225);
\draw [-] (d) to node [above] {$(ia)b$} (f);
\draw [-] (f) to node [above] {$ab$} (10,0.5);
\end{tikzpicture}
\end{center}
\begin{align*}
&\lambda_a\cdot b\circ[\,(\,\theta_{ia,b}\circ_1\theta_{i,a}\,)\circ_1u\,]
&  
\\ 
&= [\,\lambda_a\cdot b\circ(\,\theta_{ia,b}\circ_1\theta_{i,a}\,)\,]\circ_1u
& (\textrm{nat. sub. null. into bin.}) 
\\ 
&= [\,(\,\lambda_a\cdot b\circ\theta_{ia,b}\,)\circ_1\theta_{i,a}\,]\circ_1u
& (\textrm{nat. sub. bin. into bin.}) 
\\ 
&= [\,(\,\theta_{a,b}\circ_1\lambda_a\,)\circ_1\theta_{i,a}\,]\circ_1u
& (\textrm{definition of}\,\lambda_a\cdot b) 
\end{align*}
\begin{center}
\begin{tikzpicture}[triangle/.style = {fill=yellow!50, regular polygon, regular polygon sides=3,rounded corners}]
%Multimaps
\path
	(1,1.35) node [triangle,draw,shape border rotate=-90,inner sep=1pt] (b') {$u$} 
	(3,1) node [triangle,draw,shape border rotate=-90,inner sep=1pt,label=135:$i$,label=230:$a$] (c') {$\theta_{i,a}$}
	(5.5,1) node [triangle,draw,shape border rotate=-90,inner sep=1pt] (f) {$\lambda_a$}
	(7.5,0.5) node [triangle,draw,shape border rotate=-90,inner sep=1pt,label=135:$a$,label=230:$b$] (d) {$\theta_{a, b}$};
	
	%Nodes
\draw [-] (1.9,0.65) to (c'.220);
\draw [-] (b') to (c'.140);
\draw [-] (c') to node [above] {$ia$} (f);
\draw [-] (6.25,0) to (d.225);
\draw [-] (f) to (d.140);
\draw [-] (d) to node [above] {$ab$} (9,0.5);
\end{tikzpicture}
\end{center}
\begin{align*}
&= [\,\theta_{a,b}\circ_1(\,\lambda_a\circ_1\theta_{i,a}\,)\,]\circ_1u
& (\textrm{dinaturality sub. bin. into bin.}) 
\\ 
&= \theta_{a,b}\circ_1[\,(\,\lambda_a\circ_1\theta_{i,a}\,)\circ_1u\,]
& (\textrm{by axiom}\,(\ref{eq:ass-line}.b)) 
\\ 
&= \theta_{a,b}\circ_11_a
& (\textrm{by definition of}\,\lambda) 
\\
&=\theta_{a,b}
& (\textrm{by Remark~\ref{rmk:no-need-for-unary-circ_i}}) 
\end{align*}
On the other hand, 
\begin{center}
\begin{tikzpicture}[triangle/.style = {fill=yellow!50, regular polygon, regular polygon sides=3,rounded corners}]
%Multimaps
\path
	(1,1.35) node [triangle,draw,shape border rotate=-90,inner sep=1pt] (b') {$u$} 
	(3,1) node [triangle,draw,shape border rotate=-90,inner sep=1pt,label=135:$i$,label=230:$a$] (c') {$\theta_{i,a}$}
	(5.5,0.5) node [triangle,draw,shape border rotate=-90,inner sep=1pt,label=135:$ia$,label=230:$b$] (d) {$\theta_{ia, b}$}
	(8.25,0.5) node [triangle,draw,shape border rotate=-90,inner sep=-2pt] (f) {$\alpha_{i,a,b}$}
	(11,0.5) node [triangle,draw,shape border rotate=-90,inner sep=0pt] (f') {$\lambda_{ab}$};
	
	%Nodes
\draw [-] (1.9,0.65) to (c'.220);
\draw [-] (b') to (c'.140);
\draw [-] (c') to (d.140);
\draw [-] (4.4,0) to (d.225);
\draw [-] (d) to node [above] {$(ia)b$} (f);
\draw [-] (f) to node [above] {$i(ab)$} (f');
\draw [-] (f') to node [above] {$ab$} (12,0.5);
\end{tikzpicture}
\end{center}
\begin{align*}
&[\;[\,(\lambda_{ab}\circ\alpha_{i,a,b})\circ\theta_{ia,b}\,]\circ_1\theta_{i,a}\;]\circ_1u
&  
\\
&=[\;[\,\lambda_{ab}\circ(\alpha_{i,a,b}\circ\theta_{ia,b})\,]\circ_1\theta_{i,a}\;]\circ_1u
& (\textrm{profunctoriality bin.}) 
\\
&=[\;\lambda_{ab}\circ[\,(\alpha_{i,a,b}\circ\theta_{ia,b})\circ_1\theta_{i,a}\,]\;]\circ_1u
& (\textrm{nat. sub. bin. into bin.}) 
\\
&=[\;\lambda_{ab}\circ(\,\theta_{i,ab}\circ_2\theta_{a,b}\,)\;]\circ_1u
& (\textrm{by definition of}\,\alpha) 
\end{align*}
\begin{center}
\begin{tikzpicture}[triangle/.style = {fill=yellow!50, regular polygon, regular polygon sides=3,rounded corners}]
%Multimaps
\path
	(1,1.25) node [triangle,draw,shape border rotate=-90,inner sep=1pt] (b') {$u$} 
	(3,0) node [triangle,draw,shape border rotate=-90,inner sep=1pt,label=135:$a$,label=230:$b$] (c') {$\theta_{a,b}$}
	(5.5,0.5) node [triangle,draw,shape border rotate=-90,inner sep=1pt,label=135:$i$,label=230:$ab$] (d) {$\theta_{i,ab}$}
	(8.25,0.5) node [triangle,draw,shape border rotate=-90,inner sep=0pt] (f') {$\lambda_{ab}$};
	
	%Nodes
\draw [-] (1.9,-0.35) to (c'.220);
\draw [-] (1.9,0.35) to (c'.140);
\draw [-] (c') to (d.225);
\draw [-] (b') .. controls +(right:1cm) and +(left:1cm).. (d.140);
\draw [-] (d) to node [above] {$i(ab)$} (f');
\draw [-] (f') to node [above] {$ab$} (10,0.5);
\end{tikzpicture}
\end{center}
\begin{align*}
&=\lambda_{ab}\circ[\;(\,\theta_{i,ab}\circ_2\theta_{a,b}\,)\circ_1u\;]
&  (\textrm{nat. sub. null. into bin.})
\\
&=\lambda_{ab}\circ[\;(\,\theta_{i,ab}\circ_1u\,)\circ\theta_{a,b}\;]=
&  (\textrm{by axiom}\,(\ref{eq:ass-not-line}.c))
\end{align*}
\begin{center}
\begin{tikzpicture}[triangle/.style = {fill=yellow!50, regular polygon, regular polygon sides=3,rounded corners}]
%Multimaps
\path
	(3,1.25) node [triangle,draw,shape border rotate=-90,inner sep=1pt] (b') {$u$} 
	(1,0) node [triangle,draw,shape border rotate=-90,inner sep=1pt,label=135:$a$,label=230:$b$] (c') {$\theta_{a,b}$}
	(5.5,0.5) node [triangle,draw,shape border rotate=-90,inner sep=1pt,label=135:$i$,label=230:$ab$] (d) {$\theta_{i,ab}$}
	(8.25,0.5) node [triangle,draw,shape border rotate=-90,inner sep=0pt] (f') {$\lambda_{ab}$};
	
	%Nodes
\draw [-] (0,-0.35) to (c'.220);
\draw [-] (0,0.35) to (c'.140);
\draw [-] (c') to (d.225);
\draw [-] (b') .. controls +(right:1cm) and +(left:1cm).. (d.140);
\draw [-] (d) to node [above] {$i(ab)$} (f');
\draw [-] (f') to node [above] {$ab$} (10,0.5);
\end{tikzpicture}
\end{center}
\begin{align*}
&=[\;\lambda_{ab}\circ(\,\theta_{i,ab}\circ_1u\,)\;]\circ\theta_{a,b}
&  (\textrm{profunctoriality bin.})
%FOR SKEW IS NAT OF sub OF TIGHT BIN INTO LOOSE UNARY
\\
&=1_{ab}\circ\theta_{a,b}%=\theta_{a,b}
&  (\textrm{definition of}\,\lambda)
\\
&=\theta_{a,b}
& (\textrm{by Remark~\ref{rmk:no-need-for-unary-circ_i}})  
\end{align*}
\end{proof}

Before defining the functor $K\colon\fleftrep \to \skmon_{ln}$ on morphisms, we prove the following easy lemma.

\begin{lemma}\label{lem:bij}
Consider $\mlc, \md \in \fleftrep$ and a functor $F\colon \catc \to \catd$.  There is a bijection between natural families
$$F_{\overline{a},b}\colon\mlc_i(\overline{a};b)\to\md_i(F\overline{a};Fb)$$
and natural families\footnote{See Appendix~\ref{app:nat-of-f_a} for the explicit formulation of this naturality condition.}
$$f_{\overline{a}}\colon m(F\overline{a})\to F(m\overline{a})$$ 
where $m\overline{a}$ and $m(F\overline{a})$ are the $n$-ary map classifiers of the appropriate arity.
\end{lemma}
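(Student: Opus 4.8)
The plan is to recognise the statement as an instance of the Yoneda lemma, once left representability is used to rewrite the multimap sets as hom-sets of the underlying categories. Fix an arity $i$ for which $\mlc$ and $\md$ possess $i$-ary map classifiers (by hypothesis $i=0,1,2$, and $i=3,4$ as well by Proposition~\ref{prop:universal}; the cases $i=0,2$ are the ones needed afterwards) and a tuple $\overline{a}$ of that length. Left representability provides classifiers $\theta_{\overline{a}}\colon\overline{a}\to m\overline{a}$ in $\mlc$ and $\theta_{F\overline{a}}\colon F\overline{a}\to m(F\overline{a})$ in $\md$, and by definition these yield bijections $\catc(m\overline{a},b)\cong\mlc_i(\overline{a};b)$ and $\catd(m(F\overline{a}),Fb)\cong\md_i(F\overline{a};Fb)$, via $-\circ_1\theta_{\overline{a}}$ and $-\circ_1\theta_{F\overline{a}}$ respectively, each natural in $b$. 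Composing these, a family $\{F_{\overline{a},b}\}_b$ natural in $b$ is exactly the same datum as a natural transformation $\catc(m\overline{a},-)\Rightarrow\catd(m(F\overline{a}),F-)$ between functors $\catc\to\Set$; by the Yoneda lemma these correspond bijectively to elements of $\catd(m(F\overline{a}),F(m\overline{a}))$, that is, to morphisms $f_{\overline{a}}\colon m(F\overline{a})\to F(m\overline{a})$.

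Concretely, the two directions of the correspondence read as follows. From $\{F_{\overline{a},b}\}$ one takes $f_{\overline{a}}$ to be the unique unary map with $f_{\overline{a}}\circ_1\theta_{F\overline{a}}=F_{\overline{a},m\overline{a}}(\theta_{\overline{a}})$, using that $-\circ_1\theta_{F\overline{a}}$ is a bijection and that $1_{m\overline{a}}\circ_1\theta_{\overline{a}}=\theta_{\overline{a}}$ (Remark~\ref{rmk:no-need-for-unary-circ_i}), so that $f_{\overline{a}}$ is precisely the Yoneda element of the transformation above. In the other direction, given $\{f_{\overline{a}}\}$ and $g\in\mlc_i(\overline{a};b)$, write $g=\widehat{g}\circ_1\theta_{\overline{a}}$ for the unique $\widehat{g}\colon m\overline{a}\to b$ and set $F_{\overline{a},b}(g):=(F\widehat{g}\circ f_{\overline{a}})\circ_1\theta_{F\overline{a}}$. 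Mutual invertibility of these two assignments, and naturality in $b$ of the $F_{\overline{a},b}$ so produced, is exactly what the Yoneda argument delivers.

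There remains the bookkeeping needed to match the outstanding naturality conditions: the family $\{F_{\overline{a},b}\}_{\overline{a}}$ is natural in $\overline{a}$ — that is, compatible with the precomposition actions of $\mlc_i(-;-)$ and $\md_i(-;-)$ of Remark~\ref{rmk:no-need-for-unary-circ_i} — if and only if $\{f_{\overline{a}}\}_{\overline{a}}$ is natural in $\overline{a}$ in the sense of Appendix~\ref{app:nat-of-f_a}. For $\overline{p}\colon\overline{a}'\to\overline{a}$ one transports the naturality square for $F_{\overline{a},b}$ across the classifier bijections and simplifies using the functoriality of $m$ on morphisms (as in Lemma~\ref{lemma:k-on-obj}) together with the naturality of the classifiers $\theta_{\overline{a}}$ in $\overline{a}$, which holds by the very definition of $m\overline{p}$. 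I expect this final transport of naturality across the Yoneda correspondence to be the only step demanding care, though it involves no idea beyond unwinding the definitions.
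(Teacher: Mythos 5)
Your proposal is correct and follows essentially the same route as the paper: transport the family $F_{\overline{a},-}$ across the classifier bijections $-\circ_1\theta_{\overline{a}}$ and $-\circ_1\theta_{F\overline{a}}$ and apply the Yoneda lemma to identify it with a morphism $f_{\overline{a}}\colon m(F\overline{a})\to F(m\overline{a})$, with naturality in $\overline{a}$ coming from the naturality of the classifiers. Your extra unwinding of the explicit formulas and of the naturality transport is just a more detailed rendering of the paper's argument.
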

\begin{proof}
The bijection is governed by the following diagram
\begin{equation}\label{eq:yoneda}
\xymatrix{
{\mlc_i(\overline{a};-)}  \ar[rr]^{F_{\overline{a},-}} && {\md_i(F\overline{a};F-)}  \\
{\mlc_1(m\overline{a};-)}  \ar[u]_{\cong}^{- \circ_1 \theta_{\overline{a}}}  \ar[rr]_{F- \circ f_{\overline{a}}} && {\md_1(m(F\overline{a});F-)} \ar[u]^{\cong}_{- \circ_1 \theta_{F\overline{a}}}
}
\end{equation}
in which the vertical arrows are natural bijections and the lower horizontal arrow corresponds to the upper one using the Yoneda lemma. 
It is worth noticing that naturality in $\overline{a}$ follows by the fact that the classifiers $\theta$ are such. 
\end{proof}

\begin{rmk}
\label{rmk:def-K-on-lax-mon}
Given a morphism $F\colon \mlc \to \md \in \fleftrep$ we obtain, applying the above lemma, natural families $f_{2}\colon FaFb \to F(ab)$ and $f_0\colon i \to Fi$ defining the \emph{data} for a lax monoidal functor $KF\colon K\mlc \to K\md$. We will prove that this is a lax monoidal functor in Proposition~\ref{prop:full-faith}.

Explicitly, $f_{2}\colon FaFb \to F(ab)$ is the unique morphism such that $f_{2} \circ_{1} \theta_{Fa,Fb} = F_2(\theta_{a,b})$ whilst $f_{0}$ is the unique morphism such that $f_{0} \circ u = Fu$. 
\end{rmk}

\begin{notation*}
Let $\mlc$ be a short multicategory with a left universal nullary map classifier. Then we will use $(-)^\ast\colon\mlc_n(\overline{a};b)\to\mlc_{n+1}(i,\overline{a};b)$ for the inverse of $-\circ_1u$: in other words, for any $n$-multimap $f$, $f^\ast$ is the unique $(n+1)$-multimap such that $f^\ast\circ_1u=f$.  
\end{notation*}
\black
\begin{prop}\label{prop:full-faith}
With the definition on objects given in Lemma~\ref{lemma:k-on-obj} and on morphisms in Remark~\ref{rmk:def-K-on-lax-mon}, we obtain a fully faithful functor $K\colon\fleftrep \to \skmon_{ln}$.
\end{prop}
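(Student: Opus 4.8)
\emph{Overview of the plan.} Three things must be checked: that $K$ is well defined and functorial on morphisms, that it is faithful, and that it is full. Recall from Remark~\ref{rmk:def-K-on-lax-mon} that a morphism $F\colon\mlc\to\md$ in $\fleftrep$ gives rise to data $f_0\colon i\to Fi$, the unique morphism with $f_0\circ u=Fu$, and $f_2\colon FaFb\to F(ab)$, the unique morphism with $f_2\circ_1\theta_{Fa,Fb}=F_2(\theta_{a,b})$; these are natural by Lemma~\ref{lem:bij}. So it remains to show $(F,f_0,f_2)$ satisfies the three lax monoidal axioms, that $K$ respects identities and composition, and that the assignment $F\mapsto(F,f_0,f_2)$ is bijective on hom-sets.

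\emph{Well-definedness and functoriality.} I would verify the axioms \eqref{eq:mon-moprh-alpha}, \eqref{eq:mon-morph-lambda}, \eqref{eq:mon-morph-rho} by the method used throughout the paper: each is an equality of morphisms of $\catd$ whose codomain has the form $F(\cdots)$, so by left representability it suffices to test it after precomposition with the relevant universal multimaps $\theta$ and $u$; one then rewrites both sides using naturality of $F_2$, the defining equations of $f_0$ and $f_2$, the definitions of $\alpha,\lambda,\rho$ in $K\mlc$ and $K\md$ from Lemma~\ref{lemma:k-on-obj}, and the fact (Lemma~\ref{lemma:char-morph-left-repr}) that $F$ commutes with all the short‑multicategorical substitutions. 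I would carry out one representative case in full --- say the axiom \eqref{eq:mon-morph-lambda} for $\lambda$, exactly parallel to the computation of axiom \eqref{ax:mon-lambda} in Lemma~\ref{lemma:k-on-obj} --- and refer to \cite{LobbiaThesis} for the other two. Functoriality of $K$ is then immediate from the uniqueness clauses in Remark~\ref{rmk:def-K-on-lax-mon}: for the identity one checks $1_{ab}\circ_1\theta_{a,b}=\theta_{a,b}=(1_\mlc)_2(\theta_{a,b})$ and $1_i\circ u=u$, and for a composite $G\circ F$, writing $(g_0,g_2)$ for the data of $KG$, one checks that $g_2\circ Gf_2$ and $g_0\circ Gf_0$ satisfy the equations characterising $(GF)_2$ and $(GF)_0$.

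\emph{Faithfulness and fullness.} The value $KF=(F,f_0,f_2)$ records the underlying functor $F\colon\catc\to\catd$ together with the families $f_0$ and $f_2$; by Lemma~\ref{lem:bij} (applied with $i=0$ and $i=2$) these families are equivalent data to the natural families $F_0$ and $F_2$, and by Lemma~\ref{lemma:char-morph-left-repr} a morphism in $\fleftrep$ is uniquely determined by its underlying functor together with $F_0$ and $F_2$; hence $KF=KG$ forces $F=G$. For fullness, given a lax monoidal functor $(G,g_0,g_2)\colon K\mlc\to K\md$, set $F:=G$ on underlying categories and use Lemma~\ref{lem:bij} in the reverse direction to transport $g_0,g_2$ to natural families $F_0,F_2$. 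To conclude via Lemma~\ref{lemma:char-morph-left-repr} that this is a genuine morphism of short multicategories I must check that $F_0,F_2$ commute with the substitutions \eqref{eq:lemma-morph-nullary} and that the induced $F_3$ commutes with \eqref{eq:lemma-morph-bin}; this is the converse of the computation in the previous paragraph, again done by precomposing with universal multimaps and invoking the axioms \eqref{eq:mon-moprh-alpha}--\eqref{eq:mon-morph-rho} together with the description of $\alpha,\lambda,\rho$ in Lemma~\ref{lemma:k-on-obj}. Finally $KF=(G,g_0,g_2)$ since the passages of Lemma~\ref{lem:bij} and Remark~\ref{rmk:def-K-on-lax-mon} are mutually inverse.

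\emph{Main obstacle.} The genuine content is the two‑way dictionary between the short‑multicategorical substitution compatibilities of a morphism and the coherence axioms of a lax monoidal functor --- i.e. the reductions in the second and third paragraphs. Each individual equality collapses to a lower‑dimensional identity by left representability, so no single step is deep, but keeping track of all the reductions (and of which substitution axiom, e.g.\ which case of \eqref{eq:ass-line} or \eqref{eq:ass-not-line}, licences each step) is where the bookkeeping lives. This is also precisely the place where left representability, rather than mere closedness, is used.
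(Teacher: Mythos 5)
Your proposal is correct and takes essentially the same route as the paper: both reduce, via Lemma~\ref{lemma:char-morph-left-repr} and Lemma~\ref{lem:bij}, to the two-way dictionary between the substitution-compatibility equations for $(F_0,F_2)$ and the lax monoidal axioms for $(f_0,f_2)$ (the paper packages your well-definedness/faithfulness/fullness steps as a single ``if and only if'' yielding a bijection on hom-sets, and likewise defers the axiom-by-axiom computations to \cite{LobbiaThesis}). One small phrasing point: testing the equalities by precomposition with $\theta$ and $u$ is licensed because the \emph{domains} are (left universal) map classifiers, not because the codomains have the form $F(\cdots)$.
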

\begin{proof}
By Lemma~\ref{lemma:char-morph-left-repr} a morphism of $\fleftrep(\mlc,\md)$ is uniquely specified by a functor $F\colon C \to D$ and natural families $(F_2,F_0)$ satisfying three equations.  

By Lemma~\ref{lem:bij}, these natural families $(F_2,F_0)$ bijectively correspond to natural families $(f_2,f_0)$.  

Therefore, if we can prove that $(F_2,F_0)$ satisfy the equations of Lemma~\ref{lemma:char-morph-left-repr} if and only if $(f_2,f_0)$ satisfy the equations for a lax monoidal functor, then we will have described a bijection $K_{\mlc,\md}\colon\fleftrep(\mlc,\md) \to \skmon_{ln}(K\mlc,K\md)$.
Table~\ref{tab:axioms-lax-mon} describes the correspondence between axioms, \black we refer the interested reader to the proof of \cite[Proposition~4.4.4]{LobbiaThesis} for the details. \black 
\begin{table}[h!] 
\centering
\renewcommand\arraystretch{1.25}
\begin{tabular}{|c | c|}
\hline 
$(F_0,F_2)$ & $(f_0,f_2)$ \\
\hline
(\ref{eq:lemma-morph-bin}) & Associator axiom \\
(\ref{eq:lemma-morph-nullary}.a) & Left unit axiom \\
(\ref{eq:lemma-morph-nullary}.b) & Right unit axiom \\
\hline
\end{tabular}
\caption{}\label{tab:axioms-lax-mon}
\end{table}

Functoriality of $K$ follows routinely from the definition of $f_2$ and $f_0$.
\end{proof}

Let us recall that there is a forgetful functor $U_{lr}\colon\multi_{lr}\to\fmulti_{lr}$ and the authors of \cite{LackBourke:skew} construct an equivalence $T\colon\multi_{lr}\to\skmon_{ln}$. Moreover, comparing the construction of $K$ with that given in \cite[Section~6.2]{LackBourke:skew}, we see that the triangle below is commutative.
\[\begin{tikzcd}
	{\multi_{lr}} \\
	& {\fmulti_{lr}} \\
	{\skmon_{ln}}
	\arrow["T"', from=1-1, to=3-1]
	\arrow["U_{lr}", from=1-1, to=2-2]
	\arrow["K", from=2-2, to=3-1]
\end{tikzcd}\]

\begin{theorem}\label{thm:fin-equiv}
The functor $K\colon\fleftrep \to \skmon_{ln}$ is an equivalence of categories, as is the forgetful functor $U_{lr}\colon\multi_{lr}\to\fmulti_{lr}$.
\end{theorem}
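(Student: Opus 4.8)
The plan is to read this off from Proposition~\ref{prop:full-faith}, the equivalence $T\colon\multi_{lr}\to\skmon_{ln}$ of \cite[Theorem~6.3]{LackBourke:skew}, and the commutativity of the triangle recorded immediately above the statement. Since Proposition~\ref{prop:full-faith} already establishes that $K$ is fully faithful, and since a fully faithful and essentially surjective functor is an equivalence, it suffices to show that $K$ is essentially surjective.

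For essential surjectivity I would argue as follows. Let $M$ be a left normal skew monoidal category. Because $T$ is an equivalence it is in particular essentially surjective, so there is a left representable multicategory $\mlc\in\multi_{lr}$ together with an isomorphism $T\mlc\cong M$ in $\skmon_{ln}$. Applying the forgetful functor $U_{lr}$ yields a left representable short multicategory $U_{lr}\mlc\in\fleftrep$, and the commutativity $K\circ U_{lr}=T$ of the triangle gives $K(U_{lr}\mlc)=T\mlc\cong M$. Hence every object of $\skmon_{ln}$ lies in the essential image of $K$, so $K$ is an equivalence.

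Finally, the claim for $U_{lr}$ follows from the two-out-of-three property for equivalences of categories applied to the triangle $T=K\circ U_{lr}$: having shown that both $T$ and $K$ are equivalences, any choice of pseudo-inverse $K^{-1}$ of $K$ exhibits $U_{lr}$ as naturally isomorphic to $K^{-1}\circ T$, which is a composite of equivalences and therefore an equivalence itself.

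There is no real obstacle here beyond what has already been carried out: the substance lives in Proposition~\ref{prop:full-faith} (fullness and faithfulness of $K$) and in \cite[Theorem~6.3]{LackBourke:skew} (the equivalence $T$). The only point requiring a modest amount of care is the compatibility assertion underlying the commutative triangle, namely that the definition of $K$ on objects (Lemma~\ref{lemma:k-on-obj}) and on morphisms (Remark~\ref{rmk:def-K-on-lax-mon}) agrees with the construction of \cite[Section~6.2]{LackBourke:skew} after forgetting the higher-arity data; this was already checked when the triangle was introduced, so it can simply be invoked.
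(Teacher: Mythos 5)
Your proposal is correct and follows the paper's proof exactly: full faithfulness from Proposition~\ref{prop:full-faith}, essential surjectivity of $K$ deduced from that of $T$ via the commutative triangle $T=KU_{lr}$, and then two-out-of-three to conclude that $U_{lr}$ is an equivalence. No differences worth noting beyond your spelling out the steps in slightly more detail.
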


\begin{proof}
Let us show that $K$ is an equivalence first.  Since $K$ is fully faithful by the preceding result, it remains to show that it is essentially surjective on objects.  Since $T=KU$ and the equivalence $T$ is essentially surjective, so is $K$, as required.  Finally, since $T=KU$ and both $T$ and $K$ are equivalences, so is $U_{lr}$.
\end{proof}

Then, if we consider the forgetful functor $U_{rep}\colon\multi_{rep}\to\fmulti_{rep}$ and the equivalence $T_{rep}\colon\multi_{rep}\to\mon$ given in \cite{Hermida2000Representable}, we get the following result.  

\begin{theorem}\label{thm:rep-mult}
The equivalence $K\colon\fleftrep \to \skmon_{ln}$ of Theorem~\ref{thm:fin-equiv} restricts to an equivalence $K_{rep}\colon\fmulti_{rep} \to \mon$ between representable short multicategories and monoidal categories, which fits in the commutative triangle of equivalences below.
 \[\begin{tikzcd}
	{\multi_{rep}} \\
	& {\fmulti_{rep}} \\
	{\mon}
	\arrow["T_{rep}"', from=1-1, to=3-1]
	\arrow["U_{rep}", from=1-1, to=2-2]
	\arrow["K_{rep}", from=2-2, to=3-1]
\end{tikzcd}\]
\end{theorem}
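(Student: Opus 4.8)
The plan is to reduce everything to the equivalence $K\colon\fleftrep\to\skmon_{ln}$ of Theorem~\ref{thm:fin-equiv}, much as that theorem itself was deduced from Proposition~\ref{prop:full-faith}. The heart of the matter is a single comparison lemma: for a left representable short multicategory $\mlc$, the short multicategory $\mlc$ is representable in the sense of Definition~\ref{def:lr-rep-short.multi} if and only if the left normal skew monoidal category $K\mlc$ is monoidal, i.e. if and only if its associator $\alpha$ and right unit map $\rho$ are invertible (the left unit map $\lambda$ being invertible automatically, since $K\mlc$ is left normal by Lemma~\ref{lemma:k-on-obj}).

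To prove this lemma I would translate each invertibility condition into a bijectivity condition one instance at a time, by the same Yoneda-style calculations already used in Lemma~\ref{lemma:k-on-obj} and Proposition~\ref{prop:left-iff-adj}. For the right unit map $\rho_a\colon a\to ai$, defined by \eqref{eq:univ-rho} as $\theta_{a,i}\circ_2 u$, precomposition $-\circ\rho_a\colon\mlc_1(ai;z)\to\mlc_1(a;z)$ is, by naturality of the substitution of a nullary map into a binary map, equal to the composite of the classifier bijection $-\circ_1\theta_{a,i}\colon\mlc_1(ai;z)\to\mlc_2(a,i;z)$ with the substitution $-\circ_2 u\colon\mlc_2(a,i;z)\to\mlc_1(a;z)$; hence $\rho_a$ is invertible for all $a$ precisely when the latter substitution is a bijection, which is one of the instances of the nullary condition in Definition~\ref{def:lr-rep-short.multi}. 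In the same way, using the definition \eqref{eq:univ-alpha} of $\alpha$, the description of the $3$-ary classifier $\theta_{a,b,c}=\theta_{ab,c}\circ_1\theta_{a,b}$ from Proposition~\ref{prop:universal}, functoriality of $\mlc_3(-;-)$, and naturality of substitution, one identifies $-\circ\alpha_{a,b,c}$ with $-\circ_2\theta_{b,c}\colon\mlc_2(a,bc;z)\to\mlc_3(a,b,c;z)$ modulo classifier bijections, so that $\alpha$ is invertible exactly when this middle-insertion substitution is a bijection---again an instance of the binary condition in Definition~\ref{def:lr-rep-short.multi}. The remaining instances of that definition with $n\le 2$ are either classifier bijections or left universality, hence already available for $\mlc\in\fleftrep$; the genuinely four-ary instances ($n=3$) follow, once $\alpha,\lambda,\rho$ are invertible, by the same inductive reduction as in the proof of Proposition~\ref{prop:left-iff-adj}, expressing the relevant multimap sets through the $3$- and $4$-ary classifiers of Proposition~\ref{prop:universal} and sliding the classifier being substituted past one further binary classifier, the commuting squares coming from the associativity axioms \eqref{eq:ass-line} and \eqref{eq:ass-not-line}. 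Running these identifications in both directions gives the lemma.

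Granting the lemma, the conclusion is formal. By the lemma, $K$ carries objects of $\fmulti_{rep}$ to objects of $\mon$, so it restricts to a functor $K_{rep}\colon\fmulti_{rep}\to\mon$; since $\fmulti_{rep}$ and $\mon$ are full subcategories of $\fleftrep$ and $\skmon_{ln}$ and $K$ is fully faithful by Proposition~\ref{prop:full-faith}, $K_{rep}$ is fully faithful. For essential surjectivity, a monoidal category $M$, regarded as an object of $\skmon_{ln}$, is isomorphic to $K\mlc$ for some $\mlc\in\fleftrep$ by Theorem~\ref{thm:fin-equiv}; since $K\mlc\cong M$ is monoidal, the lemma gives $\mlc\in\fmulti_{rep}$, whence $M\cong K_{rep}\mlc$. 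Thus $K_{rep}$ is an equivalence. The triangle commutes because $U_{rep}$ and $T_{rep}$ are the evident restrictions of $U_{lr}$ and $T$ to the full subcategories of representable objects and $T=KU_{lr}$ by the commutative triangle preceding Theorem~\ref{thm:fin-equiv}, so that $T_{rep}=K_{rep}U_{rep}$. Finally, $T_{rep}$ is an equivalence by \cite{Hermida2000Representable} and $K_{rep}$ is one by the above, so the two-out-of-three property of equivalences, applied to $T_{rep}=K_{rep}U_{rep}$, shows $U_{rep}$ is an equivalence as well, completing the commutative triangle of equivalences.

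The main obstacle is the comparison lemma, and within it the bookkeeping: matching the complete list of bijectivity requirements in Definition~\ref{def:lr-rep-short.multi} (every insertion position, every arity $n\le 3$, for both the unit $u$ and the binary classifiers $\theta_{a,b}$) against just the two conditions ``$\alpha$ invertible'' and ``$\rho$ invertible''. Each individual translation is a short Yoneda argument of the kind already appearing in Section~\ref{sec:short-vs-mult}, but some care is needed to see that the genuinely four-ary instances are indeed consequences of the binary and ternary ones together with left universality, via Proposition~\ref{prop:universal}.
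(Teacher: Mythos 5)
Your proposal is correct and follows essentially the same route as the paper: the core of the paper's proof is precisely your comparison lemma (a left representable short multicategory $\mlc$ is representable iff $\alpha$ and $\rho$ in $K\mlc$ are invertible), established by the same Yoneda-style chains of bijections through the classifiers $\theta$, $u$ and the associativity axioms (\ref{eq:ass-line}), (\ref{eq:ass-not-line}), with the remaining instances of Definition~\ref{def:lr-rep-short.multi} reduced exactly as you describe. The concluding formal step (fully faithfulness from $K$ via full subcategories, essential surjectivity, and two-out-of-three giving that $U_{rep}$ is an equivalence) also matches the paper's argument.
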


\begin{proof}
Let $\mlc\in\fmulti_{lr}$. If $\mlc$ is representable then $K\mlc$ has invertible left unit $\lambda$ since it is skew left normal. Therefore, we have left to prove that $\alpha$ and $\rho$ are isomorphisms as well. First, we can define the inverse of $\alpha$ through the chain of bijections
\begin{center}
$\mlc_3(a,b,c;(ab)c)\cong\mlc_2(a,bc;(ab)c)\cong\mlc_1(a(bc);(ab)c)$ \\ \vspace{0.1cm}
$\theta_{ab,c}\circ_1\theta_{a,b}$ \hspace{1.5cm} $\longmapsto$ \hspace{1.5cm} $\alpha\inv$ \hspace{0.5cm}
\end{center}
Using the universal properties of $(ab)c$ and $a(bc)$ we can show that $\alpha$ and $\alpha\inv$ are inverses of each other. Then, $\rho$ is defined as $\theta\circ_2u$, see (\ref{eq:univ-rho}). We define $\rho\inv$ as the map corresponding to $1_a$ through the following bijection
$$\mlc_1(ai;a)\cong\mlc_2(a,i;a)\cong\mlc_1(a;a),$$
i.e. $\rho\inv$ is the unique map such that 
\begin{equation}
\label{eq:rho-inv}
\begin{gathered}
\begin{tikzpicture}[triangle/.style = {fill=yellow!50, regular polygon, regular polygon sides=3,rounded corners}]
%Multimaps
	%LHS
\path (2.5,1) node [triangle,draw,shape border rotate=-90] (c) {$1_a$} 
	%RHS
	(6.5,0.65) node [triangle,draw,shape border rotate=-90,inner sep=1pt] (b') {$u$} 
	(8,1) node [triangle,draw,shape border rotate=-90,label=135:$a$,label=230:$i$,inner sep=0pt] (c') {$\theta_{a,i}$}
	(10,1) node [triangle,draw,shape border rotate=-90,inner sep=0pt] (c'') {$\rho\inv$};
	
%Nodes
	%LHS
\draw [-] (c) to node [above] {$a$} (4,1);
\draw [-] (1.5,1) to node [above] {$a$} (c);

	\node () at (5,1) {$=$};

	%RHS
\draw [-] (7,1.35) to (c'.137);
\draw [-] (b') to (c'.223);
\draw [-] (c') to node [above] {$ai$} (c'');
\draw [-] (c'') to node [above] {$a$} (11,1);
\end{tikzpicture}
\end{gathered}
\end{equation}
which means that $\rho\inv\rho=1_a$. Using the universal property of $ai$ we can also prove that $\rho\rho\inv=1_{ai}$. 

On the other side, if $K\mlc$ is monoidal, then $\alpha$ and $\rho$ are invertible. Then 
$$\mlc_3(x,ab,y;z)\cong\mlc_2(x(ab),y;z)\cong\mlc_2((xa)b,y;z)\cong\mlc_3(xa,b,y)\cong\mlc_4(x,a,b,y;z)$$
where the second isomorphism is given by pre-composition with $\alpha_{x,a,b}$ and the rest by left representability. We can see how this isomorphism sends a map $f\colon x,ab,y\to z$ to
\begin{center}
$[\,(f'\circ_1\alpha)\circ_1\theta_{xa,b}\,]\circ_1\theta_{x,a}=$ \\
\begin{tikzpicture}[triangle/.style = {fill=yellow!50, regular polygon, regular polygon sides=3,rounded corners}]
%Multimaps
\path
	(1,1.35) node [triangle,draw,shape border rotate=-90,inner sep=1.5pt,label=135:$x$,label=230:$a$] (b') {$\theta$} 
	(3,1) node [triangle,draw,shape border rotate=-90,inner sep=1.5pt,label=135:$xa$,label=230:$b$] (c') {$\theta$}
	(5.5,1) node [triangle,draw,shape border rotate=-90,inner sep=1pt] (f) {$\alpha$}
	(7.5,0.5) node [triangle,draw,shape border rotate=-90,inner sep=2pt,label=135:$x(ab)$,label=230:$y$] (d) {$f'$};
	
	%Nodes
\draw [-] (0.25,1.6) to (b'.138);
\draw [-] (0.25,1) to (b'.228);
\draw [-] (2.25,0.7) to (c'.220);
\draw [-] (b') to (c'.140);
\draw [-] (c') to node [above] {$(xa)b$} (f);
\draw [-] (6.5,0.1) to (d.225);
\draw [-] (f) .. controls +(right:1cm) and +(left:1cm).. (d.140);
\draw [-] (d) to node [above] {$z$} (9,0.5);
\end{tikzpicture}

\end{center}
which can be proven to be equal to $f\circ_2\theta_{a,b}$ using the definition of $\alpha$ and associativity equations in $\mlc$.
\begin{align*}
&[\,(f'\circ_1\alpha)\circ_1\theta_{xa,b}\,]\circ_1\theta_{x,a}
& \\
&= [\,f'\circ_1(\alpha\circ\theta_{xa,b})\,]\circ_1\theta_{x,a}
& (\textrm{by dinaturality sub. binary into binary}) 
\\ 
&= f'\circ_1[\,(\alpha\circ\theta_{xa,b})\circ_1\theta_{x,a}\,]
& (\textrm{by axiom (\ref{eq:ass-line}.a)})
\\
&= f'\circ_1(\,\theta_{x,ab}\circ_2\theta_{a,b}\,)
& (\textrm{by definition of}\,\alpha)
\\
&= (\,f'\circ_1\theta_{x,ab}\,)\circ_2\theta_{a,b}
& (\textrm{by axiom (\ref{eq:ass-line}.a)})
\\
&= f\circ_2\theta_{a,b}
& (\textrm{by definition of}\,f').
\end{align*}
Then, the isomorphisms
\begin{center}
$\mlc_2(x,ab;z)\cong\mlc_3(x,a,b;z)$ \hspace{0.5cm} and \hspace{0.5cm} $\mlc_3(x,y,ab;z)\cong\mlc_4(x,y,a,b;z)$ 
\end{center} 
are constructed and shown to be induced by pre-composition with $\theta_{a,b}$ in a similar way. Finally, we show how $u$ induces the required isomorphisms for a representable short multicategory. By left representability, the map
$$-\circ u\colon\mlc_n(i,\overline{a};z)\to\mlc_{n-1}(\overline{a};z)$$
is an isomorphism (for $\overline{a}$ of length $n-1$). Then, since $\rho$ is invertible, we can define the following isomorphism
$$\mlc_3(a,i,b;z)\cong\mlc_2(ai,b;z)\cong\mlc_2(a,b;z)$$
where the last map is given by pre-composition with $\rho$ in the first variable and the first one by left representability. Thus, a ternary map $k\colon a,i,b\to z$ is sent to $k'\circ_1\rho$. The calculations below show that this is the same as precomposing with $u$ in the second variable. 
\begin{align*}
&k'\circ_1\rho_a 
& \\
&= k'\circ_1(\theta_{a,i}\circ_2u)
& (\textrm{by definition of}\,\rho)
\\ 
&= (k'\circ_1\theta_{a,i})\circ_2u
& (\textrm{by axiom (\ref{eq:ass-line}.b)})
\\ 
&= k\circ_2u
& (\textrm{by definition of}\,k').
\end{align*}
Similarly, we can construct the isomorphism $\mlc_2(a,i;z)\cong\mlc_1(a;z)$ and prove that it is induced by pre-composition with $u$.  
%\red (maybe don't write all of them?) \blue
%\begin{center}
%$\mlc_3(a,i,b;z)\cong\mlc_2(a,b;z)$, \hspace{0.5cm} $\mlc_2(a,i;z)\cong\mlc_1(a;z)$ \\ \vspace{0.1cm}
%$\mlc_4(a,b,i,c;z)\cong\mlc_3(a,b,c;z)$, \hspace{0.5cm} $\mlc_3(a,b,i;z)\cong\mlc_2(a,b;z)$ \\ \vspace{0.1cm}
%and \hspace{0.5cm} $\mlc_4(a,b,c,i;z)\cong\mlc_3(a,b,c;z)$ 
%\end{center}
%and prove that they are induced by pre-composition with $u$. 

Since $\fmulti_{rep}$ and $\mon$ are full subcategories of $\fmulti_{lr}$ and $\skmon_{ln}$, the fully faithfulness of $K_{rep}$ follows from the one of $K$. Hence, $U_{rep}$ is an equivalence as well. 
\end{proof}

\subsection{The Closed Left Representable Case}

In this section we will consider the equivalence $T^{cl}_{lr}\colon\multilrcl\to\skmon_{ln}^{cl}$ between left representable closed multicategories and left normal skew monoidal closed categories.
The existence of this equivalence follows from \cite[Theorem~6.4]{LackBourke:skew} in the same way as \cite[Theorem~6.3]{LackBourke:skew} follows from \cite[Theorem~6.1]{LackBourke:skew}.

%We know that there is an equivalence $T^s_{c}\colon\smulti_{lr}^{cl}\to\skmon^{cl}$ between left representable closed skew multicategories and skew closed monoidal categories \cite[Theorem~6.4]{LackBourke:skew}. 
%
%\begin{lemma}
%The equivalence $T^s_{c}\colon\smulti_{lr}^{cl}\to\skmon^{cl}$ restricts to an equivalence $T^{cl}_{lr}\colon\multilrcl\to\skmon_{ln}^{cl}$ between left representable closed multicategories and left normal skew monoidal closed categories. 
%\end{lemma}
%
%\begin{proof}
%We can use the same arguments of \cite[Theorem~6.3]{LackBourke:skew} to prove that, $\mlc\in\multilrcl$ if and only if $T^s_{c}\mlc\in\skmon_{ln}^{cl}$. 
%\end{proof}
%\black 

\begin{theorem}\label{thm:closed-lr-multi}
The equivalence $K\colon\fleftrep \to \skmon_{ln}$ restricts to an equivalence $K^{cl}_{lr}\colon\fleftrep^{cl} \to \skmon_{ln}^{cl}$ between left representable closed short multicategories and left normal skew closed monoidal categories, which fits in the commutative triangle of equivalences
 \[\begin{tikzcd}
	{\multi_{lr}^{cl}} \\
	& {\fmulti^{cl}_{lr}} \\
	{\skmon^{cl}_{ln}.}
	\arrow["T_{lr}^{cl}"', from=1-1, to=3-1]
	\arrow["U_{lr}^{cl}", from=1-1, to=2-2]
	\arrow["K_{lr}^{cl}", from=2-2, to=3-1]
\end{tikzcd}\]   
\end{theorem}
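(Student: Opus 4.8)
The strategy is the same restriction argument as in the proof of Theorem~\ref{thm:rep-mult}, but now along the ``closed'' axis rather than the ``representable'' one. Since $\fleftrep^{cl}$ and $\skmon_{ln}^{cl}$ are by definition full subcategories of $\fleftrep$ and $\skmon_{ln}$, and $K\colon\fleftrep\to\skmon_{ln}$ is already known to be an equivalence by Theorem~\ref{thm:fin-equiv}, it suffices to show that $K$ and a quasi-inverse of $K$ both carry closed objects to closed objects; fully faithfulness of $K^{cl}_{lr}$ is then inherited for free, and essential surjectivity follows once we know the correspondence of objects matches up. Concretely, I would prove: a left representable short multicategory $\mlc$ is closed if and only if the left normal skew monoidal category $K\mlc$ is closed.

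\emph{Step 1: from closed short multicategory to closed skew monoidal category.} Suppose $\mlc\in\fleftrep$ is closed, with internal homs $[b,c]$ and evaluation maps $e_{b,c}\colon[b,c],b\to c$. Using left representability of $\mlc$ (which gives the natural bijection $\mlc_2(a,b;c)\cong\mlc_1(ab;c)=\catc(ab,c)$ from Lemma~\ref{lemma:k-on-obj}, the tensor of $K\mlc$ being the binary map classifier) together with the closedness bijection $\mlc_2(a,b;c)\cong\mlc_1(a;[b,c])=\catc(a,[b,c])$, one obtains natural isomorphisms
\begin{equation*}
\catc(ab,c)\;\cong\;\mlc_2(a,b;c)\;\cong\;\catc(a,[b,c]),
\end{equation*}
which exhibit $-\otimes b\dashv[b,-]$ for each $b$. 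This is exactly the statement, already recorded as one direction of Proposition~\ref{prop:left-iff-adj}, that a left representable closed short multicategory has each $[b,-]$ right adjoint to a tensor; hence $K\mlc$ is left closed, i.e.\ lies in $\skmon_{ln}^{cl}$.

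\emph{Step 2: from closed skew monoidal category back to closed short multicategory.} For essential surjectivity, take $\catc\in\skmon_{ln}^{cl}$, so each $-\otimes b$ has a right adjoint $[b,-]$. Let $\mlc$ be a short multicategory with $K\mlc\simeq\catc$, which exists and is left representable by Theorem~\ref{thm:fin-equiv}; I claim $\mlc$ is closed. This is the converse direction of Proposition~\ref{prop:left-iff-adj}: from the adjunction $-\otimes b\dashv[b,-]$ one builds the bijections $\catc(ab,c)\cong\catc(a,[b,c])\cong\mlc_2(a,b;c)$, and by Yoneda the composite is precomposition with a binary multimap, which one checks is $e_{b,c}\circ_1-$ for a suitable $e_{b,c}\colon[b,c],b\to c$; the left universality already proved in Proposition~\ref{prop:left-iff-adj} upgrades this to the required bijections $e_{b,c}\circ_1-\colon\mlc_n(\overline x;[b,c])\to\mlc_{n+1}(\overline x,b;c)$ for $n=0,1,2,3$. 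Thus $\mlc\in\fleftrep^{cl}$, and $K^{cl}_{lr}$ is essentially surjective. Combining Steps 1--2 with the fully faithfulness inherited from $K$ gives that $K^{cl}_{lr}$ is an equivalence.

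\emph{Step 3: commutativity of the triangle.} It remains to observe the triangle commutes. The functor $U^{cl}_{lr}\colon\multilrcl\to\fleftrep^{cl}$ is the restriction of $U_{lr}$, and $T^{cl}_{lr}$ is by construction the restriction of $T$ (it is obtained from \cite[Theorem~6.4]{LackBourke:skew} exactly as $T$ is obtained from \cite[Theorem~6.3]{LackBourke:skew}), so commutativity of the closed triangle is just the restriction of the already-established commutative triangle $T=K\circ U_{lr}$ from Theorem~\ref{thm:fin-equiv}; alternatively, since all three functors in the closed triangle are now equivalences and the diagram of underlying functors commutes, the closed triangle commutes on the nose. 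The only mild subtlety — and the place where one must be slightly careful rather than merely routine — is Step~2: one has to be sure that the multicategory $\mlc$ reconstructed from $\catc$ genuinely satisfies Definition~\ref{def:fin-closed} for \emph{all} of $n=0,1,2,3$, not just for the bijection $\catc(ab,c)\cong\mlc_2(a,b;c)$; but this is precisely what the inductive argument in Proposition~\ref{prop:left-iff-adj} supplies, so no new work is needed.
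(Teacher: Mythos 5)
Your overall strategy is the paper's: prove that for $\mlc\in\fleftrep$ one has $\mlc$ closed if and only if $K\mlc$ is closed, and then let full faithfulness and essential surjectivity restrict along the full subcategories, with the triangle commuting because $T^{cl}_{lr}$ and $U^{cl}_{lr}$ are restrictions of $T$ and $U_{lr}$. Step 1 is fine and is exactly the paper's forward direction.

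The problem is Step 2 (and your closing remark that "no new work is needed"). Proposition~\ref{prop:left-iff-adj} does not prove the implication you need. Its statement and its inductive square both take \emph{closedness of the short multicategory as a hypothesis}: the vertical maps $e_{y,c}\circ_1-$ in that square are bijections "by assumption", and the conclusion is left universality of the classifiers, i.e.\ left representability. In your Step 2 the situation is reversed: $\mlc$ is already left representable (it comes from Theorem~\ref{thm:fin-equiv}), the Yoneda argument only gives you the bijection $e_{b,c}\circ_1-\colon\mlc_1(a;[b,c])\to\mlc_2(a,b;c)$, and what must be \emph{proved} is the bijectivity of $e_{b,c}\circ_1-$ at arities $n=0,2,3$ --- the very maps that Proposition~\ref{prop:left-iff-adj} assumes invertible. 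So the citation runs the induction backwards and leaves the key step unjustified. The repair is short but genuinely new relative to that proposition, and it is what the paper actually does: for each $\overline a$ of length $0\le n\le 3$ form the square whose top row is $e_{b,c}\circ_1-\colon\mlc_1(m\overline a;[b,c])\to\mlc_2(m\overline a,b;c)$, whose bottom row is $e_{b,c}\circ_1-\colon\mlc_n(\overline a;[b,c])\to\mlc_{n+1}(\overline a,b;c)$, and whose verticals are $-\circ_1\theta_{\overline a}$ for the left universal classifiers $\theta_{\overline a}$ (binary, the composites $\theta_{a_1a_2,a_3}\circ_1\theta_{a_1,a_2}$ etc., and $u$ in the nullary case); this square commutes by the associativity axioms (\ref{eq:ass-line}), the verticals are bijections by left universality, the top by the Yoneda construction, hence the bottom is a bijection. (A minor additional point: in Step 2 you should also say why $K\mlc\simeq\catc$ with $\catc$ closed forces $K\mlc$ itself to be closed, or sidestep this by quoting the equivalence $T^{cl}_{lr}$ as the paper's triangle does; this is routine but currently implicit.)
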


\begin{proof}
Let $\mlc$ be in $\fleftrep$. If $\mlc$ is closed then we have natural isomorphisms
\begin{equation*}
\catc(ab,c) =  \mlc_1(ab;c)\cong \mlc_2(a,b;c) \cong \mlc_1(a,[b,c]) = \catc(a,[b,c])
\end{equation*}
so that $K\mlc$ is monoidal skew closed, as required.  If $K\mlc$ is closed, then we have natural isomorphisms $\mlc_1(a;[b,c]) \cong \mlc_1(ab;c)$ for all $a,b,c$. By Yoneda, the composite
\begin{equation*}
\mlc_1(a;[b,c]) \cong  \mlc_1(ab;c)\cong \mlc_2(a,b;c)
\end{equation*}
 is of the form $e_{b,c} \circ_1 -$ for a binary map $e_{b,c}\colon [b,c],b \to c$, and to show that $\mlc$ is closed we must prove that the function on the bottom row below is a bijection for tuples $\overline{a}$ of length $0$ to $3$.  
 \begin{equation*}
 \xymatrix{
 \mlc_1(m(\overline{a});[b,c]) \ar[d]_{- \circ_1 \theta_{\overline{a}}} \ar[rr]^{e_{b,c} \circ{_1} -} && \mlc_2(m(\overline{a}),b;c) \ar[d]^{- \circ_1 \theta_{\overline{a}}} \\
\mlc_n(\overline{a};[b,c])  \ar[rr]_{e_{b,c} \circ{_1} -} && \mlc_{n+1}(\overline{a},b;c)} 
\end{equation*}
Since $\mlc$ underlies a left representable multicategory, there exists a left universal multimap $\theta_{\overline{a}}\colon\overline{a} \to m(\overline{a})$ and we have a commutative diagram as above in which the upper horizontal is invertible, as already established, and the two vertical functions by left universality, so that the lower horizontal is a bijection too.
%\begin{align*}
%
%
%
%K\mlc(ab,c) &= \mlc_1(ab;c)\cong
%& (\textrm{by definition of}\,K\mlc) 
%\\  
%&\cong \mlc_2(a,b;c)\cong
%& (\textrm{by left representability of}\,\mlc) 
%&\cong \mlc_1(a;[b,c])=
%
%&=K\mlc(a,[b,c])
%& (\textrm{by definition of}\,K\mlc).
%\end{align*} 
%\red Do we need more? \black 
\end{proof}

Putting together Theorem~\ref{thm:rep-mult} and Theorem~\ref{thm:closed-lr-multi} we get the following result. 

\begin{theorem}\label{thm:cl-rep-equiv}
The equivalence $K\colon\fleftrep \to \skmon_l$, defined in Theorem~\ref{thm:fin-equiv}, restricts to an equivalence $K_{rep}^{cl}\colon\fmulti_{rep}^{cl} \to \mon^{cl}$ between short representable closed multicategories and closed monoidal categories, which fits in the commutative triangle of equivalences
 \[\begin{tikzcd}
	{\multi_{rep}^{cl}} \\
	& {\fmulti^{cl}_{rep}} \\
	{\mon^{cl}.}
	\arrow["T_{rep}^{cl}"', from=1-1, to=3-1]
	\arrow["U_{rep}^{cl}", from=1-1, to=2-2]
	\arrow["K_{rep}^{cl}", from=2-2, to=3-1]
\end{tikzcd}\]   
\end{theorem}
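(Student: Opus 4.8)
The strategy mirrors the passage from Theorem~\ref{thm:fin-equiv} to Theorem~\ref{thm:rep-mult}: combine the closed left representable equivalence $K^{cl}_{lr}$ of Theorem~\ref{thm:closed-lr-multi} with the representable equivalence $K_{rep}$ of Theorem~\ref{thm:rep-mult}, and check that the two restrictions are compatible, i.e.\ that the closedness condition and the representability condition cut out the same full subcategory of $\fleftrep^{cl}$ as they do of $\skmon_{ln}^{cl}$. Since $\fmulti^{cl}_{rep}$ is by definition the full subcategory of $\fmulti^{cl}_{lr}$ on the representable objects, and $\mon^{cl}$ is the full subcategory of $\skmon^{cl}_{ln}$ on the (non-skew) monoidal closed categories, fully faithfulness of $K^{cl}_{rep}$ is immediate from fully faithfulness of $K$ (Proposition~\ref{prop:full-faith}), and only essential surjectivity --- equivalently, the claim that $K$ restricts here at all --- needs an argument.

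First I would observe that a left representable closed short multicategory $\mlc$ is representable in the sense of Definition~\ref{def:lr-rep-short.multi} if and only if $K^{cl}_{lr}\mlc$ is an honest monoidal closed category, i.e.\ $\alpha$ and $\rho$ (and $\lambda$, automatic by left normality) are invertible. This is exactly the content already verified inside the proof of Theorem~\ref{thm:rep-mult}: there it is shown that representability of $\mlc$ forces $\alpha^{-1}$ and $\rho^{-1}$ to exist via the chains of bijections on three- and four-ary hom-sets, and conversely that invertibility of $\alpha$ and $\rho$ in $K\mlc$ produces the required substitution bijections $-\circ_j\theta_{a,b}$ and $-\circ_j u$. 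The closed structure plays no role in that equivalence, so the same argument applies verbatim to the closed setting. Hence $K^{cl}_{lr}$ restricts to a functor $K^{cl}_{rep}\colon\fmulti^{cl}_{rep}\to\mon^{cl}$, and by the full-subcategory remark above it is fully faithful.

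For essential surjectivity, let $(\catc,\otimes,i,\alpha,\lambda,\rho,[-,-])$ be a closed monoidal category, regarded as an object of $\skmon^{cl}_{ln}$. By Theorem~\ref{thm:closed-lr-multi}, $K^{cl}_{lr}$ is essentially surjective, so there is $\mlc\in\fleftrep^{cl}$ with $K^{cl}_{lr}\mlc\cong\catc$; since $\catc$ is monoidal (not merely skew), the equivalence just established shows $\mlc$ lies in $\fmulti^{cl}_{rep}$. Thus $K^{cl}_{rep}$ is an equivalence. Finally, for the commuting triangle: the outer equivalence $T^{cl}_{rep}\colon\multi^{cl}_{rep}\to\mon^{cl}$ is the restriction of $T^{cl}_{lr}$ (obtained as in the remark preceding Theorem~\ref{thm:eq-skew-lr-multi}), the forgetful functor $U^{cl}_{rep}$ is the evident restriction of $U^{cl}_{lr}$, and $K^{cl}_{lr}\circ U^{cl}_{lr}=T^{cl}_{lr}$ by the commuting triangle of Theorem~\ref{thm:closed-lr-multi}; restricting this identity of functors to the representable subcategories gives $K^{cl}_{rep}\circ U^{cl}_{rep}=T^{cl}_{rep}$. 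As $T^{cl}_{rep}$ and $K^{cl}_{rep}$ are equivalences, so is $U^{cl}_{rep}$, completing the diagram.

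**Main obstacle.** The only genuine point to get right is the first step: verifying that the bijections witnessing ``representable'' for a short multicategory are unaffected by the presence of the closed structure, i.e.\ that the arguments of Theorem~\ref{thm:rep-mult} translating between invertibility of $\alpha,\rho$ and the substitution bijections $-\circ_j\theta_{a,b}$, $-\circ_j u$ go through word-for-word when $\mlc$ is additionally closed. This is routine but must be stated, since it is what licenses treating both restrictions as cutting out the same subcategory; everything after that is formal nonsense about restricting equivalences to full subcategories and two-out-of-three.
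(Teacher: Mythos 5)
Your proposal is correct and follows essentially the same route as the paper, which proves this theorem simply by combining Theorem~\ref{thm:rep-mult} and Theorem~\ref{thm:closed-lr-multi}; your write-up just makes explicit the routine compatibility check (that the representability/invertibility translation of Theorem~\ref{thm:rep-mult} is unaffected by the closed structure) that the paper leaves implicit.
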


\section{Short Skew Multicategories}
\label{sec:short-skew-mult}

In this section we will adapt the definitions in Section~\ref{sec:fin-mult} and the results in Section~\ref{sec:short-vs-mult} to the skew setting. Once again, we will follow the style of Proposition~\ref{prop:classical}.

A \textbf{short skew multicategory} consists, to begin with, of a category $\catc$ together with:
%\begin{itemize}
%\item Sets $\mlc(\diamond;a)$ for each $a \in \mlc$ equipped with a right action of $\mlc$;
%\item For $n=1,\ldots,4$ a function $j:\cat_n^{t}(a_1,\ldots,a_n;b) \to \cat_n^{l}(a_1,\ldots,a_n;b)$
\begin{itemize}
\item For $1 \leq n \leq 4$ a functor $\mlc^{t}_n(-;-):(\catc^{n})^{op} \times \catc \to \Set$ such that, when $n=1$, we have $\mlc^t_1(-;-)=\catc(-,-)\colon\catc^{op} \times \catc \to \Set$.
\item For $n=0,1,2$ an additional functor $\mlc^{l}_n(-;-)\colon(\catc^{n})^{op} \times \catc \to \Set$ and natural transformation $j_n\colon\mlc^{t}_n(-;-) \to \mlc^{l}_n(-;-)$. 
\end{itemize}

%\blue Let us notice that we do not require the existence of loose 3- and 4-ary maps, since all the constructions needed to get a skew monoidal category (and morphisms) do not involve such things. \black 

\begin{rmk}
	\label{rmk:skw-no-need-for-unary-circ_i}
	The $l$-typed multimaps, i.e. the elements of $\mlc_n^l(\overline{a};b)$, are thought of as \emph{loose}, whereas the $t$-typed multimaps, i.e. the elements of $\mlc_n^t(\overline{a};b)$, are thought of as \emph{tight}. The function $j$ lets us view tights as loose.  Nullary maps are thought of as loose.  
	The tight unary maps are precisely the morphisms of $\catc$. 
	%\blue
	Similarly to the situation described in Remark~\ref{rmk:no-need-for-unary-circ_i}, both tight and loose $n$-ary multimaps $f$ admit compatible precomposition (in any position) and postcomposition by tight unary maps $p$ --- that is, by the morphisms of $\catc$ --- which we write as $f \circ_i p$ and $p \circ f$ respectively.
	%		\blue
	%		Similarly to the situation described in Remark~\ref{rmk:no-need-for-unary-circ_i}, we do not require substitutions $\circ_i$ of tight unary maps in any kind of $n$-ary multimaps. 
	%		Again, these substitutions are encoded in the functor actions of $\mlc_n^t(-;-)$ and $\mlc_n^l(-;-)$, for tight and loose multimaps respectively. 
	%		For this reason we will write, for any unary maps $p\colon a'_i\to a_i$ and $q\colon b\to b'$ and $x$-types $n$-ary map $f\colon\overline{a}\to b$, we will write 
	%		\begin{center}
		%			$q\circ_1f:=\mlc_n^x(\overline{a};q)(f)$ and $f\circ_i p:=\mlc_n^x(\overline{a}_{<i},p,\overline{a}_{>i};b)(f)$, for $x=t,l$. 
		%		\end{center}
	Clearly, with these definitions, since functors preserve the identity, we get the following identity equations (for any $n$-ary loose or tight multimap $f$ and any $i=1,\ldots,n$).
	$$1\circ_1f=f=f\circ_i1$$
\end{rmk}

For $x,y \in \{t,l\}$, let us write
\[
    x \circ_i y = 
\begin{cases}
    t,& \text{if } x=y=t \text{ and } i=1\\
    t, & \text{if } x=t \text{ and }i \neq 1\\
    l, & \text{otherwise}
\end{cases}
\]

Then, in the cases listed below, we require functions
$$-\circ_i-\colon \mlc^x_n(\overline{b};c) \times \mlc^y_m(\overline{a};b_i) \longrightarrow \mlc^{x \circ_i y}_{n+m-1}(b_{<i},a,b_{>i};c)$$
for $i \in \{1,\ldots,n\}$ which are natural in $a_1,\ldots, a_m,b_1,\ldots,b_{i-1},b_{i+1},\ldots,b_n,c$ and dinatural in $b_i$.
Analogous to those from before, we require $\circ_i$ for the following cases:\footnote{The idea is that the rest of substitutions needed to form a skew multicategory, under left representability/closedness, will be derivable from these ones.}
\begin{itemize}
\item $m=2$, $n=2,3$ and $x=y=t$ (substitution of tight binary into tight binary/ternary);
\item $m=3$, $n=2$ and $x=y=t$ (substitution of tight ternary into tight binary);
\item $m=0$, $n=2,3$ and $x=t$ (substitution of nullary into tight binary/ternary).
\end{itemize}
but also:
\begin{itemize}
\item $m=0$, $n=1$ and $x=y=l$ (substitution of nullary into loose unary);
\item $m=1$, $y=l$, $n=2$ and $x=t$ (substitution of loose unary into tight binary); 
\item $m=2$, $y=t$, $n=1$ and $x=l$ (substitution of tight binary into loose unary). %\red ($n=2$ needs to commute with $j$ too, needed in axiom $r+\alpha+l$ of skew mon) \black
%\item \blue $m=1$ and $y=t$ is pre-composition with tight unary, which is already part of the definition of a profunctor, similarly for $n=1$ and $x=t$ we have post-composition with tight unary, again part of the definition of a profunctor; \black
%\item $m=0$, $y=l$ and 
%	\begin{itemize}
%	\item $n=1$ and $x=l$ (substitution of nullary into loose unary); \red (needs to commute with $j$ too, needed in axiom $r+l$ of skew mon) \black
%	\item $n=2,3$ and $x=t$ (substitution of nullary into tight binary/ternary);
%	\end{itemize} 
\end{itemize}

In the context of a binary multimap $f$, and multimaps $g$ and $h$ of arity $n$ and $p$ respectively (all tight except nullary ones), one can consider associativity equations of the following form:\footnote{We remark that, thanks to the equality $\mlc_1(-;-)=\catc(-,-)$, we can see identity maps $1$ in $\catc$ as tight unary maps.}
\begin{align}
f \circ_i(g\circ_j h) = (f\circ_i g)\circ h_{j+i-1}  \hspace{0.5cm} \textnormal{for} \hspace{0.5cm} 1 \leq i \leq 2, 1 \leq j \leq n , \label{eq:sk-ass-line}\\
(f\circ_1 g)\circ_{n+1}h=(f \circ_2 h) \circ_{1} g %\hspace{0.5cm} \textnormal{for} \hspace{0.5cm} 1 \leq i \leq 2, j < 2 - i. 
\hspace{0.5cm}  \hspace{1cm}
\label{eq:sk-ass-not-line} 
%\\
%%\blue
%1\circ f=f=f\circ_i1 \hspace{0.5cm} \hspace{2cm} %\text{(with $1$ the identity in $\catc$).}
%\label{eq:sk-identity-ax} 
%\hspace{2cm} 
\black 
\end{align}

We require
equations (\ref{eq:sk-ass-line},\ref{eq:sk-ass-not-line}) in the following cases:
\begin{itemize}
\item[(a)] $n=p=2$;
\item[(b)] $n = 2$, $p=0$;
\item[(c)] only for (\ref{eq:sk-ass-not-line}), $n = 0$, $p=2$; 
\item[(d)] only for (\ref{eq:sk-ass-not-line}), $n=p = 0$. 
\end{itemize}

\begin{rmk}\label{rmk:jg-jp}
Let us unfold what naturality of $j_n$ means. Let $g$ be a tight binary map, $p$ and $q$ two tight unary maps and $v$ a nullary map. Naturality of $j_n$ means that the following equations hold:
\begin{equation}
\label{ax:nat-in-j}
\begin{split}
& g\circ_2jp=g\circ_2p \hspace{2cm} g\circ_1jp=j(g\circ_1p) \hspace{2cm} q\circ jp=j(q\circ p) \\ 
& jp\circ g=j(p\circ g) \hspace{2cm} jp\circ v=p\circ v.
\end{split} 
\end{equation}
%Let us remark that these equations can be seen as naturality for $t<l$ in unary maps. 
Moreover, if we consider the second and third naturality equation with $p$ equals to the identity we get, using also Remark~\ref{rmk:skw-no-need-for-unary-circ_i}, the following description for $jg$ and $jq$:
\begin{center}
$jg=j(g\circ_11)=g\circ_1j1$ \\ \vspace{0.1cm}
$jq=j(q\circ_11)=q\circ_1j1$.
\end{center}
\end{rmk}

\begin{notation*}
We will denote tight $n$-ary multimaps as 
\begin{displaymath}
\begin{tikzpicture}[triangle/.style = {fill=yellow!50, regular polygon, regular polygon sides=3,rounded corners}]
%Multimaps
\path (2,2) node [triangle,draw,shape border rotate=-90, inner sep=2.5pt,label=178:$\vdots$] (m) {$f$};

%Nodes
\draw [dashed] (1,2.4) to node [above] {$a_1$} (m.135);
\draw [-] (1,1.6) to node [below] {$a_n$} (m.225);
\draw [-] (m) to node [above] {$b$} (3.5,2);
\end{tikzpicture}
\end{displaymath}
and loose $n$-ary multimaps as 
\begin{displaymath}
\begin{tikzpicture}[triangle/.style = {fill=yellow!50, regular polygon, regular polygon sides=3,rounded corners}]
%Multimaps
\path (2,2) node [triangle,draw,shape border rotate=-90, inner sep=4pt,label=178:$\vdots$] (m) {$p$};

%Nodes
\draw [-] (1,2.4) to node [above] {$a_1$} (m.135);
\draw [-] (1,1.6) to node [below] {$a_n$} (m.225);
\draw [-] (m) to node [above] {$b$} (3.5,2);
\end{tikzpicture}.
\end{displaymath}
\end{notation*}

\begin{defn}
Let $\mlc$ and $\md$ two short skew multicategories. A \emph{morphism of short skew multicategories} is a functor $F\colon\mlc\to\md$ together with natural families
\begin{center}
$F^t_i\colon\mlc^t_i(\overline{a};b)\to\md^t_i(F\overline{a};Fb)$ \hspace{0.5cm} for \hspace{0.5cm} $1\leq i\leq 4$ \\
$F^l_i\colon\mlc^l_i(\overline{a};b)\to\md^l_i(F\overline{a};Fb)$ \hspace{0.5cm} for \hspace{0.5cm} $0\leq i\leq 2$
\end{center}
such that $F_1^t\equiv F$ (with $F\overline{a}$ we mean the list $Fa_1,\ldots,Fa_n$). These families must commute with all substitution operators $\circ_i$ and $j$. 
\end{defn}

Short skew multicategories and their morphisms form a category $\fsmulti$. Naturally, there is a forgetful functor $U^s\colon\smulti\to\fsmulti$. 

\subsection{The Left Representable Case}\label{subsec:left-repr-case}
A \textbf{tight binary map classifier} for $a$ and $b$ consists of a representation of $\catc_2(a,b;-)\colon\catc \to \Set$ -- in other words, a tight binary map $\theta_{a,b}\colon a,b \to ab$ for which the induced function 
$$-\circ \theta_{a,b}:\catc^{t}_1(ab;c) \to \catc^{t}_n(a,b;c)$$
is a bijection for all $c$. It is \textbf{left universal} if, moreover, the induced function 
$$-\circ_1 \theta_{a,b}\colon\catc^{t}_n(ab,\overline{x};d) \to \catc^{t}_{n+1}(a,b,\overline{x};d)$$
is a bijection for $n=2,3$ and $\overline{x}$ a tuple of the appropriate length.  A \textbf{nullary map classifier} is a representation of $\catc^{l}_2(-;-)\colon\mlc \to \Set$ --- thus, a certain nullary map $u \in \catc^{l}_2(-;i)$.  It is \textbf{left universal} if the induced function 
$$-\circ_1 u:\catc^{t}_{n+1}(i,\overline{x};d) \to \catc^{l}_{n}(\overline{x};d)$$
is a bijection for each $d$ and tuple $\overline{x}$ of length 1 and 2. % \red (because no other loose maps)\black .

\begin{defn}
A short skew multicategory $\catc$ is said to be \textbf{left representable} if it admits left universal nullary and tight binary map classifiers.
\end{defn}

We will denote by $\fsmulti_{lr}$ the full subcategory of $\fsmulti$ with objects left representable short multicategories. Naturally, the forgetful functor $U^s\colon\smulti\to\fsmulti$ restricts to a forgetful functor $U_{lr}^s\colon\smulti_{lr}\to\fsmulti_{lr}$. 

\begin{notation*}
Let $\mlc$ be a short skew multicategory with a left universal tight binary and nullary classifier. Then we will use $(-)'\colon\mlc^t_n(\overline{a};b)\to\mlc^t_{n-1}(a_1a_2,a_3,\ldots,a_n;b)$ for the inverse of $-\circ_1\theta{a_1,a_2}$ and $(-)^\ast\colon\mlc^l_n(\overline{a};b)\to\mlc^t_{n+1}(i,\overline{a};b)$ for the inverse of $-\circ_1u$. More precisely, for any tight $n$-multimap $f$, $f'$ is the unique tight $(n-1)$-multimap such that $f'\circ_1\theta=f$ and, for any loose $n$-multimap $q$, $q^\ast$ the unique tight $(n+1)$-multimap such that $q^\ast\circ_1u=q$. 
\end{notation*}

We start proving a characterisation of morphisms between left representable short skew multicategories (Lemma~\ref{lemma:unary-loose-morph}), 
 which will be useful in the proof of Lemma~\ref{lemma:char-sk-morph-left-repr}.

\begin{lemma}\label{lemma:unary-loose-morph}
Let $\mlc$ and $\md$ be left representable short skew multicategories and let us consider two natural families $F^l_0\colon\mlc^l_0(\diamond;a)\to\md^l_0(\diamond;Fa)$ and $F^t_2\colon\mlc^t_2(a,b;c)\to\md^t_2(Fa,Fb;Fc)$. If we define, for any loose unary map $q$, $F_1^lq:=F_2^tq^\ast\circ_1F_0^lu$ (where $u$ is the universal nullary map in $\mlc$ and $q^\ast$ is the unique binary map such $q^\ast\circ_1u=q$), then for any $v\in\mlc_0^l(\diamond;a)$ and $f\in\mlc_2^t(a,b;c)$, we have
$$F_1^l(f\circ_1v)=F_2^t(f)\circ_1F_0^l(v).$$
\end{lemma}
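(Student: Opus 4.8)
The plan is to reduce the claimed identity $F_1^l(f \circ_1 v) = F_2^t(f) \circ_1 F_0^l(v)$ to the naturality of $F_2^t$ together with the defining equations of the starred operation $(-)^\ast$, using the left universality of the nullary map classifier $u$ in $\mlc$ to reason with $(f \circ_1 v)^\ast$ rather than with the loose map $f \circ_1 v$ directly. Concretely, I would unfold the definition of $F_1^l$ on the loose unary map $f \circ_1 v$, obtaining
\[
F_1^l(f \circ_1 v) = F_2^t\bigl((f \circ_1 v)^\ast\bigr) \circ_1 F_0^l(u),
\]
so that the whole problem becomes: identify $(f \circ_1 v)^\ast$, i.e. find the unique tight binary map $k$ with $k \circ_1 u = f \circ_1 v$.

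First I would compute a candidate for $(f \circ_1 v)^\ast$. Since $v \in \mlc_0^l(\diamond;a)$ is a loose nullary map and $u$ is the universal one, we have $v = v^\ast \circ_1 u$ for the unique tight unary (i.e. $\catc$-) map $v^\ast \colon i \to a$; here I use the notation $(-)^\ast$ from the excerpt for the inverse of $-\circ_1 u$. Then
\[
f \circ_1 v = f \circ_1 (v^\ast \circ_1 u) = (f \circ_1 v^\ast) \circ_1 u,
\]
where the second equality is an instance of the associativity axiom (\ref{eq:sk-ass-line}) in case (b) (binary $f$, nullary $u$, and the unary $v^\ast$ substituted in between). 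This exhibits $f \circ_1 v^\ast$ as a tight binary map whose precomposition with $u$ in position $1$ equals $f \circ_1 v$, and therefore by uniqueness
\[
(f \circ_1 v)^\ast = f \circ_1 v^\ast .
\]

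Next I would substitute this back and use naturality of $F_2^t$ in its first variable. We get
\[
F_1^l(f \circ_1 v) = F_2^t(f \circ_1 v^\ast) \circ_1 F_0^l(u) = \bigl(F_2^t(f) \circ_1 F(v^\ast)\bigr) \circ_1 F_0^l(u),
\]
where $F(v^\ast)$ denotes the underlying functor applied to the $\catc$-morphism $v^\ast$, and the second equality is naturality of $F_2^t$ in the first argument (precomposition by a unary map, in the sense of profunctoriality recalled in Remark~\ref{rmk:skw-no-need-for-unary-circ_i}). Applying the associativity axiom (\ref{eq:sk-ass-line}) in case (b) once more — now in $\md$, to regroup $(F_2^t(f) \circ_1 F(v^\ast)) \circ_1 F_0^l(u)$ as $F_2^t(f) \circ_1 (F(v^\ast) \circ_1 F_0^l(u))$ — reduces the claim to the identity $F(v^\ast) \circ_1 F_0^l(u) = F_0^l(v)$ in $\md_0^l(\diamond;Fc)$. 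But $F(v^\ast) \circ_1 F_0^l(u) = F_1^l(v^\ast \circ_1 u) = F_1^l(v)$ is exactly the defining equation of $F_1^l$ applied to the loose nullary map $v$ (here $v^\ast$ plays the role of $q^\ast$ and $v = v^\ast \circ_1 u$), using naturality of $F^l$ with respect to precomposition by the tight unary map $v^\ast$, and $F_1^l(v) = F_0^l(v)$ since on nullary maps $F_1^l$ and $F_0^l$ agree by definition (both send $v$ to $F_0^l(v)$ via the same construction). Chaining these equalities gives the result.

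The main obstacle — really the only subtle point — is making sure the bracket manipulations are legitimate instances of the axioms we are allowed to use: the short skew multicategory structure only provides the substitutions and associativity equations listed in the excerpt, so I must check that every regrouping is an instance of (\ref{eq:sk-ass-line}) in case (a) or (b) (or of profunctoriality / naturality of the $F$'s), rather than of some higher-arity associativity that is not assumed. In particular the step $f \circ_1 (v^\ast \circ_1 u) = (f \circ_1 v^\ast) \circ_1 u$ must be read with $v^\ast$ a tight unary map and $u$ a nullary map, which is precisely the pattern allowed by case (b); and the naturality step for $F_2^t$ must be the profunctorial precomposition, which is part of the data of a morphism of short skew multicategories. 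Once those bookkeeping checks are in place, the proof is a short chain of equalities with no genuine calculation.
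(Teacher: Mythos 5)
Your chain of equalities is the same as the paper's: unfold $F_1^l$, identify $(f\circ_1 v)^\ast=f\circ_1 v^\ast$, apply naturality of $F_2^t$, regroup in $\md$, and identify $F(v^\ast)\circ_1F_0^l(u)$ with $F_0^l(v)$. Each intermediate equality is true, but two of your justifications are wrong, and one of them is exactly the bookkeeping you singled out as the only subtle point. The regroupings $f\circ_1(v^\ast\circ_1 u)=(f\circ_1 v^\ast)\circ_1 u$ in $\mlc$ and $\bigl(F_2^t(f)\circ_1 F(v^\ast)\bigr)\circ_1 F_0^l(u)=F_2^t(f)\circ_1\bigl(F(v^\ast)\circ_1 F_0^l(u)\bigr)$ in $\md$ are \emph{not} instances of \eqref{eq:sk-ass-line} in case (b): that case reads $f\circ_i(g\circ_j h)=(f\circ_i g)\circ_{j+i-1}h$ with the middle map $g$ binary and $h$ nullary, whereas in your steps the middle map ($v^\ast$, resp.\ $F(v^\ast)$) is a tight \emph{unary} map, and composition with tight unary maps is not one of the listed substitutions but the profunctorial action of Remark~\ref{rmk:skw-no-need-for-unary-circ_i}. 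The identity you actually need is the dinaturality, in its first variable, of the substitution of a nullary map into a tight binary map --- part of the defining data of a short skew multicategory, and precisely what the paper cites at this point.

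Second, your concluding step does not typecheck as written: $F_1^l$ is defined only on loose \emph{unary} maps, so writing $F_1^l(v^\ast\circ_1 u)=F_1^l(v)$ applies $F_1^l$ to a nullary map, and the claim that $F_1^l$ and $F_0^l$ ``agree on nullary maps by definition'' is not part of the setup; invoking the defining equation of $F_1^l$ here is circular-looking and unnecessary. What is needed (and what the paper uses) is simply naturality of the family $F_0^l$ with respect to postcomposition by the tight unary map $v^\ast$, giving $F(v^\ast)\circ F_0^l(u)=F_0^l(v^\ast\circ u)=F_0^l(v)$. With these two citations corrected, your argument coincides with the paper's proof.
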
 

\begin{proof}
Let us consider $v\in\mlc_0^l(\diamond;a)$ and $f\in\mlc_2^t(a,b;c)$. Then
\begin{align*}
&F_1^l(f\circ_1v)
& 
\\
&=F_2^t(\,(f\circ_1v)^\ast\,)\circ_1F_0^l(u)
&  (\textrm{by definition of}\,F_1^l)
\\
&=F_2^t(\,f\circ_1v^\ast\,)\circ_1F_0^l(u)
&  (\textrm{because}\,(f\circ_1v)^\ast=f\circ_1v^\ast)
\\
&=(\,F_2^t(f)\circ_1F(v^\ast)\,)\circ_1F_0^l(u)
&  (\textrm{by naturality of}\,F_2^t)
\\
&=F_2^t(f)\circ_1(\,F(v^\ast)\circ_1F_0^l(u)\,)
&  (\textrm{by dinaturality sub. nullary into tight binary})
\\
&=F_2^t(f)\circ_1F_0^l(v^\ast\circ_1u)=F_2^t(f)\circ_1F_0^l(v)
&  (\textrm{by naturality of}\,F_0^l).
\end{align*}
\end{proof}

\begin{lemma}\label{lemma:char-sk-morph-left-repr}
Let $\mlc$ and $\md$ be left representable short skew multicategories. A morphism $F\colon\mlc\to\md$ is uniquely specified by:
\begin{itemize}
\item a functor $F\colon\catc\to\catd$ between the undelying categories of $\mlc$ and $\md$;\footnote{Recall that, by definition of short skew multicategory, morphisms in $\catc$ and $\catd$ are the tight unary maps of $\mlc$ and $\md$ respectively.} % (where we recall that $\catc$ and $\catd$ have as maps the tight unary maps of $\mlc$ and $\md$ respectively). 
\item natural families $F^l_0\colon\mlc^l_0(\diamond;a)\to\md^l_0(\diamond;Fa)$ and $F^t_2\colon\mlc^t_2(a,b;c)\to\md^t_2(Fa,Fb;Fc)$ such that $F$ commutes with   
\begin{equation}
\label{eq:lemma-sk-morph-nullary}
\begin{gathered}
%\begin{tikzpicture}[triangle/.style = {fill=yellow!50, regular polygon, regular polygon sides=3,rounded corners}]
%%Multimaps
%	%LHS
%\path
%	(2,1.5) node [triangle,draw,shape border rotate=-90,inner sep=1pt] (b) {$u$} 
%	(4,1) node [triangle,draw,shape border rotate=-90,label=135:$a$,label=230:$b$] (ab) {$f$};
%
%	
%%Connecting
%	%LHS
%\draw [-] (3.2,0.55) to (ab.225);
%\draw [dashed] (b) .. controls +(right:1cm) and +(left:1cm).. (ab.140);
%	%RHS
%	
%%Nodes
%	%LHS
%\draw [-] (ab) to node [above] {$c$} (5.5,1);
%
%	%RHS
%\end{tikzpicture}
%\hspace{2cm}
\begin{tikzpicture}[triangle/.style = {fill=yellow!50, regular polygon, regular polygon sides=3,rounded corners}]
%Multimaps
	%LHS
\path
	(2,.4) node [triangle,draw,shape border rotate=-90,inner sep=1pt] (b) {$v$} 
	(4,1) node [triangle,draw,shape border rotate=-90,label=135:$a$,label=230:$b$] (ab) {$f$};

%Connecting
	%LHS
\draw [dashed] (3.15,1.4) to (ab.140);
\draw [-] (b) .. controls +(right:1cm) and +(left:1cm).. (ab.227);
	%RHS
	
%Nodes
	%LHS
\draw [-] (ab) to node [above] {$c$} (5.5,1);

	%RHS
\end{tikzpicture} 
\end{gathered}
\end{equation}
%\begin{equation}
%\label{eq:lemma-sk-morph-loose-unary}
%\begin{gathered}
%\begin{tikzpicture}[triangle/.style = {fill=yellow!50, regular polygon, regular polygon sides=3,rounded corners}]
%%Multimaps
%	%LHS
%\path
%	(2,1.5) node [triangle,draw,shape border rotate=-90,inner sep=1pt] (b) {$q$} 
%	(4,1) node [triangle,draw,shape border rotate=-90,label=135:$a$,label=230:$b$] (ab) {$f$};
%
%	
%%Connecting
%	%LHS
%\draw [-] (1.2,1.5) to node [above] {$a_0$} (b); 
%\draw [-] (3.2,0.55) to (ab.225);
%\draw [dashed] (b) .. controls +(right:1cm) and +(left:1cm).. (ab.140);
%	%RHS
%	
%%Nodes
%	%LHS
%\draw [-] (ab) to node [above] {$c$} (5.5,1);
%
%	%RHS
%\end{tikzpicture}
%\end{gathered}
%\end{equation}
and such that if we define, for any ternary tight map $h\in\mlc^t_3(\overline{a};b)$, $F_3^th:=F^t_2h'\circ_1F^t_2\theta$, then $F$ also commutes with 
\begin{equation}
\label{eq:lemma-sk-morph-bin}
\begin{gathered}
\begin{tikzpicture}[triangle/.style = {fill=yellow!50, regular polygon, regular polygon sides=3,rounded corners}]
%Multimaps
\path 
	(2,0) node [triangle,draw,shape border rotate=-90,inner sep=0pt,label=135:$a$,label=230:$b$] (a) {$f$}
	(4,0.5) node [triangle,draw,shape border rotate=-90,label=135:$x$,label=230:$y$] (c) {$g$};

%Nodes
\draw [dashed] (3.1,0.8) to (c.139);
\draw [-] (a) .. controls +(right:1cm) and +(left:1cm).. (c.220);
\draw [dashed] (1.2,.25) to (a.140);
\draw [-] (c) to node [above] {$c$} (5,0.5);
\draw [-] (1.2,-0.25) to (a.225);
\end{tikzpicture} 
\end{gathered}
\end{equation}
and such that if we define, for any loose unary map $q$, $F_1^lq:=F_2^tq^\ast\circ_1F_0^lu$, then, for any tight unary map $p$,
\begin{equation}\label{eq:lemma-sk-morph-j}
F_1^lj(p)=jF_1^tp.
\end{equation}

\end{itemize}
\end{lemma}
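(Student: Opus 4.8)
The plan is to prove the two implications separately, with essentially all of the work in the converse direction. For the ``only if'' direction I would observe that a morphism $F$ of short skew multicategories restricts to a functor $F\colon\catc\to\catd$ and carries the natural families $F_0^l$ and $F_2^t$; since $F$ commutes with every substitution operator and with $j$, and left representability lets us write any tight ternary map $h$ as $h'\circ_1\theta$ and any loose unary map $q$ as $q^\ast\circ_1u$, the equalities $F_3^th=F_2^th'\circ_1F_2^t\theta$ and $F_1^lq=F_2^tq^\ast\circ_1F_0^lu$ (together with the analogous formulas for $F_4^t$ and $F_2^l$) hold automatically, so $F$ is completely recovered from the triple $(F,F_0^l,F_2^t)$, and this triple manifestly satisfies \eqref{eq:lemma-sk-morph-nullary}, \eqref{eq:lemma-sk-morph-bin} and \eqref{eq:lemma-sk-morph-j}.

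For the converse, given a triple $(F,F_0^l,F_2^t)$ with the stated properties, I would first reconstruct the missing families. By left universality of the classifiers the functions $-\circ_1\theta$ and $-\circ_1u$ are bijective in the relevant arities, so one may take $F_3^t$ and $F_1^l$ as in the statement and, in the same spirit, set $F_4^tk:=F_3^t(k')\circ_1F_2^t\theta$ and $F_2^lp:=F_3^t(p^\ast)\circ_1F_0^lu$. A convenient remark to record at this point is that, by uniqueness of the inverses $(-)'$ and $(-)^\ast$, any map exhibited as $P\circ_1u$ with $P$ tight satisfies $F^l(P\circ_1u)=F^t(P)\circ_1F_0^lu$, and dually for $\theta$; this is what permits ``cancelling'' the classifiers during the computations. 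Naturality of $F_3^t,F_4^t,F_1^l,F_2^l$ I expect to be routine, following from naturality of $F_2^t$ and $F_0^l$ and the (di)naturality of $\theta$ and $u$ --- for the loose unary family it is essentially Lemma~\ref{lemma:unary-loose-morph}.

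The substance of the proof is then to verify that these families commute with every substitution operator and with $j$. As in the proof of Lemma~\ref{lemma:char-morph-left-repr}, the method is uniform: one presents the higher-arity maps occurring in a given substitution as $\theta$- or $u$-composites of lower-arity ones, pushes the appropriate associativity law \eqref{eq:sk-ass-line} or \eqref{eq:sk-ass-not-line} through, and reduces to a strictly lower-dimensional instance, the base of the recursion being precisely the three hypothesised compatibilities. The purely tight cases are word for word those of \cite[Lemma~4.2.5]{LobbiaThesis}, so I would spell out only the new, loose ones. For instance, for substitution of a loose unary $q$ into the first slot of a tight binary $g$, writing $q=q^\ast\circ_1u$ and applying \eqref{eq:sk-ass-line} gives $g\circ_1q=(g\circ_1q^\ast)\circ_1u$, whence $F_2^l(g\circ_1q)=F_3^t(g\circ_1q^\ast)\circ_1F_0^lu=(F_2^tg\circ_1F_2^tq^\ast)\circ_1F_0^lu=F_2^tg\circ_1F_1^lq$, the middle equality being an instance of \eqref{eq:lemma-sk-morph-bin}; substitution of a tight binary into a loose unary is handled symmetrically, now via \eqref{eq:sk-ass-not-line} (case (c)). Compatibility with $j_1$ is exactly \eqref{eq:lemma-sk-morph-j}, and compatibility with $j_2$ then follows from it together with the identity $j_2g=g\circ_1j(1)$ of Remark~\ref{rmk:jg-jp} and the loose case just treated. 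The main obstacle I anticipate is combinatorial rather than conceptual: a short skew multicategory carries many substitution operators, and for each one must choose a presentation of the maps involved as classifier-composites compatible with the particular slot being substituted into, so that the correct associativity or commutativity instance applies; the loose substitutions are the delicate point, precisely because their outputs are loose maps on which $F_1^l$ and $F_2^l$ are defined only indirectly through $u$, and checking that this indirect definition is coherent with $j$ is exactly what hypothesis \eqref{eq:lemma-sk-morph-j} secures.
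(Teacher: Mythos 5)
Your proposal is correct and takes essentially the same route as the paper: you define $F_4^t$ and $F_2^l$ by the same formulas, reduce preservation of every substitution to the hypothesised compatibilities via the classifiers and the associativity axioms (deferring the purely tight cases exactly as the paper does to the analogue of Lemma~\ref{lemma:char-morph-left-repr}), your explicit computations for loose-unary-into-binary and binary-into-loose-unary are the paper's cases (v.1) and (vi), and your derivation of $j_2$-compatibility from \eqref{eq:lemma-sk-morph-j} via $jg=g\circ_1 j(1)$ is the paper's argument verbatim. The one small point worth flagging is that substitution of an \emph{arbitrary} nullary map into the first slot of a tight binary or ternary is not covered by your ``cancelling'' remark, which applies only to the classifier $u$ itself; this is precisely what the paper's Lemma~\ref{lemma:unary-loose-morph} (which you do cite, though under the heading of naturality) and its ternary analogue via (v.1) supply, and it also follows from your general reduction scheme.
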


%\blue
%Before proving this Lemma, let us underline the main difference with Lemma~\ref{lemma:char-morph-left-repr}. The characterisation is almost the same, but for the skew case we need to add the equation (\ref{eq:lemma-sk-morph-loose-unary}). \black 

\begin{proof} 
First of all, we need to define all of the natural families needed for a morphism in $\fsmulti_{lr}$. We start with $F_0^l$, $F_1^t\equiv F$ and $F_2^t$ given and we already defined $F_1^l$ and $F_3^t$. So, we have left to define $F_4^t$ and $F_2^l$:
\begin{itemize}
\item for $k\in\mlc_4^t(a,b,c,d;e)$ we define $F_4^tk:=F_3^tk'\circ_1F_2^t\theta$,
\item for $r\in\mlc_2^l(a,b;c)$ we define $F_2^lr:=F_3^tr^\ast\circ_1F_0^lu$. 
\end{itemize}
Then, we need to prove that these natural families commute with all substitutions, i.e. %\red(list in the same order as definition) \black 
\begin{enumerate}[(i)]
	\item Tight binary into tight binary/ternary.
	\item Tight ternary into tight binary. 
	\item Nullary into tight binary/ternary.
	\item Nullary into loose unary.
	\item Loose unary into tight binary.
	\item Tight binary into loose unary. 
\end{enumerate}
Almost all of the first three can be proved in an analogous way as in Lemma~\ref{lemma:char-morph-left-repr}. For instance, to prove that $F$ preserves $g\circ_1f$ for $f$ and $g$ binary, we use (\ref{eq:lemma-sk-morph-bin}) and naturality of $F_2^t$ (in tight maps). The only exceptions are the substitution of a nullary map into the first component of a binary/tight ternary. Lemma~\ref{lemma:unary-loose-morph} proves the nullary into tight binary case. To prove the nullary into ternary case instead, we will assume substitution of loose unary in the first variable of a tight binary, which we will refer to as (v.1). Similarly, we will refer to  substitution of loose unary in the second variable of a tight binary with (v.2). \black We can see that (v.1) follows similarly to (i), using the left universal nullary map classifier, (v.2) from (i) and (iii) and (vi) from (i). For all the explicit calculations, we refer the reader to \cite[Lemma~4.5.6]{LobbiaThesis}. \black 

Finally, let us prove that $F$ commutes with $j_1$ and $j_2$. The assumption (\ref{eq:lemma-sk-morph-j}) is literally commutativity of $F$ with $j_1$. So, let $f\in\mlc_2^t(a,b;c)$, 
$$F_2^ljf=F_2^l(\,f\circ_1 j1_a\,)=F_2^tf\circ _1F_1^l(j1_a)=F_2^tf\circ_1 jF_1^t1_{a}=F_2^tf\circ_1 j1_{Fa}=j(F_2^tf).$$ 
Here we have used naturality of $j$ (in the style of Remark~\ref{rmk:jg-jp}), assumption (\ref{eq:lemma-sk-morph-j}) and $F_1^t1_{a}=1_{Fa}$ (since $F_1^t$ is a functor). 
%Let $p\in\mlc_1^t(a;a')$, then
%$$F_1^ljp=F_1^l(\,p\circ j1_a\,)=F_1^tp\circ F_1^l(j1_a)=F_1^tp\circ j1_{Fa}=j(F_1^tp).$$
\end{proof}

\begin{rmk}
%\blue
Let us briefly see what happens to this characterisation when we consider $j=1$ both in $\mlc$ and $\md$, i.e. when they are left representable short \emph{multicategories}.  
First, condition \eqref{eq:lemma-sk-morph-j} implies that $F_1^t=F_1^l$
Hence, using Lemma~\ref{lemma:unary-loose-morph}, we see that $F$ preserves substitution $f\circ_1u$ for any binary map $f$ and any nullary map $u$, i.e. the first part of \eqref{eq:lemma-morph-nullary}. 
Furthermore, condition (\ref{eq:lemma-sk-morph-nullary}) corresponds to preserving subsitution of a nullary map in the second variable of a binary map, i.e. the second part of \eqref{eq:lemma-morph-nullary}, and (\ref{eq:lemma-sk-morph-bin}) corresponds to (\ref{eq:lemma-morph-bin}). 
%Furthermore, conditions (\ref{eq:lemma-sk-morph-nullary}) and (\ref{eq:lemma-sk-morph-bin}) correspond directly to preserving subsitution of a nullary map in the second variable of a binary map, i.e. the second part of \eqref{eq:lemma-morph-nullary}, and to (\ref{eq:lemma-morph-bin}), respectively. 
This shows how Lemma~\ref{lemma:char-sk-morph-left-repr} corresponds to Lemma~\ref{lemma:char-morph-left-repr}. 
\black 
%
%gives back the first part of (\ref{eq:lemma-morph-nullary}.a) in Lemma~\ref{lemma:char-morph-left-repr}. Furthermore, conditions (\ref{eq:lemma-sk-morph-nullary}) and (\ref{eq:lemma-sk-morph-bin}) correspond directly to the second part in  (\ref{eq:lemma-morph-nullary}) and (\ref{eq:lemma-morph-bin}), respectively. This shows how Lemma~\ref{lemma:char-sk-morph-left-repr} corresponds to Lemma~\ref{lemma:char-morph-left-repr}. 
%
%[Not super clear the (.a)/(.b), in general write this better.]
%\black 
\end{rmk} 

Now, let us recall that there is an equivalence $T^s\colon\smulti_{lr}\to\skmon$ between left representable skew multicategories and skew monoidal categories \cite[Theorem~6.1]{LackBourke:skew}.  

\begin{lemma}
\label{lemma:skew-fin-mult-to-mult}
Given a left representable short skew multicategory $\mlc$ we can construct a skew monoidal category $K^s\mlc$ in which:
\begin{itemize}
\item The tensor product $ab$ of two objects $a$ and $b$ is the tight binary map classifier;
\item The unit $i$ is the nullary map classifier;
\item Given tight unary maps $f\colon a \to b$ and $g\colon c \to d$, the tensor product $fg\colon ac \to bd$ is the unique morphism such that 
\begin{equation}
\label{eq:skew-prod-maps}
\begin{gathered}
\begin{tikzpicture}[triangle/.style = {fill=yellow!50, regular polygon, regular polygon sides=3,rounded corners}]
%Multimaps
	%LHS
\path (1,1) node [triangle,draw,shape border rotate=-90,label=135:$a$,label=230:$c$,inner sep=0pt] (c) {$\theta_{a,c}$} 
	(3,1) node [triangle,draw,shape border rotate=-90,inner sep=0pt] (ci) {$fg$}
	%RHS
	(9,0.5) node [triangle,draw,shape border rotate=-90,inner sep=1.5pt] (a') {$g$}
	(7,1.5) node [triangle,draw,shape border rotate=-90,inner sep=1pt] (b') {$f$} 
	(11,1) node [triangle,draw,shape border rotate=-90,label=135:$b$,label=230:$d$,inner sep=0pt] (c') {$\theta_{b,d}$};
	
%Connecting
	%LHS
\draw [-] (0,.6) to (c.220);
\draw [dashed] (0,1.35) to (c.140);
\draw [dashed] (c) to node [above] {$ac$} (ci);
	%RHS
\draw[dashed] (8,0.5) to node [below] {$c$} (a');
\draw[dashed] (6,1.5) to node [above] {$a$} (b');
\draw [-] (a') .. controls +(right:1cm) and +(left:1cm).. (c'.220);
\draw [dashed] (b') .. controls +(right:2cm) and +(left:1cm).. (c'.140);
	
%Nodes
	%LHS
\draw [-] (ci) to node [above] {$bd$} (4.25,1);

\node () at (5.25,1) {$=$};

	%RHS
	\draw [-] (c') to node [above] {$bd$} (12.5,1);
\end{tikzpicture}
\end{gathered}
\end{equation}
\item The associator $\alpha\colon (ab)c\to a(bc)$ is defined as the unique tight map such that 
\begin{equation}
\label{eq:univ-skew-alpha}
\begin{gathered}
\begin{tikzpicture}[triangle/.style = {fill=yellow!50, regular polygon, regular polygon sides=3,rounded corners}]
%Multimaps
	%LHS
\path (0,0.5) node [triangle,draw,shape border rotate=-90,inner sep=0pt] (a) {$\theta_{b,c}$} 
	(2.5,1) node [triangle,draw,shape border rotate=-90,inner sep=-1.5pt,label=135:$a$,label=230:$bc$] (c) {$\theta_{a,bc}$} 
	%RHS
	(7,1.35) node [triangle,draw,shape border rotate=-90,inner sep=0pt] (b') {$\theta_{a,b}$} 
	(9,1) node [triangle,draw,shape border rotate=-90,inner sep=-1.5pt,label=135:$ab$,label=230:$c$] (c') {$\theta_{ab, c}$}
	(11.5,1) node [triangle,draw,shape border rotate=-90,inner sep=1pt] (f) {$\alpha$};
	
%Connecting
	%LHS
\draw [-] (a) to (c.230);
\draw [dashed] (1,1.45) to (c.135);
	%RHS
\draw [-] (7.9,0.65) to (c'.220);
\draw [dashed] (b') to (c'.140);
	
%Nodes
	%LHS
\draw [-] (c) to node [above] {$a(bc)$} (4,1);
\draw [dashed] (-.85,0.825) to node [above] {$b$} (a.140);
\draw [-] (-.85,0.175) to node [below] {$c$} (a.220);

\node () at (5,1) {$=$};

	%RHS
\draw [dashed] (c') to node [above] {$(ab)c$} (f);
\draw [-] (f) to node [above] {$a(bc)$} (13,1);
\draw [dashed] (6.15,1.7) to node [above] {$a$} (b'.140);
\draw [-] (6.15,1) to node [below] {$b$} (b'.220);
\end{tikzpicture}
\end{gathered}
\end{equation}
\item The left unit map $\lambda\colon ia\to a$ is defined as the unique tight unary map such that 
\begin{equation}
\label{eq:univ-skew-lamda}
\begin{gathered}
\begin{tikzpicture}[triangle/.style = {fill=yellow!50, regular polygon, regular polygon sides=3,rounded corners}]
%Multimaps
	%LHS
\path (2.5,1) node [triangle,draw,shape border rotate=-90,inner sep=-0.5pt] (c) {$j(1_a)$} 
	%RHS
	(6.5,1.35) node [triangle,draw,shape border rotate=-90,inner sep=1pt] (b') {$u$} 
	(8.5,1) node [triangle,draw,shape border rotate=-90,label=135:$i$,label=230:$a$,inner sep=-0.5pt] (c') {$\theta_{i,a}$}
	(10.5,1) node [triangle,draw,shape border rotate=-90] (c'') {$\lambda$};
	
%Nodes
	%LHS
\draw [-] (c) to node [above] {$a$} (4,1);
\draw [-] (1.5,1) to node [above] {$a$} (c);

	\node () at (5,1) {$=$};

	%RHS
\draw [-] (7.5,0.65) to (c'.223);
\draw [dashed] (b') to (c'.137);
\draw [dashed] (c') to node [above] {$ia$} (c'');
\draw [-] (c'') to node [above] {$a$} (12,1);
\end{tikzpicture}
\end{gathered}
\end{equation}
\item The right unit map $\rho\colon a\to ai$ is the tight unary map defined as
\begin{equation}
\label{eq:univ-skew-rho}
\begin{gathered}
\begin{tikzpicture}[triangle/.style = {fill=yellow!50, regular polygon, regular polygon sides=3,rounded corners}]
%Multimaps
	%LHS
\path (0.5,0.55) node [triangle,draw,shape border rotate=-90,inner sep=1pt] (a) {$u$} 
	(2.5,1) node [triangle,draw,shape border rotate=-90,inner sep=0pt,label=135:$a$,label=230:$i$] (c) {$\theta_{a,i}$};

%Nodes
	%LHS
\draw [-] (a) to (c.230);
\draw [dashed] (1.2,1.4) to (c.135);
\draw [-] (c) to node [above] {$ai$} (4,1);

\end{tikzpicture}
\end{gathered}
\end{equation}
\end{itemize}
\end{lemma}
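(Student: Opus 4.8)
The plan is to follow the proof of Lemma~\ref{lemma:k-on-obj} almost verbatim, the only genuinely new work being to keep track of the tight/loose distinction. First I would record that $\otimes\colon\catc^{2}\to\catc$, $(a,b)\mapsto ab$, is a functor: for tight unary maps $f\colon a\to b$ and $g\colon c\to d$ the map $fg$ is well defined because $-\circ\theta_{a,c}\colon\mlc^{t}_1(ac;bd)\to\mlc^{t}_2(a,c;bd)$ is a bijection, its value lands among the tight unary maps (as it must, since $\catc$ is the category of tight unary maps), and preservation of identities and composites follows by uniqueness together with profunctoriality of tight binary multimaps. In the same way $\alpha$ and $\lambda$ are well-defined tight unary maps, being uniquely determined by \eqref{eq:univ-skew-alpha} and \eqref{eq:univ-skew-lamda} via left universality of $\theta$ and $u$, while $\rho$ is the composite $\theta_{a,i}\circ_2 u$ of \eqref{eq:univ-skew-rho}; and each of $\alpha,\lambda,\rho$ is natural in its object variables by naturality/dinaturality of $\theta$, $u$ and of the substitution operations in the relevant slots, exactly as in the classical case.

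Next I would verify the five skew monoidal axioms, indexed as usual by the words $abcd$, $iab$, $aib$, $abi$, $ii$. The recurring device is that, to show two parallel tight unary maps agree, it suffices to check the equality after precomposing with the appropriate iterated classifier; this works exactly as in the proof of Proposition~\ref{prop:universal}, whose skew analogue gives that $\theta_{ab,c}\circ_1\theta_{a,b}$ and $\theta_{(ab)c,d}\circ_1\theta_{ab,c}\circ_1\theta_{a,b}$ are ternary and $4$-ary classifiers and that $\theta_{i,a}\circ_1 u$ is a unary classifier (this last is a tight-into-loose composite, hence a \emph{loose} unary classifier, which is precisely what left universality of $u$ provides). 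Each monoidal axiom then reduces to a finite chain built from the substitution axioms (\ref{eq:sk-ass-line}) and (\ref{eq:sk-ass-not-line}) in cases (a)--(d), the defining equations of $\alpha,\lambda,\rho$, naturality and dinaturality of substitution, and the identity laws of Remark~\ref{rmk:skw-no-need-for-unary-circ_i}. Concretely, the pentagon $abcd$ involves only tight substitutions and consumes the (a) cases; the left unit axiom $iab$ is literally the computation displayed in Lemma~\ref{lemma:k-on-obj}, using (\ref{eq:sk-ass-line}.b) and (\ref{eq:sk-ass-not-line}.c), with $1_a$ read as $j(1_a)$ and the string diagrams redrawn in the tight/loose conventions of this section; $aib$ and $abi$ similarly reduce to nullary-into-tight-binary steps, consuming (\ref{eq:sk-ass-line}.b) and (\ref{eq:sk-ass-not-line}.c) as needed; and $ii$ uses (\ref{eq:sk-ass-not-line}.d). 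I would present the $iab$ computation in full and refer the reader to the author's thesis \cite{LobbiaThesis} for the remaining axioms.

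The hard part is not the equational content, which is the same as in Lemma~\ref{lemma:k-on-obj}, but the type bookkeeping: the nullary classifier $u$ is a \emph{loose} map, so composites such as $\theta_{i,a}\circ_1 u$ and $\theta_{i,ab}\circ_1 u$ are a priori only loose, and the reductions above are meaningful only because left universality of $u$ asserts that $-\circ_1 u\colon\mlc^{t}_{n+1}(i,\overline x;d)\to\mlc^{l}_n(\overline x;d)$ is a bijection for $\overline x$ of length $0,1,2$ (the length-$0$ case being just the classifier property). One must likewise invoke the naturality of $j$ (Remark~\ref{rmk:jg-jp}), the dinaturality of substitution, and postcomposition of loose maps by tight unary maps only in the slots where these operations are actually postulated for a short skew multicategory; every step of the $iab$ computation, and of the others, respects this discipline, and once it is in place the argument goes through unchanged.
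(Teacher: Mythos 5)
Your proposal is correct and follows essentially the same route as the paper: functoriality of the tensor from the universal property of the tight binary classifier plus profunctoriality, the purely tight axioms (pentagon, right unit) verified exactly as in Lemma~\ref{lemma:k-on-obj}, and the remaining axioms handled by the same computations with the extra ingredient being naturality of $j$, forced by the skew defining equation $(\lambda\circ\theta_{i,a})\circ_1 u=j(1_a)$, with the remaining details deferred to the thesis. Your explicit attention to the tight/loose bookkeeping and to left universality of the loose nullary classifier matches what the paper's proof implicitly relies on.
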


\begin{proof}
Functoriality of $\catc^2\to\catc:(a,b)\mapsto ab$ follows from the universal property of the tight binary map classifier and profunctoriality of $\mlc_2^t(-;-)$. It remains to verify the five axioms for a skew monoidal category. 
%\[\begin{tikzcd}
%	&& {(ab)(cd)} \\
%	{((ab)c)d} &&&& {a(b(cd))} \\
%	& {(a(bc))d} && {a((bc)d)}
%	\arrow["{\alpha_{a,b,c}d}"', from=2-1, to=3-2]
%	\arrow["{\alpha_{a,bc,d}}"', from=3-2, to=3-4]
%	\arrow["{a\alpha_{b,c,d}}"', from=3-4, to=2-5]
%	\arrow["{\alpha_{ab,c,d}}", from=2-1, to=1-3]
%	\arrow["{\alpha_{a,b,cd}}", from=1-3, to=2-5]
%\end{tikzcd}\]
Some of them have the same proof as short multicategories, we will give details only of the ones where we need to use new axioms. For instance, in the pentagon axiom (\ref{ax:mon-pent}) all the maps are tight, so the proof does not change (naturality and profunctoriality are defined using tight unary maps). Also the right unit axiom (\ref{ax:mon-rho}) has the same proof. 
%We will prove the other three cases. 
\black For the rest of the axioms, the only thing that changes from the non-skew case is that we need to use also the naturality of $j$. This comes up since for a skew multicategory $\lambda_a\circ\theta_{i,a}\circ_1u=j1_a$ (by definition of $\lambda$). We refer the interested reader to \cite[Lemma~4.5.8]{LobbiaThesis} for the details. \black 

\end{proof}

Before defining the functor $K^s\colon\fsmulti_{lr}\to\skmon$, we prove the following easy lemma, which is the counterpart of Lemma~\ref{lem:bij}.  

\begin{lemma}\label{lemma:sk-bij}
Consider $\mlc, \md \in \fsmulti_{lr}$ and a functor $F\colon \cc \to \cd$.  There is a bijection between natural families, with $x\in\lbrace t,l\rbrace$,
$$F^x_{\overline{a},b}\colon\mlc^x_i(\overline{a};b)\to\md^x_i(F\overline{a};Fb)$$
and natural families
$$f^x_{\overline{a}}\colon m^x(F\overline{a})\to F(m^x\overline{a})$$ 
where $m^x\overline{a}$ and $m^x(F\overline{a})$ are the $n$-ary $x$-map classifiers of the appropriate arity.
\end{lemma}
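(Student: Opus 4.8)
The plan is to transcribe the proof of Lemma~\ref{lem:bij}, the one new ingredient being the construction, out of the left universal tight binary classifier $\theta_{a,b}$ and the left universal nullary classifier $u$, of $x$-ary map classifiers of all the arities occurring in the statement. First I would establish the skew analogue of Proposition~\ref{prop:universal}: for $x=t$ one takes $\theta^t_{a,b}=\theta_{a,b}$, $\theta^t_{a,b,c}=\theta_{ab,c}\circ_1\theta_{a,b}$ and $\theta^t_{a,b,c,d}=\theta_{(ab)c,d}\circ_1\theta_{ab,c}\circ_1\theta_{a,b}$, with $m^t\overline{a}$ the corresponding (left-associated) iterated tensor product; for $x=l$ one takes $\theta^l_\diamond=u$, $\theta^l_a=\theta_{i,a}\circ_1u\in\mlc^l_1(a;ia)$ and $\theta^l_{a,b}=\theta_{ia,b}\circ_1\theta_{i,a}\circ_1u\in\mlc^l_2(a,b;(ia)b)$, so that $m^l\diamond=i$, $m^la=ia$ and $m^l(a,b)=(ia)b$. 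In each case the induced function $-\circ_1\theta^x_{\overline{a}}\colon\catc(m^x\overline{a},b)=\mlc^t_1(m^x\overline{a};b)\to\mlc^x_i(\overline{a};b)$ is a bijection, natural in $b$: it is a composite of the bijections supplied by the defining universal properties of $\theta_{a,b}$ and $u$ together with their left universality against tuples of length $1$ and $2$. This is exactly the input available, since in a short skew multicategory the only loose maps are nullary, unary and binary, and since substitution of a loose unary map into the first slot of a tight binary one is among the prescribed operations; the identification of $-\circ_1\theta^x_{\overline{a}}$ with the relevant composite uses the associativity axioms~(\ref{eq:sk-ass-line}) and profunctoriality. (When $x=t$, $i=1$ there is nothing to prove, both families being determined by $F$.)

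Granting this, the claimed bijection is governed by the square~\eqref{eq:yoneda} from the proof of Lemma~\ref{lem:bij}, with $\mlc_i$, $\md_i$, $m$, $\theta$ replaced by $\mlc^x_i$, $\md^x_i$, $m^x$, $\theta^x$: the two vertical arrows are the natural bijections just produced, and the lower horizontal one is $g\mapsto Fg\circ f^x_{\overline{a}}$. By the Yoneda lemma the lower arrow $\catc(m^x\overline{a},-)\to\catd(m^x(F\overline{a}),F-)$ is determined by, and determines, a morphism $f^x_{\overline{a}}\colon m^x(F\overline{a})\to F(m^x\overline{a})$; conjugating by the vertical bijections turns this into the asserted bijection with the natural families $F^x_{\overline{a},-}$. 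Naturality in $\overline{a}$ on the two sides corresponds because the classifiers $\theta^x_{\overline{a}}$ are themselves natural (dinatural) in $\overline{a}$, the precise condition on $f^x_{\overline{a}}$ being the loose/tight analogue of the one written out in Appendix~\ref{app:nat-of-f_a}.

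I expect the only genuine work to lie in the first paragraph, in the bookkeeping for the loose classifiers: checking that left universality of $u$ against tuples $\overline{x}$ of length $1$ and $2$ is precisely what makes $-\circ_1\theta^l_a$ and $-\circ_1\theta^l_{a,b}$ bijective, and that the tight binary and ternary classifiers behave as in Proposition~\ref{prop:universal}. The remainder is a verbatim repetition of the argument for Lemma~\ref{lem:bij}, so I would keep the written proof correspondingly short, pointing to that lemma for the Yoneda manipulation.
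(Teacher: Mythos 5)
Your proposal is correct and follows essentially the same route as the paper: the proof there is exactly the Yoneda square you describe, with the vertical bijections $-\circ_1\theta^x_{\overline{a}}$ and naturality in $\overline{a}$ inherited from the classifiers. The paper leaves the construction of the higher-arity tight and loose classifiers (your first paragraph, the skew analogue of Proposition~\ref{prop:universal} with $m^l\diamond=i$, $m^la=ia$, $m^l(a,b)=(ia)b$) implicit, so your spelling it out is consistent with, not divergent from, its argument.
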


\begin{proof}
The bijection is governed by the following diagram
\begin{equation}\label{eq:yoneda}
% https://q.uiver.app/#q=WzAsNCxbMCwwLCJcXG1sY154X2koXFxvdmVybGluZXthfTstKSJdLFsyLDAsIlxcbWReeF9pKEZcXG92ZXJsaW5le2F9O0YtKSJdLFswLDEsIlxcbWxjXnRfMShtXnhcXG92ZXJsaW5le2F9LC0pIl0sWzIsMSwiXFxtZF50XzEobV54KEZcXG92ZXJsaW5le2F9KSxGLSkiXSxbMCwxLCJGXnhfe1xcb3ZlcmxpbmV7YX0sLX0iXSxbMiwwLCItIFxcY2lyY18xIFxcdGhldGFeeF97XFxvdmVybGluZXthfX0iXSxbMiwzLCJGLSBcXGNpcmMgZl54X3tcXG92ZXJsaW5le2F9fSIsMl0sWzMsMSwiLSBcXGNpcmNfMSBcXHRoZXRhXnhfe0ZcXG92ZXJsaW5le2F9fSIsMl1d
\begin{tikzcd}[ampersand replacement=\&]
	{\mlc^x_i(\overline{a};-)} \&\& {\md^x_i(F\overline{a};F-)} \\
	{\mlc^t_1(m^x\overline{a},-)} \&\& {\md^t_1(m^x(F\overline{a}),F-)}
	\arrow["{F^x_{\overline{a},-}}", from=1-1, to=1-3]
	\arrow["{- \circ_1 \theta^x_{\overline{a}}}","\cong"', from=2-1, to=1-1]
	\arrow["{F- \circ f^x_{\overline{a}}}"', from=2-1, to=2-3]
	\arrow["{- \circ_1 \theta^x_{F\overline{a}}}"',"\cong", from=2-3, to=1-3]
\end{tikzcd}
\end{equation}
in which the vertical arrows are natural bijections and the lower horizontal arrow corresponds to the upper one using the Yoneda lemma. \black 
\end{proof}

\begin{rmk}
\label{rmk:skew-K-on-map}
Given a morphism $F\colon \mlc \to \md \in \fsmulti_{lr}$ we obtain, on applying the above lemma, natural families of tight maps $f_{2}\colon FaFb \to F(ab)$ and $f_0\colon i \to Fi$ defining the \emph{data} for a lax monoidal functor $K^sF\colon K^s\mlc \to K^s\md$. That it is a lax monoidal functor follows directly from the following result.

Explicitly, $f_{2}\colon FaFb \to F(ab)$ is the unique morphism such that $f_{2} \circ_{1} \theta_{Fa,Fb} = F^t_2(\theta_{a,b})$ whilst $f_{0}$ is the unique morphism such that $f_{0} \circ u = F_0^lu$.
\end{rmk}

\begin{prop}\label{prop:sk-full-faith}
With the definition on objects given in Lemma~\ref{lemma:skew-fin-mult-to-mult} and on morphisms in Lemma~\ref{rmk:skew-K-on-map}, we obtain a fully faithful functor $K^s\colon\fsmulti_{lr} \to\skmon$. 
\end{prop}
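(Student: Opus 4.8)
The plan is to run, in the skew setting, exactly the argument used for Proposition~\ref{prop:full-faith}. First, by Lemma~\ref{lemma:char-sk-morph-left-repr} a morphism $F\in\fsmulti_{lr}(\mlc,\md)$ is uniquely determined by a functor $F\colon\catc\to\catd$ together with natural families $F^l_0$ and $F^t_2$ subject to three conditions: that $F$ commutes with the substitution~(\ref{eq:lemma-sk-morph-nullary}), that the induced $F^t_3$ commutes with~(\ref{eq:lemma-sk-morph-bin}), and that the induced $F^l_1$ satisfies the $j$-compatibility~(\ref{eq:lemma-sk-morph-j}). Next, by Lemma~\ref{lemma:sk-bij}, applied once with $(x,i)=(t,2)$ and once with $(x,i)=(l,0)$, the pair $(F^t_2,F^l_0)$ corresponds bijectively to a pair of natural families $f_2\colon FaFb\to F(ab)$ and $f_0\colon i\to Fi$ — which is precisely the underlying data of a lax monoidal functor $K^s\mlc\to K^s\md$, with $f_2,f_0$ characterised as in Remark~\ref{rmk:skew-K-on-map}. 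Hence the whole statement reduces to showing that $(F^t_2,F^l_0)$ satisfies the three conditions of Lemma~\ref{lemma:char-sk-morph-left-repr} if and only if $(f_2,f_0)$ satisfies the three lax monoidal functor axioms~(\ref{eq:mon-moprh-alpha}), (\ref{eq:mon-morph-lambda}) and~(\ref{eq:mon-morph-rho}); this yields the bijection $K^s_{\mlc,\md}\colon\fsmulti_{lr}(\mlc,\md)\to\skmon(K^s\mlc,K^s\md)$, and functoriality of $K^s$ then follows routinely from the explicit formulas for $f_2$ and $f_0$ (identities and composites of short skew multifunctors go to identities and composites of lax monoidal functors).

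To establish the equivalence of the two families of axioms I would translate each condition across the defining bijections of Lemma~\ref{lemma:sk-bij} (that is, $f_2\circ_1\theta_{Fa,Fb}=F^t_2(\theta_{a,b})$ and $f_0\circ u=F^l_0(u)$) and then compare, in the spirit of Table~\ref{tab:axioms-lax-mon}. The expected dictionary is: condition~(\ref{eq:lemma-sk-morph-bin}) corresponds to the associator axiom~(\ref{eq:mon-moprh-alpha}); condition~(\ref{eq:lemma-sk-morph-nullary}) corresponds to the right unit axiom~(\ref{eq:mon-morph-rho}); and the $j$-compatibility~(\ref{eq:lemma-sk-morph-j}) corresponds to the left unit axiom~(\ref{eq:mon-morph-lambda}), the last one using that in $K^s\mlc$ the left unit map is defined by $\lambda\circ\theta_{i,a}\circ_1u=j(1_a)$, see~(\ref{eq:univ-skew-lamda}). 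Each direction is a computation with left representability — one precomposes an equality of unary maps with the universal maps $u$ and $\theta$ to reduce it to an identity of higher multimaps — together with the associativity axioms~(\ref{eq:sk-ass-line}), (\ref{eq:sk-ass-not-line}) and the (di)naturality of the $\circ_i$ and of $j$. In the write-up I would display one representative case in full and refer to \cite{LobbiaThesis} for the remaining ones, in keeping with the earlier proofs in this section.

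The main obstacle is the left unit axiom. In the classical case (Proposition~\ref{prop:full-faith}) the map $\lambda$ is invertible and the corresponding morphism condition is plain preservation of substitution of a nullary map into the first slot of the binary classifier; in the skew case $\lambda$ is no longer invertible, it is built out of $j$, and preservation of substitution of a loose unary (hence nullary) map into the first slot of a tight binary is already automatic by Lemma~\ref{lemma:unary-loose-morph}. So the genuine content of the left unit axiom for $K^sF$ must be extracted purely from the $j$-compatibility~(\ref{eq:lemma-sk-morph-j}), and making this precise requires a careful use of the naturality equations for $j$ collected in Remark~\ref{rmk:jg-jp} — in particular the description of $j(1_a)$ via substitution and the interaction of $j$ with substitution of a nullary map — together with the characterising property of $f_0$. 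Once this translation is pinned down, the associator and right unit cases reduce to essentially the same manipulations as in the non-skew setting.
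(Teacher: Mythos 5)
Your proposal is correct and follows essentially the same route as the paper's proof: reduce via Lemma~\ref{lemma:char-sk-morph-left-repr} and Lemma~\ref{lemma:sk-bij} to matching the three morphism conditions with the three lax monoidal axioms, with the dictionary (\ref{eq:lemma-sk-morph-bin})$\leftrightarrow$(\ref{eq:mon-moprh-alpha}), (\ref{eq:lemma-sk-morph-nullary})$\leftrightarrow$(\ref{eq:mon-morph-rho}), (\ref{eq:lemma-sk-morph-j})$\leftrightarrow$(\ref{eq:mon-morph-lambda}). You also correctly single out the left unit axiom as the only genuinely new case, handled exactly as in the paper via the defining property (\ref{eq:univ-skew-lamda}) of $\lambda$, Lemma~\ref{lemma:unary-loose-morph} and the naturality of $j$.
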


\begin{proof}
The proof is quite similar to the proof of Proposition~\ref{prop:full-faith}. Using Lemma~\ref{lemma:char-sk-morph-left-repr} and~\ref{lemma:sk-bij}, it is enough to prove that $(F_0^l,F_2^t)$ satisfy the equations (\ref{eq:lemma-sk-morph-nullary}, \ref{eq:lemma-sk-morph-bin}, \ref{eq:lemma-sk-morph-j}) if and only if $(f_0,f_2)$ satisfy the equations for a lax monoidal functor (\ref{eq:mon-moprh-alpha}, \ref{eq:mon-morph-lambda}, \ref{eq:mon-morph-rho}). Equation ~(\ref{eq:lemma-sk-morph-bin}) corresponds to the associator axiom (\ref{eq:mon-moprh-alpha}) for a lax monoidal functor. Since all maps involved are tight, this follows by the same proof in Proposition~\ref{prop:full-faith}. In a similar way, we can prove how (\ref{eq:lemma-sk-morph-nullary}) corresponds to the right unit axiom (\ref{eq:mon-morph-rho}) of a lax monoidal functor. The only part that changes significantly is the one regarding the left unit axiom (\ref{eq:mon-morph-lambda}). We need to change the proof because the substitution of a nullary map into the first variable of a tight binary gives a \emph{loose unary} map. Let us start proving that if $(F_0^l,F_2^t)$ satisfy (\ref{eq:lemma-sk-morph-j}), then $(f_0,f_2)$ satisfy the left unit axiom (\ref{eq:mon-morph-lambda}). We will prove this axioms showing that $F\lambda\circ f_2\circ f_0Fa$ satisfy the defining property (\ref{eq:univ-skew-lamda}) of $\lambda$. 
\begin{center}
\begin{tikzpicture}[triangle/.style = {fill=yellow!50, regular polygon, regular polygon sides=3,rounded corners}]
%Multimaps
\path
	(3,1) node [triangle,draw,shape border rotate=-90,inner sep=3.5pt] (c') {$u$}
	(5.5,0.5) node [triangle,draw,shape border rotate=-90,inner sep=3.5pt,label=135:$i$,label=230:$Fa$] (d) {$\theta$}
	(8,0.5) node [triangle,draw,shape border rotate=-90,inner sep=-2pt] (f) {$f_0Fa$}
	(10.75,0.5) node [triangle,draw,shape border rotate=-90,inner sep=2pt] (f') {$f_2$}
	(13,0.5) node [triangle,draw,shape border rotate=-90,inner sep=0pt] (f'') {$F\lambda$};
	
	%Nodes
\draw [dashed] (c') .. controls +(right:1cm) and +(left:1cm).. (d.140);
\draw [-] (4.4,0.1) to (d.225);
\draw [dashed] (d) to node [above] {$iFa$} (f);%
\draw [dashed] (f) to node [above] {$FiFa$} (f');%
\draw [dashed] (f') to node [above] {$F(ia)$} (f'');% 
\draw [-] (f'') to node [above] {$Fa$} (15,0.5);
\end{tikzpicture} \\
by definition of $f_0\cdot Fa$, $f_0$ and $f_2$ \\
\begin{tikzpicture}[triangle/.style = {fill=yellow!50, regular polygon, regular polygon sides=3,rounded corners}]
%Multimaps
\path
	(3.5,0.75) node {$=$}
	(5,1) node [triangle,draw,shape border rotate=-90,inner sep=0pt] (f) {$Fu$}
	(7.5,0.5) node [triangle,draw,shape border rotate=-90,inner sep=0pt,label=135:$Fi$,label=230:$Fa$] (d) {$F\theta$}
	(10,0.5) node [triangle,draw,shape border rotate=-90,inner sep=0pt] (f'') {$F\lambda$};
	
	%Nodes
\draw [dashed] (f) .. controls +(right:1cm) and +(left:1cm).. (d.140);
\draw [-] (6.25,0.1) to (d.225);
\draw [dashed] (d) to node [above] {$F(ia)$} (f'');%
\draw [-] (f'') to node [above] {$Fa$} (11.5,0.5);
\end{tikzpicture} 
\end{center}
\begin{align*}
&(\,F\lambda\circ F^t_2\theta\,)\circ_1F^l_0u 
& \\
&= F^t_2(\lambda\circ\theta)\circ_1F_0^lu
& (\textrm{by naturality of}\,F^t_2) 
\\ 
&= F_1^l(\,(\lambda\circ\theta_{i,a})\circ_1u\,)
& (\textrm{by Lemma~\ref{lemma:unary-loose-morph}})
 \\
&= F_1^l(j1_a)
& (\textrm{by defining property (\ref{eq:univ-skew-lamda}) of}\,\lambda)
\\ 
&= j1_{Fa}
& (\textrm{by assumption (\ref{eq:lemma-sk-morph-j})}).
\end{align*}
On the other hand, let us assume $(f_0,f_2)$ satisfy the left unit axiom (\ref{eq:mon-morph-lambda}). Then, by universal property of $\lambda$, $j1_{Fa}$ is equal to 
\begin{center}
\begin{tikzpicture}[triangle/.style = {fill=yellow!50, regular polygon, regular polygon sides=3,rounded corners}]
%Multimaps
\path
	(5,1) node [triangle,draw,shape border rotate=-90,inner sep=0pt] (f) {$Fu$}
	(7.5,0.5) node [triangle,draw,shape border rotate=-90,inner sep=3.5pt,label=135:$i$,label=230:$Fa$] (d) {$\theta$}
	(10,0.5) node [triangle,draw,shape border rotate=-90,inner sep=3.5pt] (f'') {$\lambda$};
	
	%Nodes
\draw [dashed] (f) .. controls +(right:1cm) and +(left:1cm).. (d.140);
\draw [-] (6.25,0.1) to (d.225);
\draw [dashed] (d) to node [above] {$iFa$} (f'');%
\draw [-] (f'') to node [above] {$Fa$} (11.5,0.5);
\end{tikzpicture} \\
by left unit axiom (\ref{eq:mon-morph-lambda}) \\
\begin{tikzpicture}[triangle/.style = {fill=yellow!50, regular polygon, regular polygon sides=3,rounded corners}]
%Multimaps
\path
	(-1.5,0.75) node {$=$}
	(0,1) node [triangle,draw,shape border rotate=-90,inner sep=3.5pt] (u) {$u$}
	(4,0.5) node [triangle,draw,shape border rotate=-90,inner sep=-1.5pt] (f0) {$f_0Fa$}
	(2,0.5) node [triangle,draw,shape border rotate=-90,inner sep=3.5pt,label=135:$i$,label=230:$Fa$] (t) {$\theta$}
	(6.5,0.5) node [triangle,draw,shape border rotate=-90,inner sep=2pt] (v) {$f_2$}
	(9,0.5) node [triangle,draw,shape border rotate=-90,inner sep=1pt] (f2) {$F\lambda$};
	%Nodes
\draw[-] 
	(0.75,0.1) to (t.225)
	(f2) to node [above] {$Fa$} (11,0.5);
	
\draw[dashed]
	(t) to node [above] {$iFa$} (f0)
	(f0) to node [above] {$FiFa$} (v)
	(u) .. controls +(right:1cm) and +(left:1cm).. (t.140)
	(v) to node [above] {$F(ia)$} (f2);
\end{tikzpicture} \\
by definition of $f_0Fb$ \\
\begin{tikzpicture}[triangle/.style = {fill=yellow!50, regular polygon, regular polygon sides=3,rounded corners}]
%Multimaps
\path
	(-1.5,0.75) node {$=$}
	(0,1) node [triangle,draw,shape border rotate=-90,inner sep=3.5pt] (u) {$u$}
	(2,1) node [triangle,draw,shape border rotate=-90,inner sep=2pt] (f0) {$f_0$}
	(4,0.5) node [triangle,draw,shape border rotate=-90,inner sep=3.5pt,label=135:$Fi$,label=230:$Fa$] (t) {$\theta$}
	(6.5,0.5) node [triangle,draw,shape border rotate=-90,inner sep=2pt] (v) {$f_2$}
	(9,0.5) node [triangle,draw,shape border rotate=-90,inner sep=1pt] (f2) {$F\lambda$};
	
	%Nodes
\draw[-] 
	(2.75,0.1) to (t.225)
	(f2) to node [above] {$Fa$} (11,0.5);
\draw[dashed]
	(u) to node [above] {$i$} (f0)
	(t) to node [above] {$FiFa$} (v)
	(f0) .. controls +(right:1cm) and +(left:1cm).. (t.140)
	(v) to node [above] {$F(ia)$} (f2);
\end{tikzpicture} \\
\end{center}
\begin{align*}
&= (\,F\lambda_a\circ F_2^t\theta_{i,a}\,)\circ_1F_0^lu
& (\textrm{by definition of}\,F_0^l\,\textrm{and}\,F_2^t) 
\\ 
&= F_2^t(\,\lambda\circ\theta_{i,a}\,)\circ_1F_0^lu
& (\textrm{by naturality of}\,F_2^t)
 \\
&= F_1^l(\,(\lambda\circ\theta_{i,a})\circ_1 u\,)
& (\textrm{by Lemma~\ref{lemma:unary-loose-morph}})
 \\
&= F_1^l(j1_a)
& (\textrm{by defining property (\ref{eq:univ-skew-lamda}) of}\,\lambda).
\end{align*}

%The diagrammatic parts use only naturality/dinaturality of substitution of nullary into tight binary and profunctoriality. 

Therefore, we get a correspondence analogous to the one in Table~\ref{tab:axioms-lax-mon}, which is described in Table~\ref{tab:sk-axioms-lax-mon}. 
\begin{table}[h]
\centering 
\renewcommand\arraystretch{1.25}
\begin{tabular}{|c | c|}
\hline 
$(F^l_0,F^t_2)$ & $(f_0,f_2)$ \\
\hline
(\ref{eq:lemma-sk-morph-bin}) & Associator axiom (\ref{eq:mon-moprh-alpha})\\
(\ref{eq:lemma-sk-morph-j}) & Left unit axiom (\ref{eq:mon-morph-lambda})\\
(\ref{eq:lemma-sk-morph-nullary}) & Right unit axiom (\ref{eq:mon-morph-rho})\\
\hline
\end{tabular}
\caption{}\label{tab:sk-axioms-lax-mon}
\end{table} \qedhere
\end{proof}

We recall that there is a forgetful functor $U_{lr}^s\colon\smulti_{lr}\to\fsmulti_{lr}$ and an equivalence $T^s\colon\smulti_{lr}\to\skmon$ between left representable skew multicategories and skew monoidal categories. Moreover, comparing the construction of $K^s$ with that given in \cite[Section~6.2]{LackBourke:skew}, we see that the triangle
\[\begin{tikzcd}
	{\smulti_{lr}} \\
	& {\fsmulti_{lr}} \\
	{\skmon.}
	\arrow["T^s"', from=1-1, to=3-1]
	\arrow["U_{lr}^s", from=1-1, to=2-2]
	\arrow["K^s", from=2-2, to=3-1]
\end{tikzcd}\] 
is commutative.

\begin{theorem}\label{them:sk-fin-equiv}
The functor $K^s\colon\fsmulti_{lr} \to \skmon$ is an equivalence of categories, as is the forgetful functor $U^s\colon\smulti_{lr}\to\fsmulti_{lr}$.
\end{theorem}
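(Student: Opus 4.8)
The plan is to run exactly the argument used for Theorem~\ref{thm:fin-equiv}, now in the skew setting. The key structural input is the commutative triangle displayed just above the statement, namely $T^s = K^s \circ U^s_{lr}$, together with the fact that $T^s\colon\smulti_{lr}\to\skmon$ is an equivalence (this is the categorical part of \cite[Theorem~6.1]{LackBourke:skew}, recalled here as Theorem~\ref{thm:eq-skew-lr-multi}). All the genuine work has already been done: Proposition~\ref{prop:sk-full-faith} establishes that $K^s$ is fully faithful, so only essential surjectivity remains to be checked for $K^s$.

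First I would show that $K^s$ is essentially surjective on objects. Given a skew monoidal category $\catd$, essential surjectivity of $T^s$ produces a left representable skew multicategory $\mlc$ with $T^s\mlc \cong \catd$ in $\skmon$; then $U^s_{lr}\mlc$ is a left representable short skew multicategory and, by commutativity of the triangle, $K^s(U^s_{lr}\mlc) = T^s\mlc \cong \catd$. Hence $K^s$ is essentially surjective, and being also fully faithful it is an equivalence. Finally, since $T^s$ and $K^s$ are both equivalences and $T^s = K^s\circ U^s_{lr}$, a standard two-out-of-three argument for equivalences of categories forces $U^s_{lr}$ (written $U^s$ in the statement) to be an equivalence as well.

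I do not expect a real obstacle here: the difficulty was front-loaded into Lemma~\ref{lemma:char-sk-morph-left-repr}, Lemma~\ref{lemma:skew-fin-mult-to-mult} and Proposition~\ref{prop:sk-full-faith}, which together give full faithfulness and the well-definedness of $K^s$ on objects and morphisms. The only point that deserves an explicit word is the commutativity of the triangle $T^s = K^s\circ U^s_{lr}$: this is seen by comparing the object- and morphism-level formulas defining $K^s$ (the tight binary map classifier as tensor, the nullary classifier as unit, and the induced $\alpha,\lambda,\rho$) with the construction of $T^s$ in \cite[Section~6.2]{LackBourke:skew}, noting that forgetting the higher-arity tight and loose multimaps changes none of this data. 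Once that identification is in place, the proof is purely formal.

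\begin{proof}
We first show that $K^s$ is an equivalence. By Proposition~\ref{prop:sk-full-faith} it is fully faithful, so it suffices to prove that it is essentially surjective on objects. By Theorem~\ref{thm:eq-skew-lr-multi} the functor $T^s\colon\smulti_{lr}\to\skmon$ is an equivalence, hence essentially surjective; since the triangle above commutes, $T^s = K^s\circ U^s_{lr}$, and therefore $K^s$ is essentially surjective too. Thus $K^s$ is an equivalence. Finally, from $T^s = K^s\circ U^s_{lr}$ together with the fact that both $T^s$ and $K^s$ are equivalences, it follows that $U^s_{lr}$ is an equivalence as well.
\end{proof}
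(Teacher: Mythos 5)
Your proposal is correct and follows essentially the same route as the paper: full faithfulness of $K^s$ from Proposition~\ref{prop:sk-full-faith}, essential surjectivity via the commuting triangle $T^s = K^s U^s_{lr}$ together with the essential surjectivity of the equivalence $T^s$ from \cite[Theorem~6.1]{LackBourke:skew}, and then the two-out-of-three argument for $U^s_{lr}$. No gaps.
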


\begin{proof} 
Let us show that $K^s$ is an equivalence first. Since $K^s$ is fully faithful by Proposition~\ref{prop:sk-full-faith}, it remains to show that is essentially surjective on objects. Since $T^s=K^sU^s$ and the equivalence $T^s$ is essentially surjective, so is $K^s$, as required. Finally, since $T^s=K^sU^s$ and both $T^s$ and $K^s$ are equivalences, so is $U^s$. 
\end{proof}

\subsection{The Closed Left Representable Case}

\begin{defn}
A short skew multicategory is said to be \textbf{closed} if all $b,c\in\mlc$ there exists an object $[b,c]$ and a tight binary map $e_{b,c}\colon [b,c],b\to c$ for which the induced functions
\begin{center}
$e_{b,c}\circ_1-\colon\mlc^t_n(\overline{x};[b,c]) \to \mlc^t_{n+1}(\overline{x},b;c),$ \hspace{0.5cm} for $n=1,2,3$, \\ \vspace{0.1cm}
$e_{b,c}\circ_1-\colon\mlc^l_n(\overline{x};[b,c]) \to \mlc^l_{n+1}(\overline{x},b;c),$ \hspace{0.5cm} for $n=0,1$,
\end{center}
are isomorphisms. %\red (MEMO: we do not have ternary loose maps) 
\end{defn}

Once again, let us notice that the restrictions on the arities $n$ are determined by the definition of a short skew multicategory. For instance, when dealing with loose maps we only consider $n=0,1$ because we do not have ternary loose maps in a short skew multicategory. 

We will denote with $\fsmulti^{cl}_{lr}$ the full subcategory of $\fsmulti$ with objects left representable closed short skew multicategories. Naturally, the forgetful functor $U^s_{lr}\colon\smulti_{lr}\to\fsmulti_{lr}$ restricts to a forgetful functor
\begin{center}
$U_{lr}^{s,cl}\colon\smulti_{lr}^{cl}\to\fsmulti_{lr}^{cl}$.
\end{center} Adapting Proposition~\ref{prop:left-iff-adj}, we get a characterisation of closed short skew multicategories which are also left representable. 

\begin{prop}
A closed short skew multicategory is left representable if and only if it has a nullary map classifier and each $[b,-]$ has a left adjoint. 
\end{prop}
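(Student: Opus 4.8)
The plan is to mimic the proof of Proposition~\ref{prop:left-iff-adj}, the only genuinely new point being that one must track which substitutions land in tight and which in loose multimaps. For the ``only if'' direction, suppose $\mlc$ is closed and left representable. Then by definition it has a (left universal, in particular a) nullary map classifier, so all that remains is to produce a left adjoint to each $[b,-]$. Writing $ab$ for the tight binary map classifier, the representing property of $\theta_{a,b}$ together with the $n=1$ instance of the tight half of closedness give natural bijections
\[
\catc(ab,c)=\mlc^t_1(ab;c)\;\cong\;\mlc^t_2(a,b;c)\;\cong\;\mlc^t_1(a;[b,c])=\catc(a,[b,c]),
\]
natural in $a$ and $c$, which exhibit $-\otimes b\dashv[b,-]$ with $a\otimes b:=ab$.

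For the ``if'' direction, suppose $\mlc$ is closed, has a nullary map classifier $u$, and each $[b,-]$ has a left adjoint, which I write $-\otimes b$ (so $ab:=a\otimes b$). First I would construct the tight binary map classifier: for fixed $a,b$, composing the adjunction bijection with the $n=1$ instance of closedness gives a bijection $\catc(ab,c)\cong\mlc^t_1(a;[b,c])\cong\mlc^t_2(a,b;c)$ natural in $c$, so by the Yoneda lemma it is of the form $-\circ\theta_{a,b}$ for the tight binary map $\theta_{a,b}\colon a,b\to ab$ obtained as the image of $1_{ab}$; by construction this makes $\theta_{a,b}$ a tight binary map classifier. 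It is natural in $a$ and $b$ because the adjunction and the maps $e_{b,c}$ are, and then functoriality of $(a,b)\mapsto a\otimes b$ follows from the universal property of $\theta_{a,b}$ and profunctoriality of $\mlc^t_2(-;-)$, exactly as in Lemma~\ref{lemma:skew-fin-mult-to-mult}. It then remains only to check that $\theta_{a,b}$ and $u$ are left universal.

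For the left universality of $\theta_{a,b}$ I would argue by induction on the length of $\overline{x}$, the base case (with $\overline{x}$ empty) being the classifier property. In the inductive step one removes the last domain object $y$ using the bijection $e_{y,c}\circ_1-$, obtaining a square
\[
\xymatrix{
\mlc^t_{i+1}(ab,\overline{x};[y,c]) \ar[r]^{-\circ_1\theta_{a,b}} \ar[d]_{e_{y,c}\circ_1-} & \mlc^t_{i+2}(a,b,\overline{x};[y,c]) \ar[d]^{e_{y,c}\circ_1-} \\
\mlc^t_{i+2}(ab,\overline{x},y;c) \ar[r]_{-\circ_1\theta_{a,b}} & \mlc^t_{i+3}(a,b,\overline{x},y;c)
}
\]
whose commutativity is the associativity axiom \eqref{eq:sk-ass-line} (case (a) when $i\ge 1$, mere profunctoriality when $i=0$); since only $i\le 1$ occurs, every corner sits at tight arity $\le 4$, the two verticals are bijections by the tight half of closedness (used at arities $1,2,3$, exactly the available range), the top is a bijection by the inductive hypothesis, and hence so is the bottom.

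The left universality of $u$ follows by the same pattern, with the square
\[
\xymatrix{
\mlc^t_{i+1}(i,\overline{x};[y,c]) \ar[r]^{-\circ_1 u} \ar[d]_{e_{y,c}\circ_1-} & \mlc^l_{i}(\overline{x};[y,c]) \ar[d]^{e_{y,c}\circ_1-} \\
\mlc^t_{i+2}(i,\overline{x},y;c) \ar[r]_{-\circ_1 u} & \mlc^l_{i+1}(\overline{x},y;c)
}
\]
and base case again the classifier property. This is the step where care is needed, and which I expect to be the main obstacle: substituting the nullary $u$ into the first slot of a tight binary or ternary map produces a \emph{loose} map, so the right-hand vertical is an instance of the \emph{loose} half of closedness; since $i\le 1$ it is invoked only at arities $0$ and $1$, which is precisely the range in which loose closedness is postulated in the definition of a short skew multicategory. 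Granting that, the square commutes by \eqref{eq:sk-ass-line} (case (b)) together with (di)naturality of the substitution of a nullary map into a tight binary map, its top and verticals are bijections, and therefore so is its bottom. Both inductions thus close up, so $\theta_{a,b}$ and $u$ are left universal, i.e. $\mlc$ is left representable.
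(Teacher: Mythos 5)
Your proposal is correct and follows essentially the same route as the paper: the same chains of natural bijections for the two directions, the Yoneda step producing $\theta_{a,b}$, and the same inductive squares whose verticals are the closedness bijections, using axiom (\ref{eq:sk-ass-line}.a) for $\theta_{a,b}$ and (\ref{eq:sk-ass-line}.b) for $u$. Your careful tracking of the loose half of closedness in the nullary case simply spells out what the paper summarises as ``similar in form''.
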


\begin{proof}
If $\mlc$ is closed and left representable, then the natural bijections
$$\catc(ab,c)=\mlc_1^t(ab;c)\cong\mlc_2^t(a,b;c)\cong\mlc_1^t(a;[b,c])=\catc(a,[b,c])$$
show that $-b\dashv[b,-]$. Conversely, if $[b,-]$ has a left adjoint. Then we have natural isomorphisms
$$\mlc_1^t(ab;c)=\catc(ab,c)\cong\catc(a,[b,c])=\mlc_1^t(a;[b,c])\cong\mlc_2^t(a,b;c)$$
and, by Yoneda, the composite is of the form $-\circ_1\theta_{a,b}$ for a tight binary map classifier $\theta_{a,b}\colon a,b\to ab$. It remains to show that this and the nullary map classifier are left universal.  For the tight binary map classifier, we must show that
$- \circ \theta_{a,b}\colon\mlc^t_{n+1}(ab,\overline{x};c) \to \mlc^t_{n+2}(a,b, \overline{x};c)$ is a bijection for all $\overline{x}$ of length $1$ or $2$, the case $0$ being known. For an inductive style argument, suppose it is true for $\overline{x}$ of length $i\leq 1$.  We should show that the bottom line below is a bijection
\begin{equation*}
% https://q.uiver.app/#q=WzAsNCxbMCwwLCJcXG1sY150X3tpKzF9KGFiLFxcb3ZlcmxpbmV7eH07W3ksY10pIl0sWzAsMSwiXFxtbGNedF97aSsyfShhYixcXG92ZXJsaW5le3h9LHk7YykiXSxbMiwwLCJcXG1sY150X3tpKzJ9KGEsYixcXG92ZXJsaW5le3h9O1t5LGNdKSJdLFsyLDEsIlxcbWxjXnRfe2krM30oYSxiLFxcb3ZlcmxpbmV7eH0seTtjKSJdLFswLDEsImVfe3ksY30gXFxjaXJjXzEgLSIsMl0sWzAsMiwiLSBcXGNpcmNfMSBcXHRoZXRhX3thLGJ9Il0sWzIsMywiZV97eSxjfSBcXGNpcmNfMSAtIl0sWzEsMywiLSBcXGNpcmNfMSBcXHRoZXRhX3thLGJ9IiwyXV0=
\begin{tikzcd}[ampersand replacement=\&]
	{\mlc^t_{i+1}(ab,\overline{x};[y,c])} \&\& {\mlc^t_{i+2}(a,b,\overline{x};[y,c])} \\
	{\mlc^t_{i+2}(ab,\overline{x},y;c)} \&\& {\mlc^t_{i+3}(a,b,\overline{x},y;c)}
	\arrow["{- \circ_1 \theta_{a,b}}", from=1-1, to=1-3]
	\arrow["{e_{y,c} \circ_1 -}"', from=1-1, to=2-1]
	\arrow["{e_{y,c} \circ_1 -}", from=1-3, to=2-3]
	\arrow["{- \circ_1 \theta_{a,b}}"', from=2-1, to=2-3]
\end{tikzcd}
\end{equation*}
but this follows from the fact that the square commutes, by associativity axiom (\ref{eq:sk-ass-line}.a), and the other three morphisms are bijections, by assumption.  The case of the nullary map classifier is similar in form but uses associativity axiom (\ref{eq:sk-ass-line}.b). 
\end{proof}

We recall that \cite[Theorem~6.4]{LackBourke:skew} gives an equivalence $T^s_{c}\colon\smulti_{lr}^{cl}\to\skmon^{cl}$ between left representable closed skew multicategories and skew closed monoidal categories. 

\begin{theorem}\label{thm:skew-left-closed-equiv}
The equivalence $K^s\colon\fsmulti_{lr} \to \skmon$ (from Theorem~\ref{them:sk-fin-equiv}) restricts to an equivalence $K^s_c\colon\fsmulti_{lr}^{cl} \to \skmon^{cl}$ between left representable closed short skew multicategories and skew closed monoidal categories, which fits in the commutative triangle of equivalences below. 
 \[\begin{tikzcd}
	{\smulti_{lr}^{cl}} \\
	& {\fsmulti^{cl}_{lr}} \\
	{\skmon^{cl}}
	\arrow["T^s_{c}"', from=1-1, to=3-1]
	\arrow["U_{lr}^{s,cl}", from=1-1, to=2-2]
	\arrow["K^s_c", from=2-2, to=3-1]
\end{tikzcd}\]
\end{theorem}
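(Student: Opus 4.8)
The plan is to mirror exactly the pattern used in Theorem~\ref{thm:closed-lr-multi}, which handled the non-skew version: restrict the already-established equivalence $K^s\colon\fsmulti_{lr}\to\skmon$ of Theorem~\ref{them:sk-fin-equiv} to the closed subcategories and check that it matches up ``closed'' on both sides. Since $K^s$ is fully faithful, and $\fsmulti^{cl}_{lr}$ and $\skmon^{cl}$ are full subcategories of $\fsmulti_{lr}$ and $\skmon$ respectively, the only real content is to prove that for $\mlc\in\fsmulti_{lr}$, the short skew multicategory $\mlc$ is closed if and only if the skew monoidal category $K^s\mlc$ is (left) closed. Given that equivalence, essential surjectivity of $K^s_c$ follows from essential surjectivity of $K^s$, and then the commutativity of the triangle together with the fact that $T^s_c$ is an equivalence forces $U^{s,cl}_{lr}$ to be an equivalence as well.

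First I would argue the easy direction: if $\mlc$ is closed, then for each pair $b,c$ there is an object $[b,c]$ and a tight binary map $e_{b,c}\colon[b,c],b\to c$ inducing bijections $e_{b,c}\circ_1-\colon\mlc^t_n(\overline{x};[b,c])\to\mlc^t_{n+1}(\overline{x},b;c)$; in particular at $n=1$, composed with the tight binary classifier bijection $\mlc^t_1(ab;c)\cong\mlc^t_2(a,b;c)$ one obtains natural isomorphisms
\begin{equation*}
\catc(ab,c)=\mlc^t_1(ab;c)\cong\mlc^t_2(a,b;c)\cong\mlc^t_1(a;[b,c])=\catc(a,[b,c]),
\end{equation*}
which exhibits $[b,-]$ as right adjoint to $-\otimes b=-b$, so $K^s\mlc$ is left closed. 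For the converse, suppose $K^s\mlc$ is left closed, so $\catc(ab,c)\cong\catc(a,[b,c])$ naturally; chaining this with the tight binary classifier bijection and applying the Yoneda lemma shows the composite $\mlc^t_1(a;[b,c])\cong\mlc^t_2(a,b;c)$ has the form $e_{b,c}\circ_1-$ for a canonically determined tight binary map $e_{b,c}\colon[b,c],b\to c$. It then remains to verify the universal property of $e_{b,c}$ in all the required arities --- that $e_{b,c}\circ_1-\colon\mlc^t_n(\overline{x};[b,c])\to\mlc^t_{n+1}(\overline{x},b;c)$ is a bijection for $n=1,2,3$ and $e_{b,c}\circ_1-\colon\mlc^l_n(\overline{x};[b,c])\to\mlc^l_{n+1}(\overline{x},b;c)$ for $n=0,1$.

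The verification of these universal properties is the one slightly delicate step, and it is handled exactly as in the proof of Theorem~\ref{thm:closed-lr-multi}: since $\mlc$ underlies (via essential surjectivity identifications, or directly via left representability of the short structure) a situation where each tuple $\overline{x}$ has a left universal classifier $\theta_{\overline{x}}\colon\overline{x}\to m\overline{x}$, one forms the commuting square
\begin{equation*}
\begin{tikzcd}[ampersand replacement=\&]
\mlc^x_1(m\overline{x};[b,c]) \& \mlc^x_2(m\overline{x},b;c) \\
\mlc^x_n(\overline{x};[b,c]) \& \mlc^x_{n+1}(\overline{x},b;c)
\arrow["{e_{b,c}\circ_1-}", from=1-1, to=1-2]
\arrow["{-\circ_1\theta_{\overline{x}}}"', from=1-1, to=2-1]
\arrow["{-\circ_1\theta_{\overline{x}}}", from=1-2, to=2-2]
\arrow["{e_{b,c}\circ_1-}"', from=2-1, to=2-2]
\end{tikzcd}
\end{equation*}
for $x=t,l$, whose top arrow is a bijection by the $n=1$ case already established and whose two vertical arrows are bijections by left universality of $\theta_{\overline{x}}$ (using associativity axiom (\ref{eq:sk-ass-line}.a) to see the square commutes), whence the bottom arrow is a bijection. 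One subtlety absent from the non-skew case is bookkeeping the tight/loose typing: for loose $\overline{x}$ we only need $n=0,1$, which is consistent with the available substitutions in a short skew multicategory, and the substitution $e_{b,c}\circ_1-$ of a loose map into a tight binary's first slot indeed lands in loose maps, matching the codomain. The main obstacle --- really the only place requiring care --- is ensuring that each of these finitely many arity cases is covered by the short skew structure's stipulated substitutions and associativity axioms, i.e. that we never silently invoke a $\circ_i$ or an associativity instance outside the list in the definition of a short skew multicategory; but a direct inspection (as in Proposition~\ref{prop:left-iff-adj} and its skew analogue) confirms this. Finally, essential surjectivity of $K^s_c$ is inherited from $K^s$, and $U^{s,cl}_{lr}$ is an equivalence because $T^s_c=K^s_c\circ U^{s,cl}_{lr}$ with both $T^s_c$ and $K^s_c$ equivalences.
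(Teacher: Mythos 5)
Your proposal takes essentially the same route as the paper: restrict $K^s$, show that $\mlc\in\fsmulti_{lr}$ is closed iff $K^s\mlc$ is left closed (easy direction by composing the classifier and closedness bijections, converse by Yoneda to produce $e_{b,c}$ and then a commuting-square argument against left universal classifiers), and deduce that $U^{s,cl}_{lr}$ is an equivalence from $T^s_c=K^s_c\circ U^{s,cl}_{lr}$. So the overall argument is correct in structure and matches the paper.

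One point needs repair, though: your square is mistyped in the loose case. With the superscript $x=l$ throughout, the top row would consist of \emph{loose} hom-sets $\mlc^l_1(m\overline{x};[b,c])\to\mlc^l_2(m\overline{x},b;c)$, whose bijectivity is not ``the $n=1$ case already established'' (Yoneda only gives $e_{b,c}\circ_1-$ as a bijection on \emph{tight} unary maps), and the verticals would not be the left-universality bijections either, since left universality of the loose classifiers sends \emph{tight} maps out of $i$ (resp.\ $ia$, with $\theta^l_a=\theta_{i,a}\circ_1 u$) bijectively to loose maps. The correct square for the loose arities $n=0,1$ therefore has tight hom-sets $\mlc^t_1(m^l\overline{x};[b,c])\to\mlc^t_2(m^l\overline{x},b;c)$ on top (a bijection by the tight case already proved, applied at $i$ and $ia$) and loose hom-sets on the bottom, exactly as in the paper. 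Relatedly, commutativity of the squares is not uniformly axiom (\ref{eq:sk-ass-line}.a): for the lowest arities it is dinaturality of the relevant substitution, for tight $n=3$ it is (\ref{eq:sk-ass-line}.a), and for loose $n=1$ it is (\ref{eq:sk-ass-line}.b). With those typings and axiom citations fixed, your argument coincides with the paper's proof.
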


\begin{proof}
The strategy is to prove that a left representable short skew multicategory $\mlc$ is closed if and only if the skew monoidal category $K^s\mlc$ monoidal skew closed.

We start by considering when $\mlc\in~\fsmulti_{lr}$ is a closed short skew multicategory. By definition of closedness and left representability, we have natural isomorphisms 
\begin{equation*}
\catc(ab,c) =  \mlc^t_1(ab;c)\cong \mlc^t_2(a,b;c) \cong \mlc^t_1(a,[b,c]) = \catc(a,[b,c]),
\end{equation*}
therefore $K^s\mlc$ is monoidal skew closed, as required.

On the other hand, if $K^s\mlc$ is closed, then we have,  for all $a,b,c\in K^s\mlc$, natural isomorphisms $\mlc^t_1(a;[b,c]) \cong \mlc^t_1(ab;c)$. By Yoneda, the composite
\begin{equation*}
\mlc^t_1(a;[b,c]) \cong  \mlc^t_1(ab;c)\cong \mlc^t_2(a,b;c)
\end{equation*}
 is of the form $e_{b,c} \circ_1 -$ for a tight binary map $e_{b,c}\colon [b,c],b \to c$, and to show that $\mlc$ is closed we must prove that 
\begin{center}
$e_{b,c}\circ_1-\colon\mlc^t_n(\overline{a};[b,c]) \to \mlc^t_{n+1}(\overline{a},b;c),$ \hspace{0.5cm} for $n=2,3$, \\ \vspace{0.1cm}
$e_{b,c}\circ_1-\colon\mlc^l_n(\overline{a};[b,c]) \to \mlc^l_{n+1}(\overline{a},b;c),$ \hspace{0.5cm} for $n=0,1$,
\end{center} 
are bijections. For the tight maps case we can consider the diagram
 \begin{equation*}
 \xymatrix{
 \mlc^t_1(m^t(\overline{a});[b,c]) \ar[d]^{\cong}_{- \circ_1 \theta_{\overline{a}}} \ar[rr]^{e_{b,c} \circ{_1} -} && \mlc^t_2(m^t(\overline{a}),b;c) \ar[d]_{\cong}^{- \circ_1 \theta_{\overline{a}}} \\
\mlc^t_n(\overline{a};[b,c])  \ar[rr]_{e_{b,c} \circ{_1} -} && \mlc^t_{n+1}(\overline{a},b;c)} 
\end{equation*}
where $\theta_{\overline{a}}\colon\overline{a} \to m^t(\overline{a})$ is the left universal tight $n$-multimap. More precisely, 
\begin{center}
\begin{tabular}{llll}
for $n=2$,   & $m^t(a_1,a_2):=a_1a_2$, & and            & $\theta_{\overline{a}}:=\theta_{a_1,a_2}$,                   \vspace{0.1cm}      \\ 
for $n=3$,  & $m^t(a_1,a_2,a_3):=(a_1a_2)a_3$, & and       & $\theta_{\overline{a}}:=\theta_{a_1a_2,a_3}\circ_1\theta_{a_1,a_2}$.  
\end{tabular}
\end{center}
The commutativity of the diagram with $n=2$ follows from dinaturality of substitution of tight binary into tight ternary, whereas the one with $n=3$ follows from the associativity axiom (\ref{eq:sk-ass-line}.a). Thus, since the two vertical functions are invertible by left representability and the 
upper horizontal by construction, the lower horizontal is invertible as well. Similarly, for the loose case we consider the diagram
 \begin{equation*}
 \xymatrix{
 \mlc^t_1(m^l(\overline{a});[b,c]) \ar[d]_{- \circ_1 \theta^l_{\overline{a}}} \ar[rr]^{e_{b,c} \circ{_1} -} && \mlc^t_2(m^l(\overline{a}),b;c) \ar[d]^{- \circ_1 \theta^l_{\overline{a}}} \\
\mlc^l_n(\overline{a};[b,c])  \ar[rr]_{e_{b,c} \circ{_1} -} && \mlc^l_{n+1}(\overline{a},b;c)} 
\end{equation*}
where $\theta^l_{\overline{a}}\colon\overline{a} \to m(\overline{a})$ is the left universal loose $n$-multimap, i.e.
\begin{center}
\begin{tabular}{llll}
for $n=0$,   & $m^l(-):=i$, & and            & $\theta^l_{-}:=u$,                    \vspace{0.1cm}      \\ 
for $n=1$,  & $m^l(a):=a$, & and       & $\theta^l_{a}:=\theta_{i,a}\circ_1u$. 
\end{tabular}
\end{center}
This time, when $n=0$, the commutativity of the diagram follows from dinaturality of substitution of nullary into tight binary, whereas when $n=1$ follows from the associativity axiom (\ref{eq:sk-ass-line}.b). Then, since the other three maps are invertible, the lower horizontal map is an isomorphism. 
\end{proof}
\black 

\subsection{Short Braidings}

\black
Let $\mc$ be a short skew multicategory. A \textbf{short braiding} on $\mc$ consists of natural isomorphisms 
%\begin{minipage}{0.5\textwidth}
%$\beta^2\colon\mc_2^l(a_1,a_2;b)\to\mc_2^l(a_2,a_1;b)$ \\ \vspace{0.1cm}
%$\beta^4_2\colon\mc_4^l(a_1,a_2,a_3,a_4;b)\to\mc_4^l(a_1,a_3,a_2,a_4;b)$
%\end{minipage}
%\begin{minipage}{0.5\textwidth}
%$\beta^3_2\colon\mc_3^l(a_1,a_2,a_3;b)\to\mc_3^l(a_1,a_3,a_2;b)$ \\ \vspace{0.1cm}
%$\beta^4_3\colon\mc_4^l(a_1,a_2,a_3,a_4;b)\to\mc_4^l(a_1,a_2,a_4,a_3;b)$ 
%\end{minipage}
\begin{center}
%\red $\beta^2\colon\mc_2^l(a_1,a_2;b)\to\mc_2^l(a_2,a_1;b)$ \\ \vspace{0.1cm} \black 
$\beta^3_2\colon\mc_3^t(a_1,a_2,a_3;b)\to\mc_3^t(a_1,a_3,a_2;b)$ \\ \vspace{0.1cm}
$\beta^4_2\colon\mc_4^t(a_1,a_2,a_3,a_4;b)\to\mc_4^t(a_1,a_3,a_2,a_4;b)$ \\ \vspace{0.1cm}
$\beta^4_3\colon\mc_4^t(a_1,a_2,a_3,a_4;b)\to\mc_4^t(a_1,a_2,a_4,a_3;b)$ 
\end{center}

satisfying the following six axioms
\begin{itemize}
\item for any tight 4-ary map $h\in\mc_4^l(a_1,a_2,a_3,a_4;b)$, 
\begin{equation}\label{ax:sh-braid-4-ary}
\beta^4_2\beta^4_3\beta^4_2(h)=\beta^4_3\beta^4_2\beta^4_3(h)
\end{equation}

\item for any tight binary map $g\colon b_1,b_2\to c$ and tight ternary map $f\colon a_1,a_2,a_3\to b_i$,
\begin{minipage}{0.5\textwidth}
\begin{equation}\label{ax:sh-braid-3-in-2-first}
g\circ_1\beta^3_2(f)=\beta^4_2(g\circ_1f)
\end{equation}
\end{minipage} 
\begin{minipage}{0.5\textwidth}
\begin{equation}\label{ax:sh-braid-3-in-2-secd}
g\circ_2\beta^3_2(f)=\beta^4_3(g\circ_2f)
\end{equation}
\end{minipage}\vspace{0.2cm}

\item for any tight ternary map $g\colon b_1,b_2,b_3\to c$ and tight binary map $f\colon a_1,a_2\to$~$b_i$,
\begin{minipage}{0.5\textwidth}
\begin{equation}\label{ax:sh-braid-2-in-3-first}
\beta^4_3(g\circ_1f)=\beta^3_2(g)\circ_1f
\end{equation}
\end{minipage}
\begin{minipage}{0.5\textwidth}
\begin{equation}\label{ax:sh-braid-2-in-3-secd}
\beta^4_2\beta^4_3(g\circ_2f)=\beta^3_2(g)\circ_3f
\end{equation}
\end{minipage}
\begin{equation}\label{ax:sh-braid-2-in-3-thrd}
\beta^4_3\beta^4_2(g\circ_3f)=\beta^3_2(g)\circ_2f
\end{equation}
\end{itemize}
The short braiding is called a \textbf{short symmetry} if, moreover, 
\begin{equation}\label{ax:sh-symm}
% https://q.uiver.app/?q=WzAsMyxbMCwxLCJcXG1jXzNedChhXzEsYV8yLGFfMztiKSJdLFsxLDAsIlxcbWNfM150KGFfMSxhXzMsYV8yO2IpIl0sWzIsMSwiXFxtY18zXnQoYV8xLGFfMixhXzM7YikiXSxbMCwxLCJcXGJldGFeM18yIl0sWzEsMiwiXFxiZXRhXjNfMiJdLFswLDIsIiIsMCx7ImxldmVsIjoyLCJzdHlsZSI6eyJoZWFkIjp7Im5hbWUiOiJub25lIn19fV1d
\begin{tikzcd}[ampersand replacement=\&]
	\& {\mc_3^t(a_1,a_3,a_2;b)} \\
	{\mc_3^t(a_1,a_2,a_3;b)} \&\& {\mc_3^t(a_1,a_2,a_3;b)}
	\arrow["{\beta^3_2}", from=2-1, to=1-2]
	\arrow["{\beta^3_2}", from=1-2, to=2-3]
	\arrow[Rightarrow, no head, from=2-1, to=2-3]
\end{tikzcd}
\end{equation}
We call a short skew multicategory together with a short braiding a \textbf{braided short skew multicategory}. Moreover, we say that it is \textbf{symmetric} if the braiding is a symmetry.  

%\blue
\begin{rmk}
	%We can visualise the axioms of braiding and symmetry above as commutative diagrams and/or string diagrammatic equations. 
	Axiom \eqref{ax:sh-braid-4-ary} can be visulised through a commutative diagram (see below), where the nodes represent the inputs of the 4-ary map. 
	% https://q.uiver.app/#q=WzAsNixbMCwxLCIxMjM0Il0sWzEsMiwiMTI0MyJdLFsxLDAsIjEzMjQiXSxbMiwyLCIxNDIzIl0sWzIsMCwiMTM0MiJdLFszLDEsIjE0MzIiXSxbMCwxLCJcXGJldGFfMyIsMl0sWzAsMiwiXFxiZXRhXzIiXSxbMiw0LCJcXGJldGFfMyJdLFsxLDMsIlxcYmV0YV8yIiwyXSxbMyw1LCJcXGJldGFfMyIsMl0sWzQsNSwiXFxiZXRhXzIiXV0=
	\[
	\scalebox{0.9}{\begin{tikzcd}[ampersand replacement=\&]
		\& 1324 \& 1342 \\
		1234 \&\&\& 1432 \\
		\& 1243 \& 1423
		\arrow["{\beta_3}", from=1-2, to=1-3]
		\arrow["{\beta_2}", from=1-3, to=2-4]
		\arrow["{\beta_2}", from=2-1, to=1-2]
		\arrow["{\beta_3}"', from=2-1, to=3-2]
		\arrow["{\beta_2}"', from=3-2, to=3-3]
		\arrow["{\beta_3}"', from=3-3, to=2-4]
	\end{tikzcd}}\]
	%Then, the other axioms can b with commutative diagrams and string diagrams. 
\end{rmk}
\black

\begin{rmk}
In the definition above we do not consider any action on binary maps, even if in a short skew multicategory we could consider the action 
$$\beta^2\colon\mc_2^l(a,b;c)\to\mc_2^l(b,a;c).$$ 
The reason behind this choice is that when a short multicategory has a left universal nullary map classifier, then $\beta^2$ can be described using $\beta^3_2$ as below: %\blue [SHOULD we add axioms about substitution of nullary maps? Nope if we don't have $\beta^2$...]\green
$$\beta^2:=\quad\mc_2^l(a,b;c)\cong\mc_3^t(i,a,b;c)\xrightarrow{\beta^3_2}\mc_3^t(i,b,a;c)\cong\mc_2^l(b,a;c).$$
\end{rmk}

Given two braided short skew multicategories $\mc$ and $\md$, we say that a short skew multifunctor $F\colon\mc\to\md$ is \textbf{braided} if it respects the braiding isomorphisms, i.e. if for $r=\beta^3_2,\beta^4_2,\beta^4_3$ the following diagram commutes
% https://q.uiver.app/?q=WzAsNCxbMCwwLCJcXG1jX25edChhXzEsXFxjZG90cyxhX247YykiXSxbMSwwLCJcXG1jX25edChhX3tyMX0sXFxjZG90cyxhX3tybn07YykiXSxbMCwxLCJcXG1kX25edChGYV8xLFxcY2RvdHMsRmFfbjtGYykiXSxbMSwxLCJcXG1kX25edChGYV97cjF9LFxcY2RvdHMsRmFfe3JufTtGYykiXSxbMCwxLCJeXFxtYyByXlxcYXN0Il0sWzEsMywiRl9uXnQiXSxbMCwyLCJGX25edCIsMl0sWzIsMywiXlxcbWQgcl5cXGFzdCIsMl1d
\[\begin{tikzcd}[ampersand replacement=\&]
	{\mc_n^t(a_1,\ldots,a_n;c)} \& {\mc_n^t(a_{r1},\ldots,a_{rn};c)} \\
	{\md_n^t(Fa_1,\ldots,Fa_n;Fc)} \& {\md_n^t(Fa_{r1},\ldots,Fa_{rn};Fc)}
	\arrow["{^\mc r^\ast}", from=1-1, to=1-2]
	\arrow["{F_n^t}", from=1-2, to=2-2]
	\arrow["{F_n^t}"', from=1-1, to=2-1]
	\arrow["{^\md r^\ast}"', from=2-1, to=2-2]
\end{tikzcd}\]

There is a category $\brdfsmulti$ of braided short skew multicategories and braided short multifunctors. We call $\symfsmulti$ the full subcategory of of $\brdfsmulti$ with objects symmetric short skew multicategories. Naturally, the forgetful functor $U^s\colon\smulti\to\fsmulti$ restricts to forgetful functors 
\begin{center}
$U^{brd}_{lr}\colon\brdsmulti_{lr}\to\brdfsmulti_{lr}$ \\ \vspace{0.1cm}
$U^{sym}_{lr}\colon\symsmulti_{lr}\to \symfsmulti_{lr}$.
%\begin{tabular}{r c l r}
%$U^{brd}_{lr}\colon$ & $\brdsmulti_{lr}$ & $\to \brdfsmulti_{lr}$ & and\vspace{0.1cm}\\ 
%$U^{sym}_{lr}\colon$& $\symsmulti_{lr}$ & $\to \symfsmulti_{lr}$. &
%\end{tabular}
\end{center}

Now, we want to show that we can lift $K^s\colon\fsmulti_{lr}\to\skmon$ to the braided and symmetric setting. We start with objects in the proposition below. 

\begin{prop}\label{prop:brd-equ-obj}
Let $\mc$ be a left representable short skew multicategory and $K^s\mc$ the corresponding skew monoidal category. A short braiding on $\mc$ induces a braiding on $K^s\mc$ in which
%\begin{itemize}
%\item 
the braiding isomorphism $s\colon (xa)b\to (xb)a$ is the unique map such that 
\begin{equation}\label{eq:def-s-brd}
s\circ \theta_{xa,b}\circ_1\theta_{x,a}=\beta^3_2(\theta_{xb,a}\circ_1\theta_{x,b}).
\end{equation}
%\end{itemize}
Moreover, if the braiding on $\mc$ is a short symmetry, then the braiding on $K^s\mc$ is a symmetry. 
\end{prop}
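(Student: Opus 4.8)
The plan is to define the braiding $s$ on $K^s\mc$ by the formula (\ref{eq:def-s-brd}) and first check that it is well defined, natural and invertible. By the argument of Proposition~\ref{prop:universal} (which applies verbatim to the tight part of a short skew multicategory), the composite $\theta_{xa,b}\circ_1\theta_{x,a}\colon x,a,b\to(xa)b$ is a tight ternary map classifier, so $-\circ(\theta_{xa,b}\circ_1\theta_{x,a})\colon\mc^t_1((xa)b;d)\to\mc^t_3(x,a,b;d)$ is a bijection for each $d$. Since $\theta_{xb,a}\circ_1\theta_{x,b}$ lies in $\mc^t_3(x,b,a;(xb)a)$, its image $\beta^3_2(\theta_{xb,a}\circ_1\theta_{x,b})$ lies in $\mc^t_3(x,a,b;(xb)a)$, and hence (\ref{eq:def-s-brd}) determines a unique unary map $s_{x,a,b}\colon(xa)b\to(xb)a$ in $\catc$. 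Naturality of $s$ in $x,a,b$ then follows from naturality of $\beta^3_2$ and of the classifiers $\theta$. For invertibility I would define $s^{-1}_{x,a,b}\colon(xb)a\to(xa)b$ by the analogous formula with $(\beta^3_2)^{-1}$ in place of $\beta^3_2$ and the roles of $a$ and $b$ exchanged, and then check that precomposing $s^{-1}_{x,a,b}\circ s_{x,a,b}$ and $s_{x,a,b}\circ s^{-1}_{x,a,b}$ with the appropriate ternary classifier reduces, using naturality of $\beta^3_2$, to the identity classifier, so that both composites are identities by universality.

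Next I would verify the four skew monoidal braiding axioms (\ref{ax:sk-mon-brd-s}), (\ref{ax:sk-mon-brd-s-a-1}), (\ref{ax:sk-mon-brd-s-a-2}), (\ref{ax:sk-mon-brd-a-s}). Each of these is an equation between unary maps of $\catc$ whose common source is a fourfold iterated tensor, e.g. $((xa)b)c$, which is a tight $4$-ary map classifier for $x,a,b,c$ (again Proposition~\ref{prop:universal}). So it suffices to check each axiom after precomposition with the universal $4$-ary multimap $\theta_{(xa)b,c}\circ_1\theta_{xa,b}\circ_1\theta_{x,a}$. I would then expand every occurrence of $s$ using (\ref{eq:def-s-brd}), every occurrence of the associator using (\ref{eq:univ-skew-alpha}), and every tensor of the form $s\cdot(-)$ or $(-)\cdot s$ using the definition (\ref{eq:skew-prod-maps}) of the tensor of morphisms, pushing the identity legs through by naturality and profunctoriality; this turns both sides of each axiom into tight $4$-ary maps built from the $\theta$'s and the $\beta$'s, and after rebracketing with the associativity axioms of $\mc$ the equation becomes one of the short braiding axioms. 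I expect the pure-$s$ hexagon (\ref{ax:sk-mon-brd-s}) to reduce to the $4$-ary braid relation (\ref{ax:sh-braid-4-ary}) once the braidings appearing in the $s\cdot(-)$ legs have been transported into $4$-ary position via (\ref{ax:sh-braid-3-in-2-first})--(\ref{ax:sh-braid-2-in-3-thrd}), and the three mixed $\alpha$-$s$ hexagons to reduce directly to (subsets of) the compatibility axioms (\ref{ax:sh-braid-3-in-2-first})--(\ref{ax:sh-braid-2-in-3-thrd}) together with the defining equation (\ref{eq:univ-skew-alpha}) of $\alpha$. This bookkeeping is the main obstacle: it is entirely analogous to the verification of the skew monoidal axioms in Lemma~\ref{lemma:skew-fin-mult-to-mult} and can be organised in the same way, precomposing with classifiers to cut dimensions down.

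Finally, for the symmetry claim, assume the short braiding satisfies (\ref{ax:sh-symm}), so that $\beta^3_2\beta^3_2=\mathrm{id}$ on every $\mc^t_3(a_1,a_2,a_3;b)$. Precomposing $s_{x,b,a}\circ s_{x,a,b}$ with the ternary classifier $\theta_{xa,b}\circ_1\theta_{x,a}$ and using (\ref{eq:def-s-brd}) in both orders together with naturality of $\beta^3_2$ gives
\begin{align*}
s_{x,b,a}\circ s_{x,a,b}\circ\theta_{xa,b}\circ_1\theta_{x,a}
&=s_{x,b,a}\circ\beta^3_2(\theta_{xb,a}\circ_1\theta_{x,b})
=\beta^3_2\big(s_{x,b,a}\circ\theta_{xb,a}\circ_1\theta_{x,b}\big)\\
&=\beta^3_2\beta^3_2(\theta_{xa,b}\circ_1\theta_{x,a})
=\theta_{xa,b}\circ_1\theta_{x,a},
\end{align*}
so that $s_{x,b,a}\circ s_{x,a,b}=1_{(xa)b}$ by the universal property of the ternary classifier, i.e. $s$ is a symmetry on $K^s\mc$.
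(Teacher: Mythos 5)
Your proposal is correct and takes essentially the same route as the paper: define $s$ by \eqref{eq:def-s-brd} via the tight ternary classifier, verify the four braiding axioms by precomposing with the universal tight $3$- and $4$-ary classifiers and reducing, through the defining equations of $s$, $\alpha$ and the tensor of morphisms, to the short braiding axioms (your predicted reductions match the paper's table), and obtain the symmetry claim by exactly the classifier computation you give. The only difference is cosmetic: you spell out the invertibility of $s$ (which the paper leaves implicit), while your verification of the four axioms stays at the level of an outline, just as the paper does for all but one of them.
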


\begin{proof}
Clearly $s$ defined in this way is natural, so we have left to prove that the axioms for a braided skew monoidal category hold. Other than associativity equations for multimaps, the axioms follow from the following table.
\begin{center}
\renewcommand\arraystretch{1.25} 
\begin{tabular}{|c:c|}
\hline
Axiom & Follows from \\
\hline
\eqref{ax:sk-mon-brd-s} & \eqref{ax:sh-braid-4-ary}, \eqref{ax:sh-braid-3-in-2-first}, \eqref{ax:sh-braid-2-in-3-first} \\

\eqref{ax:sk-mon-brd-s-a-1} 
& \eqref{ax:sh-braid-3-in-2-first},\eqref{ax:sh-braid-2-in-3-first},\eqref{ax:sh-braid-2-in-3-secd}  \\

\eqref{ax:sk-mon-brd-s-a-2} 
& \eqref{ax:sh-braid-3-in-2-first}, \eqref{ax:sh-braid-2-in-3-first}, \eqref{ax:sh-braid-2-in-3-secd} \\

\eqref{ax:sk-mon-brd-a-s} 
& \eqref{ax:sh-braid-3-in-2-secd},\eqref{ax:sh-braid-2-in-3-first} \\
\hline
\end{tabular} 
\end{center}
All the proofs are quite similar, so we will explicitly show only \eqref{ax:sk-mon-brd-s-a-1}. As always, we will show that the two sides of the diagram are the same when we precompose with the universal tight 4-ary map. We start from $s_{x,a,bc}\circ\alpha_{xa,b,c}$:
\begin{center}
\begin{tikzpicture}[triangle/.style = {fill=yellow!50, regular polygon, regular polygon sides=3,rounded corners}]
%Multimaps
\path
	(1,1.35) node [triangle,draw,shape border rotate=-90,inner sep=1pt] (b') {$\theta$} 
	(2.5,1) node [triangle,draw,shape border rotate=-90,inner sep=1pt] (c') {$\theta$}
	(4,0.5) node [triangle,draw,shape border rotate=-90,inner sep=1pt] (d) {$\theta$}
	(6,0.5) node [triangle,draw,shape border rotate=-90,inner sep=1pt] (f) {$\alpha$}
	(8,0.5) node [triangle,draw,shape border rotate=-90,inner sep=1pt] (f') {$s$};
	
	%Nodes
\draw [dashed] (0.2,1.6) .. controls +(right:0.5cm) and +(left:0.5cm).. node[above] {$x$} (b'.140);
\draw [-] (0.2,1.1) .. controls +(right:0.5cm) and +(left:0.5cm).. node[below] {$a$} (b'.220);
\draw [-] (1.7,0.75) .. controls +(right:0.5cm) and +(left:0.5cm).. node[below] {$b$} (c'.220); 
\draw [dashed] (b') .. controls +(right:0.5cm) and +(left:0.5cm).. node[above,xshift=0.1cm] {$xa$} (c'.140);
\draw [dashed] (c') .. controls +(right:0.5cm) and +(left:0.5cm).. node[scale=.9,above,xshift=0.25cm] {$(xa)b$} (d.140);
\draw [-] (3.2,0.25).. controls +(right:0.5cm) and +(left:0.5cm).. node[below] {$c$} (d.219);
\draw [dashed] (d) to node[scale=.8,above] {$((xa)b)c$} (f);
\draw [dashed] (f) to node[scale=.8,above] {$(xa)(bc)$} (f');
\draw [-] (f') to node[scale=.8,above] {$(x(bc))a$} (9.75,0.5);
\end{tikzpicture} \\
by definition of $\alpha$, \\
\begin{tikzpicture}[triangle/.style = {fill=yellow!50, regular polygon, regular polygon sides=3,rounded corners}]

%Multimaps
\path
(-0.5,0.75) node {$=$}
	(1,1) node [triangle,draw,shape border rotate=-90,inner sep=1pt] (b') {$\theta$} 
	(2,0) node [triangle,draw,shape border rotate=-90,inner sep=1pt] (c') {$\theta$}
	(3.25,0.5) node [triangle,draw,shape border rotate=-90,inner sep=1pt] (d) {$\theta$}
	(4.25,0.5) node [triangle,draw,shape border rotate=-90,inner sep=1pt] (f) {$s$};
	
	%Nodes
\draw [dashed] (0.2,1.25) .. controls +(right:0.5cm) and +(left:0.5cm).. node[above] {$x$} (b'.140);
\draw [-] (0.2,0.75) .. controls +(right:0.5cm) and +(left:0.5cm).. node[below] {$a$} (b'.220);
\draw[dashed] (b') .. controls +(right:0.5cm) and +(left:0.5cm).. (d.140);%node[above] {$xa$} 
\draw [dashed] (1.2,0.25) .. controls +(right:0.75cm) and +(left:0.25cm).. node[above] {$b$} (c'.140);
\draw [-] (1.2,-0.25) .. controls +(right:0.5cm) and +(left:0.5cm).. node[below] {$c$} (c'.220); 
\draw [-] (c') .. controls +(right:0.5cm) and +(left:0.25cm)..(d.219);%node[below]{$bc$} 
\draw [dashed] (d) to (f);%node[scale=.7,above] {$(xa)(bc)$} 
\draw [-] (f) to (5,0.5); %node[scale=.7,above] {$(x(bc))a$}
\end{tikzpicture} 
\begin{tikzpicture}[triangle/.style = {fill=yellow!50, regular polygon, regular polygon sides=3,rounded corners}]
%Multimaps
\path
(-0.5,0.75) node {$=$}
	(2,1) node [triangle,draw,shape border rotate=-90,inner sep=1pt] (b') {$\theta$} 
	(1,0) node [triangle,draw,shape border rotate=-90,inner sep=1pt] (c') {$\theta$}
	(3.25,0.5) node [triangle,draw,shape border rotate=-90,inner sep=1pt] (d) {$\theta$}
	(4.25,0.5) node [triangle,draw,shape border rotate=-90,inner sep=1pt] (f) {$s$};
	
	%Nodes
\draw [dashed] (1.2,1.25) .. controls +(right:0.5cm) and +(left:0.5cm).. node[above] {$x$} (b'.140);
\draw [-] (1.2,0.75) .. controls +(right:0.5cm) and +(left:0.5cm).. node[below] {$a$} (b'.220);
\draw[dashed] (b') .. controls +(right:0.5cm) and +(left:0.5cm).. (d.140);%node[above] {$xa$} 
\draw [dashed] (0.2,0.25) .. controls +(right:0.5cm) and +(left:0.25cm).. node[above] {$b$} (c'.140);
\draw [-] (0.2,-0.25) .. controls +(right:0.5cm) and +(left:0.5cm).. node[below] {$c$} (c'.220); 
\draw [-] (c') .. controls +(right:0.75cm) and +(left:0.25cm).. (d.219);%node[below]{$bc$} 
\draw [dashed] (d) to (f);%node[scale=.7,above] {$(xa)(bc)$} 
\draw [-] (f) to (5,0.5); %node[scale=.7,above] {$(x(bc))a$} 
\end{tikzpicture}
\end{center}
%$=(s_{x,a,bc}\circ\theta_{xa,bc}\circ_1\theta_{x,a})\circ_3\theta_{b,c}$
\begin{align*}
&= \beta^3_2(\theta_{x(bc),a}\circ_1\theta_{x,bc})\circ_3\theta_{b,c}
& (\textrm{by definition of}\,s) 
\\ 
&= \beta_2^4\beta_3^4(\,(\theta_{x(bc),a}\circ_1\theta_{x,bc})\circ_2\theta_{b,c}\,)
& (\textrm{by axiom}\,\eqref{ax:sh-braid-2-in-3-secd}\,).
\end{align*}
On the other hand, let us consider $\alpha_{x,b,c}a\circ s_{xb,a,c}\circ s_{x,a,b}c$: 
\begin{center}
\begin{tikzpicture}[triangle/.style = {fill=yellow!50, regular polygon, regular polygon sides=3,rounded corners}]
%Multimaps
\path
	(1,1.35) node [triangle,draw,shape border rotate=-90,inner sep=1pt] (b') {$\theta$} 
	(2.5,1) node [triangle,draw,shape border rotate=-90,inner sep=1pt] (c') {$\theta$}
	(4,0.5) node [triangle,draw,shape border rotate=-90,inner sep=1pt] (d) {$\theta$}
	(6,0.5) node [triangle,draw,shape border rotate=-90,inner sep=-1.5pt] (f) {$s\cdot c$}
	(8,0.5) node [triangle,draw,shape border rotate=-90,inner sep=1pt] (f') {$s$}
	(10,0.5) node [triangle,draw,shape border rotate=-90,inner sep=-2.5pt] (f'') {$\alpha\cdot a$};
	
	%Nodes
\draw [dashed] (0.2,1.6) .. controls +(right:0.5cm) and +(left:0.5cm).. node[above] {$x$} (b'.140);
\draw [-] (0.2,1.1) .. controls +(right:0.5cm) and +(left:0.5cm).. node[below] {$a$} (b'.220);
\draw [-] (1.7,0.75) .. controls +(right:0.5cm) and +(left:0.5cm).. node[below] {$b$} (c'.220); 
\draw [dashed] (b') .. controls +(right:0.5cm) and +(left:0.5cm).. node[above,xshift=0.1cm] {$xa$} (c'.140);
\draw [dashed] (c') .. controls +(right:0.5cm) and +(left:0.5cm).. node[scale=.9,above,xshift=0.25cm] {$(xa)b$} (d.140);
\draw [-] (3.2,0.25).. controls +(right:0.5cm) and +(left:0.5cm).. node[below] {$c$} (d.219);
\draw [dashed] (d) to node[scale=.8,above,xshift=-0.1cm] {$((xa)b)c$} (f);
\draw [dashed] (f) to node[scale=.8,above,xshift=-0.1cm] {$((xb)a)c$} (f');
\draw [dashed] (f') to node[scale=.8,above] {$((xb)c)a$} (f'');
\draw [-] (f'') to node[scale=.8,above] {$(x(bc))a$} (12,0.5);
\end{tikzpicture} \\
by definition of $s\cdot c$, 
\begin{tikzpicture}[triangle/.style = {fill=yellow!50, regular polygon, regular polygon sides=3,rounded corners}]
%Multimaps
\path
	(1,1.35) node [triangle,draw,shape border rotate=-90,inner sep=1pt] (b') {$\theta$} 
	(2.5,1) node [triangle,draw,shape border rotate=-90,inner sep=1pt] (c') {$\theta$}
		(4,1) node [triangle,draw,shape border rotate=-90,inner sep=1pt] (f) {$s$}
	(6,0.5) node [triangle,draw,shape border rotate=-90,inner sep=1pt] (d) {$\theta$}
	(8,0.5) node [triangle,draw,shape border rotate=-90,inner sep=1pt] (f') {$s$}
	(10,0.5) node [triangle,draw,shape border rotate=-90,inner sep=-2.5pt] (f'') {$\alpha\cdot a$};
	
	%Nodes
\draw [dashed] (0.2,1.6) .. controls +(right:0.5cm) and +(left:0.5cm).. node[above] {$x$} (b'.140);
\draw [-] (0.2,1.1) .. controls +(right:0.5cm) and +(left:0.5cm).. node[below] {$a$} (b'.220);
\draw [-] (1.7,0.75) .. controls +(right:0.5cm) and +(left:0.5cm).. node[below] {$b$} (c'.220); 
\draw [dashed] (b') .. controls +(right:0.5cm) and +(left:0.5cm).. node[above,xshift=0.1cm] {$xa$} (c'.140);
\draw [dashed] (c') .. controls +(right:0.5cm) and +(left:0.5cm).. node[scale=.9,above] {$(xa)b$} (f);
\draw [-] (5.2,0.25).. controls +(right:0.5cm) and +(left:0.5cm).. node[below] {$c$} (d.219);
\draw [dashed] (d) to node[scale=.8,above] {$((xb)a)c$} (f');
\draw [dashed] (f) to node[scale=.8,above] {$(xb)a$} (d.140);
\draw [dashed] (f') to node[scale=.8,above] {$((xb)c)a$} (f'');
\draw [-] (f'') to node[scale=.8,above] {$(x(bc))a$} (12,0.5);
\end{tikzpicture} 
\end{center}

\begin{align*}
&= \alpha_{x,b,c}a\circ s_{xb,a,c}\circ[\theta_{(xb)a,c}\circ_1 \beta_2^3(\theta_{xb,a}\circ_1\theta_{x,b})]
& (\textrm{by definition of}\,s) 
\\ 
&=  \alpha_{x,b,c}a\circ s_{xb,a,c}\circ\beta_2^4(\,\theta_{(xb)a,c}\circ_1(\theta_{xb,a}\circ_1\theta_{x,b})\,)
& (\textrm{by axiom}\,\eqref{ax:sh-braid-3-in-2-first}\,)
\\
&=  \beta_2^4[\,\alpha_{x,b,c}a\circ s_{xb,a,c}\circ(\theta_{(xb)a,c}\circ_1(\theta_{xb,a}\circ_1\theta_{x,b}))\,]
& (\textrm{by naturality of}\,\beta_2^4).
\end{align*}
Therefore, to conclude the proof of this axiom, it suffices to prove 
$$\beta_3^4(\,(\theta_{x(bc),a}\circ_1\theta_{x,bc})\circ_2\theta_{b,c}\,)=\alpha_{x,b,c}a\circ s_{xb,a,c}\circ(\theta_{(xb)a,c}\circ_1(\theta_{xb,a}\circ_1\theta_{x,b})).$$
Let us consider the right hand side:
%\begin{center} %%% DIAGRAM NOT NEEDED MAYBE %%%%
%\begin{tikzpicture}[triangle/.style = {fill=yellow!50, regular polygon, regular polygon sides=3,rounded corners}]
%%Multimaps
%\path
%	(1,1.35) node [triangle,draw,shape border rotate=-90,inner sep=1pt] (b') {$\theta$} 
%	(2.5,1) node [triangle,draw,shape border rotate=-90,inner sep=1pt] (c') {$\theta$}
%	(4,0.5) node [triangle,draw,shape border rotate=-90,inner sep=1pt] (d) {$\theta$}
%	(6,0.5) node [triangle,draw,shape border rotate=-90,inner sep=1pt] (f) {$s$}
%	(8,0.5) node [triangle,draw,shape border rotate=-90,inner sep=-2.5pt] (f') {$\alpha\cdot a$};
%	
%	%Nodes
%\draw [dashed] (0.2,1.6) .. controls +(right:0.5cm) and +(left:0.5cm).. node[above] {$x$} (b'.140);
%\draw [-] (0.2,1.1) .. controls +(right:0.5cm) and +(left:0.5cm).. node[below] {$b$} (b'.220);
%\draw [-] (1.7,0.75) .. controls +(right:0.5cm) and +(left:0.5cm).. node[below] {$a$} (c'.220); 
%\draw [dashed] (b') .. controls +(right:0.5cm) and +(left:0.5cm).. node[above,xshift=0.1cm] {$xb$} (c'.140);
%\draw [dashed] (c') .. controls +(right:0.5cm) and +(left:0.5cm).. node[scale=.9,above,xshift=0.25cm] {$(xb)a$} (d.140);
%\draw [-] (3.2,0.25).. controls +(right:0.5cm) and +(left:0.5cm).. node[below] {$c$} (d.219);
%\draw [dashed] (d) to node[scale=.8,above] {$((xb)a)c$} (f);
%\draw [dashed] (f) to node[scale=.8,above] {$((xb)c)a$} (f');
%\draw [-] (f') to node[scale=.8,above] {$(x(bc))a$} (9.75,0.5);
%\end{tikzpicture} \\
%by definition of $s$, \\
%\end{center}
\begin{align*}
&=  \alpha_{x,b,c}a\circ (\,(s_{xb,a,c}\circ\theta_{(xb)a,c}\circ_1\theta_{xb,a})\circ_1\theta_{x,b}\,)
& (\textrm{by naturality of}\,s)
\\
&=  \alpha_{x,b,c}a\circ (\,\beta_2^3(\theta_{(xb)c,a}\circ_1\theta_{xb,c})\circ_1\theta_{x,b}\,)
& (\textrm{by definition of}\,s,\,\eqref{eq:def-s-brd}\,)
\\
&=  \alpha_{x,b,c}a\circ \beta_3^4(\,(\theta_{(xb)c,a}\circ_1\theta_{xb,c})\circ_1\theta_{x,b}\,)
& (\textrm{by axiom}\,\eqref{ax:sh-braid-2-in-3-first}\,)
\\
&= \beta_3^4[\alpha_{x,b,c}a\circ(\,(\theta_{(xb)c,a}\circ_1\theta_{xb,c})\circ_1\theta_{x,b}\,)]
& (\textrm{by naturality of}\,\beta_3^4)
\\
&= \beta_3^4(\,(\theta_{x(bc),a}\circ_1\theta_{x,bc})\circ_2\theta_{b,c}\,)
& (\textrm{by definition of}\,\alpha a\,\textrm{and}\,\alpha).
\end{align*}

Finally, if the short braiding is a symmetry, i.e. if \eqref{ax:sh-symm} holds, then:
\begin{align*}
\theta_{xb,a}\circ_1\theta_{x,b}
&=\beta_2^3\beta^3_2(\theta_{xb,a}\circ_1\theta_{x,b})
& (\textrm{by }\,\eqref{ax:sh-symm}\,)
&
\\
&=\beta_2^3(s\circ \theta_{xa,b}\circ_1\theta_{x,a})
& (\textrm{by definition of}\,s,\,\eqref{eq:def-s-brd}\,).
& \qedhere
\end{align*}
\end{proof}

In the following proposition we will consider a braided short skew multicategory which is left representable and show how we can rewrite the isomorphisms $\beta^3_2,\beta^4_2$ and $\beta^4_3$ using $s$ and the left universal tight 3-ary maps. 

We recall that we write $(-)'$ for the inverse of $-\circ_1\theta$. In particular, for a tight 3-ary map $f\colon a,b,c\to d$ in $\mc$, $f''\colon (ab)c\to d$ is the unique tight unary map such that
\begin{equation}\label{eq:3-ary-lr}
f=f''\circ(\theta_{ab,c}\circ_1\theta_{a,b}).
\end{equation}
Similarly, for a tight 4-ary map $g\colon a,b,c,d\to e$ in $\mc$, $g''\colon(ab)c,d\to e$ is the unique tight binary map such that
\begin{equation}\label{eq:4-ary-lr}
g=g''\circ_1(\theta_{ab,c}\circ_1\theta_{a,b}).
\end{equation}
%These exists since $\theta_{xb,a}\circ_1\theta_{x,b}$ is the left universal tight 3-ary map in $\mc$.

In a similar way to \cite[Proposition~A.4]{BouLack:skew-braid}, the following proposition provides a description of $\beta_2^4$ and $\beta_3^4$ in terms of $\beta_2^3$ (in the left representable case). 

\begin{prop}\label{prop:char-beta-lr}
Let $\mc$ be a braided short skew multicategory which is left representable. Using the notation in Proposition~\ref{prop:brd-equ-obj}:
\begin{enumerate}[(i)]
\item For any tight 3-ary map $f\colon a,b,c\to d$ in $\mc$, 
\begin{equation}\label{eq:beta-3-ary-lr}
\beta^3_2(f)=f''\circ s\circ(\theta_{ac,b}\circ_1\theta_{a,c}).
\end{equation}
\item For any tight 4-ary map $g\colon a,b,c,d\to e$ in $\mc$, 
\begin{center}
\begin{minipage}{0.4\textwidth}
\begin{equation}\label{eq:beta-4-2-ary-lr}
\beta^4_2(g)= g''\circ_1\beta^3_2(\theta_{ab,c}\circ_1\theta_{a,b})
\end{equation}
\end{minipage} \hspace{0.15cm}\raisebox{-3.5pt}{and}\hspace{0.35cm}
\begin{minipage}{0.4\textwidth}
\begin{equation}\label{eq:beta-4-3-ary-lr}
\beta^4_3(g)= \beta^3_2(g')\circ_1\theta_{a,b}.
\end{equation}
\end{minipage}
\end{center}
\end{enumerate}
\end{prop}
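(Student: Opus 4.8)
The plan is to obtain each of the three identities as a one-line consequence of left representability together with a single short braiding axiom (and, for part (i), the defining equation \eqref{eq:def-s-brd} of $s$). In every case the recipe is the same: use left representability to factor the multimap in question through the tight binary map classifiers, push the braiding isomorphism past the resulting substitution, and recognise what remains.

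For (i), I would start from the factorisation \eqref{eq:3-ary-lr}, namely $f = f''\circ(\theta_{ab,c}\circ_1\theta_{a,b})$. By naturality of $\beta^3_2$ with respect to postcomposition by the tight unary map $f''$, this gives $\beta^3_2(f) = f''\circ\beta^3_2(\theta_{ab,c}\circ_1\theta_{a,b})$. Now the tight ternary map $\theta_{ab,c}\circ_1\theta_{a,b}$ is exactly the one occurring on the right-hand side of \eqref{eq:def-s-brd}, after the relabelling $x\mapsto a$ with the two permuted variables taken to be $c$ and $b$; hence $\beta^3_2(\theta_{ab,c}\circ_1\theta_{a,b}) = s\circ(\theta_{ac,b}\circ_1\theta_{a,c})$, and substituting back yields \eqref{eq:beta-3-ary-lr}.

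For (ii) I would handle the two formulas in turn. For \eqref{eq:beta-4-2-ary-lr}, write $g = g''\circ_1(\theta_{ab,c}\circ_1\theta_{a,b})$ using \eqref{eq:4-ary-lr}; this presents $g$ as the substitution of the tight ternary map $\theta_{ab,c}\circ_1\theta_{a,b}$ into the first input of the tight binary map $g''$, so axiom \eqref{ax:sh-braid-3-in-2-first} applies with that ternary map in the role of $f$ and $g''$ in the role of $g$, giving $\beta^4_2(g) = g''\circ_1\beta^3_2(\theta_{ab,c}\circ_1\theta_{a,b})$. For \eqref{eq:beta-4-3-ary-lr}, write $g = g'\circ_1\theta_{a,b}$, the substitution of the tight binary classifier $\theta_{a,b}$ into the first input of the tight ternary map $g'$; then axiom \eqref{ax:sh-braid-2-in-3-first} gives directly $\beta^4_3(g) = \beta^3_2(g')\circ_1\theta_{a,b}$.

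I do not expect a genuine obstacle: each identity is immediate once the right factorisation is chosen. The only point that needs care is the bookkeeping of indices and variable names --- matching the generic statements of \eqref{eq:def-s-brd}, \eqref{ax:sh-braid-3-in-2-first} and \eqref{ax:sh-braid-2-in-3-first} to the specific instances here, and checking in each case that the substitution really is into the first input, so that precisely these (and not their position-$2$ variants) are the applicable axioms.
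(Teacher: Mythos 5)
Your proposal is correct and follows essentially the same route as the paper's proof: factor $f$ and $g$ through the classifiers via \eqref{eq:3-ary-lr} and \eqref{eq:4-ary-lr}, use naturality of $\beta^3_2$ together with the defining equation \eqref{eq:def-s-brd} of $s$ for part (i), and apply axioms \eqref{ax:sh-braid-3-in-2-first} and \eqref{ax:sh-braid-2-in-3-first} for part (ii). The index bookkeeping you flag is handled exactly as you describe, so there is nothing to add.
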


\begin{proof}
\begin{enumerate}[(i)]
\item[]
\item Let $f\colon a,b,c\to d$ be a tight 3-ary map in $\mc$. Then: 
\begin{align*}
\beta^3_2(f)&= \beta^3_2(\,f''\circ(\theta_{ab,c}\circ_1\theta_{a,b})\,)
& (\textrm{by left representability}\,\eqref{eq:3-ary-lr}\,) 
\\ 
&= f''\circ\beta^3_2(\theta_{ab,c}\circ_1\theta_{a,b})
& (\textrm{by naturality of}\,\beta^3_2)
 \\
&= f''\circ s\circ(\theta_{ac,b}\circ_1\theta_{a,c})
& (\textrm{by definition of}\,s\,\eqref{eq:def-s-brd}\,).
\end{align*}

\item Let $g\colon a,b,c,d\to e$ be a tight 4-ary map in $\mc$. Then:
\begin{align*}
\beta^4_2(g)&= \beta^4_2(\,g''\circ_1(\theta_{ab,c}\circ_1\theta_{a,b})\,)
& (\textrm{by left representability}\,\eqref{eq:4-ary-lr}\,) 
\\ 
&= g''\circ_1\beta^3_2(\,\theta_{ab,c}\circ_1\theta_{a,b}\,)
& (\textrm{by}\,\eqref{ax:sh-braid-3-in-2-first}\,).
% \\
%&= g''\circ_1(\,s_{a,c,b}\circ(\theta_{ac,b}\circ_1\theta_{a,c})\,)
%& (\textrm{by definition of}\,s\,\eqref{eq:def-s-brd}\,).
\end{align*}
\begin{align*}
\beta^4_3(g)&= \beta^4_3(\,g'\circ_1\theta_{a,b}\,)
& (\textrm{by left representability}) 
&
\\ 
&= \beta^3_2(g')\circ_1\theta_{a,b}
& (\textrm{by}\,\eqref{ax:sh-braid-2-in-3-first}\,).
& \qedhere
% \\
%&= \beta^3_2(g')\circ_1\theta_{a,b}
%& (\textrm{by definition of}\,(-)'\,).
\end{align*}
\end{enumerate}

\end{proof}

%Before going on with the connection between braided/symmetric left representable short skew multicategories and braided/symmetric skew monoidal categories, 
A useful consequence of Proposition~\ref{prop:char-beta-lr} is the characterisation of braided short skew multifunctors in $\brdsmulti_{lr}$ which we present in the Lemma below. 

\begin{notation*}
	Following the same idea as in Proposition~\ref{prop:universal}, for any left representable short skew multicategory $\mlc$, we may use the notation $\theta_{a,b,c}:=\theta_{ab,c}\circ_1\theta_{a,b}$ to make some calculations more readable.
	
	%$$\theta_{a,b,c}:=\theta_{ab,c}\circ_1\theta_{a,b}$$
\end{notation*}

\begin{lemma}\label{lem:char-brd-lr-multifct}
Let $\mc$ and $\md$ be two braided left representable short skew multicategories and $F\colon\mc\to\md$ a short skew multifunctor between them. If $F$ respects $\beta^3_2$, then $F$ respects also $\beta^4_2$ and $\beta^4_3$. 
\end{lemma}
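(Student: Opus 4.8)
The plan is to use Proposition~\ref{prop:char-beta-lr} to express $\beta^4_2$ and $\beta^4_3$ in $\mc$ (and likewise in $\md$) purely in terms of $\beta^3_2$, left universal tight binary classifiers $\theta_{-,-}$, the induced map $s$, and ordinary substitution and functor structure. Since $F$ is a short skew multifunctor it already commutes with all the substitution operators $\circ_i$ and with the functor action on unary maps, so the only genuinely new hypothesis to exploit is that $F$ respects $\beta^3_2$. First I would record the auxiliary fact that $F$ preserves left universal classifiers up to the canonical comparison: write $f_2\colon FaFb\to F(ab)$ for the comparison of Remark~\ref{rmk:skew-K-on-map}, so that $F^t_2(\theta_{a,b})=f_2\circ_1\theta_{Fa,Fb}$, and hence also $F$ applied to $\theta_{ab,c}\circ_1\theta_{a,b}$ is $(f_2\cdot Fc \text{-type comparison})\circ_1\theta_{F(ab),Fc}\circ_1\theta_{Fa,Fb}$; more usefully for the argument, $F$ sends the canonical tight $3$-ary classifier $\theta_{a,b,c}$ to a composite of the canonical tight $3$-ary classifier $\theta_{Fa,Fb,Fc}$ in $\md$ with a unary comparison isomorphism. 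The key point is that $F$ preserves the operations $(-)'$ and $(-)''$ up to these comparisons, which follows from naturality of $F^t_2$ and the defining equations \eqref{eq:3-ary-lr}, \eqref{eq:4-ary-lr}.

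Then I would treat $\beta^4_3$ first, as it is the easier of the two. Let $g\colon a,b,c,d\to e$ be a tight $4$-ary map in $\mc$. By \eqref{eq:beta-4-3-ary-lr}, $\beta^4_3(g)=\beta^3_2(g')\circ_1\theta_{a,b}$, where $g'$ is the tight $3$-ary map with $g=g'\circ_1\theta_{a,b}$. Apply $F$: since $F$ commutes with $\circ_1$ we get $F^t_4(\beta^4_3 g)=F^t_3(\beta^3_2(g'))\circ_1 F^t_2(\theta_{a,b})$. Now $F$ respects $\beta^3_2$, so $F^t_3(\beta^3_2(g'))={}^{\md}(\beta^3_2)(F^t_3 g')$, and from $g=g'\circ_1\theta_{a,b}$ together with commutativity of $F$ with $\circ_1$ we get $F^t_4(g)=F^t_3(g')\circ_1 F^t_2(\theta_{a,b})$. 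The subtlety is that $F^t_2(\theta_{a,b})$ is not literally $\theta_{Fa,Fb}$ but $f_2\circ_1\theta_{Fa,Fb}$; however the formula \eqref{eq:beta-4-3-ary-lr} in $\md$ is stated relative to $\theta_{Fa,Fb}$, so I would rewrite $(F^t_4 g)'$ (the de-substitution along $\theta_{Fa,Fb}$) in terms of $F^t_3(g')$ and the comparison $f_2$, using naturality of the braiding $\beta^3_2$ in $\md$ to slide $f_2$ past it. Concretely: $(F^t_4 g) = F^t_3(g')\circ_1(f_2\circ_1\theta_{Fa,Fb}) = \big(F^t_3(g')\circ_1 f_2\big)\circ_1\theta_{Fa,Fb}$ by associativity \eqref{eq:sk-ass-line}, so $(F^t_4 g)' = F^t_3(g')\circ_1 f_2$, and then ${}^{\md}(\beta^4_3)(F^t_4 g) = {}^{\md}(\beta^3_2)\big(F^t_3(g')\circ_1 f_2\big)\circ_1\theta_{Fa,Fb} = \big({}^{\md}(\beta^3_2)(F^t_3 g')\circ_1 f_2\big)\circ_1\theta_{Fa,Fb}$ by naturality of $\beta^3_2$ in the $f_2$ variable, which reassembles to $F^t_3(\beta^3_2 g')\circ_1 F^t_2(\theta_{a,b}) = F^t_4(\beta^4_3 g)$ as wanted.

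Next $\beta^4_2$, using \eqref{eq:beta-4-2-ary-lr}: $\beta^4_2(g)=g''\circ_1\beta^3_2(\theta_{a,b,c})$ where $g''\colon (ab)c,d\to e$ satisfies $g=g''\circ_1\theta_{a,b,c}$. Again apply $F$, commute with $\circ_1$, use that $F$ respects $\beta^3_2$, and use that $F$ sends $\theta_{a,b,c}=\theta_{ab,c}\circ_1\theta_{a,b}$ to a comparison-twisted copy of $\theta_{Fa,Fb,Fc}$; then rewrite $(F^t_4 g)''$ accordingly and invoke naturality of $\beta^3_2$ in $\md$ to absorb the comparison isomorphisms, exactly parallel to the $\beta^4_3$ case. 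I expect the main obstacle to be purely bookkeeping: keeping straight the various comparison maps $f_2$ (and the iterated ones $f_2\circ_1 Fc$, etc.) that intervene because a short skew multifunctor need not strictly preserve classifiers, and checking that every time such a comparison appears it can be slid through $\beta^3_2$ by naturality and reabsorbed — there is no conceptual difficulty, since all the $\beta^4$'s have been reduced to $\beta^3_2$ by Proposition~\ref{prop:char-beta-lr}, but the equalities must be chased through the associativity axiom \eqref{eq:sk-ass-line} and the naturality/dinaturality of substitution with some care. An alternative, possibly cleaner, route avoiding the comparison bookkeeping: argue that $\beta^4_2$ and $\beta^4_3$ are the unique natural isomorphisms making \eqref{ax:sh-braid-3-in-2-first} and \eqref{ax:sh-braid-2-in-3-first} hold (this uniqueness is what Proposition~\ref{prop:char-beta-lr} really encodes, via left representability), and then check that $G\mapsto F^t_4\circ G\circ (F^t_3)^{-1}$-type transport of $\md$'s $\beta^4_2$, $\beta^4_3$ satisfies those same defining equations because $F$ respects $\beta^3_2$ and commutes with $\circ_1$ — hence must coincide with $F^t_4$ applied to $\mc$'s $\beta^4_2$, $\beta^4_3$; I would likely present this uniqueness version as the main line and relegate the explicit chase to a remark.
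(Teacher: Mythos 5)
Your proposal is correct and follows essentially the same route as the paper's proof: reduce $\beta^4_2$ and $\beta^4_3$ to $\beta^3_2$ via Proposition~\ref{prop:char-beta-lr}, apply $F$ using its compatibility with substitutions, and slide the comparison morphisms $f_2$ (and $f_2\cdot Fc$) through $\beta^3_2$ by naturality, exactly as in the paper's computation of $(F_4^tg)''$ and $(F_4^tg)'$. The only tiny quibble is that the re-bracketing step $F^t_3(g')\circ_1(f_2\circ\theta_{Fa,Fb})=(F^t_3(g')\circ_1 f_2)\circ_1\theta_{Fa,Fb}$ is an instance of (di)naturality of the substitution maps rather than of the associativity axiom \eqref{eq:sk-ass-line}, which does not affect the argument.
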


\begin{proof}
In Proposition~\ref{prop:char-beta-lr} we showed how to write $\beta^4_2$ and $\beta^4_3$ in terms of $\beta^3_2$. 
Then, since $F$ respects $\beta^3_2$ and substitutions, we have that $F$ respects also $\beta^4_2$ and $\beta^4_3$. For instance, using \eqref{eq:beta-4-2-ary-lr},
\begin{equation}\label{eq:f4-beta42}
F_4^t(\beta^4_2(g))= F_4^t(\,g''\circ_1\beta^3_2(\theta_{a,b,c})\,)
=F_2^tg''\circ_1F_3^t(\beta^3_2(\theta_{a,b,c}))
=F_2^tg''\circ_1\beta^3_2(F_3^t\theta_{a,b,c}).
\end{equation}

On the other hand, one can check that 
\begin{center}
\begin{tikzpicture}[triangle/.style = {fill=yellow!50, regular polygon, regular polygon sides=3,rounded corners}]
%Multimaps
\path
	(-.5,1) node [triangle,draw,shape border rotate=-90,inner sep=-2.5pt] (c') {$f_2Fc$}
	(2,1) node [triangle,draw,shape border rotate=-90,inner sep=1pt] (f) {$f_2$}
	(4.5,0.5) node [triangle,draw,shape border rotate=-90,inner sep=-3pt] (d) {$F_2^tg''$};
%	(6.25,0.5) node [triangle,draw,shape border rotate=-90,inner sep=1pt] (f') {$f_2$}
%	(8.5,0.5) node [triangle,draw,shape border rotate=-90,inner sep=-1pt] (f'') {$Fs^\mc$}
%	(11,0.5) node [triangle,draw,shape border rotate=-90,inner sep=-1pt] (g) {$Ff''$};
	
	%Nodes
\draw node at (-4,0.5) {$(F_4^tg)''=$};
\draw [dashed] (-2.75,1) to node[scale=.7,above] {$(FaFb)Fc$} (c');
\draw [dashed] (f) .. controls +(right:1cm) and +(left:0.5cm).. node[scale=.7,above] {$F((ab)c)$} (d.140);
\draw [-] (3.2,0) to node[scale=.7,below] {$Fd$} (d.225);
\draw [dashed] (c') to node[scale=.7,above] {$F(ab)Fc$} (f);%
\draw [-] (d) to node [scale=.7,above] {$Fe$} (6,0.5);%
%\draw [dashed] (f') to node [scale=.7,above] {$F((ac)b)$} (f'');% 
%\draw [dashed] (f'') to node [scale=.7,above] {$F((ab)c)$} (g);
%\draw [-] (g) to node [scale=.7,above] {$Fd$} (12.75,0.5);
\end{tikzpicture}
\end{center}
and thus
\begin{align*}
\beta_2^4(F_4^tg)&=(F_4^tg)''\circ_1\beta_2^3(\theta_{Fa,Fb,Fc})
& 
\\
& =(F_2^tg\circ_1f_2\circ f_2Fc)\circ_1\beta_2^3(\theta_{Fa,Fb,Fc})
& 
\\ 
&= F_2^tg\circ_1\beta_2^3((f_2\circ f_2Fc)\circ\theta_{Fa,Fb,Fc})
& (\textrm{by naturality of}\,\beta^3_2)
\\
&= F_2^tg\circ_1\beta_2^3(F_3^t\theta_{a,b,c})
& (\textrm{by definition of}\,f_2)
\\
&= F_4^t(\beta_2^4(g))
& (\textrm{by equation}\,\eqref{eq:f4-beta42}\,).
\end{align*}
Similarly, we can prove that 
%\begin{center}
	$F_4^t(\beta_3^4(g))=\beta_2^3(F_3^tg')\circ_1 F_2^t\theta_{a,b}=\beta_3^4(F_4^t(g))$.%\hfill \qedhere
%\end{center}
\end{proof}

For a left representable braided short skew multicategory $\mc$, we write $K^{brd}\mc$ for the skew monoidal category defined in Proposition~\ref{prop:brd-equ-obj}. The next proposition shows how to construct braided skew monoidal functors starting from morphisms in $\brdfsmulti_{lr}$.

\begin{prop}\label{prop:brd-equ-maps}
Let $F\colon\mc\to\md$ be a braided short skew multifunctor between two braided left representable short skew multicategories. Then, $K^sF\colon K^{brd}\mc\to K^{brd}\md$ is a braided skew monoidal functor. 
\end{prop}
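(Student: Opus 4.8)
The plan is to observe that, by Proposition~\ref{prop:sk-full-faith} (together with Theorem~\ref{them:sk-fin-equiv}), the underlying triple $K^sF=(F,f_0,f_2)$ is already a lax monoidal functor $K^{brd}\mc\to K^{brd}\md$, so the only thing left to verify is the braiding-compatibility square~\eqref{ax:brd-sk-mon-fct}: the equality of unary maps $(FxFa)Fb\to F\big((xb)a\big)$
\[
f_{xb,a}\circ(f_{x,b}\cdot Fa)\circ s^\cd
=
Fs^\cc\circ f_{xa,b}\circ(f_{x,a}\cdot Fb).
\]
By left universality of the tight binary map classifier of $\md$ (Proposition~\ref{prop:universal}), the object $(FxFa)Fb$ is a tight ternary map classifier at $(Fx,Fa,Fb)$ with universal multimap $\theta_{Fx,Fa,Fb}=\theta_{FxFa,Fb}\circ_1\theta_{Fx,Fa}$; hence precomposition with $\theta_{Fx,Fa,Fb}$ is injective on unary maps out of $(FxFa)Fb$, and it suffices to show the two sides agree as tight $3$-ary maps $Fx,Fa,Fb\to F\big((xb)a\big)$, i.e.\ after precomposition with $\theta_{Fx,Fa,Fb}$.

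For the right-hand side I would first compute $(f_{x,a}\cdot Fb)\circ\theta_{Fx,Fa,Fb}$: using the defining property~\eqref{eq:skew-prod-maps} of the tensor of two maps (with the identity in the second slot), associativity of the $\circ_1$ substitutions, and the identity $f_2\circ_1\theta_{Fa,Fb}=F_2^t\theta_{a,b}$ from Remark~\ref{rmk:skew-K-on-map}, this rewrites as $\theta_{F(xa),Fb}\circ_1 F_2^t\theta_{x,a}$; postcomposing with $f_{xa,b}$ and invoking Remark~\ref{rmk:skew-K-on-map} again gives $F_2^t\theta_{xa,b}\circ_1 F_2^t\theta_{x,a}=F_3^t\theta_{x,a,b}$. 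Postcomposing with $Fs^\cc$ and using naturality of the family $F_3^t$ in the codomain variable turns this into $F_3^t\big(s^\cc\circ\theta_{x,a,b}\big)$, which by the defining property~\eqref{eq:def-s-brd} of $s^\cc$ equals $F_3^t\big(\beta^3_2(\theta_{x,b,a})\big)$; finally, since $F$ is braided it respects $\beta^3_2$, so the right-hand side becomes $\beta^3_2\big(F_3^t\theta_{x,b,a}\big)=\beta^3_2\big(F_2^t\theta_{xb,a}\circ_1 F_2^t\theta_{x,b}\big)$.

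For the left-hand side, the defining property~\eqref{eq:def-s-brd} of the braiding $s^\cd$ of $K^{brd}\md$ turns $s^\cd\circ\theta_{Fx,Fa,Fb}$ directly into $\beta^3_2(\theta_{Fx,Fb,Fa})$; then I would pull the remaining postcompositions by $f_{x,b}\cdot Fa$ and $f_{xb,a}$ inside $\beta^3_2$ using naturality of $\beta^3_2$ in the codomain variable, and simplify the resulting arguments exactly as in the previous paragraph --- via~\eqref{eq:skew-prod-maps}, associativity of $\circ_1$, and Remark~\ref{rmk:skew-K-on-map} --- reaching $\beta^3_2\big(F_2^t\theta_{xb,a}\circ_1 F_2^t\theta_{x,b}\big)$, which coincides with the right-hand side. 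This establishes~\eqref{ax:brd-sk-mon-fct}. I do not expect a genuine obstacle here: the whole argument is a reduction to the tight ternary classifier plus bookkeeping of which naturality statement (that of $f_2$, of $\beta^3_2$, of $F_3^t$, or profunctoriality of the substitutions) and which universal property is invoked at each step, with left representability and the compatibility of $F$ with $\beta^3_2$ being the only substantive inputs. One could alternatively run the computation through the descriptions of $\beta^4_2$ and $\beta^4_3$ in terms of $\beta^3_2$ from Proposition~\ref{prop:char-beta-lr} (and Lemma~\ref{lem:char-brd-lr-multifct}), but the $\beta^3_2$-level computation above is the shortest route.
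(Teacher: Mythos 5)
Your proposal is correct and follows essentially the same route as the paper's proof: reduce the braiding square \eqref{ax:brd-sk-mon-fct} to an equality after precomposition with the universal tight ternary classifier $\theta_{FxFa,Fb}\circ_1\theta_{Fx,Fa}$, then rewrite using the defining properties of $f_2$ and $f\cdot Fb$, naturality of the $F_n^t$ and of substitution, the defining equation \eqref{eq:def-s-brd} of $s$ in both $\mc$ and $\md$, and the compatibility of $F$ with $\beta^3_2$. The only cosmetic difference is that you simplify both sides to the common expression $\beta^3_2\bigl(F_2^t\theta_{xb,a}\circ_1F_2^t\theta_{x,b}\bigr)$, whereas the paper pushes one side all the way to the other using \eqref{eq:beta-3-ary-lr} together with the identity $F_3^t(\theta_{xb,a}\circ_1\theta_{x,b})''=f_2\circ f_2\cdot Fa$ --- the same content in slightly different bookkeeping.
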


\begin{notation*}
With abuse of notation we will use the same notation for the data in $\mlc$ and $\md$. 
If the data appears inside $F$ it is referred to $\mlc$ and instead if it is outside it comes from $\md$. 
\end{notation*}

\begin{proof}
We need to prove axiom \eqref{ax:brd-sk-mon-fct} for $K^sF$. To prove this we just need to precompose with the universal tight ternary maps and check that the equality still holds. Then, it follows directly by the definition of $s$ and that, since $F$ is a braided short skew multifunctor, 
%%%%% EQUATION WITH SUPSCRIPTS %%%%%%%%%%%%%
%$$F_3^t(\,^\mc\beta^3_2(\theta^\mc_{xb,a}\circ_1\theta^\mc_{x,b})\,)={^\md\beta^3_2}(\,F_3^t(\theta^\mc_{xb,a}\circ_1\theta^\mc_{x,b})\,).$$ 
%%%%%%%%%%%%%%%%%%%%%%%%%%%%%%%%%%%%%%%%%%%%
$$F_3^t(\beta^3_2(\theta_{xb,a}\circ_1\theta_{x,b})\,)={\beta^3_2}(\,F_3^t(\theta_{xb,a}\circ_1\theta_{x,b})\,).$$
More precisely, we will prove axiom \eqref{ax:brd-sk-mon-fct} by precomposing with the universal tight 3-ary map classifier. We start from $Fs\circ f_2\circ f_2Fb$:
%$Fs^\mc\circ f_2\circ f_2Fb$:
\begin{align*}
& Fs\circ f_2\circ f_2Fb\circ(\theta_{FxFa,Fb}\circ_1\theta_{Fx,Fa})
%& Fs^\mc\circ f_2\circ f_2Fb\circ(\theta^\md_{FxFa,Fb}\circ_1\theta^\md_{Fx,Fa})
& 
\\
& =Fs\circ (\,f_2\circ\theta_{F(xa),Fb}\circ_1(f_2\theta_{Fx,Fa})\,)
%& =Fs^\mc\circ (\,f_2\circ\theta^\md_{F(xa),Fb}\circ_1(f_2\theta^\md_{Fx,Fa})\,)
& (\textrm{by definition of}\,f_2Fb)
\\ 
&= Fs\circ F_2^t(\theta_{xa,b})\circ_1F_2^t(\theta_{x,a})
%&= Fs^\mc\circ F_2^t(\theta_{xa,b})\circ_1F_2^t(\theta_{x,a})
& (\textrm{by definition of}\,f_2)
\\
&= F_3^t(\,s\circ(\theta_{xa,b})\circ_1\theta_{x,a}\,)
%&= F_3^t(\,s^\mc\circ(\theta_{xa,b})\circ_1\theta_{x,a}\,)
& (\textrm{since}\,F\,\textrm{respects substitution})
\\
&= F_3^t(\,\beta_2^3(\theta_{xb,a})\circ_1\theta_{x,b}\,)
& (\textrm{by definition of}\,s)
\\
&= \beta_2^3F_3^t(\,(\theta_{xb,a})\circ_1\theta_{x,b}\,)
& (\textrm{since}\,F\,\textrm{respects braidings})
\\
&= (f_2\circ f_2Fa)\circ s\circ(\theta_{FxFa,Fb}\circ_1\theta_{Fx,Fa})
%&= (f_2\circ f_2Fa)\circ s^\md\circ(\theta^\md_{FxFa,Fb}\circ_1\theta_{Fx,Fa}^\md)
& (\textrm{by}\,\eqref{eq:beta-3-ary-lr}\,).
\end{align*}
It is worth mentioning that in the last line we also used the fact that 
\begin{align*}
	F_3^t(\,(\theta_{xb,a})\circ_1\theta_{x,b}\,)''=f_2\circ f_2Fa. & \qedhere
\end{align*}
%$$F_3^t(\,(\theta_{xb,a})\circ_1\theta_{x,b}\,)''=f_2\circ f_2Fa.$$ \qedhere
\end{proof}

Propositions \ref{prop:brd-equ-obj} and \ref{prop:brd-equ-maps} lift the equivalence $K^s\colon\fsmulti_{lr}\to\skmon$ to two functors
\begin{center}
$K^{brd}\colon\brdfsmulti_{lr}\to\brdskmon$ \\
$K^{sym}\colon\symfsmulti_{lr}\to\symskmon$.
\end{center}
%\begin{center}
%\begin{tabular}{r c l r}
%$K^{brd}\colon$& $\brdfsmulti_{lr}$ & $\to\brdskmon$ & and\vspace{0.1cm}\\ 
%$K^{sym}\colon$& $\symfsmulti_{lr}$ & $\to\symskmon$. &
%\end{tabular}
%\end{center}
Moreover, precomposing with the forgetful functors $U^{brd}_{lr}$ and $U^{sym}_{lr}$ we get two functors $T^{brd}$ and $T^{sym}$ as shown below.
\begin{center}
% https://q.uiver.app/?q=WzAsMyxbMCwwLCJcXGJyZHNtdWx0aV97bHJ9Il0sWzEsMSwiXFxicmRmc211bHRpX3tscn0iXSxbMCwyLCJcXGJyZHNrbW9uIl0sWzAsMiwiVF57YnJkfSIsMl0sWzAsMSwiVV97bHJ9XnticmR9Il0sWzEsMiwiS157YnJkfSJdLFszLDEsIjo9IiwxLHsibGFiZWxfcG9zaXRpb24iOjMwLCJzaG9ydGVuIjp7InNvdXJjZSI6MjB9LCJzdHlsZSI6eyJib2R5Ijp7Im5hbWUiOiJub25lIn0sImhlYWQiOnsibmFtZSI6Im5vbmUifX19XV0=
\begin{tikzcd}[ampersand replacement=\&]
	{\brdsmulti_{lr}} \\
	\& {\brdfsmulti_{lr}} \\
	\brdskmon
	\arrow[""{name=0, anchor=center, inner sep=0}, "{T^{brd}}"', from=1-1, to=3-1]
	\arrow["{U_{lr}^{brd}}", from=1-1, to=2-2]
	\arrow["{K^{brd}}", from=2-2, to=3-1]
	\arrow["{:=}"{description, pos=0.3}, draw=none, from=0, to=2-2]
\end{tikzcd} \hspace{1cm}
% https://q.uiver.app/?q=WzAsMyxbMCwwLCJcXHN5bXNtdWx0aV97bHJ9Il0sWzEsMSwiXFxzeW1mc211bHRpX3tscn0iXSxbMCwyLCJcXHN5bXNrbW9uIl0sWzAsMiwiVF57c3ltfSIsMl0sWzAsMSwiVV97bHJ9XntzeW19Il0sWzEsMiwiS157c3ltfSJdLFszLDEsIjo9IiwxLHsic2hvcnRlbiI6eyJzb3VyY2UiOjIwfSwic3R5bGUiOnsiYm9keSI6eyJuYW1lIjoibm9uZSJ9LCJoZWFkIjp7Im5hbWUiOiJub25lIn19fV1d
\begin{tikzcd}[ampersand replacement=\&]
	{\symsmulti_{lr}} \\
	\& {\symfsmulti_{lr}} \\
	\symskmon
	\arrow[""{name=0, anchor=center, inner sep=0}, "{T^{sym}}"', from=1-1, to=3-1]
	\arrow["{U_{lr}^{sym}}", from=1-1, to=2-2]
	\arrow["{K^{sym}}", from=2-2, to=3-1]
	\arrow["{:=}"{description, pos=0.3}, draw=none, from=0, to=2-2]
\end{tikzcd}
\end{center}

We can see straight away that $T^{brd}$ has, on objects, the same description given in \cite[Theorem~A.1]{BouLack:skew-braid}. In \cite[Theorem~5.9]{BouLack:skew-braid} they show that there is a bijective correspondence between braidings on a left representable skew multicategory and braidings on the corresponding skew monoidal category, which restricts to symmetries. \black From this it follows that $T^{brd}$ and $T^{sym}$ are essentially surjective. In fact, consider a braided skew monoidal category $\cc$ and $\mc$ the left representable skew multicategory such that $\alpha\colon\cc\xrightarrow{\sim}T^s\mc$ (which exists since $T^s$ is essentially surjective). Since $\cc$ is braided, by transport of structure, there exists a unique braiding on $T^s\mc$ such that $\alpha$ is an isomorphism of braided skew monoidal categories. Then, by \cite[Theorem~5.9]{BouLack:skew-braid}, there exists a unique braiding on $\mc$ such that $T^{brd}\mc=T^s\mc$ (as braided skew monoidal categories). Hence, $\cc\cong T^{brd}\mc$. 

\black
\begin{theorem}\label{thm:brd-sk-equiv}
The functors 
\begin{center}
$K^{brd}\colon\brdfsmulti_{lr} \to \brdskmon$ \hspace{0.5cm} and \hspace{0.5cm} $K^{sym}\colon\symfsmulti_{lr} \to \symskmon$
\end{center} 
%\begin{center}
%\begin{tabular}{r c l r}
%$K^{brd}\colon$& $\brdfsmulti_{lr}$ & $\to\brdskmon$ & and\vspace{0.1cm}\\ 
%$K^{sym}\colon$& $\symfsmulti_{lr}$ & $\to\symskmon$ &
%\end{tabular}
%\end{center}
are equivalences of categories. 
\end{theorem}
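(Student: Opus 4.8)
The plan is to mirror the proof of Theorem~\ref{them:sk-fin-equiv}: I will show that $K^{brd}$ is fully faithful and essentially surjective on objects, and then obtain the symmetric statement by restriction. Faithfulness is immediate: a braided short skew multifunctor is just a short skew multifunctor with an extra property, so $K^{brd}$ acts on morphisms exactly as $K^s$ does, and $K^s$ is faithful by Proposition~\ref{prop:sk-full-faith}.

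For fullness, I would fix $\mc,\md\in\brdfsmulti_{lr}$ and a braided skew monoidal functor $G\colon K^{brd}\mc\to K^{brd}\md$. Forgetting the braiding, $G$ is a lax monoidal functor between $K^s\mc$ and $K^s\md$, so Proposition~\ref{prop:sk-full-faith} produces a unique short skew multifunctor $F$ with $K^sF=G$; the task is to check that $F$ is braided. By Lemma~\ref{lem:char-brd-lr-multifct} it suffices to verify that $F$ respects $\beta^3_2$, and this is precisely the converse of the implication established in Proposition~\ref{prop:brd-equ-maps}. Concretely, I would precompose $F^t_3\beta^3_2(f)$ and $\beta^3_2 F^t_3(f)$ with the universal tight ternary maps $\theta_{a,b,c}$, use the characterisation \eqref{eq:beta-3-ary-lr} of $\beta^3_2$ in terms of $s$ together with the defining equation \eqref{eq:def-s-brd} of the braiding on $K^{brd}\mc$ and $K^{brd}\md$, feed in the braiding compatibility axiom \eqref{ax:brd-sk-mon-fct} satisfied by $G$, and keep track of the comparison maps $f_2$ via identities such as $(F^t_3\theta_{a,b,c})''=f_2\circ f_2\cdot Fc$; running the chain of equalities of Proposition~\ref{prop:brd-equ-maps} backwards then yields $F^t_3\beta^3_2=\beta^3_2 F^t_3$, so $F$ is braided and $K^{brd}$ is full.

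Essential surjectivity I would simply extract from the discussion preceding the theorem: $T^{brd}=K^{brd}\circ U^{brd}_{lr}$, and $T^{brd}$ is essentially surjective because $T^s$ is, combined with transport of structure and \cite[Theorem~5.9]{BouLack:skew-braid} --- for a braided skew monoidal category $\cc$ one finds a left representable skew multicategory with a braiding so that $\cc\cong T^{brd}\mc=K^{brd}U^{brd}_{lr}\mc$. Putting the three properties together gives that $K^{brd}$ is an equivalence of categories.

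Finally, the symmetric case follows by restriction: $\symfsmulti_{lr}$ and $\symskmon$ are full subcategories of $\brdfsmulti_{lr}$ and $\brdskmon$, and Proposition~\ref{prop:brd-equ-obj} tells us that a short braiding is a short symmetry exactly when the corresponding braiding on $K^{brd}\mc$ is a symmetry; hence $K^{brd}$ restricts to a fully faithful functor $K^{sym}$ which, together with the essential surjectivity of $T^{sym}$ already noted, is an equivalence $\symfsmulti_{lr}\to\symskmon$. The hard part will be the fullness step --- carefully reversing the computation of Proposition~\ref{prop:brd-equ-maps} so that the braiding axiom for $G$ can be inserted at exactly the right place while bookkeeping the $f_2$-comparisons; everything else is formal.
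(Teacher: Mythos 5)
Your proposal is correct and follows essentially the same route as the paper: essential surjectivity via $T^{brd}=K^{brd}U^{brd}_{lr}$ and the transport-of-structure argument preceding the theorem, and fullness by taking the unique short skew multifunctor $F$ with $K^sF=H$ from Proposition~\ref{prop:sk-full-faith}, reducing braidedness of $F$ to compatibility with $\beta^3_2$ via Lemma~\ref{lem:char-brd-lr-multifct}, and verifying this by the same $\theta$-precomposition computation using \eqref{eq:beta-3-ary-lr}, \eqref{eq:def-s-brd} and \eqref{ax:brd-sk-mon-fct}; the symmetric case is likewise handled by restriction to the full subcategories.
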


\begin{proof}
As discussed above, we already know that $T^{brd}$ and $T^{sym}$ are essentially surjective. Since $T^{brd}=K^{brd}U^{brd}_{lr}$ and $T^{sym}=K^{sym}U^{sym}_{lr}$, then also $K^{brd}$ and $K^{sym}$ are essentially surjective. 

Thus, we only need to show that they are fully faithful. Since $\symfsmulti_{lr}$ is a full subcategory of $\brdfsmulti_{lr}$, we only need to show that $K^{brd}$ is fully faithful to also get the same result for $K^{sym}$.

Let $H\colon K^{brd}\mc\to K^{brd}\md$ a braided skew monoidal functor. We need to find a braided short skew multifunctor $F\colon\mc\to\md$ such that $K^{brd}F=H$. Since the action of $K^{brd}$ on maps is the same as $K^s$, we define $F\colon\mc\to\md$ to be the unique short skew multifunctor such that $K^sF=H$, seeing $H$ as a lax monoidal functor. Finally, it suffices to check that $F$ is \emph{braided} whenever $K^{brd}F$ is such. By Lemma~\ref{lem:char-brd-lr-multifct} we just need to show that $F$ respects $\beta^3_2$, i.e. for any tight ternary map $f\colon a,b,c\to d\in\mc$,
$$F_3^t(\beta^3_2f)={\beta^3_2}(F_3^tf).$$
%$$F_3^t(^\mc\beta^3_2f)={^\md\beta^3_2}(F_3^tf).$$
Let us start considering the left hand side. 
\begin{align*}
F_3^t(\beta^3_2f)
%F_3^t(^\mc\beta^3_2f)
&= F_3^t(\,f''\circ s\circ(\theta_{ac,b}\circ_1\theta_{a,c})\,)
%&= F_3^t(\,f''\circ s^\mc\circ(\theta_{ac,b}\circ_1\theta_{a,c})\,)
& (\textrm{by Proposition~\ref{prop:char-beta-lr}}) 
\\ 
&= Ff''\circ Fs\circ (\,F_2^t(\theta_{ac,b})\circ_1F_2^t(\theta_{a,c})\,)
%&= Ff''\circ Fs^\mc\circ (\,F_2^t(\theta_{ac,b})\circ_1F_2^t(\theta_{a,c})\,)
& (\textrm{since F short skew multifunctor}),
\end{align*}
\begin{center}
by definition of $f_2=h_2$, is equal to \\
\begin{tikzpicture}[triangle/.style = {fill=yellow!50, regular polygon, regular polygon sides=3,rounded corners}]
%Multimaps
\path
	(0,1) node [triangle,draw,shape border rotate=-90,inner sep=0pt] (c') {$\theta^\md$}
	(2,1) node [triangle,draw,shape border rotate=-90,inner sep=1pt] (f) {$f_2$}
	(4,0.5) node [triangle,draw,shape border rotate=-90,inner sep=0pt] (d) {$\theta^\md$}
	(6.25,0.5) node [triangle,draw,shape border rotate=-90,inner sep=1pt] (f') {$f_2$}
	(8.5,0.5) node [triangle,draw,shape border rotate=-90,inner sep=-1pt] (f'') {$Fs^\mc$}
	(11,0.5) node [triangle,draw,shape border rotate=-90,inner sep=-1pt] (g) {$Ff''$};
	
	%Nodes
\draw [-] (-0.8,0.75) to node[scale=.7,below] {$Fc$} (c'.220);
\draw [dashed] (-0.8,1.25) to node[scale=.7,above] {$Fa$} (c'.140);
\draw [dashed] (f) .. controls +(right:1cm) and +(left:0.5cm).. node[scale=.7,above] {$F(ac)$} (d.140);
\draw [-] (3.2,0.15) to node[scale=.7,below] {$Fb$} (d.225);
\draw [dashed] (c') to node[scale=.7,above] {$FaFc$} (f);%
\draw [dashed] (d) to node [scale=.7,above] {$F(ac)Fb$} (f');%
\draw [dashed] (f') to node [scale=.7,above] {$F((ac)b)$} (f'');% 
\draw [dashed] (f'') to node [scale=.7,above] {$F((ab)c)$} (g);
\draw [-] (g) to node [scale=.7,above] {$Fd$} (12.75,0.5);
\end{tikzpicture}\\
by definition of $f_2\cdot Fb$, 
\begin{tikzpicture}[triangle/.style = {fill=yellow!50, regular polygon, regular polygon sides=3,rounded corners}]
%Multimaps
\path
	(0,1) node [triangle,draw,shape border rotate=-90,inner sep=0pt] (c') {$\theta^\md$}
	(2,0.5) node [triangle,draw,shape border rotate=-90,inner sep=0pt] (d) {$\theta^\md$}
	(4.5,0.5) node [triangle,draw,shape border rotate=-90,inner sep=-2.5pt,scale=.9] (f) {$f_2Fb$}
	(7,0.5) node [triangle,draw,shape border rotate=-90,inner sep=1pt] (f') {$f_2$}
	(9.25,0.5) node [triangle,draw,shape border rotate=-90,inner sep=-1pt] (f'') {$Fs^\mc$}
	(11.75,0.5) node [triangle,draw,shape border rotate=-90,inner sep=-1pt] (g) {$Ff''$};
	
	%Nodes
\draw [-] (-0.8,0.75) to node[scale=.7,below] {$Fc$} (c'.220);
\draw [dashed] (-0.8,1.25) to node[scale=.7,above] {$Fa$} (c'.140);
\draw [dashed] (c') .. controls +(right:1cm) and +(left:0.5cm).. node[scale=.7,above,yshift=0.1cm] {$FaFc$} (d.140);
\draw [-] (1.2,0.15) to node[scale=.7,below] {$Fb$} (d.225);
\draw [dashed] (d) to node[scale=.7,above] {$(FaFc)Fb$} (f);%
\draw [dashed] (f) to node [scale=.7,above] {$F(ac)Fb$} (f');%
\draw [dashed] (f') to node [scale=.7,above] {$F((ac)b)$} (f'');% 
\draw [dashed] (f'') to node [scale=.7,above] {$F((ab)c)$} (g);
\draw [-] (g) to node [scale=.7,above] {$Fd$} (13.25,0.5);
\end{tikzpicture}\\
since $K^{brd}F$ is braided, see \eqref{ax:brd-sk-mon-fct},\\
\begin{tikzpicture}[triangle/.style = {fill=yellow!50, regular polygon, regular polygon sides=3,rounded corners}]
%Multimaps
\path
	(0,1) node [triangle,draw,shape border rotate=-90,inner sep=0pt] (c') {$\theta^\md$}
	(2,0.5) node [triangle,draw,shape border rotate=-90,inner sep=0pt] (d) {$\theta^\md$}
	(4.5,0.5) node [triangle,draw,shape border rotate=-90,inner sep=1pt] (f) {$s^\md$}
	(7,0.5) node [triangle,draw,shape border rotate=-90,inner sep=-2.5pt,scale=.9] (f') {$f_2Fc$}
	(9.4,0.5) node [triangle,draw,shape border rotate=-90,inner sep=1pt] (f'') {$f_2$}
	(11.85,0.5) node [triangle,draw,shape border rotate=-90,inner sep=-1pt] (g) {$Ff''$};
	
	%Nodes
\draw [-] (-0.8,0.75) to node[scale=.7,below] {$Fc$} (c'.220);
\draw [dashed] (-0.8,1.25) to node[scale=.7,above] {$Fa$} (c'.140);
\draw [dashed] (c') .. controls +(right:1cm) and +(left:0.5cm).. node[scale=.7,above,yshift=0.1cm] {$FaFc$} (d.140);
\draw [-] (1.2,0.15) to node[scale=.7,below] {$Fb$} (d.225);
\draw [dashed] (d) to node[scale=.7,above] {$(FaFc)Fb$} (f);%
\draw [dashed] (f) to node [scale=.7,above] {$(FaFb)Fc$} (f');%
\draw [dashed] (f') to node [scale=.7,above] {$F(ab)Fc$} (f'');% 
\draw [dashed] (f'') to node [scale=.7,above] {$F((ab)c)$} (g);
\draw [-] (g) to node [scale=.7,above] {$Fd$} (13.35,0.5);
\end{tikzpicture}
\end{center}
Thus, putting together these equalities, we get
\begin{equation}\label{eq:lhs-brd-ff}
F_3^t(\beta^3_2f)=(Ff''\circ f_2\circ f_2\cdot Fc\circ s)\circ(\theta_{F(ac),Fb}\circ_1\theta_{Fa,Fc}\,).
%F_3^t(^\mc\beta^3_2f)=(Ff''\circ f_2\circ f_2\cdot Fc\circ s^\md)\circ(\theta^\md_{F(ac),Fb}\circ_1\theta^\md_{Fa,Fc}\,).
\end{equation}
For the right hand side, we start noticing that $F_3^tf$ can be written in terms of the left universal 3-ary map and two $f_2$. More precisely,
\begin{align*}
F_3^tf
&= F_3^t(f''\circ \theta_{a,b,c})= F_3^t(f''\circ \theta_{ab,c}\circ_1\theta_{a,b})
%&= F_3^t(f''\circ \theta^\mc_{a,b,c})= F_3^t(f''\circ \theta^\mc_{ab,c}\circ_1\theta^\mc_{a,b})
& (\textrm{by}\,\eqref{eq:3-ary-lr}\,\textrm{and left representability}) 
\\ 
&= Ff''\circ F_3^t(\theta_{ab,c}\circ_1\theta_{a,b})
%&= Ff''\circ F_3^t(\theta^\mc_{ab,c}\circ_1\theta^\mc_{a,b})
& (\textrm{by naturality of}\,F_3^t)
\\
&= Ff''\circ F_2^t(\theta_{ab,c})\circ_1F_2^t(\theta_{a,b})
%&= Ff''\circ F_2^t(\theta^\mc_{ab,c})\circ_1F_2^t(\theta^\mc_{a,b})
& (\textrm{since}\,F\,\textrm{respects substitutions}) 
\\
&= Ff''\circ (f_2\circ\theta_{F(ab),Fc})\circ_1(f_2\circ\theta_{Fa,Fb})
%&= Ff''\circ (f_2\circ\theta^\md_{F(ab),Fc})\circ_1(f_2\circ\theta^\md_{Fa,Fb})
& (\textrm{by definition of}\,f_2)
\\
&= (Ff''\circ f_2\circ f_2\cdot Fc)\circ(\theta_{F(ab),Fc}\circ_1\theta_{Fa,Fb})
%&= (Ff''\circ f_2\circ f_2\cdot Fc)\circ(\theta^\md_{F(ab),Fc}\circ_1\theta^\md_{Fa,Fb})
& (\textrm{by definition of}\,\cdot\,\textrm{on maps}).
\end{align*}
Hence, 
\begin{align*}
{\beta^3_2}(F_3^tf)
%{^\md\beta^3_2}(F_3^tf)
&={\beta^3_2}(\,(Ff''\circ f_2\circ f_2\cdot Fc)\circ(\theta_{F(ab),Fc}\circ_1\theta_{Fa,Fb})\,)
%&={^\md\beta^3_2}(\,(Ff''\circ f_2\circ f_2\cdot Fc)\circ(\theta^\md_{F(ab),Fc}\circ_1\theta^\md_{Fa,Fb})\,)
& (\textrm{by part above})
\\
&=(Ff''\circ f_2\circ f_2\cdot Fc)\circ{\beta^3_2}(\,(\theta_{F(ab),Fc}\circ_1\theta_{Fa,Fb})\,)
%&=(Ff''\circ f_2\circ f_2\cdot Fc)\circ{^\md\beta^3_2}(\,(\theta^\md_{F(ab),Fc}\circ_1\theta^\md_{Fa,Fb})\,)
& (\textrm{by naturality of}\,\beta^3_2)
\\
&=(Ff''\circ f_2\circ f_2\cdot Fc)\circ s\circ(\theta_{F(ac),Fb}\circ_1\theta_{Fa,Fc})\,)
%&=(Ff''\circ f_2\circ f_2\cdot Fc)\circ s^\md\circ(\theta^\md_{F(ac),Fb}\circ_1\theta^\md_{Fa,Fc})\,)
& (\textrm{by definition of}\,s)
\\
&=F_3^t(\beta^3_2f)
%&=F_3^t(^\mc\beta^3_2f)
& (\textrm{by}\,\eqref{eq:lhs-brd-ff}).
\end{align*}
\end{proof}

\begin{cor}\label{cor:brd-bij-corr}
Let $\mlc$ be a left representable skew multicategory and $\mlc_s$ the corresponding short one (by  Theorem~\ref{thm:skew-left-closed-equiv}). Then, there is a bijection between braidings on $\mlc$ and short braidings on $\mlc_s$. This correspondence restricts to symmetries. 
\end{cor}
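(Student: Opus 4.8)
The plan is to reduce the statement to the bijection of \cite[Theorem~5.9]{BouLack:skew-braid} between braidings on a left representable skew multicategory $\mlc$ and braidings on the skew monoidal category $T^s\mlc$, a bijection which there is shown to restrict to symmetries. Writing $\mlc_s$ for the left representable short skew multicategory $U^s_{lr}\mlc$ corresponding to $\mlc$ (Theorem~\ref{them:sk-fin-equiv}), we have $K^s\mlc_s = K^s U^s_{lr}\mlc = T^s\mlc$, so it suffices to produce a bijection between short braidings on $\mlc_s$ and braidings on $K^s\mlc_s$ that restricts to symmetries and is compatible with the correspondence of \cite{BouLack:skew-braid}.

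To this end I would introduce three maps of sets. Let $f$ send a braiding $b$ on $\mlc$ to its restriction to the tight ternary and $4$-ary maps: the short-braiding axioms \eqref{ax:sh-braid-4-ary}--\eqref{ax:sh-braid-2-in-3-thrd}, and in the symmetric case \eqref{ax:sh-symm}, are instances of the braid-group relations and of the substitution-compatibility of $b$, so $f$ is well defined, it carries symmetries to short symmetries, and concretely it is the object part of $U^{brd}_{lr}$ on the fibre over $\mlc$. Let $g$ send a short braiding on $\mlc_s$ to the braiding on $K^s\mlc_s$ constructed in Proposition~\ref{prop:brd-equ-obj}; by the last part of that proof $g$ carries short symmetries to symmetries. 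Finally, since $T^{brd} = K^{brd}U^{brd}_{lr}$ and, as recorded just before Theorem~\ref{thm:brd-sk-equiv}, the object assignment of $T^{brd}$ agrees with that of \cite[Theorem~A.1]{BouLack:skew-braid}, unwinding the definitions shows that $g \circ f$ is exactly the bijection of \cite[Theorem~5.9]{BouLack:skew-braid}.

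It now remains only to observe that $g$ is injective, and this is precisely Proposition~\ref{prop:char-beta-lr}: for a short braiding on the left representable $\mlc_s$, the component $\beta^3_2$ is recovered from the induced map $s\colon (xa)b \to (xb)a$ via \eqref{eq:beta-3-ary-lr}, and $\beta^4_2$, $\beta^4_3$ from $\beta^3_2$ via \eqref{eq:beta-4-2-ary-lr}--\eqref{eq:beta-4-3-ary-lr}; since $s$ depends only on the $g$-image of the short braiding (through \eqref{eq:def-s-brd}), two short braidings with equal image under $g$ coincide. Because $g \circ f$ is a bijection and $g$ is injective, standard cancellation makes both $f$ and $g$ bijections. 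Moreover $g\circ f$ restricts to a bijection between symmetries; combined with the facts that $f$ sends symmetries to short symmetries, $g$ sends short symmetries to symmetries, and $g$ is injective, this forces $f$ itself to restrict to a bijection between symmetries. Taking $f^{-1}$ gives the bijection asserted in Corollary~\ref{cor:brd-bij-corr}.

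The step I expect to be the main obstacle is the identification of $g \circ f$ with the bijection of \cite[Theorem~5.9]{BouLack:skew-braid}: concretely, one must verify that starting from a braiding $b$ on $\mlc$, restricting its actions to tight ternary maps and then rebuilding $s$ by \eqref{eq:def-s-brd} returns precisely the skew monoidal braiding that \cite{BouLack:skew-braid} associate with $b$. This is a comparison of explicit formulas, which I would settle by invoking the already-noted agreement of $T^{brd}$ on objects with \cite[Theorem~A.1]{BouLack:skew-braid}; absent that, one checks by hand that, with $\beta^3_2$ the restriction of $b$, the multimap $\beta^3_2(\theta_{xb,a}\circ_1\theta_{x,b})$ is sent by the universal property of $(xb)a$ to the braiding on $T^s\mlc$ built from $b$.
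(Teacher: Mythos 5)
Your proposal is correct and takes essentially the same route as the paper: both reduce the statement to the bijection of \cite[Theorems~5.5/5.9]{BouLack:skew-braid} between braidings on $\mlc$ and braidings on the associated skew monoidal category, combined with the passage from short braidings on $\mlc_s$ to braidings on $K^s\mlc_s$ via Proposition~\ref{prop:brd-equ-obj}, the recoverability of $\beta^3_2,\beta^4_2,\beta^4_3$ from $s$ given by Proposition~\ref{prop:char-beta-lr}, and the agreement of $T^{brd}$ on objects with the construction of \cite[Theorem~A.1]{BouLack:skew-braid}. Your write-up merely unpackages the paper's appeal to Theorem~\ref{thm:brd-sk-equiv} into explicit object-level maps $f$ and $g$ with a cancellation argument, and treats the symmetry part by reflecting symmetries along the bijection instead of checking directly that $s_{x,b,a}=s_{x,a,b}^{-1}$ corresponds to \eqref{ax:sh-symm}, which is an inessential variation.
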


\begin{proof}
The part regarding braidings follows directly from Theorem~\ref{thm:brd-sk-equiv} and \cite[Theorem~5.5]{BouLack:skew-braid}. Again by \cite[Theorem~5.5]{BouLack:skew-braid} we know that a braiding on $\mlc$ is a symmetry if and only if the corresponding braiding on the associated skew monoidal category $\cc$ is a symmetry. More precisely, this is true if and only if $s_{x,b,a}=s_{x,a,b}^{-1}$ for any $x,a,b\in\cc$. Looking at the action of the equivalence $K_b$ in Theorem~\ref{thm:brd-sk-equiv} we see that $s_{x,b,a}=s_{x,a,b}^{-1}$ if and only if \eqref{ax:sh-symm}, i.e. if the braiding on $\mlc_s$ is a symmetry. 
\end{proof}

\begin{cor}
A braiding on a skew multicategory $\mc$ is a symmetry if and only if the isomorphism $\beta^3_2\colon\mc_3^t(a_1,a_2,a_3;b)\cong\mc_3^t(a_1,a_3,a_2;b)$ satisfy $\beta^3_2=(\beta^3_2)^{-1}$. 
\end{cor}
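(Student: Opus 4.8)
The plan is to read this off directly from Corollary~\ref{cor:brd-bij-corr} together with the definition of a short symmetry, so that no new computation is required. As throughout this subsection I take $\mc$ to be left representable and write $\mc_s$ for the associated left representable short skew multicategory; this is precisely the setting in which $\beta^3_2$ arises as part of the short braiding data attached to a braiding on $\mc$, and, as observed above, it is the only setting in which the higher braid actions are forced by the ternary and $4$-ary data (so the equation $\beta^3_2=(\beta^3_2)^{-1}$ cannot control symmetry without it).

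First I would pin down the two a priori different meanings of $\beta^3_2$. On the one hand, a braiding on $\mc$ includes, in arity $3$, the action of the generator $\sigma_2\in\brdgrp^1_3$ on tight ternary maps; on the other hand, the short braiding on $\mc_s$ corresponding to this braiding under Corollary~\ref{cor:brd-bij-corr} has its own isomorphism $\beta^3_2\colon\mc_3^t(a_1,a_2,a_3;b)\to\mc_3^t(a_1,a_3,a_2;b)$. These coincide, because the forgetful functor $U^{brd}_{lr}\colon\brdsmulti_{lr}\to\brdfsmulti_{lr}$ merely restricts the braid actions to arities at most $4$, leaving $\beta^3_2,\beta^4_2,\beta^4_3$ untouched; hence the $\beta^3_2$ in the statement really is the $\beta^3_2$ of the associated short braiding.

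Next I would unwind the definition of short symmetry: a short braiding is a short symmetry exactly when axiom~\eqref{ax:sh-symm} holds, and the commutative triangle~\eqref{ax:sh-symm} asserts precisely that $\beta^3_2\circ\beta^3_2$ is the identity of $\mc_3^t(a_1,a_2,a_3;b)$ for all $a_1,a_2,a_3,b$; since $\beta^3_2$ is an isomorphism, this is the same as $\beta^3_2=(\beta^3_2)^{-1}$. Chaining this with the bijection of Corollary~\ref{cor:brd-bij-corr}, which identifies symmetries on $\mc$ with short symmetries on $\mc_s$, gives the equivalences: the braiding on $\mc$ is a symmetry, if and only if the associated short braiding is a short symmetry, if and only if $\beta^3_2=(\beta^3_2)^{-1}$. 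This is the claim. The main (and only) obstacle is the identification of the two $\beta^3_2$'s made in the second paragraph; everything else is formal, since all the substantive work — naturality of $s$, the braided skew monoidal axioms, the reconstruction of $\beta^4_2$ and $\beta^4_3$ from $\beta^3_2$, and the passage to short braidings — has already been carried out in Propositions~\ref{prop:brd-equ-obj}, \ref{prop:char-beta-lr} and \ref{prop:brd-equ-maps} and in Corollary~\ref{cor:brd-bij-corr}.
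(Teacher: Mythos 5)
Your proof is correct and is essentially the argument the paper intends: the corollary appears without its own proof as an immediate consequence of Corollary~\ref{cor:brd-bij-corr}, whose bijection (implemented by restricting the braid action of $\mc$ to arities at most $4$, equivalently via the common braiding $s$ on the associated skew monoidal category and Proposition~\ref{prop:char-beta-lr}) identifies symmetries on $\mc$ with short symmetries on $\mc_s$, and axiom \eqref{ax:sh-symm} says precisely $\beta^3_2=(\beta^3_2)^{-1}$. Your two explicit clarifications --- that the statement must be read in the left representable context inherited from the preceding corollary, and that the $\beta^3_2$ of the braiding on $\mc$ coincides with the $\beta^3_2$ of the corresponding short braiding --- are exactly the points needed for the deduction, and you justify them correctly.
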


Since a short multicategory $\mc$ can be seen as short skew multicategory with $\mc_n^t(\overline{a};b)=\mc_n^l(\overline{a};b)$, from the results above we can derive similar results for short multicategories. 

\begin{theorem}\label{thm:brd-equiv}
The equivalences $K^{brd}$ and $K^{sym}$ of Theorem~\ref{thm:brd-sk-equiv} induce equivalences
\begin{center}
$K^{brd}\colon\brdfmulti_{lr} \to \brdmon$ \hspace{0.5cm} and \hspace{0.5cm} $K^{sym}\colon\symfmulti_{lr} \to \symmon$.
\end{center} 
%\begin{center}
%\begin{tabular}{r c l r}
%$\widetilde{K}^{brd}\colon$& $\brdfmulti_{lr}$ & $\to\brdmon$ & and\vspace{0.1cm}\\ 
%$\widetilde{K}^{sym}\colon$& $\symfmulti_{lr}$ & $\to\symmon$. &
%\end{tabular}
%\end{center}
\end{theorem}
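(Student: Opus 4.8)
The plan is to deduce Theorem~\ref{thm:brd-equiv} from Theorem~\ref{thm:brd-sk-equiv} in exactly the way Theorem~\ref{thm:rep-mult} was deduced from Theorem~\ref{thm:fin-equiv}. A braided (resp.\ symmetric) short multicategory is the special case of a braided (resp.\ symmetric) short skew multicategory in which $\mc_n^t=\mc_n^l$ and $j=\mathrm{id}$, so that $\brdfmulti_{lr}$ and $\symfmulti_{lr}$ are full subcategories of $\brdfsmulti_{lr}$ and $\symfsmulti_{lr}$; likewise $\brdmon$ and $\symmon$ are full subcategories of $\brdskmon$ and $\symskmon$. Since $K^{brd}\colon\brdfsmulti_{lr}\to\brdskmon$ is an equivalence (Theorem~\ref{thm:brd-sk-equiv}) it is in particular fully faithful, hence its restriction to the full subcategory $\brdfmulti_{lr}$ is fully faithful; it therefore only remains to check that this restriction is isomorphism-dense onto $\brdmon$, i.e.\ that $K^{brd}$ sends braided left representable short multicategories to braided monoidal categories and that every braided monoidal category arises so. The symmetric statement will then follow by restricting everything along the symmetry axiom \eqref{ax:sh-symm} (equivalently along $\symmon\hookrightarrow\brdmon$), using the symmetric half of Theorem~\ref{thm:brd-sk-equiv}.

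For the first half, let $\mc\in\brdfmulti_{lr}$, so $\mc$ is a short multicategory with $j=\mathrm{id}$ carrying a short braiding. Then $K^s\mc=K\mc$, and by Lemma~\ref{lemma:k-on-obj} its left unit constraint $\lambda$ is invertible, so $K\mc$ is left normal skew monoidal and $K^{brd}\mc$ is a braided left normal skew monoidal category. The point is that a braiding forces this to be a genuine braided monoidal category: this is the short counterpart of the fact that a braiding on a left normal skew monoidal category makes it monoidal, cf.\ \cite{BouLack:skew-braid}. Within the short framework one sees it directly by checking that $\mc$ is in fact a \emph{representable} short multicategory in the sense of Definition~\ref{def:lr-rep-short.multi}: the maps $-\circ_j u$ and $-\circ_j\theta_{a,b}$ with a classifier in an arbitrary position $j$ are obtained from the bijections $-\circ_1 u$, $-\circ_1\theta_{a,b}$ of left universality by conjugating with braiding isomorphisms that permute the list $\overline{x},i,\overline{y}$ (resp.\ $\overline{x},ab,\overline{y}$) into $i,\overline{x},\overline{y}$ (resp.\ $ab,\overline{x},\overline{y}$) on the source and with the corresponding permutation on the target. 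These permutations are built from $\beta^3_2$, the swap $\beta^2$ on binary maps derived via the nullary classifier, and --- to move a classifier past the first input of a $4$-ary map --- the swap of the first two inputs, which is available through left representability; that the resulting squares commute is precisely the compatibility of the short braiding with substitution recorded in axioms~\eqref{ax:sh-braid-3-in-2-first}--\eqref{ax:sh-braid-2-in-3-thrd}, all arities staying within dimension $4$. Once $\mc$ is known to be representable, Theorem~\ref{thm:rep-mult} gives that $K\mc=K_{rep}\mc$ is monoidal, so $K^{brd}\mc$ is a braided monoidal category.

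For the converse, let $\mathcal D$ be a braided monoidal category, viewed as an object of $\brdskmon$. By essential surjectivity of $K^{brd}$ there is $\mc\in\brdfsmulti_{lr}$ with $K^{brd}\mc\cong\mathcal D$ in $\brdskmon$; in particular $K^s\mc\cong\mathcal D$ in $\skmon$. On the other hand the underlying monoidal category of $\mathcal D$ lies in $\mon$, so Theorem~\ref{thm:rep-mult} yields a representable short multicategory $\mc_0$ (in particular $j=\mathrm{id}$) with $K^s\mc_0=K\mc_0=K_{rep}\mc_0\cong\mathcal D$. Since $K^s$ is an equivalence (Theorem~\ref{them:sk-fin-equiv}) we get $\mc\cong\mc_0$ in $\fsmulti_{lr}$, and as the condition $j=\mathrm{id}$ is preserved under isomorphism of short skew multicategories, $\mc$ is a short multicategory; together with its braiding it is an object of $\brdfmulti_{lr}$ with $K^{brd}\mc\cong\mathcal D$. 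This proves isomorphism-density, and with it the theorem; the symmetric case is identical, using that short symmetries correspond to symmetries (Corollary~\ref{cor:brd-bij-corr} and the last computation in the proof of Proposition~\ref{prop:brd-equ-obj}).

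The step I expect to require genuine care is the one singled out above: that the short braiding upgrades left representability to representability (equivalently, that the associated braided left normal skew monoidal category is genuinely monoidal). The subtlety is purely in the bounded-arity bookkeeping --- identifying, for each position $j$ and each arity at most $4$, the correct composite of $\beta^3_2$, $\beta^2$ and the left-representability-derived swap of the first two inputs, and verifying from axioms~\eqref{ax:sh-braid-3-in-2-first}--\eqref{ax:sh-braid-2-in-3-thrd} together with naturality that the conjugation squares commute. Everything else is a formal consequence of Theorems~\ref{thm:brd-sk-equiv}, \ref{thm:rep-mult} and \ref{them:sk-fin-equiv}.
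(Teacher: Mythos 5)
Your overall skeleton --- restrict the skew equivalence of Theorem~\ref{thm:brd-sk-equiv} to the full subcategories with $j=\mathrm{id}$ on one side and to genuine braided/symmetric monoidal categories on the other, so that full faithfulness is automatic and only the object-level correspondence needs checking --- is exactly the paper's. The essential-surjectivity half is fine up to a small slip: an isomorphism in $\fsmulti_{lr}$ only transports \emph{invertibility} of $j$, not the equation $j=\mathrm{id}$ (one gets $j^{\mc}=(F^l)^{-1}F^t$), so rather than concluding that $\mc$ itself is a short multicategory you should transport the short braiding along $\mc\cong\mc_0$ onto the short multicategory $\mc_0$ furnished by Theorem~\ref{thm:rep-mult}; this is an easy repair.

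The genuine gap is the step you yourself flag. The paper disposes of it in one line: by \cite[Proposition~2.12]{BouLack:skew-braid} a left normal braided skew monoidal category is monoidal, so $K^{brd}\mc$ is braided monoidal for every $\mc\in\brdfmulti_{lr}$ (left normality coming from Lemma~\ref{lemma:k-on-obj}), and the morphisms then agree on both sides. Your proposed internal replacement --- that the short braiding upgrades left representability to representability in the sense of Definition~\ref{def:lr-rep-short.multi} by conjugating $-\circ_1\theta_{a,b}$ and $-\circ_1 u$ with input permutations --- is not available off the shelf: the tight actions $\beta^3_2,\beta^4_2,\beta^4_3$ all fix the first input; the swap $\beta^2$ is a derived operation whose compatibility with substitution is not among the axioms \eqref{ax:sh-braid-4-ary}--\eqref{ax:sh-braid-2-in-3-thrd} and must itself be deduced via the nullary classifier; the ``swap of the first two inputs'' of a ternary map has to be manufactured from $\beta^2(\theta)$ together with left universality and shown to be a bijection; and the resulting conjugate must finally be identified with genuine substitution $-\circ_j\theta_{a,b}$, resp.\ $-\circ_j u$, not merely shown to be some bijection between the same sets. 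None of this is carried out, and doing so would amount to re-proving, in short-multicategory language, exactly the result you cite as ``cf.''. As written the crucial step therefore rests either on unverified bookkeeping or on the citation; if you simply invoke \cite[Proposition~2.12]{BouLack:skew-braid} (as the paper does), the argument closes immediately and the conjugation machinery becomes unnecessary.
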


\begin{proof}
This follows from Theorem~\ref{thm:brd-sk-equiv} and the fact that left normal braided skew monoidal categories are actually monoidal \cite[Proposition~2.12]{BouLack:skew-braid}. 

Furthermore, one can check that skew braided functors between braided monoidal categories are exactly lax monoidal functors. 
\end{proof}

\black

\black
\subsection{Biclosedness}

We conclude by underlying an interesting phenomenon which appears when we have a biclosed structure. Precisely, whenever we have a biclosed structure, then the substitutions can be described using only the biclosed isomorphisms and the functoriality of $n$-ary maps. We start considering \emph{biclosed} multicategories.

What we described in Definition~\ref{def:multi-closed} is usually called a \emph{left closed} multicategory. Dually, one gets the definition of \emph{right closed} multicategory and therefore a \emph{biclosed} one. 

\begin{defn}
\label{def:fin-biclosed}
\begin{enumerate}
\item[]
\item A multicategory is said to be \emph{right closed} if for all $b,c$ there exists an object $r[b,c]$ and binary map $e^r_{b,c}\colon(b,r[b,c]) \to c$ for which the induced function, for any $n\in\mathbb{N}$, $e^r_{b,c}\circ_2-\colon\catc_n(\overline{x};r[b,c]) \to \catc_{n+1}(b,\overline{x};c)$ is an isomorphism.  

\item A multicategory is said to be \emph{biclosed} if it is both left and right closed. 
\end{enumerate}
\end{defn}

Clearly the definitions above can be given also in the short case by limiting $n=1,2,3$. 

\begin{notation*}
In a biclosed multicategory $\catc$ we will denote with $L_n$ the isomorphisms $\catc_n(\bar{a};b)\cong\catc_{n-1}(a_1,...,a_{n-1};l[a_n,b])$ and with $R_n$ the isomorphisms $\catc_n(\bar{a};b)\cong\catc_{n-1}(a_2,...,a_{n};r[a_1,b])$. Therefore, with this notation $L_n^{-1}=e^l_{a,b}\circ_1-$ and $R_n^{-1}=e^r_{a,b}\circ_2-$.

%\blue 
%Multicategories have a strong connection with $\lambda$-calculus (see for instance \cite{Sav:clones-closd-cats-comb-logic,Lambek-multicategories}), and in
It is interesting to notice that, seeing biclosed multicategories as semantics for the simply-typed $\lambda$-calculus without products (see for instance \cite{Sav:clones-closd-cats-comb-logic,Lambek-multicategories}), these isomorphisms correspond to \emph{currying} (the $L$'s) and \emph{$\lambda$-abstraction} (the $R$'s).
%In the connection with $\lambda$-calculus, these isomorphisms correspond to \emph{currying} (the $L$'s) and $\lambda$-abstraction (the $R$'s), see for instance \cite{Sav:clones-closd-cats-comb-logic,Lambek-multicategories}. 
%\red [Look at "Clones, closed categories, and combinatory logic" by Saville, end page 5 (after Example 1) start page 6, for more references. ]
\black 
\end{notation*}

In the next proposition we prove that substitution of binary maps in a biclosed multicategory can be described in terms of the isomorphisms $L$ and $R$, and the functoriality of $\mc_2(-;-)$. 

\begin{prop}
\label{prop:biclosed-subst}
Let $\catc$ be a biclosed multicategory. The following equations are true:
\begin{enumerate}[(i)]
\item for any pair of binary maps $f\colon a_1,a_2\to b_1$ and $g\colon b_1,b_2\to c$, 
$$g\circ_1f=L_3^{-1}(L_2g\circ f);$$
\item for any pair of binary maps $f\colon a_1,a_2\to b_2$ and $g\colon b_1,b_2\to c$, 
$$g\circ_2f=R_3^{-1}(R_2g\circ f).$$
\end{enumerate}
\end{prop}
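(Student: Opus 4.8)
The plan is to verify the two displayed equations directly by testing them against the defining universal property of the closed structure, exactly as is done throughout the paper for universal maps. For part (i), both sides are elements of $\catc_3(a_1,a_2,b_2;c)$, so by applying the bijection $L_3\colon\catc_3(a_1,a_2,b_2;c)\cong\catc_2(a_1,a_2;l[b_2,c])$ it suffices to check that $L_3(g\circ_1 f)=L_2 g\circ f$. The right-hand side, being $L_3\bigl(L_3^{-1}(L_2g\circ f)\bigr)$, is literally $L_2g\circ f$, so the claim reduces to the identity $L_3(g\circ_1 f)=L_2 g\circ f$ in $\catc_2(a_1,a_2;l[b_2,c])$.

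The key step is then to unwind the definitions. By definition $L_3^{-1}=e^l_{b_2,c}\circ_1-$, so $L_3(g\circ_1 f)$ is the unique binary map $k$ with $e^l_{b_2,c}\circ_1 k=g\circ_1 f$; similarly $L_2 g$ is the unique unary map with $e^l_{b_2,c}\circ_1 L_2 g=g$. Hence I must show $e^l_{b_2,c}\circ_1(L_2 g\circ f)=g\circ_1 f$. Now $L_2 g\circ f=\mc_2(-;-)$ acting by postcomposition, i.e. $(L_2 g)\circ_1 f$ once we use $\mc_1(-;-)=\catc(-,-)$; so $e^l_{b_2,c}\circ_1\bigl((L_2g)\circ_1 f\bigr)$, and by the associativity axiom (\ref{ax:classic-ass-line}) this equals $\bigl(e^l_{b_2,c}\circ_1 L_2 g\bigr)\circ_1 f=g\circ_1 f$, which is what we wanted. (One must be a little careful about which $\circ_i$ index is used after the substitution, but since $L_2 g$ is unary the index bookkeeping in (\ref{ax:classic-ass-line}) is trivial: substituting a unary map does not shift positions.) Part (ii) is the mirror image, using $R_3^{-1}=e^r_{a_1,c}\circ_2-$, the characterisation of $R_2 g$, and the same associativity axiom (\ref{ax:classic-ass-line}) applied to substitution in the second slot; and the short-multicategory version follows verbatim since all maps involved have arity at most $3$ and the axioms used are precisely those required in Section~\ref{sec:fin-mult}.

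The main obstacle, such as it is, is purely bookkeeping: making sure that the composite "$L_2 g\circ f$" appearing in the statement is correctly interpreted as a one-position substitution of the binary map $f$ into the (unary, hence index-preserving) map $L_2 g$, and that the instance of the associativity law (\ref{ax:classic-ass-line}) is invoked with the right indices. There is no genuine conceptual difficulty — the whole content is that currying is natural with respect to postcomposition, which is built into the functoriality of $\catc_n(-;-)$ and the closed adjunction. I would present the argument as a short chain of equalities for (i), remark that (ii) is dual, and note the restriction to $n\le 3$ for the short case.
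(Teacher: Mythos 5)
Your proposal is correct and follows essentially the same route as the paper: reduce (i) to $L_3(g\circ_1 f)=L_2g\circ f$, characterise both sides by their defining property under $e^l\circ_1-$, and conclude from $e^l\circ_1 L_2g=g$ together with the interchange identity $e^l\circ_1(L_2g\circ f)=(e^l\circ_1 L_2g)\circ_1 f$ — which the paper phrases as dinaturality of substitution of binary into binary, while you cite the corresponding instance of the associativity axiom (\ref{ax:classic-ass-line}); given condition \eqref{circ_i-comp-with-fct-act} these are the same identity. The only blemishes are cosmetic: the subscript in part (ii) should read $e^r_{b_1,c}$ rather than $e^r_{a_1,c}$, and in the short setting the unary-middle interchange is supplied by the (di)naturality requirements rather than the listed equations (\ref{eq:ass-line}).
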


\begin{proof}
We show only the first case (i), since the other one can be proved in an analogous way. The equation we need to prove is equivalent to proving $L_3(g\circ_1f)=L_2g\circ f$. By definition of $L_2$ we know that $L_2g$ is the unique unary map such that $e^l\circ_1L_2g=g$. 
%\\
%\red PICTURE HERE \black \\
Similarly, $L_3(g\circ_1f)$ is the unique binary map such that $e^l\circ_1L_3(g\circ_1f)=g\circ_1f$. 
%\\
%\red PICTURE HERE \black \\
Therefore, it suffices to prove that 
$$e^l\circ_1(L_2g\circ f)=g\circ_1f.$$
This can be proven using dinaturality
 of substitution of binary into binary and the definition of $L_2g$. 
\end{proof}
 
More generally, given the biclosed isomorphisms $L_n$ and $R_n$ one could check that substitution of $n$-ary multimaps into $m$-ary must be defined as
\[\begin{tikzcd}
	{\mc_m(\bar{b};c)\times\mc_n(\bar{a};b_i)} \\
	{\mc_1(b_i;r[b_{i-1},...r[b_1,l[b_{i+1},...l[b_m,c]...]]])\times\mc_n(\bar{a};b_i)} \\
	{\mc_n(\bar{a};r[b_{i-1},...r[b_1,l[b_{i+1},...l[b_m,c]...]]])} \\
	{\mc_{n+m-1}(b_1,...b_{i-1},\bar{a},b_{i+1},...,b_n;c)}
	\arrow["\Phi", from=1-1, to=2-1]
	\arrow["p", from=2-1, to=3-1]
	\arrow["\Psi", from=3-1, to=4-1]
\end{tikzcd}\]
where $\Phi$ is a combination of $L_j$ and $R_k$, $p$ is the profunctor action and $\Psi$ a combination of $L\inv_j$ and $R^{-1}_k$. We shall not indulge in these calculations here since it drifts away from the motivation of this subsection. In fact, we want to underline how one can construct potential substitutions starting from the biclosedness isomorphism. For instance, this strategy was used in \cite{BourkeLob:SkewApp} for the multicategory of loose multimaps between Gray-categories. 

\begin{rmk}
One could define a structure consisting of a category $\cc$ equipped with, for $0\leq n\leq 4$, $n$-ary functors $\mc_n(-;-)$, natural isomorphisms $L_n$ and $R_n$ subject to axioms involving $L_n$ and $R_n$, and prove that this structure is equivalent to a biclosed multicategory. We avoid doing this here because the axioms corresponding to the various instances of the associativity equations (\ref{eq:ass-line}, \ref{eq:ass-not-line}) become more complicated than the original axioms. For example, for binary maps $f\colon a_1,a_2\to x_1$, $g\colon b_1,b_2\to x_2$ and $h\colon x_1,x_2\to y$, the associativity equation $(h\circ_1f)\circ_3g=(h\circ_2g)\circ_1f$ becomes
$$R_4^{-1}R_3^{-1}[\,R_2R_3L_3^{-1}(L_2g\circ f_1)\circ f_2\,]=L_4^{-1}L_3^{-1}[\,L_2L_3R_3^{-1}(R_2g\circ f_2)\circ f_1\,].$$
For this reason, we believe that in the biclosed scenario, the best strategy is to use the isomorphisms $L_n$ and $R_n$ to define substitutions and then prove the axioms in their original form (\ref{eq:ass-line}, \ref{eq:ass-not-line}). 
\end{rmk}

\appendix

\black
\section{The Closed Case}\label{sec:closed-case}

Instead of considering left representability as starting point, we can also consider closedness. Indeed, there is an equivalence between closed skew multicategories with unit and skew closed categories \cite[Theorem~6.6]{LackBourke:skew}. 
In this Section, we will show that similar calculations to the left representable case, but using the evaluations morphisms instead of the multimaps classifiers, lead to an equivalence 
$$K_{cl}^s\colon\fsclosed\to\skcl,$$ 
between closed short skew multicategories with units and skew closed categories. Moreover, this equivalence restricts to one $\fclosed\to\closed$ between closed short multicategories and closed categories. Since most of the example in the literature are also left representable, we left the treatment of this particular case to the appendix and, in the proofs, we give only the details of some axioms (even though we believe they are enough to give the idea for the remaining axioms). It is worth mentioning that both in \cite{LackBourke:skew} and here only \emph{left} closedness is considered. Analogous results would follow considering right closedness.

\subsection{Skew Closed Categories}
We now recall the definition of the category $\skcl$ of skew closed categories and skew closed functors \cite[Section~2]{Street:skew-closed}. A \textbf{(left) skew-closed category} $(\cc,[-,-],i,I,J,L)$ consists of a category $\catc$ equipped with a functor 
$$[-,-]\colon\catc^\op\times\catc\to\catc$$ 
and object $i$, together with natural transformations $I\colon[i,a]\to a$ (right unit), $J\colon i\to[a,a]$ (left unit) and $L\colon [b,c]\to[\,[a,b],[a,c]\,]$ (associator) subject to five axioms \cite[Axioms~2.1~--~2.5]{Street:skew-closed}. Then, a functor $F\colon\catc\to\catd$ is defined to be \textbf{closed} when it is equipped with a morphism $f_0\colon i^\cd\to Fi^\cc$ and a natural transformation $f_{a,b}\colon F[a,b]\to[Fa,Fb]$ satisfying three axioms \cite[Axioms~2.6~--~2.8]{Street:skew-closed}. 

We denote with $\smulticlosed$ the full subcategory of $\smulti$ with objects closed skew multicategories (see Definition~\ref{def:closed-skew-multi}) with a nullary map classifier, also called \emph{closed skew multicategories with units}. In \cite[Theorem~6.6]{LackBourke:skew} it is proven that there is an equivalence $T_{cl}^s\colon\smulticlosed\to\skcl$, which extends the equivalence  $\multicl\to\closed$ given in \cite{Manzyuk2012Closed}.

\subsection{Closed Short Skew Multicategories}

We denote with $\fsclosed$ the full subcategory of $\fsmulti$ with objects closed short skew multicategories (Definition~\ref{def:fin-closed}) with a nullary map classifier. Naturally, the forgetful functor $U\colon\smulti\to\fsclosed$ restricts to a forgetful functor
\begin{center}
$U_{cl}^s\colon\smulticlosed\to\fsclosed$.
\end{center} 

We underline that closed (short) skew multicategories with units do not require the nullary maps classifier to be left universal. Nevertheless, in any closed (short) skew multicategory with units, the nullary map classifier is always left universal. This is a consequence of closedness, as we show in the following proposition, which essentially follows from the proof of \cite[Proposition~4.8]{LackBourke:skew}.

\begin{prop}\label{prop:closed-units-then-left-uni}
Let $\mc$ be a closed short skew multicategory. If $\mc$ admits a nullary map classifier $u\colon\diamond\to i$, then $i$ is left universal. 
\end{prop}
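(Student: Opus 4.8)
The plan is to follow the pattern of \cite[Proposition~4.8]{LackBourke:skew}, using the closed structure to strip the non-unit inputs one at a time until we are left with the defining property of the nullary map classifier. Recall first that, since $\mc$ is a \emph{short} skew multicategory, to say that $u$ is left universal is exactly to say that
\[-\circ_1 u\colon\mc^t_{n+1}(i,\overline{x};d)\longrightarrow\mc^l_n(\overline{x};d)\]
is a bijection for every object $d$ and every tuple $\overline{x}$ of length $1$ or $2$, the case of the empty tuple being the hypothesis that $u$ is a nullary map classifier. So there are only two cases to treat, and I would deduce the second from the first.

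For $\overline{x}=(x_1)$ I would consider the square
\[\begin{tikzcd}[ampersand replacement=\&]
	{\mc^t_1(i;[x_1,d])} \& {\mc^t_2(i,x_1;d)} \\
	{\mc^l_0(\diamond;[x_1,d])} \& {\mc^l_1(x_1;d)}
	\arrow["{e_{x_1,d}\circ_1-}", from=1-1, to=1-2]
	\arrow["{-\circ_1 u}"', from=1-1, to=2-1]
	\arrow["{-\circ_1 u}", from=1-2, to=2-2]
	\arrow["{e_{x_1,d}\circ_1-}"', from=2-1, to=2-2]
\end{tikzcd}\]
whose top edge is a bijection by closedness for tight maps (the case $n=1$), whose bottom edge is a bijection by closedness for loose maps (the case $n=0$), and whose left edge is a bijection because $u$ is a nullary map classifier (its source is $\mc^t_1(i;[x_1,d])=\catc(i,[x_1,d])$). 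The square commutes: for a tight unary map $h\colon i\to[x_1,d]$ one has $(e_{x_1,d}\circ_1 h)\circ_1 u = e_{x_1,d}\circ_1(h\circ u)$, which is an instance of dinaturality of the substitution $-\circ_1-\colon\mc^t_2(\overline{b};c)\times\mc^l_0(\diamond;b_1)\to\mc^l_1(b_2;c)$ in the variable $b_1$. Since the other three edges are bijections, so is the fourth, namely $-\circ_1 u\colon\mc^t_2(i,x_1;d)\to\mc^l_1(x_1;d)$; this settles the length-one case for all $d$.

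For $\overline{x}=(x_1,x_2)$ I would strip off the last variable $x_2$ and argue with the square
\[\begin{tikzcd}[ampersand replacement=\&]
	{\mc^t_2(i,x_1;[x_2,d])} \& {\mc^t_3(i,x_1,x_2;d)} \\
	{\mc^l_1(x_1;[x_2,d])} \& {\mc^l_2(x_1,x_2;d)}
	\arrow["{e_{x_2,d}\circ_1-}", from=1-1, to=1-2]
	\arrow["{-\circ_1 u}"', from=1-1, to=2-1]
	\arrow["{-\circ_1 u}", from=1-2, to=2-2]
	\arrow["{e_{x_2,d}\circ_1-}"', from=2-1, to=2-2]
\end{tikzcd}\]
in which the two horizontal edges are bijections by closedness (for tight maps in the case $n=2$, and for loose maps in the case $n=1$), and the left edge is a bijection by the length-one case just proved, applied with $d$ replaced by $[x_2,d]$. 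Commutativity of this square is the equation $(e_{x_2,d}\circ_1 f)\circ_1 u = e_{x_2,d}\circ_1(f\circ_1 u)$ for $f\in\mc^t_2(i,x_1;[x_2,d])$, which is precisely the associativity axiom (\ref{eq:sk-ass-line}.b) with $i=j=1$ (the binary map $e_{x_2,d}$ in the role of $f$, the binary map $f$ in the role of $g$, and the nullary map $u$ in the role of $h$). Hence the remaining edge $-\circ_1 u\colon\mc^t_3(i,x_1,x_2;d)\to\mc^l_2(x_1,x_2;d)$ is a bijection, and the proof is complete.

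The argument is essentially a pair of diagram chases, so there is no serious mathematical obstacle. The point needing care is purely bookkeeping: one must check, for each square, that the instances of closedness invoked (tight versus loose, and the value of $n$) and the compatibility used to prove commutativity (dinaturality of nullary-into-binary substitution, respectively axiom (\ref{eq:sk-ass-line}.b)) all belong to the data and axioms retained in a \emph{short} skew multicategory --- which, by the choices made in Section~\ref{sec:short-skew-mult}, they do.
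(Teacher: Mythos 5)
Your proof is correct and follows essentially the same route as the paper: an induction on the length of $\overline{x}$, using the closedness bijections for tight and loose maps to strip off a variable and reduce to the nullary map classifier. The only difference is that you make explicit, via the two commuting squares and the appeal to dinaturality of nullary-into-binary substitution and axiom (\ref{eq:sk-ass-line}.b), the step the paper leaves as ``one can check that the resulting isomorphism is always given by $-\circ_1 u$''.
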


\begin{proof}
We proceed inductively. By definition, $\mc_1^t(i;y)\cong\mc_0^l(\diamond;y)$. Then, for $n=2,3$,  
$$\mc_n^t(i,\overline{a},x;y)\cong\mc_{n-1}^t(i,\overline{a};[x,y])\cong\mc_{n-2}^l(\overline{a};[x,y])\cong\mc_{n-1}^l(\overline{a},x;y),$$
where the first isomorphism is given by closedness, the second by induction (and it must be $-\circ_1u$) and the last one by closedness again. One can check using the definition of the closedness isomorphism that the resulting isomorphism is always given by $-\circ_1u$.
\end{proof}

\begin{notation*}
Let $\mc$ be a closed (short) skew multicategory. For any multimap $f\in\mc_{n+1}^x(\overline{a},b;c)$ we denote with $f^\sharp\in\mc_{n}^x(\overline{a};[b,c])$ the unique map corresponding to $f$ via the closedness isomorphism $\mc_{n+1}^x(\overline{a},b;c)\cong\mc_{n}^x(\overline{a};[b,c])$. 

We also recall that, for any nullary map $v\colon\diamond\to a$ in $\mc$, we write $v^\ast\colon i\to a$ for the tight unary map determined by the universal map classifier. 
\end{notation*}

\begin{rmk}\label{rmk:closed-iso-props}
Similarly to what happens in the left representable case, also the operation $(-)^\sharp$ \emph{respects substitution}. More precisely, we can see that the following equations are true by postcomposing with the universal maps $e_{b,c}$ given by the closed structure. 
\begin{itemize}
\item For any binary map $f\colon a,b\to c$ and nullary map $v\colon\diamond\to a$, then $f^\sharp\circ v=(f\circ_1v)^\sharp.$
\item For any binary map $f\colon a,b\to x$ and multimap $g\colon x,\overline{c}\to y$, then 
$(g\circ_1f)^\sharp=g^\sharp\circ_1 f.$
\item For any binary map $f\colon a,b\to x$ and any unary map $q\colon a'\to a$, then 
$(f^\sharp\circ q)^\sharp=[1,f^\sharp]\circ q^\sharp.$
\end{itemize}
%\blue 
It is interesting to notice that these equaitions correspond to standard equalities regarding currying in the $\lambda$-calculus (see for instance \cite[Sections~1.2,~1.4]{hottbook} or \cite{Sav:clones-closd-cats-comb-logic,Lambek-multicategories}). 
\black  
\end{rmk}

As in Section~\ref{subsec:left-repr-case}, we start by proving a characterisation of morphisms between closed short skew multicategories in the following lemma. This is analogous to Lemma~\ref{lemma:char-sk-morph-left-repr}.

\begin{lemma}\label{lemma:char-sk-morph-closed}
Let $\mc$ and $\md$ be two closed short skew multicategories. A morphism $F\colon\mc\to\md$ is uniquely specified by:
\begin{itemize}
\item A functor $F\colon\cc\to\cd$ (where $\cc(x,y):=\mc_1^t(x;y)$ and $\cd(a,b):=\md_1^t(a;b)$);

\item Natural families $F^l_0\colon\mlc^l_0(\diamond;a)\to\md^l_0(\diamond;Fa)$ and $F^t_2\colon\mlc^t_2(a,b;c)\to\md^t_2(Fa,Fb;Fc)$ such that $F$ commutes with   
\begin{equation}
\label{eq:lemma-sk-morph-nullary-cl}
\begin{gathered}
%\begin{tikzpicture}[triangle/.style = {fill=yellow!50, regular polygon, regular polygon sides=3,rounded corners}]
%%Multimaps
%	%LHS
%\path
%	(2,1.5) node [triangle,draw,shape border rotate=-90,inner sep=1pt] (b) {$u$} 
%	(4,1) node [triangle,draw,shape border rotate=-90,label=135:$a$,label=230:$b$] (ab) {$f$};
%
%	
%%Connecting
%	%LHS
%\draw [-] (3.2,0.55) to (ab.225);
%\draw [dashed] (b) .. controls +(right:1cm) and +(left:1cm).. (ab.140);
%	%RHS
%	
%%Nodes
%	%LHS
%\draw [-] (ab) to node [above] {$c$} (5.5,1);
%
%	%RHS
%\end{tikzpicture}
%\hspace{2cm}
\begin{tikzpicture}[triangle/.style = {fill=yellow!50, regular polygon, regular polygon sides=3,rounded corners}]
%Multimaps
	%LHS
\path
	(2,.4) node [triangle,draw,shape border rotate=-90,inner sep=1pt] (b) {$v$} 
	(4,1) node [triangle,draw,shape border rotate=-90,label=135:$a$,label=230:$b$] (ab) {$f$};

%Connecting
	%LHS
\draw [dashed] (3.15,1.4) to (ab.140);
\draw [-] (b) .. controls +(right:1cm) and +(left:1cm).. (ab.227);
	%RHS
	
%Nodes
	%LHS
\draw [-] (ab) to node [above] {$c$} (5.5,1);

	%RHS
\end{tikzpicture} 
\end{gathered}
\end{equation}
%\begin{equation}
%\label{eq:lemma-sk-morph-loose-unary}
%\begin{gathered}
%\begin{tikzpicture}[triangle/.style = {fill=yellow!50, regular polygon, regular polygon sides=3,rounded corners}]
%%Multimaps
%	%LHS
%\path
%	(2,1.5) node [triangle,draw,shape border rotate=-90,inner sep=1pt] (b) {$q$} 
%	(4,1) node [triangle,draw,shape border rotate=-90,label=135:$a$,label=230:$b$] (ab) {$f$};
%
%	
%%Connecting
%	%LHS
%\draw [-] (1.2,1.5) to node [above] {$a_0$} (b); 
%\draw [-] (3.2,0.55) to (ab.225);
%\draw [dashed] (b) .. controls +(right:1cm) and +(left:1cm).. (ab.140);
%	%RHS
%	
%%Nodes
%	%LHS
%\draw [-] (ab) to node [above] {$c$} (5.5,1);
%
%	%RHS
%\end{tikzpicture}
%\end{gathered}
%\end{equation}
and such that if we define, for any ternary tight map $h\in\mlc^t_3(a_1,a_2,a_3;b)$, $F_3^th:=F^t_2e_{a_3,b}\circ_1F^t_2h^\sharp$, then $F$ also commutes with 
\begin{equation}
\label{eq:lemma-sk-morph-bin-cl}
\begin{gathered}
\begin{tikzpicture}[triangle/.style = {fill=yellow!50, regular polygon, regular polygon sides=3,rounded corners}]
%Multimaps
\path 
	(2,0) node [triangle,draw,shape border rotate=-90,inner sep=0pt,label=135:$a$,label=230:$b$] (a) {$f$}
	(4,0.5) node [triangle,draw,shape border rotate=-90,label=135:$x$,label=230:$y$] (c) {$g$};

%Nodes
\draw [dashed] (3.1,0.8) to (c.139);
\draw [-] (a) .. controls +(right:1cm) and +(left:1cm).. (c.220);
\draw [dashed] (1.2,.25) to (a.140);
\draw [-] (c) to node [above] {$c$} (5,0.5);
\draw [-] (1.2,-0.25) to (a.225);
\end{tikzpicture} 
\end{gathered}
\end{equation}
and such that if we define, for any loose unary map $q$, $F_1^lq:=F_2^te_{Fa,Fa'}\circ_1F_0^l\overline{q}$, then, for any tight unary map $p$,
\begin{equation}\label{eq:lemma-sk-morph-j-cl}
F_1^lj(p)=jF_1^tp.
\end{equation}
\end{itemize}
\end{lemma}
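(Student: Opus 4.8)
The plan is to mirror the proof of Lemma~\ref{lemma:char-sk-morph-left-repr}, replacing the r\^ole of the tight binary map classifier $\theta_{a,b}$ by the evaluation maps $e_{b,c}$ and the operation $(-)'$ by $(-)^\sharp$. First I would observe that a morphism $F\colon\mc\to\md$ of short skew multicategories determines the data $F\colon\cc\to\cd$, $F_0^l$ and $F_2^t$ listed, and that commutation with the three displayed substitutions is a special case of being a morphism, together with the formula $F_3^th=F_2^te_{a_3,b}\circ_1F_2^th^\sharp$ — which holds for a genuine morphism because $F$ respects substitution and, by Definition~\ref{def:fin-closed}, $h=e_{a_3,b}\circ_1h^\sharp$. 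So the content is the converse: starting from $(F,F_0^l,F_2^t)$ satisfying (\ref{eq:lemma-sk-morph-nullary-cl}), (\ref{eq:lemma-sk-morph-bin-cl}), (\ref{eq:lemma-sk-morph-j-cl}), reconstruct the remaining families $F_1^l$, $F_3^t$, $F_4^t$, $F_2^l$ and check they commute with all six classes of substitution and with $j_1,j_2$.

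The reconstruction uses closedness to express every multimap via iterated $(-)^\sharp$: put $F_1^lq:=F_2^te_{Fa,Fa'}\circ_1F_0^l\overline{q}$ (as stated), $F_3^th:=F_2^te_{a_3,b}\circ_1F_2^th^\sharp$, $F_4^tk:=F_3^te_{a_4,b}\circ_1F_3^tk^\sharp$, and $F_2^lr:=F_3^te_{a_2,c}\circ_1F_2^lr^\sharp$ where $F_2^l$ on $\mc_1^l$ is obtained from $F_1^l$ the same way. Each verification that $F$ commutes with a substitution then reduces, by applying $(-)^\sharp$ enough times (which is a bijection onto lower-dimensional multimaps by Definition~\ref{def:fin-closed}) and using the compatibilities of $(-)^\sharp$ with substitution collected in Remark~\ref{rmk:closed-iso-props}, to the base cases (\ref{eq:lemma-sk-morph-nullary-cl}), (\ref{eq:lemma-sk-morph-bin-cl}), exactly as in Lemma~\ref{lemma:char-morph-left-repr}. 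For instance, to show $F$ preserves $g\circ_1f$ with $f,g$ tight binary, write $g\circ_1f=(g^\sharp\circ_1f)$ evaluated back, use $(g\circ_1f)^\sharp=g^\sharp\circ_1f$ from Remark~\ref{rmk:closed-iso-props}, apply naturality of $F_2^t$ and the defining formula for $F_3^t$, and assumption (\ref{eq:lemma-sk-morph-bin-cl}). The substitution of a nullary map into the first variable of a tight binary produces a loose unary and is handled by the analogue of Lemma~\ref{lemma:unary-loose-morph}, using the first bullet of Remark~\ref{rmk:closed-iso-props} ($f^\sharp\circ v=(f\circ_1v)^\sharp$) in place of $(f\circ_1v)^\ast=f\circ_1v^\ast$; the nullary-into-ternary case follows from this together with the binary-into-binary case. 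Finally, commutation with $j_1$ is literally (\ref{eq:lemma-sk-morph-j-cl}), and commutation with $j_2$ follows as in Lemma~\ref{lemma:char-sk-morph-left-repr}: for $f\in\mc_2^t(a,b;c)$, $F_2^ljf=F_2^l(f\circ_1j1_a)=F_2^tf\circ_1F_1^l(j1_a)=F_2^tf\circ_1jF_1^t1_a=F_2^tf\circ_1j1_{Fa}=j(F_2^tf)$, using naturality of $j$ in the style of Remark~\ref{rmk:jg-jp}.

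Uniqueness is immediate: any morphism $F$ agrees with the reconstructed one on $F$, $F_0^l$, $F_2^t$, and then the formulas $F_3^th=F_2^te_{a_3,b}\circ_1F_2^th^\sharp$ etc.\ (valid for any morphism, by the argument in the first paragraph) force agreement on all the remaining families. Thus the passage "morphism $\mapsto$ $(F,F_0^l,F_2^t)$ satisfying the three equations" is a bijection, which is the statement of the lemma.

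The main obstacle I expect is bookkeeping in the loose-map cases: because substitution of a nullary map into the first slot of a tight binary yields a loose unary, one cannot blindly copy the left-representable argument, and one must be careful that the two auxiliary definitions of $F_1^l$ (via $q^\ast$ in Lemma~\ref{lemma:char-sk-morph-left-repr}, via $e$ and $\overline{q}$ here) genuinely coincide and are compatible with $F_0^l$ — this is where Proposition~\ref{prop:closed-units-then-left-uni} (the nullary classifier is automatically left universal) is used, to make the two descriptions match. Beyond that, all verifications are routine diagram chases of the type already carried out in Lemma~\ref{lemma:char-morph-left-repr} and Lemma~\ref{lemma:unary-loose-morph}, so the full details can reasonably be left to the reader or cited from \cite{LobbiaThesis}.
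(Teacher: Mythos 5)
Your proposal follows essentially the same route as the paper's proof: define $F_4^t k:=F_2^t e\circ_1 F_3^t k^\sharp$ and $F_2^l r:=F_2^t e\circ_1 F_1^l r^\sharp$, then reduce each substitution-compatibility check to the assumed equations via the $(-)^\sharp$ compatibilities of Remark~\ref{rmk:closed-iso-props}, treat $j_1$, $j_2$ as in Lemma~\ref{lemma:char-sk-morph-left-repr}, and get uniqueness from the reconstruction formulas. Only cosmetic points: your formulas for $F_4^t$ and $F_2^l$ have subscript slips (the evaluation map $e$ should receive $F_2^t$, and $r^\sharp$ is loose unary so receives $F_1^l$), and the appeal to Proposition~\ref{prop:closed-units-then-left-uni} is unnecessary here (and would need a unit not assumed in the lemma), since the direct verification with the $e$-based definition of $F_1^l$ — which you also describe — is exactly what the paper does.
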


\begin{proof}
The proof is analogous to the one of Lemma~\ref{lemma:char-sk-morph-left-repr} using the properties of the closed universal maps $e_{b,c}$. The only difference lies in the definition of the natural families $F_4^t$ and $F_2^l$ (and also $F_3^t$ and $F_1^l$ which are defined in the statement of the Lemma). 
\begin{itemize}
\item For $k\in\mlc_4^t(a,b,c,d;e)$ we define $F_4^tk:=F_2^te_{d,e}\circ_1F_3^tk^\sharp$,
\item For $r\in\mlc_2^l(a,b;c)$ we define $F_2^lr:=F_2^te_{b,c}\circ_1F_1^lr^\sharp$. 
\end{itemize}
Exactly as in Lemma~\ref{lemma:char-sk-morph-left-repr}, one can prove directly from the definitions and naturality of the families $F_i^x$, and the properties of closedness (including Remark~\ref{rmk:closed-iso-props}), that these families commute with all substitutions. Below we describe what we need for each substitution, on top of the associativity equations in $\mc$ and $\md$. 
\begin{enumerate}[(i)]
	\item\label{tb-tb} Tight binary into tight binary: naturality of $F_2^t$ and \eqref{eq:lemma-sk-morph-bin-cl}.
	
	\item\label{tb-tt} Tight binary into ternary: \ref{tb-tb} and Remark~\ref{rmk:closed-iso-props}.
	
	\item\label{tt-tb} Tight ternary into tight binary: naturality of $F_2^t$, Remark~\ref{rmk:closed-iso-props}, \ref{tb-tb}, \ref{tb-tt} and~\ref{eq:lemma-sk-morph-bin-cl}.
	
	\item\label{n-tb} Nullary into tight binary: Remark~\ref{rmk:closed-iso-props}, naturality of $F_2^t$ and $F_0^l$.
		
	\item\label{n-tt} Nullary into tight ternary ($2^{nd}$ and $3^{rd}$ variable): \ref{tb-tb}, \ref{n-tb},  and naturality $F_2^t$.
	
		\item\label{lu-tb} Loose unary into tight binary: Remark~\ref{rmk:closed-iso-props}, naturality of $F_2^t$ and $F_0^l$, \eqref{eq:lemma-sk-morph-nullary-cl} and~\ref{n-tt}.
	
	\item\label{n-1-tt} Nullary into tight ternary ($1^{st}$ variable): \eqref{eq:lemma-sk-morph-nullary-cl}, \ref{tb-tb}, \ref{n-tb} and \ref{lu-tb}.
	
	\item\label{n-lu} Nullary into loose unary: \ref{n-tb} and naturality of $F_0^l$.
	
	\item\label{tb-lu} Tight binary into loose unary: \ref{tb-tb} and \ref{n-tt}. \qedhere
\end{enumerate}
%\Cref{tab:axioms-lax-mon} below provides a guideline for the interest reader in the details of the proof. 
%\begin{table}[h!] 
%\centering
%\renewcommand\arraystretch{1.25}
%\begin{tabular}{|c:c|c|}
%\hline
%%(i) & Tight binary into tight binary 
%& Nat. of $F_2^t$, \ref{eq:lemma-sk-morph-bin-cl}  \\
%(ii) & Tight binary into tight ternary 
%%& (i), \Cref{rmk:closed-iso-props} \\
%%(iii) & Tight ternary into tight binary 
%%& Nat. of $F_2^t$, \Cref{rmk:closed-iso-props}, (i), (ii) \\
%(iv) & Nullary into tight ternary ($2^{nd}$ and $3^{rd}$ variable)
%& \eqref{eq:lemma-sk-morph-nullary-cl}, \ref{eq:lemma-sk-morph-bin-cl}, (i), (iv), (v), Nat. of $F_2^t$ \\
%(iv) & Loose unary into tight binary 
%& \Cref{rmk:closed-iso-props}, Nat. of $F_2^t$ and $F_0^l$, (iv)\\
%(v) & Nullary into tight binary 
%& \eqref{eq:lemma-sk-morph-nullary-cl}, \Cref{rmk:closed-iso-props}, Nat. of $F_2^t$ and $F_0^l$ \\
%(vi) & Nullary into tight ternary 
%& \eqref{eq:lemma-sk-morph-nullary-cl}, \ref{eq:lemma-sk-morph-bin-cl}, (i), (iv), (v), Nat. of $F_2^t$ \\
%(vii) & Nullary into loose unary 
%& [...] \\
%(viii) & Tight binary into loose unary
%& [...] \\
%\hline
%\end{tabular}
%\caption{}\label{tab:axioms-lax-mon}
%\end{table}
\end{proof}

Following the same strategy we used in Section~\ref{sec:short-skew-mult}, the aim is once again to get a picture as below, where $T_{cl}^s$ is the equivalence given in \cite[Theorem~6.6]{LackBourke:skew} and $U_{cl}^s$ the natural forgetful functor.
% https://q.uiver.app/#q=WzAsMyxbMCwwLCJcXHNtdWx0aSJdLFsxLDEsIlxcZnNtdWx0aV57Y2x9Il0sWzAsMiwiXFxza2NsIl0sWzAsMSwiVV57Y2x9Il0sWzAsMiwiVF57Y2x9IiwyXSxbMSwyLCJLXntjbH0iLDAseyJzdHlsZSI6eyJib2R5Ijp7Im5hbWUiOiJkYXNoZWQifX19XV0=
\begin{equation}\label{diag-eqv-closed}
\begin{tikzcd}[ampersand replacement=\&]
	\smulticlosed \\
	\& {\fsclosed} \\
	\skcl
	\arrow["{U_{cl}^s}", from=1-1, to=2-2]
	\arrow["{T_{cl}^s}"', from=1-1, to=3-1]
	\arrow["{K_{cl}^s}", dashed, from=2-2, to=3-1]
\end{tikzcd}
\end{equation}

We start assigning to any closed short skew multicategory with units $\mc$ a skew closed category $K_{cl}^s\mc$.  

\begin{lemma}
\label{lemma:closed-skew-fin-mult-to-skew-closed}
Given a closed short skew multicategory $\mlc$ with units we can construct a skew closed category $K_{cl}^s\mlc$   in which:
\begin{itemize}
\item The hom object $[b,c]$ for a pair of objects $b$ and $c$ is defined by the hom object of the closed short skew multicategory;

\item Given a tight unary map $f\colon b\to b'$, the tight map $[f,c]$  is defined as the unique map such that 
$$e_{b,c}\circ_1[f,c]=e_{b',c}\circ_2f;$$

\item Given a tight unary map $g\colon c\to c'$, the tight map $[b,g]$ is defined as the unique map such that 
$$e_{b,c'}\circ_1[b,g]=g\circ e_{b,c};$$

\item The unit $i$ is given by the nullary map classifier;

\item The right unit $I\colon [i,a]\to a$ is defined as
$$I:=e_{i,a}\circ_2u;$$

\item The left unit $J\colon i\to [a,a]$ is defined as the unique map such that \footnote{Here we use the left representability of $i$ (see Proposition~\ref{prop:closed-units-then-left-uni}) and the resulting chain of isomorphisms $\mc_1^t(i;[a,a])\cong\mc_2^t(i,a;a)\cong\mc_1^l(a;a)$.}
$$(e_{a,a}\circ_1J)\circ_1 u= j(1_a);$$

\item The associator $L\colon[b,c]\to[\,[a,b],[a,c]\,]$ is defined as the unique map such that 

\begin{center}
\begin{tikzpicture}[triangle/.style = {fill=yellow!50, regular polygon, regular polygon sides=3,rounded corners},baseline={([yshift=-.5ex]current bounding box.center)}]
%Multimaps
\path
	(1.5,1.5) node [triangle,draw,shape border rotate=-90,inner sep=2pt] (b') {$L$} 
	(4.5,1) node [triangle,draw,shape border rotate=-90,inner sep=2pt] (c') {$e$}
	(6.5,0.5) node [triangle,draw,shape border rotate=-90,inner sep=2pt,label=230:$b$] (d) {$e$};
	%
	%Nodes
	\draw [dashed] (0.25,1.5) to node [above] {$[b,c]$} (b');
\draw [-] (3.25,0.75) to node [below] {$[a,b]$} (c'.220);
\draw [dashed] (b') to node [above] {$[\,[a,b],[a,c]\,]$} (c'.140);
\draw [dashed] (c') to node [above] {$[b,c]$} (d.140);
\draw [-] (5.5,0.25) to (d.225);
\draw [-] (d) to node [above] {$c$} (7.5,0.5);
\end{tikzpicture} \hspace{0.2cm} $=$ \hspace{0.2cm}
%\hspace{0.1cm}\raisebox{2.5em}{$=$}\hspace{0.1cm}
\begin{tikzpicture}[triangle/.style = {fill=yellow!50, regular polygon, regular polygon sides=3,rounded corners},baseline={([yshift=-.5ex]current bounding box.center)}]
%Multimaps
\path 
	(2,0) node [triangle,draw,shape border rotate=-90,inner sep=2pt,label=230:$a$] (a) {$e$}
	(4,0.5) node [triangle,draw,shape border rotate=-90,inner sep=2pt,label=230:$b$] (c) {$e$};

%Nodes
\draw [dashed] (3.1,0.8) to node [above] {$[b,c]$} (c.139);
\draw [-] (a) .. controls +(right:1cm) and +(left:1cm).. (c.220);
\draw [dashed] (1,.25) to node [above] {$[a,b]$} (a.140);
\draw [-] (c) to node [above] {$c$} (5,0.5);
\draw [-] (1,-0.25) to (a.225);
\end{tikzpicture}
\end{center}

\end{itemize}
\end{lemma}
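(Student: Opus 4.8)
The plan is to mirror the proofs of Lemma~\ref{lemma:k-on-obj} and Lemma~\ref{lemma:skew-fin-mult-to-mult}, with the universal property of the evaluation maps $e_{b,c}$ playing the role previously played by the map classifiers. First I would check that $[-,-]\colon\catc^{\mathrm{op}}\times\catc\to\catc$ is a functor. On morphisms it is defined above by the equations $e_{b,c}\circ_1[f,c]=e_{b',c}\circ_2 f$ and $e_{b,c'}\circ_1[b,g]=g\circ e_{b,c}$, and functoriality (preservation of identities and composites, and the fact that the two one-variable actions commute) follows from the uniqueness clauses in the closedness bijections $\mlc_n^x(\overline{a};[b,c])\cong\mlc_{n+1}^x(\overline{a},b;c)$ together with profunctoriality of $\mlc_2^t(-;-)$ and the associativity equations. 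Likewise $I$, $J$ and $L$ are natural: this is a direct computation from their defining equations, using dinaturality of the substitution operations, naturality of $j$ in the style of Remark~\ref{rmk:jg-jp}, and, for $J$, the left universality of $i$ supplied by Proposition~\ref{prop:closed-units-then-left-uni}.

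The heart of the proof is the verification of the five axioms \cite[Axioms~2.1~--~2.5]{Street:skew-closed}. The guiding principle is that a tight unary map into a hom object $[b,c]$ is uniquely determined by its composite with $e_{b,c}$, and that a map into an iterated hom $[\,[a,b],[a,c]\,]$ is determined by its composites with $e_{[a,b],[a,c]}$ and then $e_{a,c}$; one therefore postcomposes each axiom with the appropriate chain of evaluation maps and is left with an equality between tight multimaps of arity at most four. Each such equality is then obtained by a diagram chase using only the ingredients available in a short skew multicategory: the associativity equations (\ref{eq:sk-ass-line}.a), (\ref{eq:sk-ass-not-line}.a) and their nullary variants (b)--(d), dinaturality of the substitution operations, the compatibilities of $(-)^\sharp$ recorded in Remark~\ref{rmk:closed-iso-props}, and --- for the axioms that involve $I=e_{i,a}\circ_2 u$ or $J$ (defined via $(e_{a,a}\circ_1 J)\circ_1 u = j(1_a)$) --- the naturality of $j$ and the left universality of $i$.

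As the treatment of the closed case is confined to the appendix, I would give full details only for a representative selection of the axioms --- for instance the one relating $L$ to the right unit $I$, whose verification turns on dinaturality of substitution of a nullary map into a tight binary, and the one relating $L$ to the left unit $J$, which uses the defining equation of $J$ together with naturality of $j$ in the style of Remark~\ref{rmk:jg-jp} --- and leave the remaining ones to the reader, exactly as is done in Lemma~\ref{lemma:skew-fin-mult-to-mult}.

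The main obstacle will be the pentagon-type axiom for the associator $L$, the one coupling two instances of $L$. Because $L$ is itself defined by a two-fold evaluation, checking this axiom requires postcomposing both sides with a chain of three evaluation maps $e$, producing a long string of substitutions; the delicate point is to organise the rebracketing so that at every stage one only invokes substitution operations that are actually part of the definition of a \emph{short} skew multicategory --- binary or ternary into binary, or binary into ternary, never anything producing a map of arity five --- rather than appealing to the full multicategory associativity. Once the chase is arranged to stay within these bounds, the verification is routine.
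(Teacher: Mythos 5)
Your proposal matches the paper's own proof: functoriality of $[-,-]$ from the universal property of the hom objects and profunctoriality of $\mlc_2^t(-;-)$, then verification of Street's five axioms by postcomposing with chains of evaluation maps $e$ (and, for the unit axioms, precomposing with $u$ and using left universality of $i$ from Proposition~\ref{prop:closed-units-then-left-uni} together with naturality of $j$), with explicit details given only for a representative axiom. The only cosmetic difference is which axiom you single out for full display --- the paper works through the triangle relating $J$ and $L$ --- so the approaches are essentially the same.
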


\begin{proof}
The functoriality of $[-,-]\colon\cc^{op}\times\cc\to\cc$ follows from the universal property of the hom $[b,c]$ and profunctoriality of $\mc_2^t(-;-)$. It remains to verify the five axioms for a skew closed category. All of them follow checking that the equalities hold after postcomposing with the universal maps $e$'s given by the closed structure on $\mc$. We will show explicitly the calculations for the axiom involving $J$ and $L$ shown below (see \cite[Axioms~2.3]{Street:skew-closed}). 
% https://q.uiver.app/#q=WzAsMyxbMCwwLCJpIl0sWzEsMSwiW1xcLFthLGJdLFthLGJdXFwsXSJdLFsxLDAsIltiLGJdIl0sWzAsMiwiSl9iIl0sWzAsMSwiSl97W2EsYl19IiwyXSxbMiwxLCJMXmFfe2IuYn0iXV0=
\[\begin{tikzcd}[ampersand replacement=\&]
	i \& {[b,b]} \\
	\& {[\,[a,b],[a,b]\,]}
	\arrow["{J_b}", from=1-1, to=1-2]
	\arrow["{J_{[a,b]}}"', from=1-1, to=2-2]
	\arrow["{L^a_{b.b}}", from=1-2, to=2-2]
\end{tikzcd}\]
Using closedness and left representability of $i$, it is enough to prove the equality postcomposing with $e_{[a,b],[a,b]}$ and $e_{a,b}$, and precomposing with $u$. So, 
\begin{center}
\begin{tikzpicture}[triangle/.style = {fill=yellow!50, regular polygon, regular polygon sides=3,rounded corners}]
%Multimaps
\path
	(1,1.5) node [triangle,draw,shape border rotate=-90,inner sep=1.5pt] (b') {$u$} 
	(2.5,1.5) node [triangle,draw,shape border rotate=-90,inner sep=1.5pt] (c') {$J$}
	(5.5,1) node [triangle,draw,shape border rotate=-90,inner sep=2pt] (f) {$e$}
	(7.5,0.5) node [triangle,draw,shape border rotate=-90,inner sep=2pt,label=230:$a$] (d) {$e$};
	
	%Nodes
\draw [dashed] (b') to node [above] {$i$} (c');
\draw [dashed] (c') .. controls +(right:1cm) and +(left:1cm).. node [above,xshift=0.25cm,yshift=0.1cm] {$[\,[a,b],[a,b]\,]$} (f.140);
\draw [-] (4.45,0.7) to node [below] {$[a,b]$} (f.225);
\draw [-] (6.5,0.15) to (d.225);
\draw [dashed] (f) .. controls +(right:1cm) and +(left:1cm).. node [above,xshift=0.25cm] {$[a,b]$} (d.140);
\draw [-] (d) to node [above] {$b$} (9,0.5);
\end{tikzpicture}
\begin{align*}
&[\,e_{a,b}\circ_1(e_{[a,b],[a,b]}\circ_1J_{[a,b]})\,]\circ_1u
& \\
&= e_{a,b}\circ_1[\,(e_{[a,b],[a,b]}\circ_1J_{[a,b]})\circ_1u\,]
& (\textrm{by axiom (\ref{eq:ass-line}.b)}) 
\\ 
&= e_{a,b}\circ_1 j(1_{[a,b]})
& (\textrm{by definition of}\,J).
\\
&= j(e_{a,b}\circ_1 1_{[a,b]})=j(e_{a,b})
& (\textrm{by naturality of}\;j\;\textrm{and unit axiom}).
\end{align*}
\end{center}
Similarly one proves that also 
$$[\,e_{a,b}\circ_1(\,e_{[a,b],[a,b]}\circ_1(L_{b,b}^a\circ J_b)\,)\,]\circ_1u=e_{a,b}.$$
\end{proof}

Before defining the functor $K_{cl}^s\colon\fsclosed\to\skcl$ on morphisms, we prove the following useful lemma, which corresponds to Lemma~\ref{lemma:sk-bij} for the left representable case.

\begin{lemma}\label{lemma:sk-bij-cl}
Consider $\mc, \md \in \fsclosed$ and a functor $F\colon \cc \to \cd$.  There is a bijection between natural families 
\begin{center}
$F_0^l\colon\mc_0^l(\diamond;a)\to\md_0^l(\diamond;Fa)$ and $F_2^t\colon\mc_0^l(a,b;c)\to\md_0^l(Fa,Fb;Fc)$,
\end{center}
and natural families of morphisms
\begin{center}
$f_0\colon i^\md\to Fi^\mc$ and $f_{a,b}\colon F[a,b]\to[Fa,Fb]$.
\end{center}

\end{lemma}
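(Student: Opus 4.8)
The plan is to follow the proof of Lemma~\ref{lemma:sk-bij} almost verbatim, replacing the tight binary map classifier by the evaluation map $e_{a,b}$ of the closed structure and the nullary map classifier $u$ by itself. First I would treat the binary data. Since $\mc$ and $\md$ are closed, we have natural bijections $e_{a,b}\circ_1-\colon\mc^t_1(x;[a,b])\xrightarrow{\cong}\mc^t_2(x,a;b)$ and $e_{Fa,Fb}\circ_1-\colon\md^t_1(Fx;[Fa,Fb])\xrightarrow{\cong}\md^t_2(Fx,Fa;Fb)$, so a natural family $F^t_2$ transports to a natural family $\cc(x,[a,b])\to\cd(Fx,[Fa,Fb])$. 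Regarding $x$ as the only varying coordinate and applying the Yoneda lemma to the representable $\cc(-,[a,b])$, this datum is the same as a morphism $f_{a,b}\colon F[a,b]\to[Fa,Fb]$, and the transport in both directions is governed by the square
\[
\begin{tikzcd}[ampersand replacement=\&]
	{\mc^t_2(x,a;b)} \&\& {\md^t_2(Fx,Fa;Fb)} \\
	{\mc^t_1(x;[a,b])} \&\& {\md^t_1(Fx;[Fa,Fb])}
	\arrow["{F^t_2}", from=1-1, to=1-3]
	\arrow["{e_{a,b}\circ_1 -}"',"\cong", from=2-1, to=1-1]
	\arrow["{f_{a,b}\circ F-}"', from=2-1, to=2-3]
	\arrow["{e_{Fa,Fb}\circ_1 -}","\cong"', from=2-3, to=1-3]
\end{tikzcd}
\]

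Next I would treat the nullary data. By Proposition~\ref{prop:closed-units-then-left-uni} the nullary map classifier $u\colon\diamond\to i$ is automatically left universal, so $-\circ_1u$ gives natural bijections $\mc^t_1(i^\mc;a)\cong\mc^l_0(\diamond;a)$ and $\md^t_1(i^\md;Fa)\cong\md^l_0(\diamond;Fa)$. Hence $F^l_0$ transports to a natural family $\cc(i^\mc,a)\to\cd(i^\md,Fa)$, which by the Yoneda lemma applied to $\cc(i^\mc,-)$ is precisely a morphism $f_0\colon i^\md\to Fi^\mc$, via the square
\[
\begin{tikzcd}[ampersand replacement=\&]
	{\mc^l_0(\diamond;a)} \&\& {\md^l_0(\diamond;Fa)} \\
	{\mc^t_1(i^\mc;a)} \&\& {\md^t_1(i^\md;Fa)}
	\arrow["{F^l_0}", from=1-1, to=1-3]
	\arrow["{-\circ_1 u}"',"\cong", from=2-1, to=1-1]
	\arrow["{F-\circ f_0}"', from=2-1, to=2-3]
	\arrow["{-\circ_1 u}","\cong"', from=2-3, to=1-3]
\end{tikzcd}
\]
The usual Yoneda argument shows the two assignments are mutually inverse, which gives the claimed bijection.

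The only point demanding genuine care — and hence the main, though mild, obstacle — is checking that the transported family $\cc(x,[a,b])\to\cd(Fx,[Fa,Fb])$ is natural in the \emph{contravariant} internal-hom slot $a$, and dually covariant in $b$. This reduces to showing that the bijection $e_{a,b}\circ_1-$ is natural in $a$, i.e. that for $q\colon a'\to a$ one has $e_{a',b}\circ_1[q,b]=e_{a,b}\circ_2 q$; but this is exactly the equation defining the functorial action $[q,b]$ of the internal hom in Lemma~\ref{lemma:closed-skew-fin-mult-to-skew-closed}, and combining it with interchange (the associativity equation \eqref{eq:sk-ass-not-line}) the square chases through. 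With this and the evident covariant analogue for the $b$-slot in hand, together with naturality of $u$ for the $f_0$-square, all remaining naturality assertions are routine, exactly as in Lemma~\ref{lemma:sk-bij}.
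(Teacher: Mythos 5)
Your proposal is correct and follows essentially the same route as the paper: both correspondences are obtained by transporting $F^t_2$ and $F^l_0$ along the closedness bijections $e_{a,b}\circ_1-$ and the nullary classifier bijection $-\circ_1 u$, and then applying the Yoneda lemma to exactly the governing squares you draw. The only quibble is that the exchange $(e_{a,b}\circ_1 g)\circ_2 q=(e_{a,b}\circ_2 q)\circ_1 g$ in your naturality check is an instance of profunctoriality of $\mc^t_2(-;-)$ (both $g$ and $q$ being unary) rather than of the substitution axiom \eqref{eq:sk-ass-not-line}, but this does not affect the argument.
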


\begin{proof}
The correspondence between $F_0^l$ and $f_0$ is the same as in Lemma~\ref{lemma:sk-bij}. 

Similarly to Lemma~\ref{lemma:sk-bij}, the correspondence between $F_2^t$ and $f_{a,b}$ is governed by the following diagram 
% https://q.uiver.app/#q=WzAsNCxbMCwxLCJcXG1jXzJedCh4LGE7YikiXSxbMiwxLCJcXG1kXzJedChGeCxGYTtGYikiXSxbMCwwLCJcXG1jXzFedCh4O1thLGJdKSJdLFsyLDAsIlxcbWRfMV50KEZ4O1tGYSxGYl0pIl0sWzIsMywiRihmX3thLGJ9XFxjaXJjLSkiXSxbMiwwLCJlX3thLGJ9XFxjaXJjXzEtIiwyXSxbMywxLCJlX3tGYSxGYn1cXGNpcmNfMS0iXSxbMCwxLCJGXzJedCIsMl1d
\[\begin{tikzcd}[ampersand replacement=\&]
	{\mc_1^t(x;[a,b])} \&\& {\md_1^t(Fx;[Fa,Fb])} \\
	{\mc_2^t(x,a;b)} \&\& {\md_2^t(Fx,Fa;Fb)}
	\arrow["{F(f_{a,b}\circ-)}", from=1-1, to=1-3]
	\arrow["\cong","{e_{a,b}\circ_1-}"', from=1-1, to=2-1]
	\arrow["{e_{Fa,Fb}\circ_1-}","\cong"', from=1-3, to=2-3]
	\arrow["{F_2^t}"', from=2-1, to=2-3]
\end{tikzcd}\]
in which the vertical arrows are natural bijections by closedness and the lower horizontal arrow corresponds to the upper one using the Yoneda lemma. 
\end{proof}

\begin{rmk}
\label{rmk:def-K-cl-on-funct}
Given a morphism $F\colon \mlc \to \md \in \fsclosed$ we obtain, applying the above lemma, natural families $f_{a,b}\colon F[a,b] \to [Fa,Fb]$ and $f_0\colon i \to Fi$ defining the \emph{data} for a closed functor $K_{cl}^sF\colon K_{cl}^s\mlc \to K_{cl}^s\md$. We will prove that it is a closed functor in Proposition~\ref{prop:full-faith-cl}.

Explicitly, $f_{a,b}\colon F[a,b] \to [Fa,Fb]$ is the unique morphism such that $e_{Fa,Fb}\circ_1f_{2}=F_2^t(e_{a,b})$ whilst $f_{0}$ is the unique morphism such that $f_{0} \circ u^\md = F_0^lu^\mc$. 
\end{rmk}

In a similar way to Proposition~\ref{prop:sk-full-faith}, now we use Lemma~\ref{lemma:char-sk-morph-closed} and Lemma~\ref{lemma:sk-bij-cl} to prove the following proposition. 

\begin{prop}\label{prop:full-faith-cl}
With the definition on objects given in Lemma~\ref{lemma:closed-skew-fin-mult-to-skew-closed} and on morphisms in Remark~\ref{rmk:def-K-cl-on-funct} we obtain a fully faithful functor $K_{cl}^s\colon\fsmulti_{cl}^s\to\skcl$. 
\end{prop}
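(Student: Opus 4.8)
The plan is to mirror the structure of the proof of Proposition~\ref{prop:sk-full-faith}, replacing the role of the binary map classifiers $\theta_{a,b}$ and the associated $(-)'$ by the evaluation maps $e_{a,b}$ and the operation $(-)^\sharp$. By Lemma~\ref{lemma:char-sk-morph-closed}, a morphism in $\fsclosed(\mc,\md)$ is uniquely specified by a functor $F\colon\cc\to\cd$ together with natural families $(F_0^l,F_2^t)$ satisfying the three equations \eqref{eq:lemma-sk-morph-nullary-cl}, \eqref{eq:lemma-sk-morph-bin-cl} and \eqref{eq:lemma-sk-morph-j-cl}. By Lemma~\ref{lemma:sk-bij-cl}, such families correspond bijectively to families $(f_0,f_{a,b})$ of the data type of a closed functor $K_{cl}^s\mc\to K_{cl}^s\md$. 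Hence it suffices to prove that, under this correspondence, $(F_0^l,F_2^t)$ satisfies the three equations of Lemma~\ref{lemma:char-sk-morph-closed} if and only if $(f_0,f_{a,b})$ satisfies the three closed-functor axioms \cite[Axioms~2.6--2.8]{Street:skew-closed}; functoriality of $K_{cl}^s$ then follows routinely from the explicit formulas for $f_0$ and $f_{a,b}$ in Remark~\ref{rmk:def-K-cl-on-funct}.

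The core of the argument is a table of correspondences between the three conditions on $(F_0^l,F_2^t)$ and the three axioms on $(f_0,f_{a,b})$, analogous to Table~\ref{tab:sk-axioms-lax-mon}. I expect: \eqref{eq:lemma-sk-morph-bin-cl} (preservation of substitution of a tight binary into the first slot of a tight binary) corresponds to the $L$-axiom for a closed functor, since both sides involve only tight maps and the associator $L$ is defined in Lemma~\ref{lemma:closed-skew-fin-mult-to-skew-closed} precisely by the iterated-evaluation equation that \eqref{eq:lemma-sk-morph-bin-cl} expresses; \eqref{eq:lemma-sk-morph-j-cl} corresponds to the $J$-axiom, because $J$ is characterized through $(e_{a,a}\circ_1 J)\circ_1 u = j(1_a)$ and the hypothesis $F_1^l j(p)=j F_1^t p$ is exactly what is needed to transport this defining property along $F$; and \eqref{eq:lemma-sk-morph-nullary-cl} corresponds to the $I$-axiom, since $I$ is defined by $I=e_{i,a}\circ_2 u$ and \eqref{eq:lemma-sk-morph-nullary-cl} is preservation of substitution of a nullary map into the second slot of a tight binary. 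Each direction of each equivalence is verified by postcomposing with the appropriate evaluation maps $e$ (using that they are universal, i.e. that $e_{b,c}\circ_1-$ is a bijection) and unwinding the definitions of $f_0$, $f_{a,b}$ from Remark~\ref{rmk:def-K-cl-on-funct}, the defining equations of $I$, $J$, $L$ from Lemma~\ref{lemma:closed-skew-fin-mult-to-skew-closed}, the substitution-respecting identities of Remark~\ref{rmk:closed-iso-props}, and the derived definition $F_1^l q = F_2^t e_{Fa,Fa'}\circ_1 F_0^l\overline{q}$.

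The main obstacle will be the $J$-axiom/\eqref{eq:lemma-sk-morph-j-cl} correspondence, exactly as in Proposition~\ref{prop:sk-full-faith} where the left-unit axiom was the delicate one: the substitution of a nullary map into the first variable of a tight binary produces a \emph{loose} unary map, so the calculation must pass through $F_1^l$ and the identity $j g = g\circ_1 j1$ from Remark~\ref{rmk:jg-jp}, rather than staying among tight maps. Concretely, one shows that $f_{a,a}\circ Fj_a^{\flat}\circ f_0$ (the composite appearing in the closed-functor $J$-axiom) satisfies the defining universal property of $J_{Fa}$ by reducing, via Remark~\ref{rmk:closed-iso-props} and the formula for $F_1^l$, to $F_1^l(j1_a)=j1_{Fa}$, which is \eqref{eq:lemma-sk-morph-j-cl}; and conversely that this same reduction, read backwards, forces \eqref{eq:lemma-sk-morph-j-cl} from the $J$-axiom. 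The remaining two correspondences are essentially the tight-only computations already carried out (for the associator and the right unit) in Proposition~\ref{prop:full-faith}, now phrased via $e$ and $(-)^\sharp$ instead of $\theta$ and $(-)'$, so I would only indicate them and refer the reader to the analogous details. Once the bijection $K_{cl,\mc,\md}^s\colon\fsclosed(\mc,\md)\to\skcl(K_{cl}^s\mc,K_{cl}^s\md)$ is established, full faithfulness is immediate.
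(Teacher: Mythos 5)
Your proposal follows essentially the same route as the paper: reduce via Lemma~\ref{lemma:char-sk-morph-closed} and Lemma~\ref{lemma:sk-bij-cl} to showing that the three conditions on $(F_0^l,F_2^t)$ correspond to the three closed-functor axioms, with exactly the same pairing (\eqref{eq:lemma-sk-morph-nullary-cl} with the $I$-axiom, \eqref{eq:lemma-sk-morph-bin-cl} with the $L$-axiom, \eqref{eq:lemma-sk-morph-j-cl} with the $J$-axiom) that the paper records in Table~\ref{tab:sk-axioms-closed-funct}, the verifications being carried out by postcomposing with the evaluation maps in analogy with Proposition~\ref{prop:sk-full-faith}. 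Your extra remarks on the loose-map subtlety in the $J$-axiom correspondence are consistent with the paper's treatment of the left-unit axiom in the left representable case, so the proposal is correct and matches the paper's argument.
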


\begin{proof}
Just like in Proposition~\ref{prop:sk-full-faith} it is enough to show that $(F^l_0,F^t_2)$ satisfy the conditions of Lemma~\ref{lemma:char-sk-morph-closed} if and only if $(f_0,f_{a,b})$ satisfy the axioms of closed functor. We leave the correspondence in Tabel~\ref{tab:sk-axioms-closed-funct} below. The calculations are completely analogous to the ones in the left representable case, using the closedness techniques of this section. 

\begin{table}[h]
\centering 
\renewcommand\arraystretch{1.5}
\begin{tabular}{|c | c|}
\hline 
$(F^l_0,F^t_2)$ & $(f_0,f_{a,b})$ \\
\hline
 \eqref{eq:lemma-sk-morph-nullary-cl} & $I$ axiom \cite[Axiom~2.6]{Street:skew-closed}\\
\eqref{eq:lemma-sk-morph-bin-cl} & $L$ axiom \cite[Axiom~2.8]{Street:skew-closed} \\
\eqref{eq:lemma-sk-morph-j-cl} & $J$ axiom \cite[Axiom~2.7]{Street:skew-closed}\\
\hline
\end{tabular}
\caption{}\label{tab:sk-axioms-closed-funct}
\end{table}
\end{proof}

We now have defined a (fully faithful) functor $K_{cl}^s$ which fits in diagram \eqref{diag-eqv-closed}. Moreover, comparing the construction of $K_{cl}^s$ with that given in the equivalence $T_{cl}^s$ of \cite[Theorem~6.6]{LackBourke:skew}, we see that the diagram \eqref{diag-eqv-closed} is commutative. 

\begin{theorem}\label{them:sk-fin-equiv-closed}
The functor $K_{cl}^s\colon\fsclosed \to \skcl$ is an equivalence of categories, as is the forgetful functor $U_{cl}^s\colon\fsclosed\to\smulticlosed$.
\end{theorem}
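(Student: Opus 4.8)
The plan is to mimic the two-out-of-three arguments already used for Theorem~\ref{thm:fin-equiv} and Theorem~\ref{them:sk-fin-equiv}. First I would observe that $K_{cl}^s$ has been shown to be fully faithful in Proposition~\ref{prop:full-faith-cl}, so the only thing left to verify about $K_{cl}^s$ is essential surjectivity on objects. For this I would invoke the commutative triangle \eqref{diag-eqv-closed}, i.e.\ the functorial identity $T_{cl}^s = K_{cl}^s U_{cl}^s$: since $T_{cl}^s$ is an equivalence by \cite[Theorem~6.6]{LackBourke:skew}, it is in particular essentially surjective, and essential surjectivity of a composite forces essential surjectivity of the last functor applied; hence $K_{cl}^s$ is essentially surjective. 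Together with full faithfulness this shows $K_{cl}^s$ is an equivalence.

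For the forgetful functor $U_{cl}^s\colon\smulticlosed\to\fsclosed$ I would then use the factorization $T_{cl}^s = K_{cl}^s U_{cl}^s$ in the other direction: now that both $T_{cl}^s$ and $K_{cl}^s$ are known to be equivalences, choosing a quasi-inverse $(K_{cl}^s)^{-1}$ gives $U_{cl}^s \simeq (K_{cl}^s)^{-1} T_{cl}^s$, a composite of equivalences, so $U_{cl}^s$ is an equivalence as well.

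The only genuinely load-bearing input beyond the already-established Proposition~\ref{prop:full-faith-cl} is the commutativity of \eqref{diag-eqv-closed}, and this is the step I expect to require the most care: it amounts to checking that the skew closed category $K_{cl}^s\mlc$ built in Lemma~\ref{lemma:closed-skew-fin-mult-to-skew-closed} --- with hom objects the internal homs of $\mlc$, unit the nullary map classifier, and $I$, $J$, $L$ all defined via the evaluation maps $e_{b,c}$ --- agrees, on objects and (via Remark~\ref{rmk:def-K-cl-on-funct}) on morphisms, with the skew closed category produced from a closed skew multicategory by the construction underlying \cite[Theorem~6.6]{LackBourke:skew}. Once that identification is in place, the theorem is purely formal.
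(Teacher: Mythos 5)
Your proposal is correct and follows essentially the same route as the paper: full faithfulness from Proposition~\ref{prop:full-faith-cl}, essential surjectivity of $K_{cl}^s$ via the factorisation $T_{cl}^s=K_{cl}^sU_{cl}^s$ with $T_{cl}^s$ an equivalence by \cite[Theorem~6.6]{LackBourke:skew}, and then the two-out-of-three argument for $U_{cl}^s$. You also correctly identify the commutativity of the triangle \eqref{diag-eqv-closed} as the load-bearing input, which the paper likewise establishes (by comparing the constructions) just before stating the theorem.
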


\begin{proof} 
Let us show that $K_{cl}^s$ is an equivalence first. Since $K_{cl}^s$ is fully faithful by Proposition~\ref{prop:full-faith-cl}, it remains to show that is essentially surjective on objects. Since $T_{cl}^s=K_{cl}^sU_{cl}^s$ and the equivalence $T_{cl}^s$ is essentially surjective, so is $K_{cl}^s$, as required. Finally, since $T_{cl}^s=K_{cl}^sU_{cl}^s$ and both $T_{cl}^s$ and $K_{cl}^s$ are equivalences, so is $U_{cl}^s$. 
\end{proof}

Considering a short multicategory $\mc$ as a short skew multicategory with $\mc_n^t(\overline{a};b)=\mc_n^l(\overline{a};b)$, from Theorem~\ref{them:sk-fin-equiv-closed} we can derive a similar result for short multicategories. 

\begin{cor}
The equivalences $K_{cl}^s$ and $U_{cl}^s$ restricts to multicategories, forming the following commutative triangle of equivalences.
% https://q.uiver.app/#q=WzAsMyxbMCwwLCJcXG11bHRpXntjbH0iXSxbMSwxLCJcXGZtdWx0aV57Y2x9Il0sWzAsMiwiXFxjbG9zZWQiXSxbMCwxLCJVX3tjbH0iXSxbMCwyLCJUX3tjbH0iLDJdLFsxLDIsIktfe2NsfSJdXQ==
\[\begin{tikzcd}[ampersand replacement=\&]
	{\multicl} \\
	\& {\fclosed} \\
	\closed
	\arrow["{U_{cl}}", from=1-1, to=2-2]
	\arrow["{T_{cl}}"', from=1-1, to=3-1]
	\arrow["{K_{cl}}", from=2-2, to=3-1]
\end{tikzcd}\]
\end{cor}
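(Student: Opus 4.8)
The plan is to deduce everything formally from the skew-level results already established: the equivalences $K_{cl}^s\colon\fsclosed\to\skcl$ and $U_{cl}^s\colon\smulticlosed\to\fsclosed$ of Theorem~\ref{them:sk-fin-equiv-closed}, the commuting triangle $T_{cl}^s=K_{cl}^sU_{cl}^s$ of diagram~\eqref{diag-eqv-closed}, and the fact recalled above that $T_{cl}^s$ restricts to the equivalence $T_{cl}\colon\multicl\to\closed$ of \cite{Manzyuk2012Closed}. Throughout I identify $\multi$ and $\fmulti$ (hence $\multicl$ and $\fclosed$) with the full subcategories of $\smulti$ and $\fsmulti$ on those objects for which the comparison $j$ is the identity (equivalently $\mlc_n^t=\mlc_n^l$), exactly as in the remark preceding this subsection; and, by Street's characterisation of closed categories as the skew closed categories whose right unit map $I$ is invertible, $\closed$ is a replete full subcategory of $\skcl$.

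First I would check that the forgetful functor restricts. Since $U^s$ acts on the comparison transformation $j$ by mere restriction of arities, it preserves the property ``$j=\mathrm{id}$'', so $U_{cl}^s$ carries $\multicl$ into $\fclosed$, yielding a functor $U_{cl}\colon\multicl\to\fclosed$ which is fully faithful as a restriction of the fully faithful $U_{cl}^s$ to full subcategories. For essential surjectivity, take $\mlc\in\fclosed$ and, using that $U_{cl}^s$ is essentially surjective, choose $\md\in\smulticlosed$ with $U_{cl}^s\md\cong\mlc$; transporting this isomorphism forces $j_1^{\md}$ to be a bijection at every pair of objects. In a closed skew multicategory, closedness reduces every $\mlc_n^x(\overline a;b)$ to an $\mlc_1^x$ of an iterated internal hom, and since $e_{b,c}\circ_1-$ commutes with the $j$'s, under this reduction $j_n$ corresponds to $j_1$; hence every $j_n^{\md}$ is a bijection, and strictifying along the $j_n^{\md}$ produces a closed skew multicategory $\md'\in\multicl$ isomorphic to $\md$ in $\smulticlosed$. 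Then $\mlc\cong U_{cl}^s\md'=U_{cl}\md'$, so $U_{cl}$ is an equivalence.

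Next I would treat $K_{cl}^s$. For $\mlc\in\fclosed$, by the previous paragraph $\mlc\simeq U_{cl}\md'$ with $\md'\in\multicl$, so $K_{cl}^s\mlc\simeq K_{cl}^sU_{cl}^s\md'\cong T_{cl}^s\md'=T_{cl}\md'\in\closed$; as $\closed$ is replete in $\skcl$ this shows $K_{cl}^s$ restricts to a functor $K_{cl}\colon\fclosed\to\closed$, again fully faithful by restriction. It is essentially surjective: given $\cc\in\closed$, essential surjectivity of $T_{cl}\colon\multicl\to\closed$ yields $\md'\in\multicl$ with $T_{cl}\md'\cong\cc$, whence $\cc\cong T_{cl}^s\md'=K_{cl}^sU_{cl}^s\md'=K_{cl}(U_{cl}\md')$ with $U_{cl}\md'\in\fclosed$. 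Thus $K_{cl}$ is an equivalence, and the triangle $T_{cl}=K_{cl}U_{cl}$ commutes, being the restriction of the commuting triangle $T_{cl}^s=K_{cl}^sU_{cl}^s$ to the relevant full subcategories.

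The only step with genuine content is the essential surjectivity of $U_{cl}$, and within it the claim that a closed skew multicategory with $j_1$ invertible is isomorphic, via strictification along the $j_n$, to a closed multicategory; everything else is bookkeeping with full subcategories together with the two-out-of-three property for equivalences. I expect the cleanest phrasing of that strictification to be the main thing to get right. An alternative would be to argue directly that $j=\mathrm{id}$ forces the right unit map $I$ of $K_{cl}^s\mlc$ to be invertible, but routing through $U_{cl}$ and $T_{cl}$ avoids that computation entirely.
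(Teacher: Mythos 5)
Your proof is correct, and its key input is the same as the paper's: the skew-level equivalences of Theorem~\ref{them:sk-fin-equiv-closed}, the commuting triangle $T_{cl}^s=K_{cl}^sU_{cl}^s$, and the fact that $T_{cl}^s$ restricts to Manzyuk's equivalence $T_{cl}\colon\multicl\to\closed$. The difference is one of decomposition: the paper's proof consists essentially of the single observation that $T^s_{cl}$ restricts to $T_{cl}$, leaving implicit exactly the point you work hardest on, namely why $K_{cl}^s$ carries $\fclosed$ into $\closed$ (equivalently, why every closed short multicategory with units lifts to a genuine closed multicategory rather than merely to a closed skew multicategory). You settle this by proving essential surjectivity of $U_{cl}$ first, via the argument that an isomorphism $U_{cl}^s\md\cong\mlc$ with $j^{\mlc}=\mathrm{id}$ forces $j_1^{\md}$, hence (by closedness, since the $e_{b,c}\circ_1-$ commute with $j$) all $j_n^{\md}$, to be bijections, after which one strictifies along the $j_n$; the restriction of $K_{cl}^s$ and its essential surjectivity then follow formally, as does the triangle. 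This buys an explicit justification of the object-level restriction that the paper glosses over, at the price of the strictification lemma, which is routine but does need the transport of the substitution structure along the $j_n^{-1}$ to be checked. One small caveat: your appeal to Street's characterisation of Eilenberg--Kelly closed categories as skew closed categories with invertible $I$ is stated a little loosely (the characterisation in \cite{Street:skew-closed} also involves left normality, i.e.\ invertibility of $\gamma\colon\cc(a,b)\to\cc(i,[a,b])$); this does not affect your argument, since all you use is that $\closed$ sits inside $\skcl$ as a full subcategory closed under isomorphism, which holds for either formulation.
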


\begin{proof}
It suffices to notice that the equivalence $T^s_{cl}$ restricts to the equivalence $T_{cl}\colon\multicl\to\closed$ between closed multicategories with units and closed categories, which was proven in \cite{Manzyuk2012Closed}. 
\end{proof}

\black

\section{Some Naturality Conditions}
\label{app:nat-conditions}

\subsection{Naturality and Dinaturality Axioms for Substitutions}
\label{app:nat-dinat-axioms}

Here we write explicitly the naturality and dinaturality requirements in Proposition~\ref{prop:classical}.
\begin{itemize}
	\item Naturality in $a_1,\ldots, a_n$ means that, for any $\overline{f}=(f_1,\ldots,f_m)\colon\overline{a}\to\overline{a'}\in\catc^m$, the following diagram is commutative. 
	% https://q.uiver.app/#q=WzAsNCxbMSwwLCJcXG1sY197bittLTF9KFxcb3ZlcmxpbmV7Yn1fezxpfSxcXG92ZXJsaW5le2F9LFxcb3ZlcmxpbmV7Yn1fez5pfTtjKSJdLFswLDAsIlxcbWxjX24oXFxvdmVybGluZXtifTtjKSBcXHRpbWVzIFxcbWxjX20oXFxvdmVybGluZXthfTtiX2kpIl0sWzAsMSwiXFxtbGNfbihcXG92ZXJsaW5le2J9O2MpIFxcdGltZXMgXFxtbGNfbShcXG92ZXJsaW5le2EnfTtiX2kpIl0sWzEsMSwiXFxtbGNfe24rbS0xfShcXG92ZXJsaW5le2J9X3s8aX0sXFxvdmVybGluZXthJ30sXFxvdmVybGluZXtifV97Pml9O2MpIl0sWzEsMCwiXFxjaXJjX2kiXSxbMSwyLCIxXFx0aW1lc1xcbWxjX20oXFxvdmVybGluZXtmfTtiX2kpIiwyXSxbMiwzLCJcXGNpcmNfaSIsMl0sWzAsMywiXFxtbGNfe24rbS0xfShcXG92ZXJsaW5le2J9X3s8aX0sXFxvdmVybGluZXtmfSxcXG92ZXJsaW5le2J9X3s+aX07YykiXV0=
	\[\begin{tikzcd}[ampersand replacement=\&]
		{\mlc_n(\overline{b};c) \times \mlc_m(\overline{a};b_i)} \& {\mlc_{n+m-1}(\overline{b}_{<i},\overline{a},\overline{b}_{>i};c)} \\
		{\mlc_n(\overline{b};c) \times \mlc_m(\overline{a'};b_i)} \& {\mlc_{n+m-1}(\overline{b}_{<i},\overline{a'},\overline{b}_{>i};c)}
		\arrow["{\circ_i}", from=1-1, to=1-2]
		\arrow["{1\times\mlc_m(\overline{f};b_i)}"', from=1-1, to=2-1]
		\arrow["{\mlc_{n+m-1}(\overline{b}_{<i},\overline{f},\overline{b}_{>i};c)}", from=1-2, to=2-2]
		\arrow["{\circ_i}"', from=2-1, to=2-2]
	\end{tikzcd}\]
	\item Similarly, naturality in $c$ means that, for any $h\colon c\to c'\in\catc$, the following diagram is commutative. 
	% https://q.uiver.app/#q=WzAsNCxbMSwwLCJcXG1sY197bittLTF9KFxcb3ZlcmxpbmV7Yn1fezxpfSxcXG92ZXJsaW5le2F9LFxcb3ZlcmxpbmV7Yn1fez5pfTtjKSJdLFswLDAsIlxcbWxjX24oXFxvdmVybGluZXtifTtjKSBcXHRpbWVzIFxcbWxjX20oXFxvdmVybGluZXthfTtiX2kpIl0sWzAsMSwiXFxtbGNfbihcXG92ZXJsaW5le2J9O2MnKSBcXHRpbWVzIFxcbWxjX20oXFxvdmVybGluZXthJ307Yl9pKSJdLFsxLDEsIlxcbWxjX3tuK20tMX0oXFxvdmVybGluZXtifV97PGl9LFxcb3ZlcmxpbmV7YSd9LFxcb3ZlcmxpbmV7Yn1fez5pfTtjJykiXSxbMSwwLCJcXGNpcmNfaSJdLFsxLDIsIlxcbWxjX24oXFxvdmVybGluZXtifTtoKVxcdGltZXMxIiwyXSxbMiwzLCJcXGNpcmNfaSIsMl0sWzAsMywiXFxtbGNfe24rbS0xfShcXG92ZXJsaW5le2J9X3s8aX0sXFxvdmVybGluZXthfSxcXG92ZXJsaW5le2J9X3s+aX07aCkiXV0=
	\[\begin{tikzcd}[ampersand replacement=\&]
		{\mlc_n(\overline{b};c) \times \mlc_m(\overline{a};b_i)} \& {\mlc_{n+m-1}(\overline{b}_{<i},\overline{a},\overline{b}_{>i};c)} \\
		{\mlc_n(\overline{b};c') \times \mlc_m(\overline{a'};b_i)} \& {\mlc_{n+m-1}(\overline{b}_{<i},\overline{a'},\overline{b}_{>i};c')}
		\arrow["{\circ_i}", from=1-1, to=1-2]
		\arrow["{\mlc_n(\overline{b};h)\times1}"', from=1-1, to=2-1]
		\arrow["{\mlc_{n+m-1}(\overline{b}_{<i},\overline{a},\overline{b}_{>i};h)}", from=1-2, to=2-2]
		\arrow["{\circ_i}"', from=2-1, to=2-2]
	\end{tikzcd}\]
	\item Naturality in $b_j$ for $j\neq i$, means that for any $g_j\colon b_j\to b_j'\in\catc$, the diagram below commutes. 
	In order to capture both the case when $j<i$ and $j>i$, in the diagram below we write $-g_j-$ to mean applying $g_j$ to the component relative to $b_j$ and denote with $\overline{x}_{\overline{b},\overline{a},i,j}$ the lists which is the same as $\overline{b}_{<i},\overline{a},\overline{b}_{>i}$ except for $b_j$ which is replaced by $b_j'$. 
% https://q.uiver.app/#q=WzAsNCxbMSwwLCJcXG1sY197bittLTF9KFxcb3ZlcmxpbmV7Yn1fezxpfSxcXG92ZXJsaW5le2F9LFxcb3ZlcmxpbmV7Yn1fez5pfTtjKSJdLFswLDAsIlxcbWxjX24oXFxvdmVybGluZXtifTtjKSBcXHRpbWVzIFxcbWxjX20oXFxvdmVybGluZXthfTtiX2kpIl0sWzAsMSwiXFxtbGNfbihcXG92ZXJsaW5le2J9X3s8an0sYl9qJyxcXG92ZXJsaW5le2J9X3s+an07YykgXFx0aW1lcyBcXG1sY19tKFxcb3ZlcmxpbmV7YX07Yl9pKSJdLFsxLDEsIlxcbWxjX3tuK20tMX0oXFxvdmVybGluZXt4fV97XFxvdmVybGluZXtifSxcXG92ZXJsaW5le2F9LGksan07YykiXSxbMSwwLCJcXGNpcmNfaSJdLFsxLDIsIlxcbWxjX24oXFxvdmVybGluZXtifV97PGp9LGdfaixcXG92ZXJsaW5le2J9X3s+an07YylcXHRpbWVzMSIsMl0sWzIsMywiXFxjaXJjX2kiLDJdLFswLDMsIlxcbWxjX3tuK20tMX0oLWdfai07YykiXV0=
\[\begin{tikzcd}[ampersand replacement=\&]
	{\mlc_n(\overline{b};c) \times \mlc_m(\overline{a};b_i)} \& {\mlc_{n+m-1}(\overline{b}_{<i},\overline{a},\overline{b}_{>i};c)} \\
	{\mlc_n(\overline{b}_{<j},b_j',\overline{b}_{>j};c) \times \mlc_m(\overline{a};b_i)} \& {\mlc_{n+m-1}(\overline{x}_{\overline{b},\overline{a},i,j};c)}
	\arrow["{\circ_i}", from=1-1, to=1-2]
	\arrow["{\mlc_n(\overline{b}_{<j},g_j,\overline{b}_{>j};c)\times1}"', from=1-1, to=2-1]
	\arrow["{\mlc_{n+m-1}(-g_j-;c)}", from=1-2, to=2-2]
	\arrow["{\circ_i}"', from=2-1, to=2-2]
\end{tikzcd}\]
	\item Finally, dinaturality in $b_i$ means that, for any $g_i\colon b_i\to b_i'\in\catc$, the following diagram commutes.
	% https://q.uiver.app/#q=WzAsNCxbMiwwLCJcXG1sY19uKFxcb3ZlcmxpbmV7Yn1fezxpfSxiX2knLFxcb3ZlcmxpbmV7Yn1fez5pfTtjKSBcXHRpbWVzIFxcbWxjX20oXFxvdmVybGluZXthfTtiX2knKSJdLFswLDEsIlxcbWxjX24oXFxvdmVybGluZXtifTtjKSBcXHRpbWVzIFxcbWxjX20oXFxvdmVybGluZXthfTtiX2kpIl0sWzIsMSwiXFxtbGNfe24rbS0xfShcXG92ZXJsaW5le2J9X3s8aX0sXFxvdmVybGluZXthfSxcXG92ZXJsaW5le2J9X3s+aX07YykiXSxbMCwwLCJcXG1sY19uKFxcb3ZlcmxpbmV7Yn1fezxpfSxiX2knLFxcb3ZlcmxpbmV7Yn1fez5pfTtjKSBcXHRpbWVzIFxcbWxjX20oXFxvdmVybGluZXthfTtiX2kpIl0sWzEsMiwiXFxjaXJjX2kiLDJdLFszLDEsIlxcbWxjX24oXFxvdmVybGluZXtifV97PGl9LGdfaSxcXG92ZXJsaW5le2J9X3s+aX07YylcXHRpbWVzMSIsMl0sWzAsMiwiXFxjaXJjX2kiXSxbMywwLCIxIFxcdGltZXMgXFxtbGNfbShcXG92ZXJsaW5le2F9O2dfaSkiXV0=
	\[\begin{tikzcd}[ampersand replacement=\&]
		{\mlc_n(\overline{b}_{<i},b_i',\overline{b}_{>i};c) \times \mlc_m(\overline{a};b_i)} \&\& {\mlc_n(\overline{b}_{<i},b_i',\overline{b}_{>i};c) \times \mlc_m(\overline{a};b_i')} \\
		{\mlc_n(\overline{b};c) \times \mlc_m(\overline{a};b_i)} \&\& {\mlc_{n+m-1}(\overline{b}_{<i},\overline{a},\overline{b}_{>i};c)}
		\arrow["{1 \times \mlc_m(\overline{a};g_i)}", from=1-1, to=1-3]
		\arrow["{\mlc_n(\overline{b}_{<i},g_i,\overline{b}_{>i};c)\times1}"', from=1-1, to=2-1]
		\arrow["{\circ_i}", from=1-3, to=2-3]
		\arrow["{\circ_i}"', from=2-1, to=2-3]
	\end{tikzcd}\]
\end{itemize}

\subsection{Naturality in Lemma~\ref{lem:bij}}
\label{app:nat-of-f_a}

In this subsection we explicitly write down the naturality condition for $f_{\overline{a}}\colon m(F\overline{a})\to F(m\overline{a})$ in Lemma~\ref{lem:bij}. 

First, we notice that given any family of morphisms $\overline{p}=(p_1,\ldots,p_n)\colon\overline{a}\to\overline{b}\in\catc^n$, we get a corresponding morphism $m\overline{p}\colon m\overline{a}\to m\overline{b}$ given by the unversal property of $m\overline{a}$. 
More precisely, $m\overline{p}$ is the morphism corresponding to $\theta_{\overline{b}}\circ (p_1,\ldots,p_n)$ through the isomorphism
$$\mlc_n(\overline{a};m\overline{b})\cong\mlc_1(m\overline{a};m\overline{b}),$$
i.e. $m\overline{p}$ is the unique morphism such that $m\overline{p}\circ\theta_{\overline{a}}=\theta_{\overline{b}}\circ (p_1,\ldots,p_n)$. 

Now we are ready to formulate the naturality condition for $f_{\overline{a}}$: for any $\overline{p}\colon\overline{a}\to\overline{b}\in\catc^n$, we require the following diagram to commute. 

% https://q.uiver.app/#q=WzAsNCxbMCwwLCJtKEZcXG92ZXJsaW5le2F9KSJdLFsxLDAsIkYobVxcb3ZlcmxpbmV7YX0pIl0sWzAsMSwibShGXFxvdmVybGluZXtifSkiXSxbMSwxLCJGKG1cXG92ZXJsaW5le2J9KSJdLFswLDEsImZfe1xcb3ZlcmxpbmV7YX19Il0sWzAsMiwibShGXFxvdmVybGluZXtwfSkiLDJdLFsxLDMsIkYobVxcb3ZlcmxpbmV7cH0pIl0sWzIsMywiZl97XFxvdmVybGluZXtifX0iLDJdXQ==
\[\begin{tikzcd}[ampersand replacement=\&]
	{m(F\overline{a})} \& {F(m\overline{a})} \\
	{m(F\overline{b})} \& {F(m\overline{b})}
	\arrow["{f_{\overline{a}}}", from=1-1, to=1-2]
	\arrow["{m(F\overline{p})}"', from=1-1, to=2-1]
	\arrow["{F(m\overline{p})}", from=1-2, to=2-2]
	\arrow["{f_{\overline{b}}}"', from=2-1, to=2-2]
\end{tikzcd}\]

\bibliography{References}
\bibliographystyle{plain}

\end{document}